\date{} 
\title{Liouville quantum gravity spheres as \\ matings of finite-diameter trees}
\author{Jason Miller and Scott Sheffield}
\def\@rst #1 #2other{#1}
\newcommand\MR[1]{\relax\ifhmode\unskip\spacefactor3000 \space\fi
  \MRhref{\expandafter\@rst #1 other}{#1}}
\newcommand{\MRhref}[2]{\href{http://www.ams.org/mathscinet-getitem?mr=#1}{MR#2}}
\newcommand{\one}{{\mathbf 1}}
\newcommand{\CF}{{\mathcal F}}
\newcommand{\CS}{{\mathcal S}}
\newif\ifdraft
\numberwithin{equation}{section}
\numberwithin{figure}{section}
\newtheorem{theorem}{Theorem}
\numberwithin{theorem}{section}
\newtheorem{lemma}[theorem]{Lemma}
\newtheorem{proposition}[theorem]{Proposition}
\theoremstyle{remark}
\theoremstyle{remark}\newtheorem{remark}[theorem]{Remark}
\newcommand{\R}{\mathbf{R}}
\renewcommand{\C}{\mathbf{C}}
\newcommand{\D}{\mathbf{D}}
\newcommand{\Z}{\mathbf{Z}}
\newcommand{\N}{\mathbf{N}}
\newcommand{\W}{\mathcal{W}}
\newcommand{\HH}{\mathbf{H}}
\newcommand{\h}{\HH}
\definecolor{purple}{rgb}{0.7,0,0.7}
\definecolor{gray}{rgb}{0.6,0.6,0.6}
\definecolor{dgreen}{rgb}{0.0,0.4,0.0}
\definecolor{dblue}{rgb}{0.0,0.0,0.5}
\newcommand{\re}{{\mathrm {Re}}}
\newcommand{\im}{{\mathrm {Im}}}
\newcommand{\Fh}{{\mathfrak h}}
\newcommand{\CC}{{\mathcal C}}
\newcommand{\ol}{\overline}
\newcommand{\wh}{\widehat}
\newcommand{\wt}{\widetilde}
\newcommand{\CD}{{\mathcal D}}
\newcommand{\CE}{{\mathcal E}}
\newcommand{\CI}{{\mathcal I}}
\newcommand{\giv}{\,|\,}
\newcommand{\bes}{\mathrm{BES}}
\def\diam{\mathop{\mathrm{diam}}}
\def\dist{\mathop{\mathrm{dist}}}
\def\Re{{\rm Re}\,}
\newcommand{\SLE}{{\rm SLE}}
\newcommand{\QLE}{{\rm QLE}}
\newcommand{\CLE}{{\rm CLE}}
\newcommand{\CA}{{\mathcal A}}
\newcommand{\CH}{{\mathcal H}}
\newcommand{\CU}{{\mathcal U}}
\newcommand{\CT}{{\mathcal T}}
\newcommand{\CZ}{{\mathcal Z}}
\newcommand{\strip}{{\mathscr{S}}}
\newcommand{\cyl}{\mathscr{C}}
\newcommand{\ppp}{p.p.p.}
\newcommand{\p}{{\mathbf P}}
\newcommand{\pr}[1]{\p\!\left[#1\right]}
\newcommand{\var}[1]{\mathrm{var}\!\left(#1\right)}
\newcommand{\cov}[2]{\mathrm{cov}\!\left(#1,#2\right)}
\newcommand{\prstart}[2]{\p^{#1}\!\left[#2\right]}
\newcommand{\SM}{\mathsf M}
\newcommand{\SN}{\mathsf N}
\newcommand{\bdisk}{\SM}
\newcommand{\bsphere}{\SM_{\rm BES}}
\newcommand{\lsphere}{\SM_{\mathrm {LEV}}}
\newcommand{\lexcursion}{\SN}
\newcommand{\conelaw}{{\mathsf m}}
\newcommand{\Msone}{\SM_{\mathrm{SPH}}^1}
\newcommand{\Mstwo}{\SM_{\mathrm{SPH}}^2}
\newcommand{\Msk}{\SM_{\mathrm{SPH}}^k}
\newcommand{\musone}{\mu_{\mathrm{SPH}}^1}
\newcommand{\mustwo}{\mu_{\mathrm{SPH}}^2}
\newcommand{\musk}{\mu_{\mathrm{SPH}}^k}
\def\Ito/{It\^o}
\def \P {{\bf P}}
\def \p {{\P}}
\def \E {{\bf E}}
\begin{document} \maketitle

\begin{abstract}
We show that the unit area Liouville quantum gravity sphere can be constructed in two equivalent ways.  The first, which was introduced by the authors and Duplantier in  \cite{dms2014mating}, uses a Bessel excursion measure to produce a Gaussian free field variant on the cylinder. The second uses a correlated Brownian loop and a ``mating of trees'' to produce a Liouville quantum gravity sphere decorated by a space-filling path.

In the special case that $\gamma=\sqrt{8/3}$, we present a third equivalent construction, which uses the excursion measure of a $3/2$-stable L\'evy process (with only upward jumps) to produce a pair of trees of quantum disks that can be mated to produce a sphere decorated by $\SLE_6$.   This construction is relevant to a program for showing that the $\gamma=\sqrt{8/3}$ Liouville quantum gravity sphere is equivalent to the Brownian map.
\end{abstract}
\newpage
\tableofcontents
\newpage

\parindent 0 pt
\setlength{\parskip}{0.25cm plus1mm minus1mm}

\medbreak {\noindent\bf Acknowledgements.} We have benefited from conversations about this work with many people, a partial list of whom includes Omer Angel, Itai Benjamini, Nicolas Curien, Hugo Duminil-Copin, Amir Dembo, Bertrand Duplantier, Ewain Gwynne, Nina Holden, Jean-Fran{\c{c}}ois Le Gall, Gregory Miermont, R\'emi Rhodes, Steffen Rohde, Oded Schramm, Stanislav Smirnov, Xin Sun, Vincent Vargas, Menglu Wang, Samuel Watson, Wendelin Werner, David Wilson, and Hao Wu.  We also thank an anonymous referee for a number of helpful comments which led to many improvements to this article.

We would also like to thank the Isaac Newton Institute (INI) for Mathematical Sciences, Cambridge, for its support and hospitality during the program on Random Geometry where part of this work was completed.  J.M.'s work was also partially supported by DMS-1204894 and J.M.\ thanks Institut Henri Poincar\'e for support as a holder of the Poincar\'e chair, during which part of this work was completed.  S.S.'s work was also partially supported by DMS-1209044, a fellowship from the Simons Foundation, and EPSRC grants {EP/L018896/1} and {EP/I03372X/1}.

\section{Introduction}
\label{sec::introduction}

\subsection{Overview}

Suppose that $h$ is an instance of the Gaussian free field (GFF) on a planar domain $D$ and $\gamma \in [0,2)$ is fixed.  Then the $\gamma$-Liouville quantum gravity (LQG) surface associated with $h$ is described by the measure $\mu_h$ which is formally given by $e^{\gamma h(z)} dz$ where $dz$ denotes Lebesgue measure on $D$.  Since the GFF $h$ does not take values at points, one has to regularize in some way to make this definition precise. Let $h_\epsilon(z)$ be the average of $h$ on $\partial B(z,\epsilon)$, a quantity that is a.s.\ well defined for each $\epsilon > 0$ and $z \in D$ such that $B(z,\epsilon) \subseteq D$ \cite[Section~3]{ds08}.  The process $(z,\epsilon) \mapsto h_\epsilon(z)$ is jointly continuous in $(z,\epsilon)$ and one can define $e^{\gamma h(z)} dz$ to be the weak limit as $\epsilon \to 0$ along negative powers of $2$ of $\epsilon^{\gamma^2/2} e^{\gamma h_\epsilon(z)} dz$ \cite{ds08}.  We will often write $\mu_h$ for the measure $e^{\gamma h(z)} dz$.  LQG surfaces have also been constructed and analyzed for $\gamma > 2$ \cite{ds2009qg_prl,bjrv2013super_critical,dms2014mating} and for $\gamma=2$ \cite{DRSV1,MR3215583} but this paper will only be concerned with the case that $\gamma \in [0,2)$.

The regularization procedure used to construct $\mu_h$ leads to the following change of coordinates formula \cite[Proposition~2.1]{ds08}.  Suppose that $D,\wt{D}$ are planar domains and $\varphi \colon D \to \wt{D}$ is a conformal map.  If $\wt{h}$ is a GFF on $\wt{D}$ and
\begin{equation}
\label{eqn::coord_change}
h = \wt{h} \circ \varphi + Q \log|\varphi'| \quad\text{where}\quad Q = \frac{2}{\gamma} + \frac{\gamma}{2},
\end{equation}
then
\begin{equation}
\label{eqn::measures_equivalent}
\mu_h(A) = \mu_{\wt{h}}(\varphi(A))
\end{equation}
for all Borel sets $A$.  This allows us to define an equivalence relation on pairs $(D,h)$ by declaring $(D,h)$ and $(\wt{D},\wt{h})$ to be equivalent if $h$ and $\wt{h}$ are related as in~\eqref{eqn::coord_change}.  An equivalence class of such $(D,h)$ is then referred to as a {\em quantum surface} \cite{ds08}.  A representative $(D,h)$ of such an equivalence class is referred to as an \emph{embedding} of the quantum surface.  In many situations, it is natural to consider quantum surfaces with one or more marked points or paths.  In this case, the equivalence relation is defined in the same way except we require in addition that the conformal map in~\eqref{eqn::coord_change} takes the marked points and paths associated with the first surface to the corresponding marked points and paths associated with the second surface.

As described above, LQG has a number of variants because the GFF has a number of variants (e.g., free boundary, fixed boundary, free boundary plus a harmonic function). And this brings us to a natural question: what is the {\em right} way to define Liouville quantum gravity on the sphere, which has no boundary? More precisely, how do we describe the object that one would expect to see as the scaling limit of the most natural discrete random planar map models on the sphere?  The most obvious answer (sample the ordinary GFF on the sphere --- which is defined modulo additive constant --- and adjust the constant {\em a posteriori} to make the total $\mu_h$ area $1$) appears to be wrong.

In fact, this problem is more subtle than one might initially guess. It turns out that there are a number of ways to describe the right answer mathematically, but they all require at least a page or two of text to properly motivate and explain. One somewhat less explicit approach is to describe the answer using limits,\footnote{Short version: one defines a fixed boundary GFF $h$ on a fixed domain $D$, conditions on $\mu_h(D) = C $, rescales to make $\mu_h(D) = 1$, and then considers the $C \to \infty$ of the resulting law on surfaces.} an idea suggested and briefly sketched in \cite{SHE_WELD}. Another more explicit construction, appearing in \cite{dms2014mating}, uses the cylinder as a parameter space (mapping one ``quantum typical'' point to each of its two endpoints), and makes use of a reparameterized Bessel excursion measure to describe the averages of $h$ on cylinder slices. It is shown in \cite{dms2014mating} that the limit suggested in \cite{SHE_WELD} is well-defined and equivalent to the object constructed in \cite{dms2014mating}.

A third approach, presented by David, Kupiainen, Rhodes, and Vargas in~\cite{lqg_sphere}, uses the complex plane $\C$ as a parameter space, fixes the location of three ``quantum typical points,'' and describes the law in terms of an integral over the space of possible averages of the field $h$ w.r.t.\ a fixed background measure.  Although it is not obvious from their construction, a work of Aru, Huang, and Sun \cite{twoperspectives} shows that the construction of \cite{lqg_sphere} is equivalent to the ones we mentioned above. We note that \cite{lqg_sphere} (see also \cite{hrvdisk, drvtorus}) closely follows similar constructions that appeared in the physics literature some decades ago; it also surveys and recovers a number of explicit calculations from that literature, which is quite extensive and which prefigures much of the recent mathematical work in this area.\footnote{Both \cite{dms2014mating} and \cite{lqg_sphere} also discuss ``non-unit-area'' LQG spheres. One way to describe a general quantum surface with finite area is via a pair $(\CS,A)$ where $A$ is the total area, and $\CS$ is the unit area surface obtained by ``rescaling'' the original (i.e., by adding a constant to $h$ to make the total $\mu_h$ mass $1$).  If $d \CS$ is a measure on unit area quantum spheres, then for any measurable function $f:\R_+ \to \R_+$, there is a measure $d\CS \otimes f(A) dA$ on $(\CS,A)$ pairs. In \cite{dms2014mating} this $f$ is a taken to be a power of $A$.  In \cite{lqg_sphere}, it is a power of $A$ times $e^{-\mu A}$, where $\mu$ is the so-called cosmological constant. In both papers, it turns out to be sometimes easier to first construct the non-constant-area measure, and then obtain the unit area measure as the conditional law of $\CS$ once $A$ is given.} We will not attempt to survey the physics literature here.

In \cite{dms2014mating}, the authors along with Duplantier explain how to construct and interpret infinite-volume LQG surfaces as conformal matings of pairs of trees. Along the way,  \cite{dms2014mating} develops a number of connections between different types of LQG surfaces and random curves related to the Schramm-Loewner evolution ($\SLE$) \cite{S0}. These results build on the imaginary geometry theory derived in \cite{MS_IMAG,MS_IMAG2,MS_IMAG3,MS_IMAG4} and the conformal welding theory derived in \cite{SHE_WELD}. The goal of the present article is to extend these results and connections to the unit area quantum sphere described in \cite{dms2014mating}.

Let us stress however that this is far from a straightforward
extension of \cite{dms2014mating}, and that the work in this paper is very different in character from what appears in \cite{dms2014mating}. As we outline in Section~\ref{subsec::overview}, the main work in the current paper involves making
sense of various types of ``bottleneck conditioning,'' which are
conceptually clean but technically quite subtle.

We also remark that it remains an important open problem to establish higher genus analogs of the statements in this paper (where the sphere is replaced by a genus $g$ torus).  Figure~\ref{fig:dependencies} illustrates how the results of the present paper fit into the existing literature, and the higher genus analogs of most of the boxes and arrows shown there have not yet been established. Exceptions include the Brownian map box (work in preparation by Bettinelli and Miermont \cite{bm_compact2}) and the triple-fixed-point construction box (work of Guillarmou, Rhodes and Vargas \cite{guillarmourhodesvargas}, which builds on physics literature constructions).

\subsection{Scaling limit motivation}

\begin{figure}[ht!]
\begin{center}
\includegraphics[scale=0.85]{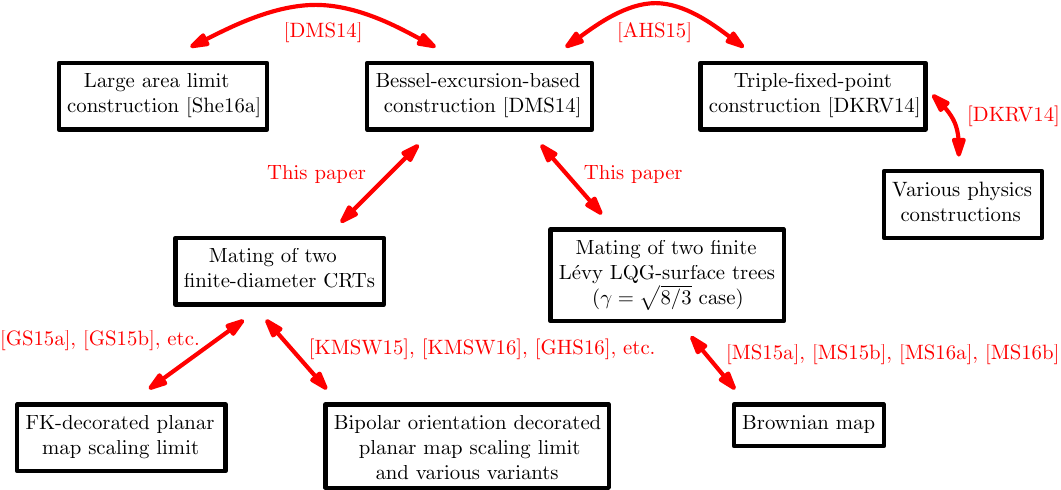}
\end{center}
\caption{\label{fig:dependencies} Several constructions of the Liouville quantum gravity sphere and their relationships. The boxes represent equivalent LQG-sphere definitions.}
\end{figure}

The unit area quantum sphere is significant in part because when $\gamma^2 \in [2,4)$ it has been conjectured to be the scaling limit of the FK-weighted random planar map on the sphere, as discussed for example in \cite[Section~4.2]{2011arXiv1108.2241S}. But why do we expect this conjecture to be true? In other words, how do we know that the LQG sphere definition (in any of its equivalent forms) is the right one for the purpose of understanding FK-scaling limits? There are various ways to answer this question, but our strongest answer is that there is one version of this conjecture that has actually been proved in work by Gwynne and Sun \cite{finitevolumeestimates,finitevolumelimit}, as indicated by one of the arrows in Figure~\ref{fig:dependencies}.

As explained in \cite{2011arXiv1108.2241S}, one may encode a loop-decorated quadrangulation of the sphere by a spanning tree and dual tree pair, which are in turn encoded by a walk on $\Z_+^2$.  The two coordinate functions in this walk are the contour functions of the trees, and the loop-decorated map is viewed as a gluing of this pair of trees along a space-filling path. The work in \cite{finitevolumeestimates,finitevolumelimit} establishes a precise form for the scaling limit of this pair of trees: it is a particular pair of correlated finite-diameter trees, each closely related to the continuum random tree (CRT).

In this paper, we show in Theorem~\ref{thm::sphere_equivalent_constructions} that the unit area quantum sphere constructed in \cite{dms2014mating} can be understood as a ``conformal mating'' of the same pair of trees. Thus, together with  \cite{finitevolumeestimates,finitevolumelimit}, this implies that the FK-decorated random planar maps converge to $\CLE$-decorated LQG spheres in a topology where two decorated spheres are considered close when their corresponding trees are close. Although the topology may not be the first that would come to mind when formulating a scaling limit conjecture, this is already an ``honest'' scaling limit result in the sense that in both the discrete and continuum settings, the pair of trees encodes the entire structure of the surface. The infinite volume version of this story is developed and explained in much more detail in \cite{2011arXiv1108.2241S,dms2014mating}.

A work in progress is \cite{strongertopology}, which builds on \cite{finitevolumeestimates,finitevolumelimit} in order to strengthen this topology of convergence.  The work in \cite{strongertopology} aims to show that the entire discrete loop structure (lengths of loops, areas of regions surrounded by loops, locations along loops---as measured by loop length---where self-intersections and intersections with other loops appear) converges to the analogous continuum loop structure. This ``continuum loop structure'' is a countable collection of measure-endowed loops, together with a set of intersection points and a planar embedding defined up to homeomorphism. It is essentially the object one gets by looking at a $\CLE_{\kappa'}$-decorated $\gamma$-LQG sphere ($\kappa' = 16/\gamma^2$) and remembering all the loop lengths and intersection points but ``forgetting'' how the whole structure is conformally embedded. It is shown in \cite{strongertopology} that the continuum loop structure a.s.\ uniquely determines the embedding, so this forgetting involves no actual loss of information.

A similar convergence result for bipolar oriented maps of the sphere towards the $\sqrt{4/3}$-LQG sphere decorated by $\SLE_{12}$ has been established in \cite{kmsw2015bipolar} and extended in \cite{ghs2016bipolar}.  (See also \cite{kmsw2016bipolar-lattice} for more on bipolar orientations on a planar lattice.)

The works mentioned above do not address the (still open) question of whether the natural conformal embeddings of the discrete models in the sphere (circle parkings, square tilings, Riemannian uniformizations of glued-together unit squares, etc.) approximate the analogous embeddings of the continuum models.  (See \cite{gms2017tutte} for a statement of this type for a random planar map model defined out of the mating of trees construction of LQG.  See also \cite{gms2018tutte} for a statement of this type for the adjacency graph formed by the cells in the Poisson-Voronoi tessellation of a Brownian surface.)

\subsection{Brownian map motivation}

In the special case $\gamma = \sqrt{8/3}$, the current work will also play an important role in a program announced by the authors in \cite{ms2013qle} to use the so-called $\QLE(8/3,0)$ to put a metric on $\sqrt{8/3}$-Liouville quantum gravity, and to show that the resulting metric space agrees in law with the Brownian map, the scaling limit of uniformly random maps on the sphere \cite{legalluniqueanduniversal,miermontlimit}.  Indeed, the current paper shows that a whole-plane $\SLE_6$ drawn on top of an independent $\sqrt{8/3}$-LQG sphere satisfies the correct symmetries so that we can make sense of a form of $\QLE(8/3,0)$ on the $\sqrt{8/3}$-LQG sphere.  In particular, we will show that:
\begin{itemize}
\item The holes cut out by an $\SLE_6$ are given by a Poissonian collection of quantum disks,
\item The law of the region which contains the target point of an $\SLE_6$ is equal to that of a quantum disk weighted by its quantum area, and
\item The law of the tip of an $\SLE_6$ is distributed according to the quantum length measure on the boundary of the unexplored region.	
\end{itemize}
The final point mentioned above implies that the reshuffling operation introduced in \cite{ms2013qle} when applied in the present setting has the interpretation of being a continuum analog of the Eden model on a $\sqrt{8/3}$-LQG sphere.  These three properties will in fact be critical in \cite{qlebm}, in which the metric for $\sqrt{8/3}$-LQG is constructed.

The rest of the program for connecting the Brownian map and the $\sqrt{8/3}$-LQG sphere is carried out in \cite{map_making,qlebm, qle_continuity,qle_determined}.  We refer the reader to the introduction of \cite{qlebm} for an overview of how the different articles fit together.

\subsection{Main results}
\label{subsec::main_results}
When stating our results (and throughout the paper) we assume that $\gamma \in (0,2)$ and
\begin{equation}
\label{eqn::kappa_kappa_prime}
 \kappa = \gamma^2 \in (0,4), \quad \kappa' = \frac{16}{\kappa} = \frac{16}{\gamma^2} \in (4,\infty), \quad\text{and}\quad Q = \frac{2}{\gamma} + \frac{\gamma}{2} \in (2,\infty).
\end{equation}
We also assume that the reader is familiar with the definitions of
\begin{itemize}
\item LQG surfaces for $\gamma \in (0,2)$ (briefly described above; see also \cite{ds08}) and the GFF \cite{SHE06}.
\item $\SLE$ and $\SLE_\kappa(\rho)$ processes.  See \cite{S0,LAW05,W03} for more on $\SLE$ and \cite[Section~8.3]{LSW_RESTRICTION} as well as \cite{SCHRAMM_WILSON} and the preliminaries sections of \cite{MS_IMAG,MS_IMAG2,MS_IMAG3,MS_IMAG4} for more on $\SLE_\kappa(\rho)$ processes.
\item Space-filling $\SLE$.  See the introduction of \cite{MS_IMAG4}.  (We will provide an additional review in Section~\ref{subsec::space_filling_sle}.)
\end{itemize}
We further assume that the reader is familiar with the tree-mating constructions as they are described in the introduction of \cite{dms2014mating}.  That is, since this paper is in some sense a follow up to \cite{dms2014mating}, we will not replicate the introduction here.  It is, however, not necessary for the reader to have digested all of \cite{dms2014mating} in order to understand the present paper.  For the convenience of the reader and to set notation we will recall below some constructions that are used repeatedly.

First let us briefly recall the construction of the unit area quantum sphere given in \cite{dms2014mating}.  In order to do so, we first need to introduce some Hilbert spaces.  For functions $f,g \colon \C \to \R$ with $L^2$ gradients, we define their Dirichlet inner product to be
\begin{equation}
\label{eqn::dirichlet}
(f,g)_\nabla = \frac{1}{2\pi} \int_D \nabla f(x) \cdot \nabla g(x) dx.
\end{equation}
For a domain $D \subseteq \C$, we let $H(D)$ be the Hilbert space closure of the subspace of $C^\infty(D)$ with $L^2$ gradients with respect to $(\cdot,\cdot)_\nabla$.  Let $\cyl = \R \times [0,2\pi]$ denote the infinite cylinder with the lines $\R$ and $\R + 2\pi i$ identified.  We then let $\CH_1(\cyl)$ (resp.\ $\CH_2(\cyl)$) denote the subspace of $H(\cyl)$ given by those functions which are constant on vertical lines (resp.\ have mean-zero on vertical lines).  Then $\CH_1(\cyl) \oplus \CH_2(\cyl)$ gives a $(\cdot,\cdot)_\nabla$-orthogonal decomposition of $H(\cyl)$; see \cite[Lemma~4.3]{dms2014mating}.

The starting point for the construction of the unit area quantum sphere is a certain infinite measure $\bsphere$ on doubly-marked quantum surfaces $(\cyl,h,-\infty,+\infty)$.  To describe the measure, we assert that one may sample from the measure via the following steps:
\begin{itemize}
\item Take the projection of $h$ onto $\CH_1(\cyl)$ to be given by the process $\tfrac{2}{\gamma} \log Z$ reparameterized to have quadratic variation $du$ where $Z$ is picked from the It\^o excursion measure $\nu_\delta^\bes$ of a Bessel process of dimension $\delta = 4-\tfrac{8}{\gamma^2}$.  (We review the construction of $\nu_\delta^\bes$ in Section~\ref{subsubsec::bessel}.  Even though $\delta \leq 0$ for $\gamma \in (0,\sqrt{2}]$, $\nu_\delta^\bes$ still makes sense.)
\item Sample the projection of $h$ onto $\CH_2(\cyl)$ independently from the law of the corresponding projection of a whole-plane GFF on $\cyl$.
\end{itemize}
Since $\nu_\delta^\bes$ is an infinite measure, so is $\bsphere$.  However, if one conditions on the quantum area of $\bsphere$ being a particular positive and finite value, then the conditional law is a well-defined probability measure.\footnote{This point is justified carefully in \cite[Section~4]{dms2014mating} for the quantum disk measures that describe the pieces of so-called thin quantum wedges. The quantum disk measures in \cite[Section~4]{dms2014mating} are constructed from Bessel excursions the same way as $\bsphere$ but with a different range of choices for the parameter $\delta$. The conditioning argument used in  \cite[Section~4]{dms2014mating} also applies to $\bsphere$.} The {\bf unit area quantum sphere} is the measure which is given by sampling $(\cyl,h,-\infty,+\infty)$ as above conditioned on having unit quantum area.

As mentioned earlier, it is also shown in \cite{dms2014mating} that the law of the unit area quantum sphere can be constructed using the limiting procedure suggested in \cite{2011arXiv1108.2241S}.  It is shown in \cite[Proposition~A.13]{dms2014mating}, which follows from the limiting construction, that the points which correspond to $\pm \infty$ conditionally on $(\cyl,h)$ as a quantum surface are uniformly and independently distributed according to $\mu_h$.  That is, the law of the field $h$ (modulo a horizontal translation and global rotation about $\pm \infty$) is invariant under the operation of picking $x,y \in \cyl$ independently from $\mu_h$, letting $\varphi \colon \cyl \to \cyl$ be a conformal transformation with $\varphi(+\infty) = x$, $\varphi(-\infty) = y$, and then replacing $h$ with the field $h \circ \varphi + Q \log|\varphi'|$.

The infinite volume companion of the unit area quantum sphere is the so-called $\gamma$-quantum cone described in \cite{SHE_WELD,dms2014mating}.  Just as in the case of the former, the latter can also be constructed using Bessel processes; we will recall its construction in Section~\ref{subsec::surfaces} as it will play an important role in this paper.  Two of the main results of \cite{dms2014mating}, namely \cite[Theorem~1.9 and Theorem~1.11]{dms2014mating}, give that a $\gamma$-quantum cone can be constructed and described entirely in terms of a certain (correlated) two-dimensional Brownian motion.  In particular, if $(\cyl,h,+\infty,-\infty)$ is a $\gamma$-quantum cone with $\gamma \in [\sqrt{2},2)$ where $-\infty$ (resp.\ $+\infty$) is the marked point about which neighborhoods have infinite (resp.\ finite) mass and $\eta'$ is a space-filling $\SLE_{\kappa'}$ process \cite{MS_IMAG4} from $-\infty$ to $-\infty$ sampled independently of $h$ and then reparameterized so that $\mu_h(\eta'([s,t])) = t-s$ for all $s < t$ then the change in the quantum lengths $(L,R)$ of the left and right boundaries of $\eta'$ relative to time $0$ evolve as a correlated two-dimensional Brownian motion with (up to a linear reparameterization of time) \footnote{Space-filling $\SLE_{\kappa'}$ from~$\infty$ to $\infty$ in $\C$ is constructed in \cite{MS_IMAG4}.  The process in $\cyl$ from $-\infty$ to $-\infty$ is defined in the same way with a whole-plane GFF on $\cyl$.  Alternatively, it can be constructed by taking the process on $\C$ and then applying a conformal transformation which takes $\infty$ to $-\infty$.}
\begin{equation}
\label{eqn::l_r_cov}
\var{L_t} = |t|, \quad \var{R_t} = |t|, \quad\text{and}\quad \cov{L_t}{R_t} = - \cos(\pi \gamma^2/4) |t| \geq 0.
\end{equation}
Moreover, $(L,R)$ a.s.\ determines both $\eta'$ and the quantum surface $(\cyl,h,+\infty,-\infty)$.  It is in fact shown in \cite{dms2014mating} that the quantum lengths $(L,R)$ of $\eta'$ evolve as a two-dimensional Brownian motion for all values of $\gamma \in (0,2)$ and that $(L,R)$ a.s.\ determines the path decorated quantum surface $(\cyl,h,+\infty,-\infty)$, $\eta'$.  It was later shown in \cite{ghms2015correlation} that the covariance matrix for $(L,R)$ is the same function of $\gamma$ as in~\eqref{eqn::l_r_cov} for all $\gamma \in (0,2)$.

Our first main result (stated just below) serves to extend this result to the setting of the unit area quantum sphere.  In this setting, the pair $(L,R)$ is no longer a correlated Brownian motion but rather a correlated Brownian loop.  Fix $\gamma \in (0,2)$ and suppose that $(X,Y)$ is a two-dimensional Brownian motion starting from the origin with the same covariance as in~\eqref{eqn::l_r_cov}.  Let $(L,R)$ be given by the law of $(X,Y)$ conditioned on $X_1 = Y_1 = 0$ and $X_t, Y_t \geq 0$ for all $t \in [0,1]$. (This involves conditioning on an event of measure zero; we explain how to make this precise in Section~\ref{sec::brownian_excursions}.)

\begin{theorem}
\label{thm::sphere_equivalent_constructions}
Suppose that $\gamma \in (0,2)$ and that $(\cyl,h,-\infty,+\infty)$ is a unit area quantum sphere.  Let $\eta'$ be a space-filling $\SLE_{\kappa'}$ process from $-\infty$ to $-\infty$ sampled independently of $h$ and then reparameterized by quantum area.  That is, we take $\eta'(0) = -\infty$ and we parameterize time so that for $0 \leq s < t \leq 1$ we have that $\mu_h(\eta'([s,t])) = t-s$.  Let $L_t$ (resp.\ $R_t$) denote the quantum length of the left (resp.\ right) side of $\eta'([0,t])$.  Then the law of $(L,R)$ is as described just above.  Moreover, the path-decorated quantum surface $(\cyl,h,-\infty,+\infty)$, $\eta'$ is a.s.\ determined by $(L,R)$.
\end{theorem}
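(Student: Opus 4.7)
The plan is to deduce Theorem~\ref{thm::sphere_equivalent_constructions} from the $\gamma$-quantum cone analog (Theorems~1.13 and~1.14 of \cite{dms2014mating}) via a ``bottleneck conditioning'' argument making rigorous the heuristic that a unit area quantum sphere is a piece of a $\gamma$-quantum cone bounded between two quantum typical points that enclose total quantum area $1$. The key observation is that the only distributional difference between the unit area quantum sphere and the $\gamma$-quantum cone lies in the projection of the field onto $\CH_1(\cyl)$: in the sphere this projection comes from a Bessel excursion of dimension $\delta = 4 - 8/\gamma^2$, while in the cone it comes from a two-sided Bessel process of the same dimension. The $\CH_2(\cyl)$ projection and the independent space-filling $\SLE_{\kappa'}$ process $\eta'$ are common to both constructions, so the conditional law of the path-decorated surface given the $\CH_1(\cyl)$ projection is the same in both settings.

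Concretely, I would approximate the sphere by fixing $\epsilon > 0$, conditioning a quantum-area-parameterized $\gamma$-quantum cone together with its independent $\eta'$ on the event that the $\CH_1(\cyl)$ projection descends to level $\tfrac{2}{\gamma}\log\epsilon$ at two times whose intermediate swept quantum area is close to $1$, and then cutting out the resulting piece. On the Bessel side this creates a ``neck'' of quantum length $\approx \epsilon$ isolating a piece of quantum area $\approx 1$, and as $\epsilon \to 0$ a disintegration of $\nu_\delta^\bes$ of the type developed for quantum disks in Section~5 of \cite{dms2014mating} shows that the cut-out piece converges in law to the unit area quantum sphere. Simultaneously, since $(L, R)$ on the cone is a correlated two-dimensional Brownian motion with the covariance in~\eqref{eqn::l_r_cov}, the law of $(L, R)$ restricted to the corresponding time interval converges to that of this Brownian motion conditioned on starting and ending at $(0,0)$ on $[0,1]$ and remaining in the nonnegative quadrant --- exactly the correlated Brownian loop described before the theorem.

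Taking the joint limit yields the first conclusion of the theorem immediately, and the a.s.\ determination statement is inherited from its cone counterpart: on any compact subinterval of $(0,1)$ on which $(L, R)$ stays strictly positive, the sphere and the cone are locally mutually absolutely continuous (after the corresponding change of embedding), so the deterministic procedure from \cite{dms2014mating} that reconstructs the path-decorated cone from $(L, R)$ also reconstructs the corresponding portion of the sphere; exhausting $(0,1)$ by such subintervals recovers all of $(\cyl, h, -\infty, +\infty), \eta'$. The main obstacle will be making the bottleneck conditioning rigorous, since it conditions on an event of probability zero; this requires a careful disintegration of both the Bessel excursion measure $\nu_\delta^\bes$ and the two-dimensional correlated Brownian motion in the nonnegative quadrant with respect to the analogous bottleneck events, together with a verification that these two disintegrations are compatible under the map from $(h, \eta')$ to $(L, R)$. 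Once this compatibility is in hand, combining with the $\gamma$-quantum cone theorem gives both halves of the statement.
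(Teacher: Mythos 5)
Your high-level heuristic---realize the sphere by pinching a unit of area off a $\gamma$-quantum cone and inherit the $(L,R)$ description and the determination statement from \cite[Theorems~1.13 and~1.14]{dms2014mating}---is indeed the paper's strategy, and your treatment of the determination claim (local comparison with the cone on subintervals where $(L,R)$ is positive, plus a distortion estimate to control the effect of resampling the ends) matches the paper's argument in spirit. But there is a genuine gap at the heart of the plan: you use a \emph{single} bottleneck, defined through the projection of $h$ onto $\CH_1(\cyl)$ (the Bessel bottleneck $E_{r,\epsilon}$ of Proposition~\ref{prop::cone_to_sphere}), and assert that ``simultaneously'' the law of $(L,R)$ on the corresponding time interval converges to the correlated Brownian loop. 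That does not follow. The event $E_{r,\epsilon}$ is an event for the field average process; the circle $\partial(\cyl_+ + \tau_r)$ is not traced by $\eta'$, its quantum length is not a functional of $(L,R)$, and the time interval it cuts out is not delimited by cone times of $(L,R)$. So conditioning on $E_{r,\epsilon}$ does not condition $(L,R)$ on anything resembling ``start and end at $(0,0)$ and stay in the quadrant,'' and Proposition~\ref{prop::cone_to_sphere_brownian_limit} cannot be invoked for it.

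The paper resolves this by introducing a \emph{second}, path-adapted bottleneck: the event $F_{r,\epsilon,C}$ that the $\SLE_{\kappa'}(\kappa'-6)$ exploration first cuts out (at a time in $\CT$) a component containing $+\infty$ with boundary length in $[\tfrac12,1]\cdot C^{-1}e^{\gamma r/2}$ and area in $[1,1+\epsilon]$. This event \emph{is} a $\pi/2$-cone excursion of the time-reversed $(L,R)$ with small terminal displacement, so Proposition~\ref{prop::cone_to_sphere_brownian_limit} applies on that side. The bulk of the proof (Proposition~\ref{prop::space_filling_almost_sphere} and Section~\ref{subsec::asymptotic_mixing}) is then devoted to showing the two bottlenecks are asymptotically compatible: $\pr{E_{r,\epsilon}\giv F_{r,\epsilon,C}}\to 1$ as $C\to\infty$ uniformly in $r,\epsilon$, and conversely $\pr{F_{r,\epsilon,C}\giv E_{r,\epsilon}}$ is uniformly positive, so that the extra conditioning does not change the limiting surface law. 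This requires real inputs you have not supplied: the Markov-type structure of the exploration (Proposition~\ref{prop::unexplored_region_qc_inf_to_0}), the fact that the Radon--Nikodym derivative between the conditioned laws is a function of the cut boundary length alone with a bounded density (Lemmas~\ref{lem::conditionally_independent}--\ref{lem::disk_boundary_length_conditioned}), and the conformal distortion estimate (Lemma~\ref{lem::conformal_cylinder_hulls}) to control the cut-and-reglue comparison of the two pinched cones. You flag ``compatibility of the two disintegrations'' as the main obstacle, which is the right diagnosis, but the proposal offers no mechanism for it, and this is precisely where the work lies. Separately, note that the path-adapted bottleneck uses the $\SLE_{\kappa'}(\kappa'-6)$ exploration and hence only works for $\gamma\in(\sqrt2,2)$; the case $\gamma\in(0,\sqrt2]$ needs a different bottleneck (the paper uses the time-reversal of the space-filling path itself), a case distinction absent from your outline.
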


In \cite[Theorem~1.17]{dms2014mating}, it is shown that it is also natural to explore a $\gamma$-quantum cone $(\C,h,0,\infty)$ with a whole-plane $\SLE_{\kappa'}(\kappa'-6)$ process $\wt{\eta}'$ from $0$ to $\infty$. The quantum surfaces which correspond to the complementary components of $\C \setminus \wt{\eta}'$ are described by so-called forested lines.  In the case that $\kappa'=6$ so that $\wt{\eta}'$ is an ordinary whole-plane $\SLE_{6}$, it is shown in \cite[Corollary~10.2]{dms2014mating} that the evolution of the quantum length of the boundary of the unbounded component of $\C \setminus \wt{\eta}'([0,t])$ is given by a totally asymmetric $3/2$-stable L\'evy process with only negative jumps conditioned to be non-negative when $\wt{\eta}'$ is parameterized by quantum natural time (quantum natural time is introduced just before the statement of \cite[Theorem~1.18]{dms2014mating}).  In our next theorem, we describe the analog of this latter statement in the case of a unit area quantum sphere with $\gamma=\sqrt{8/3}$.

Suppose that $X_t$ is a totally asymmetric $3/2$-stable process with only upward jumps and let $I_t = \inf\{ X_s : s \leq t\}$ be the running infimum of $X$.  Let $\lexcursion$ be the excursion measure associated with the excursions that $X_t - I_t$ makes from $0$.  We note that $\lexcursion$ is an infinite measure (we will recall its construction in Section~\ref{subsubsec::stable_levy}; see also \cite[Chapter~VIII.4]{bertoin96levy}), though for each $\epsilon > 0$ we have that $\lexcursion$ assigns finite mass to those excursions which have length at least $\epsilon$. Let $\lsphere$ be the infinite measure on collections of oriented, marked quantum disks such that sampling from $\lsphere$ amounts to:
\begin{itemize}
\item Sampling a stable L\'evy excursion $e$ from $\lexcursion$.
\item Given $e$, sampling a collection of conditionally independent quantum disks (we will review the definition of a quantum disk in Section~\ref{subsubsec::quantum_disks}) indexed by the jumps of $e$ whose boundary length is equal to the size of the jump made by $e$ and then orienting each by the toss of an i.i.d.\ fair coin.
\item Marking the boundary of each quantum disk with a conditionally independent point chosen from its quantum boundary measure.
\end{itemize}

\begin{theorem}
\label{thm::pure_sphere_equivalent_constructions}
Let $(\cyl,h,-\infty,+\infty)$ be a unit area quantum sphere with $\gamma=\sqrt{8/3}$.  Let $\wt{\eta}'$ be a whole-plane $\SLE_6$ process in $\cyl$ from $-\infty$ to $+\infty$ sampled independently of $h$ and then parameterized by quantum natural time.  For each $t \geq 0$, let $X_t$ denote the length of the boundary of the connected component of $\cyl \setminus \wt{\eta}'([0,t])$ which contains $+\infty$.  Then the joint law of $X$ and the oriented (by the order in which $\wt{\eta}'$ draws the disk boundary), marked (by the last point on the disk boundary visited by $\wt{\eta}'$) quantum disks cut out by $\wt{\eta}'$ is equal to the time-reversal of a sample produced from $\lsphere$ conditioned on the total quantum area of the quantum disks being equal to $1$.  Moreover, the path-decorated quantum surface $(\cyl,h,-\infty,+\infty)$, $\wt{\eta}'$ is a.s.\ determined by the ordered sequence of oriented, marked components cut out by $\wt{\eta}'$ viewed as quantum surfaces.
\end{theorem}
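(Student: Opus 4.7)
The plan is to deduce this from the corresponding infinite-volume statement for the $\sqrt{8/3}$-quantum cone decorated by whole-plane $\SLE_6$: by \cite[Corollary~12.4]{dms2014mating}, on a cone the boundary length of the unbounded component of $\C\setminus\eta'([0,t])$ in quantum natural time is a $3/2$-stable L\'evy process with only downward jumps conditioned to stay non-negative, whose jumps index a family of conditionally independent marked quantum disks. The sphere statement should be the corresponding ``excursion version'' of that result: exploring between two quantum-typical marked points on the sphere replaces the conditioned-to-be-positive L\'evy process by its excursion measure $\lexcursion$. The time-reversal in the statement appears only because $\lsphere$ is defined using upward jumps, while the $\SLE_6$ exploration swallows bubbles via downward jumps of $X$; under time-reversal an upward-jump L\'evy excursion becomes a downward-jump L\'evy excursion of the appropriate law.

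I would work at the level of infinite measures and condition on total area equal to $1$ only at the end. Let $\mathcal M$ denote the infinite measure on path-decorated quantum surfaces obtained by sampling $(\cyl,h,-\infty,+\infty)$ from $\bsphere$ and then, conditionally on $h$, an independent whole-plane $\SLE_6$ from $-\infty$ to $+\infty$ parameterized by quantum natural time; let $\wt{\lsphere}$ denote the image of $\lsphere$ under time-reversal. The goal is to show that reading off $(X,\text{cut-out oriented marked disks})$ pushes $\mathcal M$ forward to $\wt{\lsphere}$. Three ingredients drive the identification. First, the $\SLE_6$ domain Markov property: conditionally on the exploration up to time $t$, the unexplored region containing $+\infty$ is a quantum disk of boundary length $X_t$ with a marked interior point, and $\eta'$ continues inside as an $\SLE_6$ targeted at that point. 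Second, the cone result applied locally: the infinitesimal evolution of $X$ and the conditional laws of the cut-out disks agree with their cone counterparts, since the continuation law for $\eta'$ depends on the ambient surface only through the current boundary data. Third, the identification of the marginal law of $X$ as $\lexcursion$: for $\gamma=\sqrt{8/3}$ the Bessel dimension $\delta=4-8/\gamma^2=1$ appearing in the definition of $\bsphere$ yields a Brownian-type excursion structure for the quantum-area coordinate that intertwines cleanly with the boundary-length coordinate to produce the stable excursion $\lexcursion$. The i.i.d.\ fair-coin orientation and the boundary-measure-uniform marking of the disks both follow jumpwise from the cone case, the former from the left/right symmetry of $\SLE_6$ on a quantum disk and the latter from the fact that the last point visited by the $\SLE_6$ tracing the disk boundary is uniform in the quantum boundary measure.

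The second claim --- that the path-decorated surface is a.s.\ determined by the ordered sequence of oriented marked disks --- should follow from the analogous cone statement in \cite[Theorem~1.18]{dms2014mating}, combined with absolute continuity between cone and sphere away from $\pm\infty$ and control of the tails of the boundary-length excursion near each endpoint. The main obstacle, and the point where the argument departs substantially from \cite{dms2014mating}, is the bottleneck conditioning itself: the sphere measure effectively conditions on a pair of measure-zero events at $\pm\infty$ (the Bessel and L\'evy coordinates simultaneously returning to zero), so one must realize $\bsphere$ as a suitable limit along bottleneck events and verify that this limit commutes with reading off $X$ and the cut-out disks. I expect this to require the regularization machinery that powers the proof of Theorem~\ref{thm::sphere_equivalent_constructions} (specialized from the correlated Brownian loop to the univariate $3/2$-stable excursion), and the most delicate step should be the behavior near $+\infty$, where $\eta'$ swallows the target in a short window of quantum natural time during which $X$ must vanish continuously in the correct manner.
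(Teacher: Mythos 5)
Your high-level strategy is the same as the paper's: reduce to the infinite-volume cone result of \cite{dms2014mating} and recover the sphere by bottleneck conditioning, reusing the machinery behind Theorem~\ref{thm::sphere_equivalent_constructions}. But two of the steps you identify as ingredients are not actually arguments, and they are precisely where the work lies. First, your ``third ingredient'' --- that for $\gamma=\sqrt{8/3}$ the Bessel dimension $\delta=1$ yields a Brownian-type excursion structure that ``intertwines cleanly'' with the boundary-length coordinate to produce $\lexcursion$ --- is not how the marginal law of $X$ gets identified, and as stated it proves nothing: there is no direct intertwining of the Bessel (field-average) coordinate with the quantum boundary length of the $\SLE_6$ hull. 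What the paper does instead is run the cone exploration $\ol{\eta}'$ from $+\infty$ toward $-\infty$, so that the boundary length is a $3/2$-stable process conditioned to stay positive, stop it at the first time $\ol{\xi}_{s,\Delta}$ it drops below $e^{\gamma s/2}$ after exceeding $\Delta$, and invoke the time-reversal theorem \cite[Chapter~VII, Theorem~18]{bertoin96levy} to show that the time-reversal of this segment converges, as $s\to-\infty$ and then $\Delta\to 0$, to $\lexcursion$ together with its decorating disks (Proposition~\ref{prop::levy_conditioned_infinite}). That is the step that actually produces the excursion measure and explains the time-reversal in the statement; it needs to be carried out, not asserted.

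Second, you correctly flag that one must ``verify that this limit commutes with reading off $X$ and the cut-out disks,'' but you give no mechanism, and this is the crux. The paper's device is to run \emph{two} explorations from opposite ends of the cone: $\wt{\eta}'$ from $-\infty$, whose stopping defines the Brownian-side bottleneck event $F_{r,\epsilon,C}$ used in the proof of Theorem~\ref{thm::sphere_equivalent_constructions}, and $\ol{\eta}'$ from $+\infty$, whose stopping defines the L\'evy-side event $\ol{Q}_{s,\Delta,\delta}$. One then shows that (i) the two conditionings are asymptotically compatible, (ii) the two explored regions are disjoint with probability tending to $1$, and (iii) on disjointness the two path-decorated pieces are conditionally independent given their boundary lengths, so conditioning on $F_{r,\epsilon,C}$ does not perturb the law of the L\'evy-side piece. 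Without some version of this decoupling, the claim that the L\'evy excursion limit and the unit-area-sphere limit can be taken jointly is unsupported. A smaller point: your ``first ingredient'' (the unexplored region containing the target is a quantum disk) omits the area-weighting and, in the sphere setting, is Proposition~\ref{prop::unexplored_region}, which the paper \emph{derives from} this theorem; only the cone version (Proposition~\ref{prop::unexplored_region_qc_inf_to_0}) is available as an input, so as written your plan risks circularity there.
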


We will show in Lemma~\ref{lem::finite_mass} that for each $a > 0$, $\lsphere$ assigns finite mass to those configurations for which the sum of the area of the quantum disks is at least $a$.

\begin{remark}
\label{rem::pure_sphere_points_uniform}
By \cite[Proposition~A.13]{dms2014mating}, the points which correspond to $-\infty$ and $+\infty$ in the unit area quantum sphere as constructed above are independent and uniformly distributed according to the quantum measure conditional on the surface. In particular, Theorem~\ref{thm::pure_sphere_equivalent_constructions} applies if one starts with a unit area quantum sphere, picks points $x,y \in \CS$ independently and uniformly at random using the quantum area measure, and then lets $\wt{\eta}'$ be a whole-plane $\SLE_6$ on $\CS$ from $x$ to $y$.
\end{remark}

\begin{remark}
\label{rem::other_versions}  We expect statements analogous to Theorem~\ref{thm::pure_sphere_equivalent_constructions} to also hold for other values of $\gamma \in (\sqrt{2},2)$.  Namely, we expect it to be possible to describe a certain kind of doubly-marked quantum sphere decorated with a whole-plane $\SLE_{\kappa'}$ process in terms of a $\kappa'/4$-stable L\'evy excursion where each of the jumps correspond to conditionally independent quantum disks whose boundary length is equal to the size of the jump.  The case $\gamma=\sqrt{8/3}$ is special because the surface is given by the unit area quantum sphere.  For other values of $\gamma$, the law on spheres should be constructed in the same manner as the unit area quantum sphere except with a different Bessel process dimension.  In particular, for $\gamma \neq \sqrt{8/3}$, the starting and ending points of the $\SLE_{\kappa'}$ process are not uniformly distributed according to the quantum area measure.  It should also be possible to describe a (standard) unit area quantum sphere with $\gamma \in (\sqrt{2},2)$ decorated by an independent whole-plane $\SLE_{\kappa'}(\kappa'-6)$ process connecting two points chosen uniformly from the quantum measure in terms of finite volume analogs of the forested lines considered in \cite{dms2014mating}.  We will describe some extensions in this direction in Section~\ref{sec::extensions}.
\end{remark}

Theorem~\ref{thm::pure_sphere_equivalent_constructions} implies that $\lsphere$ can be thought of as an infinite measure on path-decorated doubly-marked quantum spheres.  More generally, Theorem~\ref{thm::pure_sphere_equivalent_constructions} implies that a sample produced from $\lsphere$ conditioned on having a given quantum area $A > 0$ has the same law as a sample produced from $\bsphere$ conditioned on the quantum area being equal to the same value $A$.  It therefore follows that the Radon-Nikodym derivative of $\lsphere$ with respect to $\bsphere$ is a function of quantum area alone.  Our final main result is the explicit identification of this function.

\begin{theorem}
\label{thm::pure_sphere_radon_nikodym_derivative}
There exists a constant $c_{\mathrm {LB}} > 0$ such that
\begin{align}
   \frac{d \lsphere}{d \bsphere} &= c_{\mathrm {LB}}. \label{eqn::cm_m_rn}
 \end{align}
\end{theorem}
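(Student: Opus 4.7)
The plan is to combine Theorem~\ref{thm::pure_sphere_equivalent_constructions} with a scaling argument. By Theorem~\ref{thm::pure_sphere_equivalent_constructions}, conditioning either $\lsphere$ or $\bsphere$ on the total quantum area being equal to $A$ yields the same law on path-decorated doubly-marked quantum spheres, so the Radon-Nikodym derivative $d\lsphere/d\bsphere$ factors through the total quantum area map: there is a measurable $g \colon (0,\infty) \to [0,\infty)$ with $d\lsphere/d\bsphere = g(A)$, and the goal is to show $g$ is constant. For $\lambda > 0$ let $T_\lambda$ denote the operation on configurations given by $h \mapsto h + \gamma^{-1}\log\lambda$; this bijection multiplies the total quantum area by $\lambda$ and each quantum boundary length by $\lambda^{1/2}$. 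If one can establish that
\begin{equation*}
(T_\lambda)_* \bsphere = \alpha(\lambda)\,\bsphere \quad \text{and} \quad (T_\lambda)_* \lsphere = \beta(\lambda)\,\lsphere
\end{equation*}
for all $\lambda > 0$, then the change-of-variables identity $(T_\lambda)_*(g(A)\,\bsphere) = \alpha(\lambda)\,g(A/\lambda)\,\bsphere$ combined with $(T_\lambda)_* \lsphere = \beta(\lambda) g(A) \bsphere$ forces $g(A/\lambda) = [\beta(\lambda)/\alpha(\lambda)]\,g(A)$, so it suffices to verify that $\alpha(\lambda) = \beta(\lambda)$.

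The scaling of $\bsphere$ reduces to a scaling of the Bessel It\^o excursion measure $\nu_\delta^\bes$, since $T_\lambda$ affects only the $\CH_1(\cyl)$-projection $\tfrac{2}{\gamma}\log Z$. A direct calculation shows that the shift by $\gamma^{-1}\log\lambda$ corresponds to the Bessel dilation $Z_t \mapsto \lambda^{1/2} Z_{\lambda^{-1} t}$: the quadratic-variation reparameterization $du = (4/\gamma^2) Z_t^{-2}\,dt$ is invariant under this dilation, so only the additive shift of the $\CH_1$-projection remains. At $\gamma^2 = 8/3$ the Bessel dimension is $\delta = 4 - 8/\gamma^2 = 1$ and the lifetime density of $\nu_1^\bes$ is proportional to $\ell^{-3/2}$, so dilating lifetime by $\lambda$ produces $\alpha(\lambda) = \lambda^{1/2}$. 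For $\lsphere$, under $T_\lambda$ each quantum disk's boundary length is multiplied by $\lambda^{1/2}$, and since these boundary lengths are precisely the jumps of the totally asymmetric $3/2$-stable L\'evy excursion, this matches the self-similar rescaling $X_t \mapsto \lambda^{1/2} X_{\lambda^{-3/4} t}$. Combined with the lifetime density of $\lexcursion$ being proportional to $\ell^{-5/3}$, the excursion-measure factor is $\lambda^{1/2}$; and since the conditional law of each marked, oriented quantum disk given its boundary length is preserved under boundary-length rescaling (a disk of length $L$ is mapped to a disk of length $\lambda^{1/2}L$ with the correct conditional law), no further factor appears, giving $\beta(\lambda) = \lambda^{1/2}$.

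Thus $\alpha(\lambda) = \beta(\lambda) = \lambda^{1/2}$, which forces $g(A/\lambda) = g(A)$ for all $\lambda, A > 0$, and so $g$ is constant. The main obstacle is making the two scaling arguments rigorous for the infinite measures involved: one must carefully verify that $T_\lambda$ acts on the Bessel-excursion-built $\bsphere$ with the claimed scaling factor (in particular establishing the invariance of the quadratic-variation reparameterization under Bessel dilation) and that $T_\lambda$ acts on the L\'evy-excursion-indexed collection of conditionally independent marked, oriented quantum disks underlying $\lsphere$ as a L\'evy-excursion self-similarity composed with a boundary-length rescaling of the disks. That the two exponents match is specific to $\gamma = \sqrt{8/3}$: the equality $1 - \delta/2 = 1/2$ holds precisely because $\delta = 1$, consistent with Remark~\ref{rem::other_versions}, which observes that for other $\gamma \in (\sqrt{2},2)$ the corresponding identification cannot hold on the unit area sphere.
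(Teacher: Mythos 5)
Your proposal is correct and is essentially the same scaling argument as the paper's: the paper likewise reduces to showing that the density of the quantum area is proportional to $A^{-3/2}$ under both measures, using the self-similarity of the Bessel excursion measure (exponent $1-\delta/2=1/2$ for $\delta=1$, via \cite[Proposition~4.24]{dms2014mating}) and of the $3/2$-stable L\'evy excursion measure (also exponent $1/2$), which is exactly your identity $\alpha(\lambda)=\beta(\lambda)=\lambda^{1/2}$ phrased in terms of densities rather than pushforwards. The one technical point you flag as the "main obstacle'' on the L\'evy side --- that $T_\lambda$ really does act as the self-similar rescaling of the excursion, i.e.\ that the time scaling $e^{3\gamma C/4}$ can be read off from the jumps alone --- is precisely what the paper's proof supplies via its jump-counting argument~\eqref{eqn::levy_time}.
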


Let $A$ denote the quantum area of a quantum surface $\CS$ sampled from $\lsphere$ or $\bsphere$.  For $\gamma=\sqrt{8/3}$, it turns out that the density of $A$ under $\bsphere$ with respect to Lebesgue measure is given by a constant times $A^{-3/2}$; see Proposition~\ref{prop::bessel_area_decay}.  Theorem~\ref{thm::pure_sphere_radon_nikodym_derivative} then implies that the density of $A$ under $\lsphere$ with respect to Lebesgue measure is given by a constant times $A^{-3/2}$. It is not a coincidence that this is the same exponent that one encounters in the ``grand canonical'' doubly marked Brownian map measure $\mustwo$, as discussed for example in \cite{map_making}.

The equivalence of $\lsphere$ with $\bsphere$ implies that the conditional law of the two marked points of $\lsphere$ given the underlying surface are uniformly random from the quantum measure.  For this reason, in the subsequent papers \cite{qlebm,qle_continuity,qle_determined} we will refer to this measure as $\Mstwo$.  We can generate spheres with fewer or more marked points by unweighting or weighting $\Mstwo$ by quantum area.  In particular, the infinite measure on quantum spheres $\Msone$ with only one marked point can be sampled by first picking $A$ from the infinite measure $A^{-5/2} dA$ where $dA$ denotes Lebesgue measure on $\R_+$ and then, given $A$, picking a quantum sphere with area equal to $A$, and then finally picking the marked point from the quantum area measure.  This measure is in correspondence with $\musone$ from \cite{map_making}.  More generally, we define the infinite measure $\Msk$ on quantum spheres with $k$ marked points by first picking $A$ from the measure $A^{-7/2+k} dA$, and then, given $A$, picking a quantum sphere with area equal to $A$, and then finally picking the $k$ marked points conditionally independently from the quantum area measure.  This measure is in correspondence with $\musk$ from \cite{map_making}.

\subsection{Outline}
\label{subsec::overview}

The remainder of this article is structured as follows.  We will review some preliminary facts about Bessel and stable L\'evy processes, quantum surfaces, and conformal maps in Section~\ref{sec::preliminaries}. In particular, we will recall in Theorem~\ref{thm::disk_explore} how to describe a quantum disk in terms of a correlated two-dimensional Brownian excursion; this result already appeared in \cite{dms2014mating}.
Next, in Section~\ref{sec::brownian_excursions} we will give a rigorous construction of the measure on correlated Brownian loops described just before the statement of Theorem~\ref{thm::sphere_equivalent_constructions}. This measure essentially corresponds to a correlated two-dimensional Brownian bridge, starting and ending at the origin and conditioned to stay in the positive quadrant. We construct and make some basic observations about this process. In Section~\ref{sec::sphere_from_cone} we will explain how one can construct a unit area quantum sphere from a $\gamma$-quantum cone.  We will then make use of this result in Section~\ref{sec::bessel_brownian_constructions} and Section~\ref{sec::levy_construction}, where we will respectively establish Theorem~\ref{thm::sphere_equivalent_constructions} and Theorems~\ref{thm::pure_sphere_equivalent_constructions} and~\ref{thm::pure_sphere_radon_nikodym_derivative}.  We will discuss extensions of our results for $\gamma=\sqrt{8/3}$ to general values of $\gamma \in (\sqrt{2},2)$ in Section~\ref{sec::extensions}.

All of the results in this paper build on the infinite volume constructions that appear in \cite{dms2014mating}. Intuitively, one way to get from an infinite volume quantum surface to a unit area quantum sphere is to condition the former to have a small ``bottleneck,'' so that the area to one side of the bottleneck is about $1$.  One can take a limit as the bottleneck is required to be, in some sense, smaller and smaller. For each of the different ways to describe the infinite volume surface that are shown to be equivalent in \cite{dms2014mating} (via the Bessel process, the correlated 2D Brownian motion, or the stable L\'evy process) there is natural way to define a bottleneck and to make sense of the sphere obtained in the ``small bottleneck'' limit.  The technical challenge, which the bulk of this paper is devoted to addressing, is to show that all of these different approaches actually agree in the limit.

Before addressing these challenges, we will recall that a quantum cone is a random surface with two marked points, an ``infinite mass'' point (about which every neighborhood has infinite mass) and a ``finite mass'' point (about which small neighborhoods have finite mass). We will then consider various ways to explore this random surface from the infinite mass point toward the finite mass point---either deterministically (parameterizing the surface by a cylinder and exploring the cylinder from left to right) or randomly (drawing a whole plane $\SLE$ from one endpoint to another). With each approach, we may stop the exploration when (in some sense) the boundary of the unexplored region is small and then consider the conditional law of the unexplored region---in particular, we would like to understand the law of the unexplored region conditioned on the event that its quantum area is much larger than one would expect.

This analysis will require some work, and a number of careful estimates, but there are a few tricks that make the job more pleasant than it might otherwise be. One involves using some basic conformal map estimates, similar to those that appear in \cite{dms2014mating}, to argue that certain fairly drastic local operations on a quantum surface (such as cutting out a disk with a very small quantum area and then gluing in a disk with the same boundary length but a much larger quantum area) actually have little effect on the global conformal embedding. Another involves using the ``target invariance'' of certain types of $\SLE$ along with properties of the quantum cone to show that one can sometimes partially ``forget'' the location of the finite-mass endpoint of a quantum cone --- and resample it from the LQG measure on some specified region --- without changing the overall law of the path decorated surface.

\section{Preliminaries}
\label{sec::preliminaries}

In this section, we will review some preliminary facts about random processes and quantum surfaces.  We will begin in Section~\ref{subsec::bessel_levy} with a short review of Bessel and stable L\'evy processes.  Next, in Section~\ref{subsec::space_filling_sle} we will give a review of space-filling $\SLE_{\kappa'}$ as constructed in \cite{MS_IMAG4}.  Then in Section~\ref{subsec::surfaces} we will remind the reader of the various types of quantum surfaces which were constructed in \cite{dms2014mating} and are relevant for the present article.  Finally, in Section~\ref{subsec::maps}, we will record an elementary estimate for conformal maps which will be used when we perform cutting/gluing operations on quantum surfaces.

\subsection{Bessel and stable L\'evy processes}
\label{subsec::bessel_levy}

We will now collect a few facts about Bessel and stable L\'evy processes.  For the former, we refer the reader to \cite[Chapter~XI]{RY04} for a more detailed introduction; see also \cite[Section~3.2]{dms2014mating}.  For the latter, we refer the reader to \cite{bertoin96levy}.

\subsubsection{Bessel processes}
\label{subsubsec::bessel}

A {\bf Bessel process} $X_t$ of dimension $\delta \in \R$, denoted by $\bes^\delta$, is described by the SDE
\begin{equation}
\label{eqn::bessel_evolution}
dX_t = \frac{\delta-1}{2} \cdot \frac{1}{X_t} dt + dB_t,\quad X_0 \geq 0
\end{equation}
where $B$ is a standard Brownian.  Standard results for SDEs imply that a unique strong solution to~\eqref{eqn::bessel_evolution} exists up until the first time $t$ that $X_t \leq 0$ for all $\delta \in \R$.  For $\delta \geq 2$, a $\bes^\delta$ a.s.\ does not hit $0$ (except possibly at its starting point) while for $\delta < 2$, a $\bes^\delta$ a.s.\ hits $0$ in finite time.  This can be seen by observing that $X_t^{2-\delta}$ is a continuous local martingale.  For $\delta \in (1,2)$, a $\bes^\delta$ process can be defined for all times, is instantaneously reflecting at $0$, is a semimartingale, and satisfies the integrated form of~\eqref{eqn::bessel_evolution}.  For $\delta \in (0,1]$, there is also a unique way of defining a $\bes^\delta$ for all times which is instantaneously reflecting at $0$.  In this case, the process is not a semimartingale and satisfies~\eqref{eqn::bessel_evolution} only in those intervals in which it is not hitting $0$.  In order to make sense of the integrated version of~\eqref{eqn::bessel_evolution} in this case, it is necessary to introduce a principal value correction.

For $\delta \in (0,2)$, one can use It\^o excursion theory to decompose a $\bes^\delta$ into its excursions from $0$.  In order to describe this, we let $\CE_h$ be the set of continuous functions $\phi \colon [0,h] \to \R$ and $\CE = \cup_{h > 0} \CE_h$.  We then let $\nu_\delta^\bes$ be the (infinite) measure on $\CE$ which can be sampled from by:
\begin{itemize}
\item Picking a sample $t$ from the measure $c_\delta t^{\delta/2-2} dt$ where $dt$ denotes Lebesgue measure on $\R_+$ and $c_\delta > 0$ is a constant.
\item Given $t$, picking an excursion of a $\bes^\delta$ from $0$ to $0$ of length $t$.
\end{itemize}
Note that a Bessel process can be constructed as a chain of excursions: precisely, a sample from the law of a $\bes^\delta$ can be produced by first picking a Poisson point process (\ppp)\ $\Lambda$ with intensity measure $du \otimes \nu_\delta^\bes$ on $\R_+ \otimes \CE$, where $du$ is Lebesgue measure on $\R_+$, and then concatenating together the second component of the elements $(u,e)$ of $\Lambda$ with $(u,e)$ coming before $(u',e')$ if and only if $u < u'$.

One can also generate a \ppp\ $\Lambda$ with intensity measure $du \otimes \nu_\delta^\bes$ when $\delta \leq 0$.  In this case, however, it is not possible to string together the excursions chronologically to form a continuous process.  As explained in the introduction, the excursion measure associated with a $\bes^\delta$ is the starting point for the construction of the unit area quantum sphere given in \cite{dms2014mating}.

Another way to generate a Bessel process is by exponentiating a Brownian motion with linear drift and then reparameterizing it to have quadratic variation $dt$.  Namely, if $X_t = B_t + a t$ where $B$ is a standard Brownian motion, then the process which arises by setting $Z_t = e^{X_t}$ and then changing time so that $d\langle Z \rangle_t = dt$ is a $\bes^\delta$ with $\delta = 2+2a$ (stopped at the first time that it hits $0$).  Conversely, if $Z$ is a $\bes^\delta$, then $X_t = \log Z_t$ reparameterized to have quadratic variation $d \langle X \rangle_t = dt$ is a standard Brownian motion with linear drift $at$ with $a =(\delta-2)/2$.  (See \cite[Chapter~XI]{RY04} or \cite[Proposition~3.4]{dms2014mating}.)

\subsubsection{Stable L\'evy processes}
\label{subsubsec::stable_levy}

Fix $\alpha \in (0,2)$ and recall that a L\'evy process $X$ is said to be {\bf $\alpha$-stable} if for each $u > 0$ fixed it has the property that $(t \mapsto u^{-1/\alpha} X_{u t}) \stackrel{d}{=} (X_t)$.

Suppose that $X$ is an $\alpha$-stable process with $\alpha \in (1,2)$ with only positive jumps.  Let $I_t = \inf\{X_s : s \in [0,t]\}$ be the running infimum of $X$.  Then the process $X-I$ can be decomposed into a Poissonian collection of excursions from $0$ \cite[Chapter~VIII.4]{bertoin96levy}.  Let $\lexcursion$ be the measure which is sampled from using the following steps:
\begin{itemize}
\item Pick a lifetime $t$ from the measure $c_\alpha t^{\rho-2} dt$ where $\rho = 1-1/\alpha$ is the positivity parameter of the process \cite[Chapter~VIII.1]{bertoin96levy}, $dt$ denotes Lebesgue measure on $\R_+$, and $c_\alpha > 0$ is a constant.
\item Given $t$, pick a sample from the normalized excursion measure of an $\alpha$-stable L\'evy process and then rescale it spatially and in time so that it has length $t$.
\end{itemize}

One can then produce a sample from the law of the process $X - I$ by sampling a \ppp\ $\Lambda$ with intensity measure $du \otimes d\lexcursion$, with $du$ given by Lebesgue measure on $\R_+$, and then concatenating the second component of the elements $(u,e) \in \Lambda$ where $(u,e)$ comes before $(u',e')$ if and only if $u < u'$.

The collection of jumps made by $X$ up to a given time $T$ also has a Poissonian structure.  Namely, if $\Lambda$ is a \ppp\ on $[0,T] \times \R_+$ sampled with intensity measure $dt \otimes \wt{c}_\alpha u^{-1-\alpha} du$ where $dt$ denotes Lebesgue measure on $[0,T]$, $du$ denotes Lebesgue measure on $\R_+$, and $\wt{c}_\alpha > 0$ is a constant, then the elements $(t,u)$ are in correspondence with the jumps made by $X$ up to time $T$ where $t$ gives the time at which the jump occurred and $u$ gives the size of the jump.

We finish by recording one final useful fact about $\alpha$-stable L\'evy processes with only positive jumps.  Suppose that $X_t$ is an $\alpha$-stable process with only positive jumps, $X_0 > 0$, and let $\tau = \inf\{t \geq 0 : X_t = 0\}$.  Then the time-reversal $X_{t-\tau}$ has the law of an $\alpha$-stable process with only negative jumps conditioned to be non-negative and stopped at the last time that it hits $X_0$ \cite[Chapter~VII, Theorem~18]{bertoin96levy}.

\subsection{Space-filling $\SLE$}
\label{subsec::space_filling_sle}

Fix $\kappa' > 4$.  In this section, we will recall the construction of {\bf space-filling $\SLE_{\kappa'}$} from \cite{MS_IMAG4} and also explain how it is related to chordal and radial $\SLE_{\kappa'}$.  Roughly speaking, space-filling $\SLE_{\kappa'}$ for $\kappa' \in (4,8)$ is an $\SLE$ process which iteratively fills up the bubbles as it cuts them off from $\infty$ (or its target point) and for $\kappa' > 8$ it is the same as ordinary $\SLE_{\kappa'}$.  The particular variant of space-filling $\SLE_{\kappa'}$ which is most important for this article is the version which is defined on $\C$ and is an infinite path from $\infty$ back to itself.  The starting point for its construction in \cite{MS_IMAG4} is a whole-plane GFF $h$ with values defined up to a global multiple of $2\pi \chi$ where
\[ \chi = \frac{2}{\sqrt{\kappa}} - \frac{\sqrt{\kappa}}{2} \quad\text{and}\quad \kappa = \frac{16}{\kappa'} \in (0,4).\]
It is shown in \cite{MS_IMAG4} that it is possible to construct the flow lines of the formal vector field $e^{i h / \chi}$ (this only requires us to have defined the field values modulo a global multiple of $2\pi \chi$).  For $z \in \C$, the flow line $\eta_z$ of $h$ from $z$ to $\infty$ is a whole-plane $\SLE_\kappa(2-\kappa)$ process from $z$ to $\infty$.  More generally, for $\theta \in \R$ we can define the flow line $\eta_z^\theta$ of $h$ from $z$ to $\infty$ as the flow line of $h+ \theta \chi$ and $\eta_z^\theta$ is a whole-plane $\SLE_\kappa(2-\kappa)$  process from $z$ to $\infty$.  If $\wt{\theta} = \theta + 2\pi \chi$ then $\eta_z^\theta = \eta_z^{\wt{\theta}}$ but otherwise $\eta_z^\theta$, $\eta_z^{\wt{\theta}}$ are distinct paths.  In other words, we have a $2\pi$ range of angles for flow lines starting from each $z \in \C$.  It is shown in \cite[Theorem~1.9]{MS_IMAG4} that for $z,w \in \C$ distinct we a.s.\ have that the flow lines $\eta_z$, $\eta_w$ starting from $z,w$, respectively, a.s.\ merge with each other and do not subsequently separate.  The same is also true for the $\eta_z^\theta$, $\eta_w^\theta$ for each value of $\theta$.  This means that we can use the flow lines to define a space-filling tree and a space-filling dual tree.

	More precisely, for each $z \in \C$ we let $\eta_z^L$ (resp.\ $\eta_z^R$) be the flow line of $h$ starting from $z$ with angle $\tfrac{\pi}{2}$ (resp.\ $-\tfrac{\pi}{2}$).  The paths $\eta_z^L$ are the branches of the tree and the paths $\eta_z^R$ are branches of the dual tree.  Given a countable dense set $(z_n)$ of $\C$, we can define an ordering on the $z_n$ by saying that $z_n$ comes before $z_m$ if $\eta_{z_n}^L$ merges with $\eta_{z_m}^L$ on its right side.  This turns out to be equivalent to  $\eta_{z_n}^R$ merging with $\eta_{z_m}^R$ on its left side.  Whole-plane space-filling $\SLE_{\kappa'}$ from $\infty$ to $\infty$ is a continuous, non-self-tracing and non-self-crossing path $\eta'$ which fills all of $\C$ and visits the points of $(z_n)$ according to the above order.  It is not difficult to see from the construction that the resulting path is a.s.\ the same for any two fixed choices of countable dense sets.  Space-filling $\SLE_{\kappa'}$ can thus be thought of as the peano curve which traces between the tree and dual tree defined above.

	If $z \in \C$ is fixed, then we can consider $\eta'$ targeted at $z$, which means that we parameterize the path by capacity as seen from $z$.  Call this path $\wt{\eta}_z'$.  In other words, $\wt{\eta}_z'$ does not fill in the bubbles that it disconnects from $z$.  It turns out that the law of $\wt{\eta}_z'$ is that of a whole-plane $\SLE_{\kappa'}(\kappa'-6)$ process from $\infty$ to $z$ and is the counterflow line of $h$ from $\infty$ to $z$.  For different points $z,w$, the paths $\wt{\eta}_z'$, $\wt{\eta}_w'$ agree with each other until $z$ and $w$ are separated and afterwards their evolution continues independently.

\subsection{Quantum cones and disks}
\label{subsec::surfaces}

We are now going to give a brief overview of the types of quantum surfaces which will be important for this article.  We refer the reader to \cite[Sections~1 and~4]{dms2014mating} for a much more detailed introduction and motivation for the definitions we give here.

Throughout, we let $\strip = \R \times [0,\pi]$ and $\strip_\pm = \R_\pm \times [0,\pi]$.  We also let $\cyl = \R \times [0,2\pi]$ and $\cyl_\pm = \R_\pm \times [0,2\pi]$ with the top and bottom identified in both cases.  When $X \in \{ \strip, \strip_\pm, \cyl, \cyl_\pm\}$, we let $\CH_1(X)$ be the subspace of $H(X)$ consisting of those functions which are constant on vertical lines and let $\CH_2(X)$ be the subspace of $H(X)$ consisting of those functions which have mean zero on vertical lines.  As explained in the introduction (see also \cite[Lemma~4.3]{dms2014mating}), we have that $\CH_1(X) \oplus \CH_2(X)$ gives an orthogonal decomposition of $H(X)$.

\subsubsection{Quantum cones}
\label{subsubsec::quantum_cones}

Fix $\alpha < Q$.  An $\alpha$-quantum cone $\CC = (\cyl,h,+\infty,-\infty)$ is the quantum surface whose law can be sampled from using the following steps:
\begin{itemize}
\item Take the projection of $h$ onto $\CH_1(\cyl)$ to be given by $2\gamma^{-1} \log Z$ parameterized to have quadratic variation $du$ where $Z$ is the time-reversal of a $\bes^\delta$ with $\delta = 2 + \tfrac{4}{\gamma}(Q-\alpha) > 2$ starting from $0$.  This determines the projection of $h$ onto $\CH_1(\cyl)$ up to horizontal translation; we can fix the horizontal translation by taking it so that the projection first hits $0$ at $u=0$.
\item Sample the projection of~$h$ onto $\CH_2(\cyl)$ independently from the law of the corresponding projection of a GFF on~$\cyl$.
\end{itemize}
In many instances, it is also natural to parameterize a quantum cone by~$\C$ rather than~$\cyl$.  For the purposes of this article, however, we will always parameterize our cones by~$\cyl$.

The reason that we take the time-reversal of $Z$ in place of $Z$ itself is so that every neighborhood of $-\infty$ a.s.\ contains an infinite amount of quantum area and every sufficiently small neighborhood of $+\infty$ (i.e.,\ bounded away from $-\infty$) a.s.\ contains a finite amount of quantum area; at times it will be useful to explore from the ``infinite area'' end to the ``finite area'' end of the cylinder, so it is mildly more convenient to orient time that way.

As explained in \cite{dms2014mating}, it is natural to explore a $\gamma$-quantum cone with an independent space-filling $\SLE_{\kappa'}$ process.  When parameterized by $\cyl$ as above, we take $\eta'$ to be a space-filling $\SLE_{\kappa'}$ from $-\infty$ to $-\infty$ sampled independently of $h$ and then reparameterized by $\gamma$-LQG area with time normalized so that $\eta'(0) = +\infty$.  \cite[Theorem~1.9]{dms2014mating} implies that the processes $(L,R)$ which describe the change in the quantum length of the left and right boundaries of $\eta'$ relative to time $0$ (i.e., $L_0 = R_0 = 0$) evolve as a pair of correlated Brownian motions with covariance as in~\eqref{eqn::l_r_cov} (the covariance for $\gamma \in (0,\sqrt{2})$ is identified in \cite{ghms2015correlation}) and \cite[Theorem~1.11]{dms2014mating} implies that $(L,R)$ a.s.\ determines both the quantum cone and $\eta'$, up to a rotation and translation (i.e.,\ conformal transformation of $\cyl$ which fix $\pm \infty$).

By \cite[Corollary~10.2]{dms2014mating}, it is also natural to explore an $\alpha$-quantum cone,
\begin{equation}
\label{eqn::alpha_gamma_value}
\alpha = \frac{\gamma^2+8}{4\gamma},
\end{equation}
with an independent whole-plane $\SLE_{\kappa'}$ process $\wt{\eta}'$ from $+\infty$ to $-\infty$ when $\gamma \in (\sqrt{2},2)$ so that $\kappa' \in (4,8)$.  When $\wt{\eta}'$ is parameterized by quantum natural time, the quantum length $X$ of its outer boundary evolves as a $\kappa'/4$-stable process with only downward jumps starting from $0$ and conditioned to be non-negative.  Given the realization of $X$, the regions cut out are conditionally independent quantum disks (a type of finite-volume surface described in the next subsection) whose boundary lengths are given by the jump sizes of $X$.  Moreover, by \cite[Theorem~1.17]{dms2014mating} the quantum cone is a.s.\ determined by $X$, the quantum disks, their orientation (whether or not they are surrounded on the left or right side of $\wt{\eta}'$), and the marked boundary point on each which corresponds to the first (resp.\ equivalently last) point visited by $\wt{\eta}'$.  The value $\gamma=\sqrt{8/3}$ (corresponding to $\kappa'=6$) is special because it is the unique positive solution to $(\gamma^2+8)/(4\gamma) = \gamma$.  More generally, by \cite[Theorem~1.17]{dms2014mating} it is natural to explore a $\gamma$-quantum cone with an independent whole-plane $\SLE_{\kappa'}(\kappa'-6)$ process $\wt{\eta}'$ from $-\infty$ to $+\infty$ though for $\kappa' \neq 6$ the process which gives the quantum length of the outer boundary of $\wt{\eta}'$ is more complicated to describe.

\subsubsection{Quantum disks}
\label{subsubsec::quantum_disks}

As in the case of the unit area quantum sphere described in the introduction, the starting point for the construction of the unit boundary length quantum disk is an infinite measure on quantum surfaces which is derived from the (infinite) excursion measure for a certain Bessel process.  As in \cite{dms2014mating}, we will take our quantum disks to be parameterized by $\strip$ (with ``marked'' points at the two endpoints).  A natural infinite measure $\bdisk$ on quantum disks with two marked boundary points can be sampled from by:
\begin{itemize}
\item Taking the projection $h$ onto $\CH_1(\strip)$ to be given by $2 \gamma^{-1} \log Z$ where $Z$ is sampled from the excursion measure of a Bessel process of dimension $3-\tfrac{4}{\gamma^2}$ parameterized to have quadratic variation $2du$.
\item Sampling the projection of $h$ onto $\CH_2(\strip)$ from the law of the corresponding projection of a free boundary GFF on $\strip$.
\end{itemize}
The {\bf unit boundary length quantum disk} is the law on quantum surfaces that one gets by sampling from the measure $\bdisk$ conditioned to have quantum boundary length equal to $1$.

As in the case of $\gamma$-quantum cones, it is shown in \cite{dms2014mating} that it is also natural to explore a unit boundary length quantum disk using a space-filling $\SLE_{\kappa'}$ process.  We are going to give a precise statement of this result below for the convenience of the reader.  Before we do so, we first need to remind the reader of the definition of so-called $\pi/2$-cone times and excursions.  Suppose that $Z = (X,Y)$ is a continuous process.  Then a time $t$ is said to be a {\bf $\pi/2$-cone time} for $Z$ if there exists $h > 0$ such that $Z_s \geq Z_t$ (i.e., this inequality holds coordinate-wise) for all $s \in [t,t+h]$.  We call the restriction of $Z$ to an interval of time $[s,t]$ a {\bf $\pi/2$-cone excursion} if it has the property that $Z_r \geq Z_s$ for all $r \in [s,t]$ and $X_s = X_t$ or $Y_s = Y_t$.  We refer to the quantity $\max(X_t - X_s, Y_t - Y_s)$ as the {\bf terminal displacement} of the $\pi/2$-cone excursion.  In the case that $Z$ is a two-dimensional Brownian motion, it is not difficult to see that it is possible to represent $Z$ as a Poissonian collection of $\pi/2$-cone excursions sampled using a certain infinite measure on $\pi/2$-cone excursions (this is essentially carried out in the proof of \cite[Proposition~10.3]{dms2014mating}).  We will give a direct construction of the law of a $\pi/2$-cone excursion for a correlated Brownian motion of either unit length or terminal displacement in Section~\ref{sec::brownian_excursions}.

\begin{theorem}
\label{thm::disk_explore}
Fix $\gamma \in [\sqrt{2},2)$ and suppose that $(\strip,h,-\infty,+\infty)$ is a unit boundary length quantum disk.  Let $\eta'$ be a space-filling $\SLE_{\kappa'}$ process from $-\infty$ to $-\infty$ sampled independently of $h$ and then reparameterized by quantum area.  Let $L_t$ (resp.\ $R_t$) denote the length of the left (resp.\ right) side of $\eta'([0,t])$.  Then $(L,R)$ evolves as a $\pi/2$-cone excursion with terminal displacement $1$ of a two-dimensional Brownian motion with covariance as in~\eqref{eqn::l_r_cov}.  Moreover, $(L,R)$ a.s.\ determines both~$h$ and $\eta'$,  up to a conformal transformation of $\strip$ which fixes $-\infty$.
\end{theorem}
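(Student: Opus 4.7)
The plan is to deduce Theorem~\ref{thm::disk_explore} from the infinite-volume mating-of-trees statements of \cite{dms2014mating} by combining the chain-of-beads description of a thin quantum wedge with an It\^o excursion decomposition of the correlated Brownian motion associated to an ambient thick quantum cone or wedge.

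Concretely, I would start from a thin quantum wedge of the weight whose Bessel excursion chain decomposition, described in \cite[Section~4]{dms2014mating}, produces a Poissonian chain of surfaces each sampled from $\bdisk$ and glued at their two marked boundary points. By choosing the weight correctly, I can realize this thin wedge as a region cut out by a space-filling $\SLE_{\kappa'}$ on an ambient thick quantum cone or wedge to which \cite[Theorems~1.13 and~1.14]{dms2014mating} apply; the left/right boundary-length process of the space-filling curve on the ambient surface is then a correlated two-dimensional Brownian motion with covariance as in~\eqref{eqn::l_r_cov}, and the pinch points separating the beads correspond exactly to the cone times of this Brownian motion. Standard It\^o excursion theory decomposes the Brownian trajectory between cone times into a Poissonian collection of $\pi/2$-cone excursions sampled from a certain $\sigma$-finite reference measure, and matching this Poissonian structure with the chain-of-beads decomposition identifies the $\pi/2$-cone excursion measure, up to a universal multiplicative constant, with the image of $\bdisk$ under the map that reads off the left/right boundary-length processes of an independent space-filling $\SLE_{\kappa'}$. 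Conditioning both sides on quantum area (respectively terminal displacement, which corresponds to quantum boundary length) equal to $1$ then gives the two cases of the theorem, and the almost-sure measurability of the path-decorated disk with respect to $(L,R)$ is inherited from the analogous measurability on the ambient cone or wedge supplied by \cite[Theorem~1.14]{dms2014mating}.

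The main obstacle will be the quantitative matching of the two Poissonian structures. One needs to verify that the $\pi/2$-cone excursion reference measure of a Brownian motion with covariance~\eqref{eqn::l_r_cov}, when disintegrated by lifetime, has lifetime law proportional to the $t^{\delta/2-2}\,dt$ weight built into $\bdisk$ through the Bessel excursion measure $\nu_\delta^\bes$, and that the two marked boundary points of a bead sampled from $\bdisk$ are identified with the temporal start and end of the corresponding cone excursion. Once these identifications and the proportionality constant are pinned down, passing from the $\sigma$-finite matching to the stated conditional probability measures is routine, as is the restriction to $\gamma \in [\sqrt{2},2)$, where the covariance in~\eqref{eqn::l_r_cov} is non-negative and the $\pi/2$-cone excursion decomposition is well-posed.
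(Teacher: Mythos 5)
The paper does not actually prove Theorem~\ref{thm::disk_explore}: it is stated only as a recollection of a result from \cite{dms2014mating} (the outline says explicitly that ``this result already appeared in \cite{dms2014mating}''), so there is no in-paper proof to compare against. Your sketch is a faithful reconstruction of the strategy used there --- realizing the chain of beads of the weight $\gamma^2-2$ thin wedge inside an ambient surface to which the infinite-volume mating theorems apply, and matching the two Poissonian structures to identify the $\pi/2$-cone excursion measure with the push-forward of $\bdisk$. The one point you should not describe as ``standard It\^o excursion theory'' is the decomposition of the correlated Brownian motion into $\pi/2$-cone excursions: there is no local time at a point for planar Brownian motion, the cone times form a fractal set and the excursions nest, so the Poissonian decomposition requires the separate argument that the paper itself attributes to \cite[Proposition~12.3]{dms2014mating}; with that reference in place of the phrase, your outline is the intended proof.
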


\begin{remark}
\label{rem::disk_explore_condition_on_both_area_and_bl}
Theorem~\ref{thm::disk_explore} describes the behavior of $(L,R)$ when one conditions on the quantum boundary length of the quantum disk.  What happens when one conditions on both the quantum area and boundary length?  The process $(L,R)$ is a function of the quantum disk and the independent space-filling $\SLE_{\kappa'}$ process $\eta'$.  We know that if we fix the boundary length of the disk, then $(L,R)$ evolves as a $\pi/2$-cone excursion with terminal displacement given by the boundary length.  If we condition further on the quantum area of the disk, then this has the effect of fixing the length of the $\pi/2$-cone excursion.  Since this extra conditioning only depends on the quantum disk and not on $\eta'$, it follows that when we explore a quantum disk of a given quantum boundary length and area with an independent space-filling $\SLE_{\kappa'}$ process then the left/right boundary lengths evolve as a correlated Brownian excursion of the given length and terminal displacement.
\end{remark}

\subsection{Distortion estimate for conformal maps on the cylinder}
\label{subsec::maps}

In our proofs of Theorem~\ref{thm::sphere_equivalent_constructions} and Theorem~\ref{thm::pure_sphere_equivalent_constructions}, we will perform a number of ``cutting'' and ``gluing'' operations for quantum surfaces.  The following elementary estimates for conformal maps tell us how much these operations distort the embedding of the rest of the surface.  Recall that a set $K \subseteq \C$ is said to be a {\bf hull} if it is compact and $\wh{\C} \setminus K$ is simply connected, where $\wh{\C}$ denotes the Riemann sphere.  We begin with a restatement of \cite[Lemma~9.6]{dms2014mating}.

\begin{lemma}
\label{lem::conformal_whole_plane_hulls}
There exist constants $C_1,C_2 > 0$ such that the following is true.  Let $K_1 \subseteq \C$ be a hull of diameter at most $r$ and $K_2 \subseteq \C$ another hull such that there exists a conformal map $F \colon \C \setminus K_1 \to \C \setminus K_2$ with $|F(z)-z| \to 0$ as $z \to \infty$.  Then whenever $\dist(z,K_1) \geq C_1 r$ we have that
\[ | F(z) - z| \leq C_2 r^2 |z-b_1|^{-1}\]
where $b_1$ is the harmonic center of $K_1$.  That is, if $F_1 \colon \C \setminus \ol{\D} \to \C \setminus K_1$ is the unique conformal map fixing $\infty$ and with positive derivative at $\infty$ then $b_1$ is equal to the average of $F$ on $\partial B(0,r)$ for any $r > 1$.  (It is elementary to check that this definition does not depend on the choice of $r$.)
\end{lemma}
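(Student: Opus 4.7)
The plan is to reduce the estimate to a standard application of the area theorem for univalent maps of $\C \setminus \ol{\D}$. Let $F_1 : \C \setminus \ol{\D} \to \C \setminus K_1$ be the uniformizing map with Laurent expansion at infinity
\[
F_1(w) = r_1 w + \alpha_0^{(1)} + \sum_{n\geq 1} \alpha_n^{(1)} w^{-n},
\]
where $r_1 > 0$ is the outer conformal radius and $b_1 = \alpha_0^{(1)}$ is the harmonic center by definition. Define $F_2$ analogously, so that $F = F_2 \circ F_1^{-1}$. First I would examine the behavior of $F_2(w) - F_1(w)$ as $w \to \infty$: since $F(z) - z \to 0$ as $z \to \infty$ and $F_1(w) \to \infty$ as $|w| \to \infty$, matching the leading and constant terms forces $r_1 = r_2 =: r_0$ and $\alpha_0^{(1)} = \alpha_0^{(2)} = b_1$. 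Thus the two expansions share the ``affine part'' $r_0 w + b_1$, and differ only in the negative-power coefficients.

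Next, I would apply the area theorem to the normalized map $G_i(w) := (F_i(w) - b_1)/r_0$, which is univalent on $\C \setminus \ol{\D}$ with expansion $w + \sum_{n\geq 1} (\alpha_n^{(i)}/r_0) w^{-n}$. This yields $\sum_{n \geq 1} n |\alpha_n^{(i)}|^2 \leq r_0^2$, so in particular $|\alpha_n^{(i)}| \leq r_0$ for each $n \geq 1$. Consequently, for $|w| > 1$,
\[
|F_2(w) - F_1(w)| = \Bigl|\sum_{n \geq 1} (\alpha_n^{(2)} - \alpha_n^{(1)}) w^{-n}\Bigr| \leq \frac{2r_0}{|w|-1}.
\]
Two standard geometric inputs complete the dictionary. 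First, the outer conformal radius satisfies $r_0 \leq \diam(K_1) \leq r$ (via Koebe's $1/4$-theorem applied externally), and the harmonic center satisfies $\dist(b_1,K_1) \leq r_0 \leq r$. Second, the same estimate applied to $F_1$ itself gives $|F_1(w) - (r_0 w + b_1)| \leq r_0/(|w|-1)$, so for $|w| \geq 2$ we have the two-sided comparison $\tfrac12 r_0 |w| \leq |z - b_1| \leq 2 r_0 |w|$ where $z = F_1(w)$.

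The last step is to translate the hypothesis $\dist(z, K_1) \geq C_1 r$ into a lower bound on $|w|$. Using $\dist(b_1, K_1) \leq r$ and the triangle inequality, $|z-b_1| \geq (C_1-1) r \geq (C_1-1) r_0$; choosing $C_1$ sufficiently large (say $C_1 = 5$) guarantees $|z - b_1| \geq 4 r_0$ and hence $|w| \geq 2$ via the comparison above. Plugging back in,
\[
|F(z) - z| = |F_2(w) - F_1(w)| \leq \frac{2 r_0}{|w|-1} \leq \frac{4 r_0}{|w|} \leq \frac{8 r_0^2}{|z-b_1|} \leq \frac{8 r^2}{|z-b_1|},
\]
which gives the desired inequality with $C_2 = 8$. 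I do not expect any serious obstacle in this argument; the only point that warrants care is verifying the matching of the $r_0 w + b_1$ term, which is forced by the normalization $|F(z)-z| \to 0$, and checking the geometric bound $\dist(b_1,K_1) \leq r_0$ which is a direct consequence of the area theorem applied to the constant term.
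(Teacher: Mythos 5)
Your argument is correct, and it is essentially the canonical proof of this estimate. Note that the paper itself does not prove this lemma at all: it is stated as a verbatim restatement of \cite[Lemma~10.5]{dms2014mating}, so there is no internal proof to compare against; what you have written is, in substance, the standard area-theorem argument that the cited lemma rests on (expand both exterior uniformizing maps in Laurent series, match the affine parts using the hydrodynamic normalization, bound each coefficient by the common conformal radius $r_0$, and sum the tail). Two small points deserve tightening. First, a priori $F = F_2\circ(e^{i\theta}F_1^{-1})$ for some rotation $e^{i\theta}$; the normalization $|F(z)-z|\to 0$ forces the derivative at infinity $e^{i\theta}r_2/r_1$ to equal $1$, which kills the rotation and gives $r_1=r_2$ — worth saying explicitly rather than writing $F=F_2\circ F_1^{-1}$ by fiat. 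Second, the step ``$|z-b_1|\ge 4r_0$ hence $|w|\ge 2$'' does not follow directly from the two-sided comparison, since that comparison was derived under the hypothesis $|w|\ge 2$; instead observe that $K_1\cup F_1(\{1<|w|\le 2\})$ is bounded with boundary contained in $\ol{B(b_1,3r_0)}$ (using $K_1\subseteq \ol{B(b_1,2r_0)}$ and $|F_1(w)-b_1|\le 2r_0+r_0$ on $|w|=2$), hence is itself contained in $\ol{B(b_1,3r_0)}$; equivalently, apply the area-theorem containment to $w\mapsto F_1(2w)$. With these repairs the proof is complete.
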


We say that a set $K \subseteq \cyl$ is a hull if the image of $K$ under the map $\cyl \to \C$ given by $z \mapsto e^{z}$ is a hull as defined above.  We are now going to use Lemma~\ref{lem::conformal_whole_plane_hulls} to deduce a similar estimate in the setting of hulls in $\cyl$.  As in Section~\ref{subsec::surfaces}, we let $\cyl_- = \{ z \in \cyl : \re(z) \leq 0\}$ and $\cyl_+ = \{z \in \cyl : \re(z) \geq 0\}$.

\begin{lemma}
\label{lem::conformal_cylinder_hulls}
There exist constants $C_1,C_2 > 0$ such that the following is true.  Suppose that $K_1 \subseteq \cyl_-$ is a hull and $K_2 \subseteq \cyl$ is another hull such that there exists a conformal map $F \colon \cyl \setminus K_1 \to \cyl \setminus K_2$ with $|F(z)-z| \to 0$ as $z \to +\infty$.  Then we have that
\begin{equation}
\label{eqn::f_w_w_bound}
|F(w) - w| \leq C_2\exp(-\re(w)) \quad\text{for all}\quad w \in \cyl_+ + C_1.
\end{equation}
\end{lemma}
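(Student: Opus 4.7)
The plan is to transfer the problem to the plane via $\Phi(z) = e^z$, invoke Lemma~\ref{lem::conformal_whole_plane_hulls} there, and pull the bound back. Let $\wt{K}_i$ be the closure in $\C$ of $\Phi(K_i)$ for $i=1,2$, so that each $\wt{K}_i$ is a hull in $\C$ and the hypothesis $K_1 \subseteq \cyl_-$ gives $\wt{K}_1 \subseteq \ol{\D}$. The induced map $\wt{F}(W) := \exp(F(\log W))$ is then a conformal equivalence $\C \setminus \wt{K}_1 \to \C \setminus \wt{K}_2$. Writing $F(z) = z + g(z)$ with $g$ holomorphic and $2\pi i$-periodic for $\re(z)$ large, the condition $g(z) \to 0$ at $+\infty$ yields a Fourier expansion $g(z) = \sum_{k \geq 1} b_k e^{-kz}$, which translates under $\Phi$ into
\[
\wt{F}(W) = W + b + O(1/W) \qquad \text{as } |W| \to \infty, \qquad b := b_1 \in \C.
\]

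The key step, and the main obstacle, is to bound $|b|$ by a universal constant. For this I would consider the inversion $G(w) := 1/\wt{F}(1/w)$. Since $\wt{K}_1 \subseteq \ol{\D}$, the map $\wt{F}$ is univalent on $\{|W| > 1\}$; since $0 \in \wt{K}_2$, the map $\wt{F}$ never vanishes there. Hence $G$ is univalent on $\D \setminus \{0\}$ and extends holomorphically across the origin with $G(0) = 0$, and a direct computation from the expansion of $\wt{F}$ gives $G(w) = w - b w^2 + O(w^3)$ near the origin, so that $G'(0) = 1$ and the second Taylor coefficient equals $-b$. Bieberbach's inequality $|a_2| \leq 2$ for univalent functions on $\D$ normalized to fix $0$ with unit derivative then forces $|b| \leq 2$. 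It is precisely here that the hypothesis $K_1 \subseteq \cyl_-$ is essential; without the confinement $\wt{K}_1 \subseteq \ol{\D}$ the inversion trick would not produce a univalent function on the full disk and the resulting $C_2$ would fail to be universal.

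With $|b|$ universally bounded, the remainder is routine. The shifted map $\wt{F}(W) - b$ is a hydrodynamically normalized conformal equivalence from $\C \setminus \wt{K}_1$ to $\C \setminus (\wt{K}_2 - b)$, and since $\diam(\wt{K}_1) \leq 2$ and the harmonic center of $\wt{K}_1$ is universally bounded, Lemma~\ref{lem::conformal_whole_plane_hulls} yields $|\wt{F}(W) - W - b| \leq C/|W|$ for $|W|$ exceeding a universal constant. Setting $W = e^w$ and writing $\delta(W) := \wt{F}(W) - W - b$ (so $|\delta(W)| \leq C/|W|$), one picks $C_1$ large enough that $|(b + \delta(W))/W| \leq 1/2$ whenever $\re(w) \geq C_1$; this is possible because $|b| \leq 2$. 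The identity $F(w) - w = \log(1 + (b + \delta(W))/W)$ combined with the elementary estimate $|\log(1+u)| \leq 2|u|$ for $|u| \leq 1/2$ then yields $|F(w) - w| \leq 2(|b| + C/|W|)/|W| \leq C_2 e^{-\re(w)}$, as desired.
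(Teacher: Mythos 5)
Your proof is correct and follows essentially the same route as the paper's: conjugate by $z \mapsto e^z$, apply Lemma~\ref{lem::conformal_whole_plane_hulls} to the induced planar map, and take logarithms at the end. The one point where you are more careful than the paper is in isolating the translation constant $b=b_1$ and bounding it via Bieberbach's inequality: the paper simply asserts that the exponentiated map $G$ satisfies $|G(z)-z|\to 0$ at infinity (which in general holds only modulo this constant) before concluding the uniform bound $|G(z)-z|\le \wt{C}_2$, so your extra step fills a small gap rather than constituting a different method.
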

\begin{proof}
Let $G(z) = \exp(F(\log(z)))$.  Then $G$ is a conformal transformation of $\C \setminus \wt{K}_1$ to $\C \setminus \wt{K}_2$ where $\wt{K}_i = \exp(K_i)$ for $i=1,2$ with $|G(z) -z| \to 0$ as $z \to \infty$.  Note that $\diam(\wt{K}_1) \leq 1$ since $K_1 \subseteq \cyl_-$.  Consequently, Lemma~\ref{lem::conformal_whole_plane_hulls} implies that there exist constants $\wt{C}_1,\wt{C}_2 > 0$ such that $|G(z) - z| \leq \wt{C}_2$ whenever $|z| \geq \wt{C}_1$.  Suppose that $z \in \C$ with $|z| \geq \wt{C}_1$ and let $w = \log(z) \in \cyl$.  Then this implies that
\[ |\exp(F(w)) - \exp(w)| \leq \wt{C}_2.\]
Equivalently,
\begin{equation}
\label{eqn::equiv_bound}
|\exp(F(w)-w) -1| \leq \wt{C}_2 |\exp(-w)|.
\end{equation}
By the triangle inequality, this implies that
\[ \exp(\re(F(w)-w)) \leq \wt{C}_2 \exp(-\re(w)) + 1.\]
Taking logs of both sides, we get for a constant $C_2 > 0$ that
\begin{equation}
\label{eqn::re_ubd}\re(F(w)-w) \leq \log(\wt{C}_2 \exp(-\re(w))+1) \leq C_2 \exp(-\re(w)).
\end{equation}
Similarly, we also have that
\[ 1 \leq \wt{C}_2 \exp(-\re(w)) + \exp(\re(F(w)-w))\]
which implies that
\[ 1- \wt{C}_2 \exp(-\re(w)) \leq \exp(\re(F(w)-w)).\]
This gives us that, by possibly increasing the value of $C_2$, we have
\begin{equation}
\label{eqn::re_lbd}
\re(F(w)-w) \geq -C_2 \exp(-\re(w))
\end{equation}
and therefore combining~\eqref{eqn::re_ubd} and~\eqref{eqn::re_lbd} we have
\begin{equation}
\label{eqn::re_bound}
|\re(F(w) -w)| \leq C_2 \exp(-\re(w)).
\end{equation}
Inserting~\eqref{eqn::re_bound} into~\eqref{eqn::equiv_bound}, we see that by possibly increasing the value of $C_2$ we have
\begin{equation}
\label{eqn::im_bound}
|\exp(i \im(F(w)-w))-1| \leq C_2 \exp(-\re(w)).
\end{equation}
Combining~\eqref{eqn::re_bound} with~\eqref{eqn::im_bound} implies~\eqref{eqn::f_w_w_bound} with $C_1 = \log \wt{C}_1$.
\end{proof}

\section{Correlated Brownian loops and excursions}
\label{sec::brownian_excursions}

We are now going to give a rigorous construction of the correlated Brownian loop which was described just before the statement of Theorem~\ref{thm::sphere_equivalent_constructions}.  We will at the same time give the construction of the law of a $\pi/2$-cone excursion of length $1$ and given terminal displacement.  We will then show that the correlated Brownian loop can be constructed as the limit of a $\pi/2$-cone excursion of a correlated Brownian motion of length $1$ and terminal displacement tending to $0$.  This is natural in the context of relating quantum disks and spheres.

\begin{theorem}
\label{thm::brownian_excursion_measure_exists}
Fix $\alpha \in (-1,1)$ and $(x_1,y_1) \in \partial \R_+^2$.  There exists a unique law on pairs of continuous processes $Z = (X,Y)$ defined on $[0,1]$ with $X_0 = Y_0 = 0$ and $X_1 = x_1$, $Y_1 = y_1$ such that the following hold.
\begin{enumerate}[(i)]
\item $\pr{ X_t, Y_t > 0 } = 1$ for all $t \in (0,1)$.
\item For each $0 < s < t < 1$, the conditional law of $Z|_{[s,t]}$ given $Z|_{[0,s]}$ and $Z|_{[t,1]}$ is given by that of a two-dimensional Brownian motion $(A,B)$ on $[s,t]$ with $\var{A_u} = \var{B_u} = u-s$ and $\cov{A_u}{B_u} = \alpha (u-s)$ conditioned on $A_u,B_u \geq 0$ for all $u \in [s,t]$ and on $(A_s,B_s) = Z_s$ and $(A_t,B_t) = Z_t$.
\end{enumerate}
\end{theorem}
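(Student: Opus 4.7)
The plan is to construct the law via an explicit Doob $h$-transform after reducing to the standard setting of Brownian motion in a cone, and then to deduce uniqueness from the Markov-type property (ii).

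The reduction step is simple: the correlated two-dimensional Brownian motion of covariance specified in (ii) is a linear image of a standard planar Brownian motion, and the linear map sends $\R_+^2$ onto a cone $\CC_\theta \subset \C$ of opening angle $\theta = \arccos(-\alpha) \in (0,\pi)$ (in particular $\theta = \pi/2$ when $\alpha = 0$, consistent with the $\pi/2$-cone excursion picture recalled in Section~\ref{subsec::surfaces}). Working in the transformed coordinates, the task becomes to construct a standard planar Brownian bridge on $[0,1]$ from the apex of $\CC_\theta$ to the prescribed boundary point, conditioned to stay in $\CC_\theta$. For this I would use the Doob $h$-transform with the positive harmonic function $h(r,\phi) = r^{\pi/\theta}\sin(\pi\phi/\theta)$ (in polar coordinates based at the apex, with $\phi$ measured from one side of the cone), which vanishes on $\partial \CC_\theta \setminus \{0\}$. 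The $h$-transformed process is a well-defined Markov diffusion that can be started from the apex, stays in $\CC_\theta$ on $(0,\infty)$, and has transition density proportional to $h(y)\, p_t(x,y)$, with $p_t$ the Dirichlet heat kernel on $\CC_\theta$. The desired bridge is then obtained by further conditioning this $h$-process on its time-$1$ value, interpreting this conditioning at the degenerate boundary endpoint $(x_1, y_1)$ via an explicit limiting density.

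Property (i) is immediate from this construction, because the $h$-process avoids $\partial \CC_\theta$ on $(0,\infty)$; property (ii) follows from the Markov property of the $h$-process together with the general fact that conditioning a Markov process on its endpoint values and on staying in a domain over $[s,t]$ yields exactly the described bridge law. For uniqueness, (ii) implies that any law satisfying (i) and (ii) has the Markov property on every $[s,t] \subset (0,1)$ with the prescribed transition kernels, so its full law is determined by a single-time marginal at some $t_0 \in (0,1)$. That marginal is in turn pinned down by continuity together with the requirements $Z_0 = 0$ and $Z_1 = (x_1,y_1)$: letting $s \downarrow 0$ and $t \uparrow 1$ in (ii) forces the marginal to coincide with the one produced by the $h$-transform construction.

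The main technical obstacle is the degeneracy of the two endpoints — the apex of $\CC_\theta$, where $h$ vanishes, and (in the cases used elsewhere in the paper) the terminal point on $\partial \CC_\theta$, where $h$ also vanishes. Making the relevant limits rigorous requires sharp two-sided Dirichlet heat kernel estimates in $\CC_\theta$, of the schematic form $p_t(x,y) \asymp h(x) h(y) \, t^{-1-\pi/\theta}$ for $x,y$ near the apex, together with analogous boundary Harnack bounds near $\partial \CC_\theta$. The loop case $(x_1,y_1)=(0,0)$, which is exactly what is needed to build the Brownian loop appearing in Theorem~\ref{thm::sphere_equivalent_constructions}, combines both degeneracies simultaneously and is the most delicate; I would handle it by first proving the theorem for $(x_1,y_1) \in \partial \R_+^2 \setminus \{0\}$ and then taking a further weak limit as $(x_1,y_1) \to 0$ along $\partial \R_+^2$, using the same heat kernel estimates to justify tightness and identify the limit.
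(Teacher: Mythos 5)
Your argument is correct in outline, but it takes a genuinely different route from the paper's. For existence, the paper does not perform an explicit Doob $h$-transform: it quotes Shimura's construction \cite{shimura1985conebm} of the correlated Brownian motion conditioned to stay in $\R_+^2$ started from the origin (Lemma~\ref{lem::conditioned_process_exists}), then conditions that process on $\{\wh{Z}_1 \in B(w_1,\epsilon)\}$ and shows, via the Markov property, that the Radon--Nikodym derivatives $\wh{\CZ}_t^\epsilon$ are uniformly bounded, so a consistent subsequential weak limit exists; continuity at the terminal time is handled by time-reversal rather than by boundary heat-kernel asymptotics. Your $h$-transform in the cone of angle $\arccos(-\alpha)$ (the same linear reduction the paper uses in proving Proposition~\ref{prop::finitely_many_large_cone_excursions_short_displacement}) is a legitimate alternative, but it shifts the burden onto the two-sided Dirichlet heat kernel estimates near the apex and near $\partial\R_+^2$, which the paper sidesteps by citing Shimura. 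The divergence is larger for uniqueness: you send $s\downarrow 0$, $t\uparrow 1$ in (ii) and use continuity of the bridge laws at the degenerate endpoints to pin down the law, whereas the paper fixes $0<t_1<t_2<1$, runs the chain that alternately resamples $Z_{t_1}$ and $Z_{t_2}$ given their neighbours, and invokes positivity of the transition densities (Lemma~\ref{lem::transition_kernel_continuous}) together with the fact that distinct ergodic measures are mutually singular \cite{georgii2011gibbs}. Your limiting argument is valid, but be aware that its key input --- weak continuity of the quadrant-bridge law restricted to $[\epsilon,1-\epsilon]$ jointly in both endpoints up to and including the values $0$ and $(x_1,y_1)$ --- is precisely the nontrivial estimate (the analogue of Lemma~\ref{lem::conditioned_process_given_endpoint_exists}) and should not be treated as routine. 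In short, your approach buys explicitness (closed-form harmonic function and heat kernel) at the price of hard analytic estimates; the paper's buys softness at the price of quoting Shimura and a Gibbs-measure coupling device.
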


Throughout, we let $Z = (X,Y)$ be a two-dimensional Brownian motion with $\var{X_t} = \var{Y_t} = t$ and $\cov{X_t}{Y_t} = \alpha t$ for $\alpha \in (-1,1)$ fixed.  For $z \in \C$, we let $\p^z$ denote the law under which $Z_0 = z$ and let $\E^z$ be the corresponding expectation.  The first step in the proof of the existence component of Theorem~\ref{thm::brownian_excursion_measure_exists} is to prove the existence of a process taking values in $\R_+^2$ which corresponds to Brownian motion conditioned to stay in $\R_+^2$.  This process was constructed by Shimura in \cite{shimura1985conebm} and the next two lemmas can be found in \cite{shimura1985conebm}.

\newcommand{\stayinlaw}{\nu}

For each $\delta \geq 0$, we let $P_\delta$ be the event that $X_t,Y_t \geq -\delta$ for all $t \in [0,1]$.

\begin{lemma}
\label{lem::conditioned_process_exists}
There exists a law $\stayinlaw$ on continuous processes $W \colon [0,1] \to \R_+^2$ with $W_0 = 0$ such that:
\begin{enumerate}[(i)]
\item $\p[ W_t > 0] = 1$ for all $t \in (0,1)$,
\item For each $t \in (0,1)$, the conditional law of $W|_{[t,1]}$ given $W|_{[0,t]}$ is that of a Brownian motion $(A,B)$ on $[t,1]$ with $\var{A_u} = u-t$, $\var{B_u} = u-t$, and $\cov{A_u}{B_u} = \alpha (u-t)$ conditioned (on the positive probability event) to stay in $\R_+^2$ in $[t,1]$ and starting from $W_t$, and
\item With respect to the topology of uniform convergence, we have both
\begin{enumerate}[(a)]
\item\label{it::delta_to_zero_conv} The law of $Z|_{[0,1]}$ under $\prstart{0}{ \cdot \giv P_\delta}$ converges weakly to $\stayinlaw$ as $\delta \to 0$.
\item\label{it::z_to_zero_conv} The law of $Z|_{[0,1]}$ under $\prstart{z}{ \cdot \giv P_0}$ converges weakly to $\stayinlaw$ as $z \to 0$ in $\R_+^2$.
\end{enumerate}
\end{enumerate}
\end{lemma}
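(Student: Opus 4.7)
The plan is to reduce the statement to Shimura's construction \cite{shimura1985conebm} of planar Brownian motion conditioned to stay in a cone. If $M$ is a $2\times 2$ matrix with $MM^\top$ equal to the covariance of $(X_1,Y_1)$, then $B := M^{-1}Z$ is a standard planar Brownian motion and the event $\{Z_t \in \R_+^2 \text{ for all } t \in [0,1]\}$ corresponds to $B$ staying in the planar cone $\Lambda := M^{-1}(\R_+^2)$, whose opening angle $\theta = \theta(\alpha) \in (0,\pi)$ depends on $\alpha$. Since $\alpha \in (-1,1)$ the matrix $M$ is invertible and $\Lambda$ is a genuine planar cone, placing us squarely in Shimura's setting. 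The minimal positive harmonic function on $\Lambda$ vanishing on $\partial \Lambda$ has the explicit form $h(r,\phi) = r^{\pi/\theta}\sin(\pi\phi/\theta)$ in polar coordinates with apex at the origin.

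Next I would invoke Shimura's Doob $h$-transform construction: the $h$-process obtained from standard planar BM via this $h$, started at the apex, is a continuous strong Markov process that remains in $\Lambda$ and lies in the interior at every positive time. Transporting back by $M$ yields a candidate law $\nu$ on paths in $\R_+^2$; property (i) is inherited from the fact that $h > 0$ in the interior and the $h$-process is in the interior for all $t \in (0,1)$, while property (ii) follows from the Markov property of the $h$-process together with the observation that conjugation by $M$ maps cone-bridges of standard BM conditioned to stay in $\Lambda$ to positive-quadrant bridges of the correlated BM $Z$ conditioned to stay in $\R_+^2$.

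For the convergence in (iii)(a), the key inputs are Shimura's hitting estimate $\pr{A_\delta} \asymp \delta^{\pi/\theta}$ and the identification, for each $t > 0$, of the density of $Z_t$ under $\pr{\,\cdot \giv A_\delta}$ with the $h$-transform density up to errors that vanish as $\delta \to 0$. Combined with a uniform modulus of continuity estimate for $Z$ near $t = 0$ to control tightness, this yields weak convergence of $\pr{\,\cdot \giv A_\delta}$ to $\nu$. Part (iii)(b) is proved in the same way, with the role of $\delta \to 0$ played by $z \to 0$ inside $\R_+^2$, and the normalizing asymptotic $\prstart{z}{A_0} \asymp h(M^{-1}z)$ from Shimura replacing the above scaling estimate.

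The main technical obstacle I anticipate is that, under pull-back by $M$, the event $A_\delta$ does not correspond to $B$ staying in $\Lambda$ itself but rather to $B$ staying in a $\delta$-enlargement of $\Lambda$, whose boundary is only asymptotically straight near the apex. One must verify that this perturbation does not affect the scaling limit at the diffusive scale, either by a direct comparison of hitting probabilities between $\Lambda$ and its $\delta$-enlargement, or by re-running Shimura's $h$-transform argument with the perturbed boundary and checking that the limiting harmonic function is unchanged. Once this is addressed, uniqueness of $\nu$ is automatic from (ii) and the standard characterization of Markov processes by their transition kernels.
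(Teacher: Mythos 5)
Your proposal is correct and follows essentially the route the paper takes: the paper offers no proof of this lemma, citing Shimura \cite{shimura1985conebm} instead, and your sketch (linear change of variables sending the correlated Brownian motion to a standard one in a planar cone of angle $\arccos(-\alpha)$, Doob $h$-transform by the minimal positive harmonic function, and hitting asymptotics to get the convergence statements (iii)(a)--(b)) is precisely Shimura's construction. One simplification of your anticipated obstacle: under $M^{-1}$ the event $A_\delta$ corresponds to $B$ staying in an exact translate of the cone $\Lambda$ with apex at $M^{-1}(-\delta,-\delta)$, not a curved enlargement, so the perturbation you worry about reduces to the apex-translation/starting-point limit that Shimura already treats.
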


\begin{lemma}
\label{lem::transition_kernel_continuous}
For $0 \leq s \leq t \leq 1$, we let $p^{s,t}(z,w)$ be the transition density for the process constructed in Lemma~\ref{lem::conditioned_process_exists}.  Then $(s,t,z,w) \mapsto p^{s,t}(z,w)$ is bounded and continuous in $z,w \in \R_+^2$.  Moreover, $p^{s,t}(z,w) > 0$ for all $0 \leq s < t \leq 1$ and $z,w \in \R_+^2$.
\end{lemma}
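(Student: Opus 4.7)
The plan is to realise the conditioned process $W$ of Lemma~\ref{lem::conditioned_process_exists} as a Doob $h$-transform of the correlated Brownian motion $Z$ killed on $\partial \R_+^2$, and to read off the conclusions from an explicit formula for $p^{s,t}$. Let $q^{s,t}(z,w)$ denote the sub-probability transition density of $Z$ killed at $\partial \R_+^2$, and for $z \in \R_+^2$ and $u \in [0,1]$ set
\begin{equation*}
h(z,u) \;=\; \prstart{z}{Z_v \in \R_+^2 \text{ for all } v \in [0,1-u]}.
\end{equation*}
Applying property~(ii) of Lemma~\ref{lem::conditioned_process_exists}, or equivalently applying property~(iii)(b) to Brownian motion started from $z_0$ near the corner and conditioned on $A_0$ and then passing to the limit, one obtains
\begin{equation*}
p^{s,t}(z,w) \;=\; q^{s,t}(z,w)\,\frac{h(w,t)}{h(z,s)}, \qquad 0 < s < t < 1,\ z,w \text{ in the interior of } \R_+^2.
\end{equation*}

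Next I would analyse the two factors separately. A linear change of variables depending only on $\alpha$ conjugates $Z$ to a standard planar Brownian motion and sends $\R_+^2$ to an open wedge of angle $\theta \in (0,\pi)$ (with $\theta = \pi/2$ when $\alpha = 0$ and $\theta \to 0$ or $\pi$ as $\alpha \to \pm 1$). The killed transition density in such a wedge admits a classical eigenfunction expansion involving Bessel functions which is jointly smooth, locally bounded, and strictly positive on $\{0 < s < t\}$ times two copies of the interior of the wedge; positivity may alternatively be obtained from a Cameron--Martin tube-neighbourhood argument along a polygonal path from $z$ to $w$ that stays strictly inside the wedge. Joint continuity of $h$ on $\R_+^2 \times [0,1)$ follows from dominated convergence together with the fact that a Brownian motion started at an interior point almost surely avoids $\partial \R_+^2$ at any fixed time, and strict positivity of $h$ on the interior is immediate from the positive probability that the path never leaves a small tube around its starting value. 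Assembling these inputs, the displayed formula yields joint continuity, local boundedness, and strict positivity of $p^{s,t}(z,w)$ on $\{0<s<t<1\}$ with $z,w$ ranging over the interior of $\R_+^2$.

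To cover the endpoint cases $s = 0$ (with $z = 0$) and $t = 1$, I would pass to the limit using Lemma~\ref{lem::conditioned_process_exists}~(iii). Under the wedge change of variables both $q^{s,t}(z,w)$ and $h(z,s)$ vanish like the same explicit power of the distance from $z$ to the corner as $z \to 0$, so the ratio $q^{s,t}(z,w)\,h(w,t)/h(z,s)$ admits a continuous, strictly positive limit, which by the weak convergence in Lemma~\ref{lem::conditioned_process_exists}~(iii)(b) must equal the entrance density $p^{0,t}(0,w)$; the case $t = 1$ is analogous by time reversal. The main obstacle I anticipate is precisely these matched corner asymptotics: the individual factors $q^{s,t}$ and $h$ both degenerate as $z \to 0$, and showing that the cancellation in their ratio is uniform enough to yield a bounded, continuous, strictly positive boundary extension is what forces the explicit wedge computation, while everything else reduces to routine potential theory.
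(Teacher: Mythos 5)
Your proposal is sound, but it is worth noting that the paper does not actually prove this lemma: it is quoted from Shimura \cite{shimura1985conebm}, and the text immediately following the statement simply directs the reader there for the explicit formula for $p^{s,t}$. What you have written is essentially a reconstruction of what that reference supplies. Your $h$-transform identity $p^{s,t}(z,w)=q^{s,t}(z,w)\,h(w,t)/h(z,s)$ is the correct starting point, and the linear change of variables to a standard Brownian motion in a wedge of angle $\theta=\arccos(-\alpha)$ is exactly the device the paper itself uses later (in the proof of Proposition~\ref{prop::finitely_many_large_cone_excursions_short_displacement}); the killed wedge heat kernel's eigenfunction expansion then gives smoothness and positivity in the interior, and the first term of that expansion is what produces the matched power-law decay of $q^{s,t}(z,\cdot)$ and $h(\cdot,s)$ near the corner that you correctly identify as the crux. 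Two caveats. First, the matched asymptotics is only named, not executed: to get a \emph{continuous, strictly positive} extension of the ratio as $z$ approaches the boundary you need uniformity of the leading-order term in the eigenfunction expansion over $w$ in compacts and over $s<t$ bounded away from coincidence, and the same cancellation must be carried out along the two edges of the quadrant (exponent $1$) as well as at the corner (exponent $\pi/\theta$); this is the entire technical content of the lemma, so a complete write-up would have to do it. Second, the positivity claim ``$p^{s,t}(z,w)>0$ for all $z,w\in\R_+^2$'' is used in the paper with $w$ allowed on $\partial\R_+^2$ (Theorem~\ref{thm::brownian_excursion_measure_exists} conditions the endpoint to lie in $\partial\R_+^2$), so the same renormalised boundary limit must be taken in the $w$ variable, not only in $z$; your time-reversal remark points in the right direction but should be made explicit, since the literal Lebesgue density vanishes on the boundary for $t<1$. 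With those two points filled in, your argument is a legitimate self-contained substitute for the citation.
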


We direct the reader to \cite{shimura1985conebm} for an explicit formula for $p^{s,t}(x,y)$.  By combining Lemma~\ref{lem::conditioned_process_exists} and Lemma~\ref{lem::transition_kernel_continuous}, we obtain the following.

\begin{lemma}
\label{lem::conditioned_process_given_endpoint_exists}
Fix $w \in \R_+^2$, $s, t \in (0,1)$ with $s < t$, and let $\stayinlaw$ be as in Lemma~\ref{lem::conditioned_process_exists}.  Then both of the following hold with respect to the topology of uniform convergence.
\begin{enumerate}[(i)]
\item The law of $Z|_{[0,s]}$ under $\prstart{0}{ \cdot \giv P_\delta, Z_t = w}$ converges weakly as $\delta \to 0$ to $\stayinlaw[ \cdot \giv Z_t = w]$.
\item The law of $Z|_{[0,s]}$ under $\prstart{z}{ \cdot \giv P_0, Z_t = w}$ converges weakly as $z \to 0$ in $\R_+^2$ to $\stayinlaw[ \cdot \giv Z_t = w]$. 
\end{enumerate}
\end{lemma}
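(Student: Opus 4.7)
The plan is to recast the extra conditioning on $\{Z_t = w\}$ as a bridge-type Doob $h$-transform of the already-conditioned law, identify the limit under $\stayinlaw$, and then pass to the limit using Lemma~\ref{lem::conditioned_process_exists} together with the continuity of the transition kernel from Lemma~\ref{lem::transition_kernel_continuous}. By Lemma~\ref{lem::transition_kernel_continuous}, $\stayinlaw$ is Markov on $\R_+^2$ with continuous, strictly positive transition density $p^{s,t}(z,w)$; in particular the time-$t$ marginal under $\stayinlaw$ has continuous positive density $p^{0,t}(0,\cdot)$, so the regular conditional distribution $\stayinlaw[\,\cdot\, \giv Z_t = w]$ exists, and its restriction to $[0,s]$ is given by the bridge formula
\begin{equation}
\label{eqn::bridge_limit_proposal}
\stayinlaw\bigl[Z|_{[0,s]} \in \,\cdot\, \giv Z_t = w \bigr] \;=\; \frac{p^{s,t}(Z_s, w)}{p^{0,t}(0, w)} \cdot \stayinlaw|_{[0,s]}(\,\cdot\,).
\end{equation}

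Next I would check that for each $\delta > 0$ the law $P_\delta := \prstart{0}{\cdot \giv A_\delta}$ is itself (time-inhomogeneously) Markov---this is immediate from the Markov property under $\prstart{0}{\cdot}$ since $A_\delta$ splits across any time $u$ as $A_\delta|_{[0,u]} \cap A_\delta|_{[u,1]}$---so that it carries a transition kernel $q_\delta^{s,t}(z,w)$. Bayes' rule then yields the analogous bridge representation
\begin{equation*}
P_\delta\bigl[Z|_{[0,s]} \in \,\cdot\, \giv Z_t = w \bigr] \;=\; \frac{q_\delta^{s,t}(Z_s, w)}{q_\delta^{0,t}(0, w)} \cdot P_\delta|_{[0,s]}(\,\cdot\,),
\end{equation*}
and similarly in case~(ii) with $P_\delta$ replaced by $\prstart{z}{\cdot \giv A_0}$ and a corresponding transition density $q_z^{s,t}$. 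Testing against a bounded continuous $F$ on $C([0,s], \R^2)$, matters reduce to showing that $\E_{P_\delta}[F(Z|_{[0,s]}) q_\delta^{s,t}(Z_s, w)] / q_\delta^{0,t}(0, w)$ converges to its $\stayinlaw$-analog. Since $P_\delta \Rightarrow \stayinlaw$ by Lemma~\ref{lem::conditioned_process_exists}, the core task is to show that $q_\delta^{s,t}(\,\cdot\,, w) \to p^{s,t}(\,\cdot\,, w)$ uniformly on compact subsets of $\R_+^2$, together with a uniform integrability bound permitting interchange of limit and expectation.

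This convergence of the transition kernels is the main obstacle. The approach I would take is to split the interval $[s, t]$ into $[s, s+\epsilon] \cup [s+\epsilon, t]$: over the inner segment $[s+\epsilon, t]$ one can apply the conclusion of Lemma~\ref{lem::conditioned_process_exists} starting from a variable point $z \in \R_+^2$ to obtain convergence of the corresponding transition density, combined with the continuity supplied by Lemma~\ref{lem::transition_kernel_continuous}; the short outer segment $[s, s+\epsilon]$ is absorbed using a tightness estimate and the joint continuity of $p^{s,t}$. The subtlety that $q_\delta^{s,t}(z, w)$ may blow up as $z$ approaches the shifted boundary of $[-\delta,\infty)^2$ is harmless in the limit, since by Lemma~\ref{lem::conditioned_process_exists} the limiting measure $\stayinlaw$ places no mass on $\{Z_s \in \partial \R_+^2\}$, and a compact-containment argument allows boundary contributions to be discarded. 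Case~(ii) follows by the identical scheme, invoking the convergence statement from a variable starting point $z \to 0$ in place of the $\delta \to 0$ statement.
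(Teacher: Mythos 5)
Your proposal follows essentially the same route as the paper: write the conditioning on $\{Z_t=w\}$ as a Radon--Nikodym derivative $p^{s,t}(Z_s,w)/p^{0,t}(0,w)$ of the conditioned law against the unconditioned one via Bayes' rule and the Markov property, and then combine the weak convergence of the base laws from Lemma~\ref{lem::conditioned_process_exists} with convergence and boundedness of these density ratios from Lemma~\ref{lem::transition_kernel_continuous}. The only difference is that the paper imports the uniform convergence $p_\delta^{s,t}\to p^{s,t}$ directly from Shimura's results rather than re-deriving it by interval splitting as you sketch, but this is a matter of citation rather than of substance.
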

\begin{proof}
We are going to prove the first assertion of the lemma.  The second assertion is proved similarly.  Fix $s,t \in (0,1)$ with $s < t$ and $w \in \R_+^2$.  By a Bayes' rule calculation and the Markov property, the Radon-Nikodym derivative of the law of $Z|_{[0,s]}$ under $\prstart{0}{ \cdot \giv P_\delta, Z_t = w}$ with respect to $\prstart{0}{ \cdot \giv P_\delta}$ is given by
\begin{equation}
\label{eqn::terminal_conditioned_rn}
\CZ_{w,\delta}^{s,t} = \frac{p_\delta^{s,t}(Z_s,w)}{p_\delta^{0,t}(0,w)}
\end{equation}
where $p_\delta^{s,t}$ is the transition density for $Z$ given $P_\delta$.  Moreover, the Radon-Nikodym derivative $\CZ_w^{s,t}$ of $\stayinlaw[ \cdot \giv Z_t = w]$ with respect to $\stayinlaw$ takes the same form.  Lemma~\ref{lem::transition_kernel_continuous} implies that $\CZ_{w,\delta}^{s,t}$ is bounded and continuous as a function of $Z_s$ and it follows from \cite{shimura1985conebm} that $\CZ_{w,\delta}^{s,t} \to \CZ_w^{s,t}$ uniformly as $\delta \to 0$.  Therefore the result follows from Lemma~\ref{lem::conditioned_process_exists}.
\end{proof}

\begin{proof}[Proof of Theorem~\ref{thm::brownian_excursion_measure_exists}]
We are first going to show that there is at most one such law.

Suppose that $Z$ and $\wt{Z}$ are two processes which satisfy the hypotheses of the proposition.  Fix $t \in (0,1)$ and note that the Markovian hypothesis implies that both $Z_t$ and $\wt{Z}_t$ have continuous densities with respect to Lebesgue measure which are everywhere positive in $\R_+^2$.  Moreover, Lemma~\ref{lem::conditioned_process_given_endpoint_exists} implies that for each $t \in (0,1)$ and $w \in \R_+^2$ we have that the laws of $Z|_{[0,t]}$ given $Z_t = w$ and $\wt{Z}|_{[0,t]}$ given $\wt{Z}_t = w$ are the same.  Similarly, the laws of $Z|_{[t,1]}$ given $Z_t = w$ and $\wt{Z}|_{[t,1]}$ given $\wt{Z}_t = w$ are the same.

Fix $0 = t_0 < t_1 < t_2 < t_3 = 1$ and fix $R > 0$.  Consider the Markov chain which in each step sequentially resamples $Z_{t_j}$ given $Z_{t_{j-1}}$ and $Z_{t_{j+1}}$ for $j=1,2$ conditioned on $Z_{t_j} \leq R$ (i.e., each coordinate is at most $R$).  Then we have that the laws of both $(Z_{t_1},Z_{t_2})$ conditioned on $Z_{t_j} \leq R$ for $j=1,2$ and $(\wt{Z}_{t_1},\wt{Z}_{t_2})$ conditioned on $\wt{Z}_{t_j} \leq R$ for $j=1,2$ are invariant for this chain (i.e., the same resampling kernel).  Note that the conditional law of $Z_{t_1}$ conditioned on $Z_{t_1} \leq R$ given any value of $Z_{t_2}$ has a density with respect to Lebesgue measure on $[0,R]^2$ which is positive on $(0,R)^2$.  The same is likewise true with the roles of $t_1$ and $t_2$ swapped and with $\wt{Z}$ in place of $Z$.  It thus follows that by running this chain for one step, we can couple $Z$ and $\wt{Z}$ conditioned on both coordinates being at most $R$ at times $t_1,t_2$ so that $Z_{t_j} = \wt{Z}_{t_j}$ for $j=1,2$ with positive probability (since the conditional law of each given its neighbors is the same).  Therefore it follows from \cite[Theorem~14.10]{georgii2011gibbs} that the law of $(Z_{t_1},Z_{t_2})$ conditioned so that $Z_{t_j} \leq R$ for $j=1,2$ is the same as the law of $(\wt{Z}_{t_1},\wt{Z}_{t_2})$ conditioned so that $\wt{Z}_{t_j} \leq R$ for $j=1,2$.  Indeed, the above implies that this chain cannot have distinct ergodic measures because distinct ergodic measures are necessarily singular.  Since $R > 0$ was arbitrary, we therefore have that the law of $(Z_{t_1},Z_{t_2})$ is the same as the law of $(\wt{Z}_{t_1},\wt{Z}_{t_2})$.  Uniqueness follows since the Markov hypothesis for $Z$ and $\wt{Z}$ implies that we can sample the rest of $Z$ and $\wt{Z}$ to be a.s.\ the same given that the two processes agree at times $t_1$ and $t_2$.

We will now prove existence.  Let $\wh{Z}$ have the law of the process constructed in Lemma~\ref{lem::conditioned_process_exists}.  We are now going to construct the law of $\wh{Z}$ conditioned on $\wh{X}_1 = x_1$ and $\wh{Y}_1 = y_1$ and then argue that the resulting conditioned process satisfies the properties listed in the proposition.  This will complete the proof of existence.

Let $w_1 = (x_1,y_1)$.  In order to construct this process, we first fix $\epsilon > 0$ and consider the law of $\wh{Z}$ conditioned on the positive probability event that $\wh{Z}_1 \in B(w_1,\epsilon)$.  By an application of Bayes' rule, the Radon-Nikodym derivative of the conditioned law of $\wh{Z}|_{[0,t]}$ with respect to the unconditioned law is given by
\begin{equation}
\label{eqn::rn_bayes_form}
\wh{\CZ}_t^\epsilon(y) = \frac{\pr{\wh{Z}_1 \in B(w_1,\epsilon) \giv \wh{Z}_t=y}}{\pr{\wh{Z}_1 \in B(w_1,\epsilon)}} \quad\text{for}\quad y \in \R_+^2.
\end{equation}
We define 
\[ \pr{\wh{Z}_1 \in B(w_1,\epsilon) \giv \wh{Z}_t = 0} = \lim_{w \to 0} \pr{\wh{Z}_1 \in B(w_1,\epsilon) \giv \wh{Z}_t = w}.\]
It is easy to see from the explicit form of $p^{s,t}$ given in \cite{shimura1985conebm} that there exists a constant $c_0 > 0$ such that
\begin{equation}
\label{eqn::rn_bayes_form2}
\wh{\CZ}_t^\epsilon(y) \leq c_0 \left( \frac{\pr{\wh{Z}_1 \in B(w_1,\epsilon) \giv \wh{Z}_t=0}}{\pr{\wh{Z}_1 \in B(w_1,\epsilon)}} \right) \quad\text{for all}\quad y \in \R_2^+.
\end{equation}
Fix $h > 0$ so that $t < t+h < 1$ and let
\[ \wh{Q}_{t,h} = \left\{ z \in \R_+^2 : \pr{ \wh{Z}_1 \in B(w_1,\epsilon) \giv \wh{Z}_{t+h} = z} \geq \frac{1}{2} \pr{ \wh{Z}_1 \in B(w_1,\epsilon) \giv \wh{Z}_t = 0 } \right\}.\]
The Markov property together with Lemma~\ref{lem::transition_kernel_continuous} implies that $\wh{Q}_{t,h}$ has positive Lebesgue measure.  Applying Lemma~\ref{lem::transition_kernel_continuous} again implies that there exists $p_{t,h} > 0$ depending only on $t$ and $h$ such that
\begin{equation}
\label{eqn::p_t_h_lbd}
\pr{ \wh{Z}_{t+h} \in \wh{Q}_{t,h}} = p_{t,h} > 0.
\end{equation}
Combining~\eqref{eqn::p_t_h_lbd} with the Markov property for $\wh{Z}$ implies that
\begin{equation}
\label{eqn::markov_lbd}
\pr{ \wh{Z}_1 \in B(w_1,\epsilon) } \geq \frac{p_{t,h}}{2} \pr{ \wh{Z}_1 \in B(w_1,\epsilon) \giv \wh{Z}_t = 0}.
\end{equation}
Inserting~\eqref{eqn::markov_lbd} into~\eqref{eqn::rn_bayes_form2} implies that
\begin{equation}
\label{eqn::rn_pth_ubd}
\sup_{y \in \R_+^2} \wh{\CZ}_t^\epsilon(y) \leq \frac{2 c_0 }{p_{t,h}} < \infty.
\end{equation}

Therefore the random variables $\wh{\CZ}_t^{\epsilon} = \wh{\CZ}_t^{\epsilon}(\wh{Z}_t)$ are uniformly integrable.  Therefore there exists a positive sequence $(\epsilon_k)$ decreasing to $0$ such that the law of $\wh{\CZ}_t^{\epsilon_k}(\wh{Z}_t)$ converges weakly to a limit $\wh{\CZ}_t$ which has expectation $1$.  By passing to a further (diagonal) subsequence if necessary, we can arrange so that $\wh{\CZ}_t^{\epsilon_k}$ converges weakly as $k \to \infty$ to a limit which has expectation $1$ for all rational $t \in (0,1)$.  It is easy to see that the family of measures obtained by weighting the law of $\wh{Z}|_{[0,t]}$ by $\wh{\CZ}_t$ for $t \in (0,1)$ is consistent and the measure obtained from the $t \to 1$ limit satisfies the Markov property described in the statement of the lemma.  The continuity of the process at the terminal point can be seen by a time-reversal argument.
\end{proof}

As recalled in Theorem~\ref{thm::disk_explore}, it is shown in \cite{dms2014mating} that when $\alpha \in [0,1)$, a $\pi/2$-cone excursion of $Z$ of terminal displacement $\epsilon > 0$ naturally encodes a quantum disk with $\gamma \in [\sqrt{2},2)$ with quantum boundary length $\epsilon$ where $\alpha$ and $\gamma$ are related as in~\eqref{eqn::l_r_cov}.  We are now going to show that if one generates such a $\pi/2$-cone excursion conditioned further on the event that its length is equal to $1$, then the conditional law converges as $\epsilon \to 0$ to the law constructed and characterized in Theorem~\ref{thm::brownian_excursion_measure_exists}.  This is natural in view of Theorem~\ref{thm::sphere_equivalent_constructions} because it is natural to expect that the law of a quantum disk with boundary length $\epsilon$ and area $1$ converges to that of a unit area quantum sphere as $\epsilon \to 0$.

\begin{proposition}
\label{prop::cone_to_sphere_brownian_limit}
Fix $\epsilon > 0$ and suppose that $Z$ has the law of a $\pi/2$-cone excursion of length $1$ and terminal displacement equal to $\epsilon$.  Then the law of $Z$ converges weakly as $\epsilon \to 0$ to the law constructed in Theorem~\ref{thm::brownian_excursion_measure_exists} with respect to the topology of uniform convergence.
\end{proposition}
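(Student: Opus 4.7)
The strategy is to invoke the uniqueness portion of Theorem~\ref{thm::brownian_excursion_measure_exists}: I will argue that $\mu_\epsilon$, the law of a $\pi/2$-cone excursion $Z^\epsilon$ of length $1$ and terminal displacement $\epsilon$, converges as $\epsilon\to 0$ to the unique law there with $(x_1,y_1)=(0,0)$. By definition of a $\pi/2$-cone excursion, $Z^\epsilon$ starts at $0$, stays in $\R_+^2$ on $[0,1]$, and terminates on the boundary of $\R_+^2$ at either $(\epsilon,0)$ or $(0,\epsilon)$ (the two possible terminal points consistent with terminal displacement $\epsilon$). Disintegrating the $\pi/2$-cone-excursion It\^o measure according to length and then along the terminal-point marginal, one can identify $\mu_\epsilon$ with a mixture (with mixture weights continuous in $\epsilon$) of the two laws constructed in the existence proof of Theorem~\ref{thm::brownian_excursion_measure_exists} with $w_1=(\epsilon,0)$ and $w_1=(0,\epsilon)$. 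Writing $\stayinlaw_{w_1}$ for the law there corresponding to endpoint $w_1\in\closure{\R_+^2}$, the task reduces to showing $\stayinlaw_{w_1}\to\stayinlaw_0$ weakly as $w_1\to 0$ along the axes.

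For each fixed $s\in(0,1)$, the Radon-Nikodym derivative of $\stayinlaw_{w_1}|_{[0,s]}$ with respect to the Shimura measure $\stayinlaw|_{[0,s]}$ from Lemma~\ref{lem::conditioned_process_exists} is the standard bridge kernel
\begin{equation*}
\wh\CZ_s^{w_1}(y) \,=\, \frac{p^{s,1}(y,w_1)}{p^{0,1}(0,w_1)}\,,\qquad y\in\R_+^2,
\end{equation*}
which follows from an adaptation of the Bayes' rule computation~\eqref{eqn::rn_bayes_form}. By Lemma~\ref{lem::transition_kernel_continuous}, the numerator is bounded and jointly continuous in $(y,w_1)\in\R_+^2\times\closure{\R_+^2}$, while the denominator is continuous and strictly positive at $w_1=0$. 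Hence $\wh\CZ_s^{w_1}(y)\to\wh\CZ_s^{0}(y)$ pointwise in $y$ and is uniformly bounded in $(y,w_1)$ over a neighborhood of $w_1=0$. Dominated convergence applied to integrals against $\stayinlaw|_{[0,s]}$ yields $\stayinlaw_{w_1}|_{[0,s]}\to\stayinlaw_0|_{[0,s]}$ weakly, and the same holds for the mixture $\mu_\epsilon|_{[0,s]}$.

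To extend the convergence from $[0,s]$ (arbitrary $s<1$) to all of $[0,1]$, I would exploit time-reversal symmetry: reversing time in a $\pi/2$-cone excursion yields another $\pi/2$-cone excursion (with the roles of the two endpoints swapped), and the analogous symmetry holds for $\stayinlaw_0$ by the uniqueness in Theorem~\ref{thm::brownian_excursion_measure_exists}. Running the dominated-convergence argument above on the time-reversed processes therefore shows $\mu_\epsilon|_{[s,1]}\to\stayinlaw_0|_{[s,1]}$ weakly for each $s>0$. Joint tightness follows, and any subsequential limit agrees with $\stayinlaw_0$ in finite-dimensional distributions on $(0,1)$ and has continuous endpoints $Z_0=Z_1=0$. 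The Markov property (ii) of Theorem~\ref{thm::brownian_excursion_measure_exists} is inherited from $\mu_\epsilon$ by passing to finite-dimensional limits, and property (i) is inherited from $\stayinlaw$. Uniqueness in Theorem~\ref{thm::brownian_excursion_measure_exists} closes the argument.

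The main technical obstacle is the identification carried out in the first step: showing that disintegrating the $\pi/2$-cone excursion It\^o measure on length and terminal displacement really does produce the endpoint-conditioned laws $\stayinlaw_{w_1}$ (up to the appropriate mixture). This requires some measure-theoretic care, as one must verify that both constructions yield processes with identical Markov kernels in the interior of $\R_+^2$ (which they do, both being correlated Brownian motion conditioned to avoid the boundary, as in Lemma~\ref{lem::conditioned_process_exists}) and with matching entrance and exit behavior at the boundary $\partial\R_+^2$. Once this identification is in place, the remaining analytic content reduces to the dominated convergence and time-reversal arguments sketched above.
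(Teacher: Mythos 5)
Your overall architecture---identify the $\pi/2$-cone excursion with correlated Brownian motion conditioned to stay in $\R_+^2$ with a prescribed degenerate endpoint, prove convergence of these laws as the endpoint tends to $0$, and close with the uniqueness statement of Theorem~\ref{thm::brownian_excursion_measure_exists}---is the same as the paper's. However, the central analytic step is not justified as written. You assert that in $\wh{\CZ}_s^{w_1}(y) = p^{s,1}(y,w_1)/p^{0,1}(0,w_1)$ the denominator is ``continuous and strictly positive at $w_1=0$'' by Lemma~\ref{lem::transition_kernel_continuous}. It is not: the Shimura process is conditioned to stay in $\R_+^2$, so its time-$1$ density vanishes on $\partial \R_+^2$. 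Indeed, writing $p^{0,1}(0,w) = q_1(0,w)/H_1(0)$ with $q$ the kernel of the correlated Brownian motion killed on exiting the quadrant and $H_1(0)$ the (entrance-law) survival probability, one has $q_1(0,w) \to 0$ as $w \to \partial\R_+^2$; the positivity in Lemma~\ref{lem::transition_kernel_continuous} pertains to interior points. Consequently, as $w_1 \to 0$ along the axes your ratio is a genuine $0/0$ limit, and continuity of the kernel plus dominated convergence does not yield $\wh{\CZ}_s^{w_1} \to \wh{\CZ}_s^{0}$. What is needed is the boundary factorization of Shimura's kernel, i.e.\ $q_{1-s}(y,w_1) = \varrho(w_1)\,k_s(y)(1+o(1))$ uniformly in $y$ as $w_1 \to 0$, so that $\varrho$ cancels in the ratio; this is precisely the nontrivial input your argument omits. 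Note that the existence proof of Theorem~\ref{thm::brownian_excursion_measure_exists} deliberately avoids defining the boundary-endpoint law via an exact kernel ratio: it conditions on $\wh{Z}_1 \in B(w_1,\epsilon)$, proves the uniform bound~\eqref{eqn::rn_pth_ubd}, and extracts subsequential limits, exactly because of this degeneracy.

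The paper's proof sidesteps the issue by time-reversing \emph{first}: the reversed excursion $\wt{Z}$ starts at the degenerating point $\wt{Z}_0 \in \partial\R_+^2$ and is conditioned to stay in $\R_+^2$ and to terminate at the origin. One then bounds the Radon--Nikodym derivative of the law of $\wt{Z}|_{[0,t]}$ with respect to the law of correlated Brownian motion started at $\wt{Z}_0$ and merely conditioned to stay in $\R_+^2$, uniformly in the starting point (this is the bound~\eqref{eqn::rn_pth_ubd} from the existence proof), and invokes the second convergence statement of Lemma~\ref{lem::conditioned_process_exists} (the $z \to 0$ limit from \cite{shimura1985conebm}), which is exactly the limit already available; the limit process is then checked against the hypotheses of Theorem~\ref{thm::brownian_excursion_measure_exists}. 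To repair your argument you must either establish the kernel asymptotics described above or restructure it along these lines.
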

\begin{proof}
Let $Z = (X,Y)$ be a $\pi/2$-cone excursion with length $1$ and terminal displacement equal to $\epsilon$ and let $\wt{Z}$ be the time-reversal of $Z$.  Then arguing as in the proof of Theorem~\ref{thm::brownian_excursion_measure_exists} we can view $\wt{Z}$ as a correlated Brownian motion conditioned to stay in~$\R_+^2$ and conditioned to terminate at the origin.  The argument of the proof of Theorem~\ref{thm::brownian_excursion_measure_exists} gives that the Radon-Nikodym derivative between the law of $\wt{Z}|_{[0,t]}$, $t \in (0,1)$ fixed, and the law of a correlated Brownian motion in $[0,t]$ starting from $\wt{Z}_0$ and conditioned to be in $\R_+^2$ is bounded from above by a constant which depends only on $t$.  For the latter law, the results of \cite{shimura1985conebm} imply that we have a limiting process as $\wt{Z}_0 \to 0$.  This limiting process clearly satisfies the hypotheses of Theorem~\ref{thm::brownian_excursion_measure_exists}, which completes the proof.
\end{proof}

We end this section with the following estimate for the probability of a bi-infinite correlated Brownian motion having a $\pi/2$-cone excursion with terminal displacement at most $1$ and starting time in the interval $[-k-1,-k]$.

\begin{proposition}
\label{prop::finitely_many_large_cone_excursions_short_displacement}
Fix $\alpha \in (-1,1)$.  Suppose that $Z = (X,Y)$ is a Brownian motion $\R \to \R^2$ normalized so that $Z_0 = 0$ with
\[ \var{X_t} = \var{Y_t} = |t| \quad\text{and}\quad \cov{X_t}{Y_t} = \alpha |t|.\]
There exist constants $c_0 > 0$, $\beta > 1$ depending only on $\alpha$ such that the following is true.  For each $k \in \N$, let $E_k$ be the event that $Z$ has a $\pi/2$-cone excursion starting in $[-k-1,-k]$ of length at least $k+1$ and terminal displacement at most $1$.  Then
\begin{equation}
\label{eqn::ek_bm_bound}
\p[E_k] \leq c_0 k^{-\beta}.
\end{equation}
In particular, the number of $k \in \N$ such that $E_k$ occurs is finite a.s.
\end{proposition}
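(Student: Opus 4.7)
My plan has three stages: a case split by the sign of $\alpha$, a first-moment bound using the Poissonian structure of $\pi/2$-cone excursions of $Z$, and an estimate of the cone excursion measure via Brownian scaling combined with a conformal-map exit distribution.

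First, when $\alpha \leq 0$, one can decorrelate $Z$ into a standard 2D Brownian motion by a linear transformation, under which the first quadrant becomes a wedge of angle $\theta = \pi/2 + \arcsin \alpha \leq \pi/2$. By classical cone-point theory for planar Brownian motion, $\theta$-cone times exist almost surely only when $\theta > \pi/2$, so $Z$ has no $\pi/2$-cone times a.s.\ when $\alpha \leq 0$. In this degenerate case, $E_k$ is empty for every $k$ and the bound is trivial.

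Assume henceforth that $\alpha \in (0,1)$, so $\theta = \pi/2 + \arcsin \alpha \in (\pi/2, \pi)$. In this regime, the $\pi/2$-cone times of $Z$ form a stationary regenerative subset of $\R$ of Hausdorff dimension $d_c = 2 - \pi/\theta \in (0, 1)$, and the associated cone excursions (indexed by a natural local time $\ell$ at cone times) form a Poisson point process with intensity $d\ell \otimes d\conelaw$ for some $\sigma$-finite cone excursion measure $\conelaw$ depending only on $\alpha$. Letting $N_k$ count the cone excursions whose start time lies in $[-k-1,-k]$ and whose length $L$ and terminal displacement $D$ satisfy $L \geq k+1$ and $D \leq 1$, we have $E_k = \{N_k \geq 1\}$, and Markov's inequality together with stationarity gives
\[ \p[E_k] \leq \E[N_k] = C_\alpha \cdot \conelaw(L \geq k+1,\, D \leq 1), \]
where $C_\alpha = \E[\ell(0) - \ell(-1)]$ is a finite constant depending only on $\alpha$ (finiteness follows from the standard fact that the natural local time of a stable regenerative set has finite mean over a bounded real-time interval).

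The remaining task is to bound $\conelaw(L \geq k+1,\, D \leq 1)$ by $c_0 k^{-\beta}$ for some $\beta > 1$. Brownian scaling yields $\conelaw(L \geq T) \asymp T^{-d_c}$. Conditional on length approximately $T$, the cone excursion has typical terminal displacement of order $\sqrt{T}$, so $\{D \leq 1\}$ is unusual; using the conformal map $z \mapsto z^{\pi/\theta}$ which sends the wedge of angle $\theta$ to the upper half plane and converts the boundary exit distribution into a Cauchy-type density on $\R$, one obtains $\p[D \leq 1 \mid L \approx T] \asymp T^{-\pi/(2\theta)}$. A dyadic sum over $T \geq k+1$ then gives
\[ \conelaw(L \geq k+1,\, D \leq 1) \leq c_0 (k+1)^{-d_c - \pi/(2\theta)} = c_0 (k+1)^{-(2 - \pi/(2\theta))}, \]
and $\beta := 2 - \pi/(2\theta) > 1$ since $\theta < \pi$. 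The final assertion (finiteness of $\{k : E_k\text{ occurs}\}$) then follows from Borel--Cantelli. The most delicate ingredient is the joint tail estimate for $\conelaw(L \geq T, D \leq 1)$, which requires combining the regenerative-set / stable-subordinator description of $Z$'s cone-time set (giving the $T^{-d_c}$ factor) with the wedge exit estimate via $z \mapsto z^{\pi/\theta}$ (giving the $T^{-\pi/(2\theta)}$ factor); I expect the authors either to invoke existing results on Brownian cone points and wedge exit distributions directly, or to repurpose one of the Poissonian / stable-process estimates developed elsewhere in the paper for the correlated Brownian motion setting.
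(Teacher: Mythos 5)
Your proposal is a genuinely different route from the paper's, and as written it contains two serious gaps, one structural and one in the exponents.

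The structural gap: the $\pi/2$-cone times of $Z$ do not form a stationary regenerative set. Cone excursions in the sense of Section~\ref{subsubsec::quantum_disks} are heavily nested and overlapping (this nesting is exactly the tree structure being mated), so there is no local time $\ell$ ``at cone times'' with respect to which the excursions form a Poisson point process with intensity $d\ell \otimes d\mu_{\mathrm{cone}}$, and on $E_k$ the collection of qualifying cone excursions is typically uncountable, so $E_k = \{N_k \geq 1\}$ together with $\E[N_k] < \infty$ is unjustified. There \emph{is} a Poissonian decomposition of $Z$ into $\pi/2$-cone excursions (\cite[Proposition~12.3]{dms2014mating}, alluded to in Section~\ref{subsubsec::quantum_disks}), but it is indexed by a different clock (quantum natural time, i.e.\ the local time at the infimum of the associated $\kappa'/4$-stable boundary-length process) and records only a distinguished countable subfamily; to run your first-moment argument you would also need to show that the occurrence of $E_k$ forces one of \emph{those} atoms to satisfy $L \geq k+1$, $D \leq 1$, which is not automatic since passing to a larger enclosing excursion increases $L$ but can push $D$ above $1$. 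The exponent gap: $2 - \pi/\theta$ is the Hausdorff dimension of the set of cone \emph{points} $\{Z_t\}$ in the plane, not of the set of cone \emph{times}, which has dimension $1 - \pi/(2\theta)$ (half as large). Feeding the time-set dimension into your own computation gives $\beta = (1 - \pi/(2\theta)) + \pi/(2\theta) = 1$ exactly, which fails Borel--Cantelli; and your claimed $\beta = 2 - \pi/(2\theta)$ tends to $3/2$ as $\alpha \to 1$, whereas the exponent the paper obtains (and which the correctly normalized excursion computation reproduces, namely the intensity of atoms with $D > x$ per unit quantum natural time being $\asymp x^{-\pi/\theta}$, whence the joint tail $\asymp T^{-\pi/\theta}$) degenerates to $1$ there --- a strong signal that your bound overshoots for $\alpha$ near $1$. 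Your treatment of $\alpha \leq 0$ via Evans/Shimura is correct but unnecessary.

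The paper's actual proof avoids all of this machinery. It first uses the reflection principle to restrict to the event that $Z$ oscillates by at most $k^\epsilon$ on $[-k-1,-k]$; on $E_k$ this forces a single Brownian path started within distance $O(k^\epsilon)$ of the corner of the quadrant to remain in the quadrant for time $k$ --- hence (up to an exponentially small exception) to reach distance $k^{1/2-\epsilon}$ --- and then to return to a ball of radius $O(k^\epsilon)$ about the corner before exiting. After decorrelating to a wedge of angle $\theta = \arccos(-\alpha)$ and applying $z \mapsto z^{\pi/\theta}$, each of the two legs (escape and return) costs $\asymp k^{\zeta(2\epsilon - 1/2)}$ with $\zeta = \pi/\theta$ by gambler's ruin and the Poisson kernel in $\HH$, yielding $\beta = \zeta(1-4\epsilon) > 1$ uniformly over $\alpha \in (-1,1)$ for $\epsilon$ small. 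If you want to salvage an excursion-theoretic proof you must import the full decomposition of \cite[Section~12]{dms2014mating}, fix the normalization of the length tail accordingly, and justify the reduction of $E_k$ to a statement about the atoms of that point process; the paper's direct path estimate is substantially shorter.
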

\begin{proof}
Fix $\epsilon > 0$; we will adjust its value at the end of the proof.  We first note that the reflection principle for Brownian motion implies that there exist constants $c_1,c_2 > 0$ such that the probability of the event~$F_k$ that $\sup_{s,t \in [-k-1,-k]} |Z_s-Z_t| \geq k^\epsilon$ is at most~$c_1 e^{-c_2 k^{2\epsilon}}$.  It therefore suffices to establish~\eqref{eqn::ek_bm_bound} with $E_k \cap F_k^c$ in place of~$E_k$.

The probability of the event $E_k \cap F_k^c$ is bounded from above by the probability of the event~$G_k$ that a Brownian motion~$\wt{Z}$ in~$\R^2$ with the same covariance as $Z$ starting from $k^\epsilon(1+i)$ stays in $\R_+^2$ for all $t \in [0,k]$ and hits $\partial B(0,2k^\epsilon)$ after time~$k$ before exiting~$\R_+^2$.

We note that there exist constants $c_3,c_4 > 0$ such that the probability of the event that $\wt{Z}|_{[0,k]}$ stays in $B(0,k^{1/2-\epsilon}) \cap \R_+^2$ is at most $c_3 e^{-c_4 k^{2\epsilon}}$.  Indeed, the reason for this is that in each round of time of length $k^{1-2\epsilon}$, we have that $\wt{Z}$ has a positive of chance of leaving $B(0,k^{1/2-\epsilon})$ which is uniform in its starting point in $B(0,k^{1/2-\epsilon})$ at the start of the round.  It therefore suffices to bound the probability of the event $G_k'$ that $\wt{Z}$ hits $\partial B(0,k^{1/2-\epsilon}) \cap \R_+^2$ and then hits $B(0,2k^\epsilon)$ before exiting $\R_+^2$.

Let
\[ \Lambda = \frac{1}{(1-\alpha^2)^{1/2}} \begin{pmatrix} (1-\alpha^2)^{1/2} & 0 \\ -\alpha  & 1 \end{pmatrix}.\]
Then $\wh{Z} = \Lambda \wt{Z}$ is a standard Brownian motion in $\R^2$.  Moreover, $\Lambda$ takes $\R_+^2$ to a Euclidean wedge $\W_\theta$ of opening angle $\theta = \arccos(-\alpha) \in (0,\pi)$.  Let $\zeta = \pi/\theta > 1$.  Fix $a \in \C$ with $|a|=1$ so that the map $z \mapsto a z^\zeta$ takes $\W_\theta$ to $\h$ and let $\acute{Z}$ be the image of $\wh{Z}$ under this map.  

The event that $\wt{Z}$ hits $\partial B(0,k^{1/2-\epsilon})$ before exiting $\R_+^2$ corresponds to the event that $\acute{Z}$ escapes to distance of order $k^{\zeta(1/2-\epsilon)}$ before exiting $\h$.  Since $\acute{Z}$ starts with imaginary part of order $k^{\zeta \epsilon}$, it follows that there exists a constant $c_5 > 0$ such that the probability of this event is at most
\begin{equation}
\label{eqn::escape_probability}
c_5 k^{\zeta(2\epsilon-1/2)}.
\end{equation}
Conditional on this event, it is clear from the explicit form of the Poisson kernel on~$\h$ that there exists a constant $c_6 > 0$ such that the probability that $\acute{Z}$ hits a ball centered at the origin with size proportional to $k^{\zeta \epsilon}$ after reaching distance of order $k^{\zeta(1/2-\epsilon)}$ and before exiting $\h$ is at most
\begin{equation}
\label{eqn::come_back_probability}
c_6 k^{\zeta(2\epsilon-1/2)}.
\end{equation}
By taking $\epsilon > 0$ sufficiently small,~\eqref{eqn::ek_bm_bound}
 follows by combining~\eqref{eqn::escape_probability} and~\eqref{eqn::come_back_probability}.

The second assertion of the proposition is an immediate consequence of the first and the Borel-Cantelli lemma.
\end{proof}

\section{Constructing a unit area quantum sphere from a $\gamma$-quantum cone}
\label{sec::sphere_from_cone}

The purpose of this section is to show how to construct a unit area quantum sphere by pinching off a unit of quantum area from a $\gamma$-quantum cone.  This construction will be important for our proofs of Theorem~\ref{thm::sphere_equivalent_constructions} and Theorem~\ref{thm::pure_sphere_equivalent_constructions}.  We remark that this result is similar to \cite[Proposition~A.11]{dms2014mating}, though the present setting turns out to be simpler.

\begin{proposition}
\label{prop::cone_to_sphere}
Fix $\gamma \in (0,2)$ and suppose that $\CC = (\cyl,h,+\infty,-\infty)$ is a $\gamma$-quantum cone.  Let $X$ be the projection of $h$ onto $\CH_1(\cyl)$.  For each $r \in \R$ and $\epsilon > 0$, let
\begin{equation}
\label{eqn::tau_r_e_r_eps_def}
\tau_r = \inf\{ u \in \R : X_u \leq r\} \quad\text{and}\quad E_{r,\epsilon} = \left\{ 1 \leq \mu_h( \cyl_+ + \tau_r) \leq 1+ \epsilon \right\}.
\end{equation}
The laws of the quantum surfaces $(\cyl_+ + \tau_r,h)$ given $E_{r,\epsilon}$ converge weakly in the space of distributions to that of the unit area quantum sphere when we take a limit first as $r \to -\infty$ and then as $\epsilon \to 0$.  More precisely, if we start with the quantum surface $(\cyl_+ + \tau_r,h)$ given $E_{r,\epsilon}$ and then embed it by taking the horizontal translation so that the amount of quantum mass in $\cyl_+$ is equal to $1/2$, then its law converges weakly in the space of distributions as $r \to -\infty$ and then $\epsilon \to 0$ to that of the unit area quantum sphere with the embedding taken so that the amount of mass assigned $\cyl_+$ is $1/2$.

Moreover, for any fixed $S > 0$, the conditional law of the surfaces $(\cyl_+ + \tau_r,h)$ given $E_{r,\epsilon}$ and the restriction of $h$ to $U_r = (-\infty,\tau_r+S] \times [0,2\pi]$ converges in probability to that of the unit area quantum sphere with respect to the topology of weak convergence in the space of distributions (over the realization of the restriction of $h$ to $U_r$) when we take a limit first as $r \to -\infty$ and then $\epsilon \to 0$.
\end{proposition}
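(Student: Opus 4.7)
The plan combines the explicit Bessel-process descriptions of the $\gamma$-quantum cone (Section~\ref{subsec::surfaces}) and the unit area quantum sphere (introduction) with the classical Bessel time-reversal duality between dimensions $\delta = 8/\gamma^2$ (cone) and $\delta_s = 4 - 8/\gamma^2$ (sphere). Crucially, the $\CH_2$ projection of $h$ is, under both the cone law and $\bsphere$, an independent whole-plane-GFF $\CH_2$ projection on $\cyl$; by translation invariance of this projection, its restriction to $\cyl_+ + \tau_r$ has the same law in both settings. The argument thus reduces to analyzing the $\CH_1$ projection.

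\textbf{Step 1 (law of the cone past $\tau_r$).} By the strong Markov property for $Z := e^{\gamma X/2}$ at $\tau_r$ and the Williams time-reversal identity for Bessel processes, the process $Y_u := Z_{u+\tau_r}$, $u \geq 0$, is distributed in cone time as a $\bes^{\delta_s}$ entrance law from $y_r := e^{\gamma r/2}$, with the cone-time reparameterization placing the hit of $0$ at $u = \infty$. As $r \to -\infty$ so $y_r \to 0$, this entrance law converges, in the sense of the Bessel entrance-law / excursion-measure correspondence, to the excursion measure $\nu_{\delta_s}^\bes$ used in the definition of $\bsphere$.

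\textbf{Step 2 (area conditioning and the double limit).} The quantum area $\mu_h(\cyl_+ + \tau_r)$ is a continuous measurable functional of the joint data $(Y, h^\perp|_{\cyl_+ + \tau_r})$, and under $\bsphere$ it has a continuous positive density on $(0, \infty)$. Combining Step~1 with the $\CH_2$ identification, the joint law of the shifted quantum surface $(\cyl_+ + \tau_r, h)$ under the cone measure converges weakly to $\bsphere$ as $r \to -\infty$ (the boundary at $\tau_r$, having vanishing quantum length, degenerates to a marked point in the limit). A standard continuous-density conditioning argument then yields that the conditional law given $E_{r, \epsilon}$ converges first as $r \to -\infty$ for fixed $\epsilon > 0$ to $\bsphere$ conditioned on area in $[1, 1+\epsilon]$, then as $\epsilon \to 0$ to $\bsphere$ conditioned on area exactly $1$, which is the unit area quantum sphere.

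\textbf{Step 3 (moreover assertion and main obstacle).} For the statement conditioning on $h|_{U_r}$, the spatial Markov property of $h$ across $\{\tau_r + S\} \times [0, 2\pi]$ decomposes $h|_{\cyl_+ + (\tau_r + S)}$ as a zero-boundary GFF plus the harmonic extension of the boundary values on that circle; running Steps~1--2 on the shifted half-cylinder $\cyl_+ + (\tau_r + S)$ and noting that the boundary-data influence fades as $r \to -\infty$ (since the Step~1 convergence is uniform over typical boundary realizations) yields the desired convergence in probability. The main technical obstacle is the quantitative version of Step~1: controlling the full law of $Y$ (not only a single ``dominant'' excursion) under the area conditioning uniformly in $r$, which requires a Radon-Nikodym bound between the $\bes^{\delta_s}$-entrance law from $y_r$ and $\nu_{\delta_s}^\bes$, together with a Poissonian-excursion comparison ensuring that non-dominant excursions of $Y$ contribute negligibly both to the area and, via the conformal distortion estimate of Section~\ref{subsec::maps}, to the quantum surface structure.
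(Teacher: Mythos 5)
Your overall route is the same as the paper's: reduce to the $\CH_1$ projection (the $\CH_2$ projection is the same translation-invariant object under the cone law and under $\bsphere$, and is independent of $\tau_r$), identify the post-$\tau_r$ radial part as a $\bes^{4-8/\gamma^2}$ started from $e^{\gamma r/2}$ and run to $0$ (your Williams time-reversal step is exactly the content of the paper's Lemma~\ref{lem::cond_x_large}, which cites \cite[Lemma~3.6]{dms2014mating}), and for the ``moreover'' part argue that the influence of $h|_{U_r}$ fades, which matches the paper's harmonic-extension argument.

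The genuine gap is in Step~2, and it is precisely the point the paper's proof is built around. You assert that ``the joint law of the shifted quantum surface $(\cyl_+ + \tau_r,h)$ under the cone measure converges weakly to $\bsphere$''; but the left side is a probability measure for each $r$ while $\bsphere$ is an infinite measure, so this statement has no meaning as written --- in fact the unconditioned surfaces degenerate as $r \to -\infty$ (the boundary length $e^{\gamma r/2}$ and the typical area both tend to $0$). Consequently $\pr{E_{r,\epsilon}} \to 0$, and a ``standard continuous-density conditioning argument'' does not apply: one is conditioning on events of vanishing probability, and weak convergence does not commute with such conditioning without uniform control. The paper resolves this by introducing the auxiliary positive-probability event $E_{r,\beta}' = \{\sup_{u \geq \tau_r} X_u \geq \gamma^{-1}\log\beta^{-1}\}$: conditioned on $E_{r,\beta}'$ the post-$\tau_r$ field has an honest weak limit as $r \to -\infty$ (namely $\bsphere$ conditioned on its $\CH_1$ maximum exceeding a fixed level, a probability measure), and Lemma~\ref{lem::two_e_events} supplies the two-sided comparison --- $\pr{E_{r,\beta}' \giv E_{r,\epsilon}} \to 1$ as $\beta \to \infty$ uniformly in $r$, and $\pr{E_{r,\epsilon} \giv E_{r,\beta}'}$ uniformly positive --- that lets one swap the order of conditioning and pass to the limit. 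You flag the need for ``a Radon--Nikodym bound between the $\bes^{\delta_s}$-entrance law from $y_r$ and $\nu_{\delta_s}^\bes$, uniform in $r$'' as the main obstacle, which is the right diagnosis, but this is the heart of the proof rather than a deferred technicality; without it (or an equivalent of Lemma~\ref{lem::two_e_events}) Step~2 does not go through. The Poissonian comparison of ``non-dominant excursions'' you mention at the end is not needed here: after $\tau_r$ the radial part is a single downward passage to $0$ of a $\bes^{4-8/\gamma^2}$, not a concatenation of excursions.
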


The idea of the proof of Proposition~\ref{prop::cone_to_sphere} is to introduce the auxiliary event (defined in the statement of Lemma~\ref{lem::cond_x_large} just below) that $X$ takes on the value $\gamma^{-1} \log (\beta^{-1})$ after time $\tau_r$ for a fixed value of $\beta > 0$.  Standard facts about Bessel processes imply that the law of $X$ conditioned on this event converges as $r \to -\infty$ to the $\log$ of the same type of Bessel excursion used to construct the unit area quantum sphere conditioned to take on a large value and then reparameterized by quadratic variation.  We will then argue that this event occurs with probability tending to $1$ given $E_{r,\epsilon}$ as we decrease $\beta$ and that, conversely, $E_{r,\epsilon}$ occurs with positive probability for each fixed choice of $\beta$ uniformly in $r$ (Lemma~\ref{lem::two_e_events}).  Combining these two results will lead to the first assertion of Proposition~\ref{prop::cone_to_sphere}.  The second assertion follows from a similar argument.

\begin{lemma}
\label{lem::cond_x_large}
Suppose that we have the setup described in Proposition~\ref{prop::cone_to_sphere}.  Let
\begin{equation}
\label{eqn::e_c_beta_def}
E_{r,\beta}' = \left\{ \sup_{u \geq \tau_r} X_u \geq \gamma^{-1} \log\left( \beta^{-1} \right) \right\}.
\end{equation}
Let $\wt{X}$ be given by $2 \gamma^{-1} \log Z$ reparameterized to have quadratic variation $du$ where $Z$ is sampled from the excursion measure of a Bessel process of dimension $\delta = 4-\tfrac{8}{\gamma^2}$ conditioned on having maximum at least~$\beta^{-1/2}$.  Then the law of $u \mapsto X_{u+\tau_r}$ conditioned on~$E_{r,\beta}'$ converges as $r \to -\infty$ weakly with respect the topology of local uniform convergence to the law of~$\wt{X}$, where we have taken the horizontal translation for both so that they hit~$\beta^{-1/2}$ for the first time at $u=0$.
\end{lemma}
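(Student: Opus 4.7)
The plan is to use the strong Markov property of $X$ at the stopping time $\tau_r$ combined with the Bessel--log correspondence of Section~\ref{subsubsec::bessel}. Since $X$ is (a time change of) a Brownian motion with negative linear drift, the conditional law of $(X_{\tau_r+u})_{u\geq 0}$ given $(X_v)_{v\leq\tau_r}$ depends only on the value $X_{\tau_r}=r$, and is that of an unconditioned copy of $X$ started from $r$. The event $E_{r,\beta}'$ then asks this copy to reach the high level $M := \gamma^{-1}\log\beta^{-1}$.

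On the event $E_{r,\beta}'$ there is a well-defined first hitting time $\sigma$ of $M$ after $\tau_r$, and I would invoke a Williams-type decomposition at $\sigma$. By an elementary h-transform for Brownian motion with drift (using the harmonic function $e^{-2\mu x}$, where $\mu$ denotes the drift of $X$), conditional on $E_{r,\beta}'$ the piece on $[\tau_r,\sigma]$ has the law of a Brownian motion with the opposite drift started from $r$ and run until it first hits $M$; by the strong Markov property at $\sigma$, the piece on $[\sigma,\infty)$ then evolves as an unconditioned copy of $X$ started from $M$. Translating back via $Z = e^{(\gamma/2)X}$, the first piece is a $\bes^{8/\gamma^2}$ started at $e^{(\gamma/2)r}$ and run until it first hits $\beta^{-1/2}$, while the second piece is a $\bes^{4-8/\gamma^2}$ started at $\beta^{-1/2}$ (an It\^o calculation using the Bessel--log correspondence confirms these are the forward Bessel of the cone and its dual, respectively).

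Finally I would take $r \to -\infty$. The starting value $e^{(\gamma/2)r}$ of the first Bessel piece tends to $0$; since $8/\gamma^2 > 2$ the boundary $0$ is an entrance boundary, and by standard SDE continuity for Bessel processes (see \cite[Chapter~XI]{RY04}) the first piece converges weakly in the topology of local uniform convergence to a $\bes^{8/\gamma^2}$ started at $0$ and stopped at $\beta^{-1/2}$; the second piece is unaffected by $r$. Concatenating the limits and applying the Williams decomposition of a Bessel excursion at its first hitting time of a fixed level identifies the resulting process with a $\bes^{8/\gamma^2}$ excursion from $0$ to $0$ conditioned on its maximum being at least $\beta^{-1/2}$, which by the definition of $\nu_\delta^\bes$ recalled in Section~\ref{subsubsec::bessel} is precisely a sample from $\nu_{4-8/\gamma^2}^\bes$ conditioned on max $\geq \beta^{-1/2}$. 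Passing through $2\gamma^{-1}\log(\cdot)$ and the time change to quadratic variation $du$ produces $\wt{X}$, and the alignment convention that both processes first hit $\beta^{-1/2}$ at $u=0$ is automatically respected. The main technical obstacle is the $\epsilon \to 0$ continuity for the first Bessel piece in the local uniform topology; once this is established, the remainder is bookkeeping.
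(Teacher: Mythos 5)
Your argument is correct, and it is exactly the route the paper intends: the published proof is a one-line citation to ``standard properties of Bessel processes'' (namely \cite[Lemma~3.6]{dms2014mating}), and your strong Markov/$h$-transform decomposition at the first hitting of $\gamma^{-1}\log\beta^{-1}$, the dimension bookkeeping $8/\gamma^2 \leftrightarrow 4-8/\gamma^2$ under $Z=e^{\gamma X/2}$, the entrance-boundary continuity as $e^{\gamma r/2}\to 0$, and the Williams decomposition of $\nu^{\bes}_{4-8/\gamma^2}$ conditioned on $\{\max\ge\beta^{-1/2}\}$ are precisely those standard properties spelled out. The only point deserving a sentence more of care is the one you flag: the local uniform convergence after recentering so that the hitting time of $\beta^{-1/2}$ sits at $u=0$, which works because in the quadratic-variation parameterization the pre-hitting piece occupies $[-T,0]$ for any fixed $T$ once $r$ is sufficiently negative.
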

\begin{proof}
This follows from some standard properties of Bessel processes, which can be found in \cite[Section~3]{dms2014mating}.  (See, for example, \cite[Lemma~3.6]{dms2014mating}.)  In particular,
\begin{itemize}
\item If $B$ is a standard Brownian motion and $a \in \R$, then the process given by $e^{B_t + at}$ reparameterized to have quadratic variation $dt$ is a Bessel process of dimension $2+2a$.  Applying this to the process $X$, we see that $e^{(\gamma/2) X_t}$ reparameterized to have quadratic variation $dt$ is a Bessel process of dimension $4-8/\gamma^2$.  Sending $r \to -\infty$ corresponds to taking the starting point of the Bessel process to be equal to $0$.
\item Conditioning $X$ to exceed the value $\gamma^{-1} \log (\beta^{-1})$ is equivalent to conditioning $e^{(\gamma/2) X_t}$ to exceed the value $\beta^{-1/2}$.
\end{itemize}
\end{proof}

\begin{lemma}
\label{lem::two_e_events}
Suppose that we have the same setup as described in Proposition~\ref{prop::cone_to_sphere}.  Let $E_{r,\epsilon}$ be as in~\eqref{eqn::tau_r_e_r_eps_def} and $E_{r,\beta}'$ be as in~\eqref{eqn::e_c_beta_def}.  Then we have both
\begin{align}
\pr{ E_{r,\beta}' \giv E_{r,\epsilon}} \to 1 \quad\text{as}\quad \beta \to \infty \quad\text{uniformly in}\quad r \leq 0 \quad\text{and}\\
\pr{E_{r,\epsilon} \giv E_{r,\beta}'} > 0 \quad\text{uniformly in}\quad r \leq 0 \quad\text{for}\quad \epsilon,\beta > 0 \quad\text{fixed}.
\end{align}
\end{lemma}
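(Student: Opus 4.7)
My plan is to work with the decomposition $h = X + h_2$, where $X$ is the $\CH_1(\cyl)$-projection (the time-reversed log-Bessel) and $h_2$ is the independent $\CH_2(\cyl)$-projection, so that
\[ \mu_h(\cyl_+ + \tau_r) = \int_0^\infty e^{\gamma X_{\tau_r + u}} M_u\, du, \]
where $M_u$ depends only on $h_2$, is stationary in $u$, and is independent of $X$ (with the appropriate regularization as in \cite{dms2014mating}).

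For the first claim I would argue that on $(E_{r,\beta}')^c$ one has $e^{\gamma X_{\tau_r + u}} < \beta^{-1}$ for every $u \geq 0$, so in Bessel coordinates $Z = e^{\gamma X / 2}$ the time-reversed Bessel stays below $\beta^{-1/2}$ on all of $[\tau_r, \infty)$. Using transience of the underlying Bessel ($\delta > 2$), which forces eventual decay to $0$, together with Bessel scaling, I would bound
\[ \E\!\left[\mu_h(\cyl_+ + \tau_r);\, (E_{r,\beta}')^c \right] \leq \beta^{-1}\, \E\!\left[\int_0^\infty 1_{\{Z_{\tau_r+u} \leq \beta^{-1/2}\}}\, M_u\, du\right] \]
by a quantity that tends to $0$ as $\beta \to \infty$ uniformly in $r \leq 0$; the essential inputs are exponential moment estimates for the stationary environment $M$, together with the fact that the reversed Bessel, constrained to stay below a small level, contributes nontrivially to the integral only on a controlled time window. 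Markov's inequality then yields $\pr{E_{r,\epsilon} \cap (E_{r,\beta}')^c} \to 0$ uniformly in $r$, so dividing by $\pr{E_{r,\epsilon}}$ (whose uniform positivity follows from the second claim, proved independently below) gives the first assertion.

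For the second claim I apply Lemma~\ref{lem::cond_x_large}: the conditional law of $u \mapsto X_{\tau_r + u}$ given $E_{r,\beta}'$ converges weakly as $r \to -\infty$ to the law of a log-Bessel excursion $\wt X$ conditioned on its maximum exceeding $\beta^{-1/2}$. Coupling $\wt X$ with an independent copy of the lateral environment produces a limiting area variable $\wt A$, which is strictly positive and whose law has a continuous, everywhere-positive density on $(0,\infty)$ by the quantum disk/sphere theory of \cite{dms2014mating}. Hence $\pr{\wt A \in [1, 1+\epsilon]} > 0$, and passing back through weak convergence together with continuity of the area functional on precompact sets of paths shows $\liminf_{r \to -\infty} \pr{E_{r,\epsilon} \giv E_{r,\beta}'} > 0$; for $r$ in any compact range $[-R, 0]$ the same lower bound follows from continuity in $r$ and a compactness argument.

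The main obstacle will be the uniform-in-$r$ tail estimate for the first claim: one must control the weighted integral above for a time-reversed transient Bessel started from the tiny value $e^{\gamma r/2}$, uniformly as $r \to -\infty$. The natural strategy is to apply Bessel scaling invariance to reduce to a single reference scale and then combine with moment bounds for the stationary environment $M$, but care is required because the excursion-based arguments of \cite{dms2014mating} are phrased for Bessel excursions rather than for the ``stays below level $\beta^{-1/2}$'' conditioning that appears here.
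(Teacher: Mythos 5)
Your sketch of the second assertion is broadly in the right spirit (it is essentially the route through Lemma~\ref{lem::cond_x_large} that the paper's one-line citation to \cite[Lemma~5.4]{dms2014mating} encodes, modulo the unaddressed point that the area functional is not continuous for local uniform convergence of the path --- it is an integral over an infinite time horizon against the independent lateral field --- so passing positivity through the weak limit needs a tightness/uniform-integrability step). The first assertion, however, has a genuine gap. You propose to bound $\pr{E_{r,\epsilon} \cap (E_{r,\beta}')^c}$ by an absolute quantity tending to $0$ uniformly in $r$, and then divide by $\pr{E_{r,\epsilon}}$, ``whose uniform positivity follows from the second claim.'' It does not: the second claim gives $\pr{E_{r,\epsilon} \giv E_{r,\beta}'} \geq p_0$, which only yields $\pr{E_{r,\epsilon}} \geq p_0 \pr{E_{r,\beta}'}$, and both $\pr{E_{r,\beta}'}$ and $\pr{E_{r,\epsilon}}$ tend to $0$ as $r \to -\infty$ ($\mu_h(\cyl_+ + \tau_r) \to 0$ a.s.\ since the total mass near $+\infty$ is finite, so $E_{r,\epsilon}$ is a vanishing-probability event). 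An absolute $o(1)$ bound on the numerator therefore cannot produce the required uniform-in-$r$ conclusion; what is needed is the \emph{relative} estimate $\pr{E_{r,\epsilon} \cap (E_{r,\beta}')^c} = o(1)\cdot \pr{E_{r,\epsilon}}$ with $o(1)$ uniform in $r$, and a first-moment/Markov bound on the area does not see the conditioning.

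Separately, the displayed moment bound is vacuous as written: on $(E_{r,\beta}')^c$ the reversed Bessel satisfies $Z_{\tau_r+u} \leq \beta^{-1/2}$ for \emph{every} $u \geq 0$ (and in any case $Z_{\tau_r+u} \to 0$, so the indicator is eventually identically $1$ on all realizations), whence $\E\bigl[\int_0^\infty \mathbf{1}_{\{Z_{\tau_r+u} \leq \beta^{-1/2}\}} M_u\, du\bigr] = \infty$ for the stationary environment $M$. The argument the paper is invoking works instead at the level of the Bessel excursion decomposition: conditioning the area beyond $\tau_r$ to be of order $1$ forces the terminal stretch of the reversed Bessel below level $e^{\gamma r/2}$ to be long (in Bessel time), and by scaling and the structure of the It\^o excursion measure, long excursions have large maxima with conditional probability tending to $1$ as $\beta \to \infty$, uniformly over the starting level. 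That is a statement about the conditional law given $E_{r,\epsilon}$, and is the missing ingredient in your plan.
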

\begin{proof}
This follows from the same argument used to prove \cite[Lemma~A.4]{dms2014mating}.
\end{proof}

\begin{proof}[Proof of Proposition~\ref{prop::cone_to_sphere}]
By Lemma~\ref{lem::cond_x_large} and Lemma~\ref{lem::two_e_events}, it follows that the law of $h$ conditioned on both $E_{r,\epsilon}$ and $E_{r,\beta}'$ converges as $r \to -\infty$ and then as $\epsilon \to 0$ to that of a unit area quantum sphere conditioned on the positive probability event that the supremum of its projection onto $\CH_1(\cyl)$ exceeds $\gamma^{-1} \log(\beta^{-1})$.  Taking a further limit as $\beta \to \infty$ yields the law of a unit area quantum sphere.  The first assertion of the proposition then follows because Lemma~\ref{lem::two_e_events} implies that the conditional law of $h$ given both $E_{r,\epsilon}$ and $E_{r,\beta}'$ is close to the law conditioned on only $E_{r,\epsilon}$ when $\beta > 0$ is large.

We are now going to justify the second assertion.  Fix $S > 0$. Let $\Fh_r$ be the distribution on $\cyl_+ + \tau_r$ which is given by harmonically extending $h$ from $U_r$ to $\cyl_+ + \tau_r + S$.  We first claim that, given both $E_{r,\epsilon}$ and $E_{r,\beta}'$, $\Fh_r$ restricted to $\cyl_+ + \tau_r + S + T$ is close to a constant with respect to the uniform topology for large $T$.  Lemma~\ref{lem::cond_x_large} implies that this is the case when we only condition on $E_{r,\beta}'$.  Indeed, we can write this harmonic extension as the sum $f_1 + f_2$ where $f_i$ is the part which comes from harmonically extending the projection of $h$ onto $\CH_i(\cyl)$ for $i=1,2$.  The restriction of $f_1$ to $\tau_r + S + T$ is constant as it is given by the harmonic extension of a function which is constant on vertical lines.  The law of $f_2$ restricted to $\cyl_+ + \tau_r + S+T$ converges weakly to a constant with respect to the uniform topology because conditioning on $E_{r,\beta}'$ does not affect $h_2$, $h_2$ is independent of $\tau_r$, and the law of $h_2$ is translation invariant.  Lemma~\ref{lem::two_e_events} implies that the same convergence holds when we condition on both $E_{r,\epsilon}$ and $E_{r,\beta}'$.  The remainder of the proof of the second assertion thus follows from the same argument used to establish the first assertion.
\end{proof}

\section{Equivalence of Bessel and Brownian constructions}
\label{sec::bessel_brownian_constructions}

The purpose of this section is to give the proof of Theorem~\ref{thm::sphere_equivalent_constructions}.  A variant of this argument, which we will explain in Section~\ref{sec::levy_construction}, gives Theorem~\ref{thm::pure_sphere_equivalent_constructions}.  

\subsection{Setup and strategy}
\label{subsec::setup_and_strategy}

Before we proceed to the proof, we will first give an overview of the steps (and introduce many of the objects and events). We will first consider the case that $\gamma \in (\sqrt{2},2)$.  We suppose that we are working on a $\gamma$-quantum cone $\CC = (\cyl,h,+\infty,-\infty)$ and that~$\eta'$ is a space-filling $\SLE_{\kappa'}$ process in $\cyl$ from $-\infty$ to $-\infty$ sampled independently of $h$ and then reparameterized by quantum area.  In other words, for each $s < t$ we have that $\mu_h(\eta'([s,t])) = t-s$ with the normalization that $\eta'(0) = +\infty$.  Let $\wt{\eta}' \colon \R \to \cyl$ be the $\SLE_{\kappa'}(\kappa'-6)$ process from $-\infty$ to $+\infty$ which arises by taking $\eta'$ and reparameterizing it according to capacity as seen from $+\infty$.  Equivalently, $\wt{\eta}'$ is the counterflow line from $-\infty$ to $+\infty$ of the GFF used to generate $\eta'$.  For each time $t$, we let $\theta_t$ be equal to $2\pi$ times the harmonic measure of the left side of $\wt{\eta}'([0,t])$ as seen from $+\infty$.   Since $\theta_t$ is continuous in $t$ we have that the set $\wt{\CT}$ of times $t$ such that $\theta_t = 0$ or $\theta_t = 2\pi$ is closed in $\R$. Hence we can write $\R \setminus \wt{\CT}$ as a countable disjoint union of open intervals $\cup_j (s_j,t_j)$.  We then let $\CT$ be the countable and discrete subset (i.e., without limit points) of $\wt{\CT}$ which consists of those $t_j$ such that $\theta_{t_j} \neq \theta_{s_j}$.  These times correspond to when $\wt{\eta}'$ makes loops which disconnect $+\infty$ from $-\infty$ with alternating clockwise and counterclockwise orientation.

Let $L$ (resp.\ $R$) denote the change in the quantum boundary length of the left (resp.\ right) side of $\eta'((-\infty,t])$ relative to time $0$.  Each of the times in $\CT, \wt{\CT}$ corresponds to a $\pi/2$-cone excursion of the time-reversal $(\wt{L},\wt{R})$ of $(L,R)$ on intervals of time which contain $0$.  Recall that the definition of a $\pi/2$-cone time is given in Section~\ref{subsubsec::quantum_disks} and the connection between $\pi/2$-cone times for $(\wt{L},\wt{R})$ and the behavior of space-filling $\SLE_{\kappa'}$ is explained in the introduction of \cite{dms2014mating}; see in particular \cite[Figure~1.13]{dms2014mating}.

For $r \in \R_-$ and $C > 1$, we let $\zeta_{r,C}$ be the first time $t \in \CT$ that the quantum boundary length of the component containing $+\infty$ is at most $C^{-1} e^{\gamma r/2}$.  We note that a first such time $t$ a.s.\ exists by Proposition~\ref{prop::finitely_many_large_cone_excursions_short_displacement} as the complementary component containing $+\infty$ of $\wt{\eta}'$ drawn up to such a time corresponds to a $\pi/2$-cone excursion of $(\wt{L},\wt{R})$ with terminal displacement at most $C^{-1} e^{\gamma r/2}$.  Let $U_{r,C}$ be this component and let $\ell_{r,C}$ be its quantum boundary length.

Let $F_{r,\epsilon,C}$ be the event that $\ell_{r,C}$ is contained in $I_{r,C} := [\tfrac{1}{2},1] \cdot C^{-1} e^{\gamma r/2}$ and the quantum area of $U_{r,C}$ is in $[1,1+\epsilon]$.  Throughout, we let $\tau_r$ and $E_{r,\epsilon}$ be as in~\eqref{eqn::tau_r_e_r_eps_def}.

The first step (carried out in Section~\ref{subsec::comparison_of_pinched_quantum_cones}) is to show that the conditional probability of $E_{r,\epsilon}$ given $F_{r,\epsilon,C}$ converges to $1$ as $C \to \infty$ uniformly in $r$ and that the conditional probability of $F_{r,\epsilon,C}$ given $E_{r,\epsilon}$ is uniformly positive in $r$ when $C$ is fixed.  This implies that we can view the joint law of $h$ and $\eta'$ conditioned on $F_{r,\epsilon,C}$ as arising by first conditioning on $E_{r,\epsilon}$ and then subsequently conditioning the resulting law on the uniformly positive conditional probability event $F_{r,\epsilon,C}$.

The second step (carried out in Section~\ref{subsec::asymptotic_mixing}) is to show that the conditional law of $h$ and $\eta'$ given $E_{r,\epsilon}$ is close to the law which results when we condition further on $F_{r,\epsilon,C}$.  The idea to establish this is first to take the horizontal translation of the embedding of $\CC$ into $\cyl$ so that the quantum area of $\cyl_+$ is equal to $1/2$.  Whether or not the event $F_{r,\epsilon,C}$ occurs is determined by the behavior of $\eta'$ and $h$ in $\cyl_- + u$ for $u < 0$ very negative.  Since the conditional law of $\eta'$ and $h$ in $\cyl_+ + v$ for $v$ much larger than $u$ given their behavior in $\cyl_- + u$ is not far from their unconditioned law (both $h$ and $\eta'$ ``forget their past'' quickly), it follows that their joint conditional law converges to that of a unit area quantum sphere decorated by an independent space-filling $\SLE_{\kappa'}$ process upon taking limits.

The result then follows for $\gamma \in (\sqrt{2},2)$ because by Proposition~\ref{prop::cone_to_sphere_brownian_limit} the law of $(\wt{L},\wt{R})$ conditional on $F_{r,\epsilon,C}$ restricted to the interval $J_{r,C}$ of time in which $\eta'$ is filling $U_{r,C}$ converges when we take appropriate limits to a correlated Brownian loop as constructed in Section~\ref{sec::brownian_excursions}.

At the end of this section, we will explain how a variant of this argument gives the case that $\gamma \in (0,\sqrt{2}]$.

\subsection{Exploring a $\gamma$-quantum cone}
\label{subsec::exploring_gamma_cone_infinity_to_zero}

We are now going to identify the conditional law of the unexplored region in a $\gamma$-quantum cone, $\gamma \in (\sqrt{2},2)$, when one draws a whole-plane $\SLE_{\kappa'}(\kappa'-6)$ in $\cyl$ from $-\infty$ to $+\infty$ up to the first time $t \in \CT$ that the quantum boundary length of the complementary component containing the origin falls below $1$.  This result will in particular imply that the surface parameterized by this component is conditionally independent of the outside surface given its quantum boundary length.

\begin{proposition}
\label{prop::unexplored_region_qc_inf_to_0}
Suppose that $\gamma \in (\sqrt{2},2)$ and let $\tau$ be the first time $t \in \CT$ that the quantum boundary length of the component~$U$ of $\cyl \setminus \wt{\eta}'([0,t])$ containing $+\infty$ falls below~$1$.  Then the conditional law of the quantum surface $(U,h)$ given its quantum boundary length is that of a quantum disk weighted by its quantum area.  That is, if $b$ denotes the quantum boundary length of $\partial U$ and $\bdisk^b$ denotes the law of a quantum disk with boundary length $b$, then the conditional law of $(U,h)$ given $b$ has Radon-Nikodym derivative with respect to $\bdisk^b$ equal to the quantum area of $U$ times a normalization constant to make it a probability measure.  Moreover, given its quantum boundary length, the quantum surface $(U,h)$ is conditionally independent of the quantum surface $(\cyl \setminus U,h)$ and the ordered sequence of marked, oriented quantum surfaces separated by $\wt{\eta}'$ from $+\infty$ before time $\tau$.  Finally, $\wt{\eta}'(\tau)$ is uniformly distributed from the quantum boundary measure on $\partial U$.
\end{proposition}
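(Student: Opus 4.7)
The plan is to deduce this from the structural description of the exploration of a $\gamma$-quantum cone by $\wt{\eta}'$ given in \cite[Theorem~1.18]{dms2014mating}, combined with the fact that the marked point $+\infty$ of the $\gamma$-quantum cone is a quantum-typical point, and a target-invariance argument for $\SLE_{\kappa'}(\kappa'-6)$.

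First I would invoke \cite[Theorem~1.18]{dms2014mating}. Writing $U_t$ for the component of $\cyl \setminus \wt{\eta}'([0,t])$ containing $+\infty$, at every time $t \in \CT$ the curve has just closed a loop around $+\infty$ and $\partial U_t$ has a well-defined quantum length $\ell_t$. The complementary components of $\wt{\eta}'([0,t])$ other than $U_t$ are conditionally independent quantum disks given their boundary lengths and orientations, and the pair ``outer boundary length / unexplored surface'' evolves Markovianly in $\ell_t$. Since $\tau$ is a stopping time for this structure (the first $t \in \CT$ with $\ell_t < 1$), the Markov property at $\tau$ gives conditional independence of $(U,h)$ from $(\cyl \setminus U, h)$ and the ordered sequence of previously cut-out surfaces, given $b = \ell_\tau$.

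Second I would identify the conditional law of $(U,h)$ given $b$. If one forgets the interior marked point $+\infty$, then by \cite[Theorem~1.18]{dms2014mating} the piece $U$ has the law of an unmarked quantum disk of boundary length $b$. What distinguishes $U$ from an ordinary cut-out piece is that it contains $+\infty$, and by the construction of the $\gamma$-quantum cone together with a rerooting argument in the spirit of \cite[Section~5]{dms2014mating} and \cite[Proposition~5.8]{dms2014mating}, the $\gamma$-quantum cone with its finite-mass marked point can be characterized as: first sample the cone modulo that point, and then sample $+\infty$ according to the quantum area measure on the underlying surface. Consequently, selecting from an otherwise-unbiased sequence of cut-out quantum disks the unique one containing such a quantum-typical point size-biases the selection by the quantum area of $U$, which gives the first claim. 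The uniformity of $\wt{\eta}'(\tau)$ on $\partial U$ then follows from target-invariance: after re-expressing the law of $+\infty$ as quantum-area-uniform inside $U$, the mating-of-trees description of a quantum disk (Theorem~\ref{thm::disk_explore}) combined with a symmetry argument forces the boundary tip $\wt{\eta}'(\tau)$ to be uniform on $\partial U$ according to quantum boundary length.

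The main technical obstacle is justifying the size-biasing at the jump-stopping time $\tau$, since $\tau$ is defined by a jump of $\ell_t$ rather than by a deterministic threshold being reached continuously. This is a Palm-type computation that is cleanest to carry out by approximating the event $\{\ell_\tau < 1\}$ by events of the form $\{\ell_\tau \in (1-\delta, 1)\}$, for which the rerooting identity applies directly, and then passing to the $\delta \to 0$ limit using the continuity and tightness properties of the exploration already established in \cite{dms2014mating}.
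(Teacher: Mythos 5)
Your high-level strategy---conditional independence from the Markovian structure of \cite[Theorem~1.18]{dms2014mating}, plus a rerooting argument exploiting that $+\infty$ is a quantum-typical point of the $\gamma$-quantum cone, so that the component containing it is selected in an area-biased way---is exactly the strategy of the paper. The conditional-independence part and the identification of the area-biasing as the source of the weighting are both on target.

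There is, however, a genuine gap at the step where you write that, forgetting $+\infty$, ``by \cite[Theorem~1.18]{dms2014mating} the piece $U$ has the law of an unmarked quantum disk of boundary length $b$.'' Theorem~1.18 describes the components \emph{cut off from} the target point; $U$ is the \emph{unexplored} component containing the target, and the curve is targeted at $+\infty$, so one cannot simply ``forget'' that point --- identifying the law of $U$ is precisely the content of the proposition, and $U$ is not a member of any exchangeable family of cut-out disks on which a size-biasing argument could operate. The paper bridges this in two ways. First, it continues the exploration past $\tau$ with an $\SLE_{\kappa'}(\kappa'-6)$ retargeted at an independent quantum-area-typical point $w \in U$ (Lemmas~\ref{lem::retarget_inf_to_0} and~\ref{lem::unexplored_region_resample}); after the retargeting the region containing $+\infty$ at the next loop-closing time \emph{is} a genuine cut-out component of the new curve, Theorem~1.18 applies to it, and the retargeting identity shows $+\infty$ plays the role of a quantum-typical point there. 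Second, and this is where the size-biasing is actually proved, it represents $U$ as the outermost orientation-changing $\pi/2$-cone excursion of the time-reversed boundary-length process $(\wt L,\wt R)$ containing time $0$ with terminal displacement at most $1$, and shows by a stationarity (Palm) argument that, given its terminal displacement, this excursion is length-biased (Lemma~\ref{lem::conex_containing_0}); via Theorem~\ref{thm::disk_explore} length-biased means area-biased. Your proposed $\delta$-approximation of $\{\ell_\tau \in (1-\delta,1)\}$ addresses a different and lesser issue: the real subtlety is not the value of $\ell_\tau$ but the selection of the \emph{outermost} excursion below the threshold from a nested family, which is what the cone-excursion lemma handles. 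Finally, the uniformity of $\wt{\eta}'(\tau)$ on $\partial U$ should also be routed through the retargeting construction rather than an unspecified ``symmetry argument''; as stated that step is not yet a proof.
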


Recall that weighting the law of a quantum disk by its quantum area corresponds to adding a marked point to the interior of the disk which is sampled from its quantum area measure.  In the setting of Proposition~\ref{prop::unexplored_region_qc_inf_to_0}, the role of the marked point will be played by the origin of the quantum cone $(\cyl,h,+\infty,-\infty)$, i.e., the point at $+\infty$.  Before we proceed to the details of the proof of Proposition~\ref{prop::unexplored_region_qc_inf_to_0}, let us briefly review the strategy.  We first recall from \cite{dms2014mating} that if we draw an independent $\SLE_{\kappa'}(\kappa'-6)$ process on top of a quantum cone, then the structure of the components (viewed as quantum surfaces) that it cuts out is described by a Poissonian collection of quantum disks.  We will combine this fact with the target invariance of $\SLE_{\kappa'}(\kappa'-6)$ processes where the new target point will come from moving the marked point at $+\infty$ of the quantum cone using an independent space-filling $\SLE_{\kappa'}$.

It is natural in view of considerations from the discrete models that in the statement of Proposition~\ref{prop::unexplored_region_qc_inf_to_0} we require that $\tau \in \CT$.  Indeed, $\tau \in \CT$ is the continuous analog of having the boundary conditions for the discrete model on $\partial U$ all having ``the same color.''  Proposition~\ref{prop::unexplored_region_qc_inf_to_0} will be a consequence of the following two observations.  The first is a version of \cite[Lemma~8.3]{dms2014mating}.

For each $z \in \cyl$, we let $\psi_z \colon \cyl \to \cyl$ be the unique conformal transformation with $\psi_z(z) = +\infty$ and $\psi_z'(-\infty) > 0$.  Explicitly, $\psi_z(u) = -\log( e^{-u} - e^{-z})$.  We note that $\psi_z$ is the analog of the map $\C \to \C$ which corresponds to translating a point $z \in \C$ to the origin in the setting in which we represent $\C$ by $\cyl$ with $+\infty$ (resp.\ $-\infty$) corresponding to $0$ (resp.\ $\infty$).

\begin{lemma}
\label{lem::retarget_inf_to_0}
Suppose that we have the same setup as in Proposition~\ref{prop::unexplored_region_qc_inf_to_0}.  Conditionally on $h$ and $\wt{\eta}'$, we suppose that $w \in U$ is picked uniformly from the quantum area measure on $U$ and let $\wt{\eta}_w'$ be a whole-plane $\SLE_{\kappa'}(\kappa'-6)$ process from $-\infty$ to $w$ coupled so as to agree with $\wt{\eta}'$ until $w$ and $+\infty$ are first separated and then taken to continue conditionally independently afterwards.  Then we have as path decorated surfaces that
\[ (\cyl, h \circ \psi_w^{-1} + Q\log|(\psi_w^{-1})'|,+\infty,-\infty,\psi_w(\wt{\eta}_w')) \stackrel{d}{=} (\cyl,h,+\infty,-\infty,\wt{\eta}').\]
\end{lemma}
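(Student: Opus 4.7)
The plan is to combine two invariance properties: re-rooting invariance of the $\gamma$-quantum cone from \cite[Lemma~9.3]{dms2014mating}, and target invariance of whole-plane $\SLE_{\kappa'}(\kappa'-6)$ from the imaginary geometry framework of \cite{MS_IMAG,MS_IMAG2,MS_IMAG3,MS_IMAG4}. The cone re-rooting invariance asserts that if $w$ is sampled from an appropriate version of the quantum area measure on $(\cyl,h)$, then the pair $(h \circ \psi_w + Q\log|\psi_w'|, -\infty, +\infty)$ is again a $\gamma$-quantum cone; equivalently, the finite-mass marked point of the cone may be moved to a quantum-typical point without altering the law. The $\SLE$ target invariance provides precisely the coupling between $\wt{\eta}'$ and $\wt{\eta}_w'$ assumed in the statement: the two processes agree until the first time $w$ is separated from $+\infty$, and then evolve conditionally independently.

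First, I would consider the joint triple $(h,\wt{\eta}',w)$ where, given $(h,\wt{\eta}')$, the point $w$ is uniform in $U$ with respect to $\mu_h|_U/\mu_h(U)$. Target invariance lets me replace $\wt{\eta}'$ by $\wt{\eta}_w'$ in this triple without altering its joint law: the two paths coincide up to the separation time of $w$ from $+\infty$ (which occurs no earlier than $\tau$, since $w\in U$), and the post-separation continuations are conditionally independent of everything else. I would then apply $\psi_w$, which sends $-\infty\mapsto-\infty$ and $w\mapsto+\infty$. The field transforms under the LQG change-of-coordinates rule~\eqref{eqn::coord_change} and, by the cone re-rooting invariance, the transformed field $h\circ\psi_w+Q\log|\psi_w'|$ has the marginal law of a $\gamma$-quantum cone field. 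The path transforms by conformal pushforward, and by the conformal covariance of the $\SLE_{\kappa'}(\kappa'-6)$/GFF coupling in \cite{MS_IMAG4}, $\psi_w(\wt{\eta}_w')$ has the law of a whole-plane $\SLE_{\kappa'}(\kappa'-6)$ from $-\infty$ to $+\infty$ in $\cyl$ coupled to the transformed field in exactly the same way that $\wt{\eta}'$ is coupled to $h$. This yields the claimed equality in distribution.

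The main technical obstacle I anticipate is bookkeeping for the normalization in the re-rooting step. The invariance of \cite[Lemma~9.3]{dms2014mating} naturally involves a size-biasing by the quantum mass of the region from which $w$ is sampled, and here that region is the random set $U$ determined by $\wt{\eta}'$ itself. To reconcile the two sides of the distributional identity one must verify that the Radon-Nikodym derivative arising from picking $w$ uniformly in the $\wt{\eta}'$-measurable region $U$ (rather than a deterministic region) cancels correctly against the weighting implicit in the cone re-rooting statement. This reduces to checking that conditioning on $U$ and on the pre-separation segment of $\wt{\eta}'$ is compatible with the conformal invariance, which in turn follows from the fact that $\wt{\eta}'$ is constructed from an independent auxiliary GFF and so transforms covariantly under the coordinate change $\psi_w$.
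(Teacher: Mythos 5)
You have identified the right ingredients---\cite[Lemma~9.3]{dms2014mating} plus target invariance---and, to your credit, the right obstacle: $w$ is quantum-typical only for the random region $U$, not for the whole cone, so the re-rooting invariance does not apply to it directly. But the proposal does not close this gap, and the justification you offer for the cancellation (that $\wt{\eta}'$ is built from an independent auxiliary GFF and transforms covariantly under $\psi_w$) is beside the point: conformal covariance of the path/field coupling says nothing about why the size-biasing by $\mu_h(U)$ disappears. Relatedly, the opening claim that target invariance lets you ``replace $\wt{\eta}'$ by $\wt{\eta}_w'$ in the triple without altering its joint law'' is not meaningful as stated; given $w$, which depends on $h$ and on the pre-separation segment of the path, it is not a priori clear that $(h,\wt{\eta}_w')$ is a cone decorated by a conditionally independent $\SLE_{\kappa'}(\kappa'-6)$ targeted at $w$, and this is essentially what the lemma asks you to prove.

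The missing argument is quantitative, and it is where the paper does its work. Sample a genuinely quantum-typical point $z=\eta'(V)$ with $V$ uniform on $[-R,R]$ and condition on the event $E_R=\{z\in U\}$; sampling $w$ uniformly from $\mu_h|_U$ is the $R\to\infty$ limit of this after removing a weighting by $\mu_h(U)$. By Bayes' rule and \cite[Lemma~9.3]{dms2014mating}, the conditional law of the re-rooted pair given $E_R$ converges as $R\to\infty$ to the law of $(h,\wt{\eta}')$ weighted by $\mu_h(U)$. The same limit, computed after first fixing the value $a$ of $\mu_h(U)$, is the law of $(h\circ\psi_w+Q\log|\psi_w'|,\psi_w(\wt{\eta}_w'))$ with $\mu_h(U)=a$. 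Since the coordinate change preserves quantum area, $\mu_h(U)$ is the same functional of both configurations, so the weighting becomes trivial once one conditions on $a$; the two conditional laws therefore agree for each $a$, both sides induce the same law on $a$, and the unconditioned laws agree. Some version of this computation is needed to make your ``the Radon--Nikodym derivatives cancel'' step into a proof.
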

\begin{proof}
We take $\wt{\eta}'$ and $\wt{\eta}_w'$ to be coupled together so that they both correspond to taking the whole-plane space-filling $\SLE_{\kappa'}$ process $\eta'$ from $-\infty$ to $-\infty$, respectively targeted at $+\infty$ and $w$.  The idea of the proof is to generate a marked point $z$ chosen uniformly from the quantum measure on $\eta'([-R,R])$ for $R > 0$ large.  Since $\eta'$ is parameterized according to quantum area, this is equivalent to taking $z = \eta'(V)$ where $V$ is chosen uniformly in $[-R,R]$ according to Lebesgue measure independently of everything else.  It is convenient to work with the point $z$ in place of $w$ because \cite[Lemma~9.3]{dms2014mating} implies that the quantum surface $(\cyl,h,+\infty,-\infty)$ decorated by $\eta'$ has the same law as the quantum surface $(\cyl, h \circ \psi_z^{-1} + Q \log|(\psi_z^{-1})'|,+\infty,-\infty)$ decorated by the path $\psi_z(\eta')$.  Since $\psi_z(\wt{\eta}_z')$ is generated from $\psi_z(\eta')$ in the same deterministic manner as $\wt{\eta}'$ is generated from $\eta'$, it follows that the quantum surface $(\cyl,h,+\infty,-\infty)$ decorated by $\wt{\eta}'$ has the same law as the quantum surface $(\cyl, h \circ \psi_z^{-1} + Q \log|(\psi_z^{-1})'|,+\infty,-\infty)$ decorated by the path $\psi_z(\wt{\eta}_z')$.  As we will see momentarily, the lemma will follow by considering the latter conditioned on the event $E_R$ that $z \in U$.

Let $\CA$ be any positive probability event for $(\cyl,h \circ \psi_z^{-1}+Q\log|(\psi_z^{-1})'|,+\infty,-\infty,\psi_z(\wt{\eta}_z'))$.  By Bayes' rule, we have that
\begin{align}
    \p[ \CA \giv E_R]
&= \frac{\p[ E_R \giv \CA]}{\p[E_R]} \p[\CA]. \label{eqn::condition_ratio}
\end{align}
As explained above, by \cite[Lemma~8.3]{dms2014mating}, we have that $\p[\CA]$ is equal to the probability of the same event with $(\cyl,h,+\infty,-\infty,\wt{\eta}')$ in place of $(\cyl,h \circ \psi_z^{-1}+Q\log|(\psi_z^{-1})'|,+\infty,-\infty,\psi_z(\wt{\eta}_z'))$.  Assume that $\CA$ is an event of the form $\wt{\CA} \cap \{ a \leq \mu_h(U) \leq a+\delta\}$ for some $a \geq 0$ and $\delta > 0$.  Then sending $R \to \infty$, we see that the right hand side of~\eqref{eqn::condition_ratio} is bounded from below by a constant times $a \p[\CA]$ and from above by the same constant times $(a+\delta) \p[\CA]$.  Therefore the conditional law of $(\cyl,h \circ \psi_z^{-1} + Q \log|(\psi_z^{-1})'|,+\infty,-\infty,\psi_z(\wt{\eta}_z'))$ given $E_R$ converges as $R \to \infty$ to the law of $(\cyl,h,+\infty,-\infty,\wt{\eta}')$ weighted by the quantum area of $U$.  We note that if we fix the quantum area of $U$, then the conditional law of $(\cyl,h \circ \psi_z^{-1} + Q\log|(\psi_z^{-1})'|,+\infty,-\infty,\psi_z(\wt{\eta}_z'))$ given $E_R$ converges as $R \to \infty$ to that of $(\cyl,h \circ \psi_w^{-1}+Q\log|(\psi_w^{-1})'|,+\infty,-\infty,\psi_w(\wt{\eta}_w'))$ (with the quantum area of $U$ for the latter fixed to be the same as the former).  Therefore the law of $(\cyl,h \circ \psi_w^{-1}+Q\log|(\psi_w^{-1})'|,+\infty,-\infty, \psi_w(\wt{\eta}_w'))$ given the quantum area of $U$ is equal to the law of $(\cyl,h,+\infty,-\infty,\wt{\eta}')$ given the quantum area of $U$.  This implies the result because both laws induce the same law on the quantum area of~$U$.
\end{proof}

\begin{lemma}
\label{lem::unexplored_region_resample}
Suppose that we have the same setup as in Proposition~\ref{prop::unexplored_region_qc_inf_to_0} and let $\wh{\eta}'$ be given by $\wt{\eta}'|_{[\tau,\infty)}$.  Suppose that $w$ is picked uniformly in $U$ from the quantum area measure conditionally independently of everything else and let $U_w$ be the component of $U \setminus \wh{\eta}'$ which contains $w$.  Suppose that we condition on the following:
\begin{enumerate}
\item The event that $\partial U_w$ is entirely contained in either the left or the right side of $\wh{\eta}'$,
\item The quantum area and boundary length of $(U_w,h)$, and
\item The quantum areas and boundary lengths of all of the components of $U \setminus \wh{\eta}'$.
\end{enumerate}
Then we have that the quantum surface $(U_w,h)$ has the law of a quantum disk with the given quantum boundary length and area.  In particular, $(U_w,h)$ is conditionally independent of the other components of $U \setminus \wh{\eta}'$ (viewed as quantum surfaces) given its quantum boundary length and area.
\end{lemma}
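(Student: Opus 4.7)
The plan is to use Lemma~\ref{lem::retarget_inf_to_0} (target invariance) together with the quantum-disk description of $(U,h)$ from Proposition~\ref{prop::unexplored_region_qc_inf_to_0} in order to reduce the statement to the disk-level analog of \cite[Theorem~1.18]{dms2014mating}.

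First, I would apply Lemma~\ref{lem::retarget_inf_to_0} at the sample point $w$: this produces an $\SLE_{\kappa'}(\kappa'-6)$ process $\wh{\eta}'_w$ targeted at $w$ which agrees with $\wh{\eta}'$ up to the first time $\sigma \geq \tau$ at which $w$ is separated from $+\infty$ by the trace, and continues conditionally independently inside the $w$-containing component afterwards. On the conditioning event that $\partial U_w$ is entirely contained in one side of $\wh{\eta}'$, the component $U_w$ of $U \setminus \wh{\eta}'$ containing $w$ coincides set-theoretically with the ``target bubble'' of $\wh{\eta}'_w$ at time $\sigma$, since a one-sided bubble cut off at time $\sigma$ is not re-entered by $\wh{\eta}'$ after time $\sigma$.

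Next, I would invoke Proposition~\ref{prop::unexplored_region_qc_inf_to_0}: given the quantum boundary length $b$ of $\partial U$, the marked surface $(U,h,\wt{\eta}'(\tau))$ has the law of a quantum disk of boundary length $b$ reweighted by its quantum area, independently of $(\cyl \setminus U,h)$ and of the ordered marked oriented surfaces separated by $\wt{\eta}'$ before time $\tau$. Since $w$ is then a uniform quantum-area sample from this reweighted disk, unbiasing by area shows that, conditional on $b$, the quadruple $(U,h,\wt{\eta}'(\tau),w)$ is simply a quantum disk of boundary length $b$ together with an interior point $w$ sampled from its quantum area measure, independent of the remaining data. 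On this disk picture, $\wh{\eta}'_w|_{[\tau,\sigma]}$ is then an $\SLE_{\kappa'}(\kappa'-6)$ drawn on an independent quantum disk from a boundary marked point to an interior marked quantum-area point.

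The disk analog of \cite[Theorem~1.18]{dms2014mating}, which is proved by the same welding and absolute-continuity techniques used in \cite{dms2014mating}, then identifies the complementary components cut off before time $\sigma$ as conditionally independent quantum disks given their boundary lengths and orientations, and identifies the target-swallowing bubble --- given its boundary length and area --- as a quantum disk with that boundary length and area. Integrating out the position of $w$ within $U_w$ against the uniform quantum-area law yields the stated conditional law of $(U_w,h)$, together with the conditional independence from the other components of $U \setminus \wh{\eta}'$. The main obstacle I anticipate is the first step: the justification of the set-theoretic identification between $U_w$ and the target bubble of $\wh{\eta}'_w$ on the one-sidedness event, which requires ruling out that $\wh{\eta}'$ re-enters a one-sided component after disconnecting it from $+\infty$. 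Once that identification is in place, target invariance lets one replace $\wh{\eta}'$ by $\wh{\eta}'_w$, Proposition~\ref{prop::unexplored_region_qc_inf_to_0} converts the quantum cone picture into a quantum disk with a uniform quantum-area interior point, and the disk-level structural result finishes the argument.
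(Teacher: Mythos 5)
Your argument is circular within the logical structure of the paper. You invoke the conclusion of Proposition~\ref{prop::unexplored_region_qc_inf_to_0} (that $(U,h)$ given its boundary length is an area-weighted quantum disk, independent of the previously cut-off surfaces) as an input, but the paper proves that proposition \emph{using} Lemma~\ref{lem::unexplored_region_resample}; the lemma only borrows the \emph{setup} of the proposition, not its conclusion. Likewise, the ``disk analog of \cite[Theorem~1.18]{dms2014mating}'' to which you defer the final step is not a result of \cite{dms2014mating}: it is essentially Theorem~\ref{thm::disk_explore2} of the present paper, which is only established at the very end by combining Theorem~\ref{thm::sphere_sle_kp_kp_minus_6} and Proposition~\ref{prop::unexplored_general_sphere}, both of which trace back to the present lemma. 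So both of your main inputs sit downstream of the statement you are trying to prove, and the substantive content of the lemma is deferred to an assertion that would itself require the same proof.

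The intended argument is much shorter and never leaves the quantum cone picture. The components of $U \setminus \wh{\eta}'$ are exactly the complementary components of the whole-plane $\SLE_{\kappa'}(\kappa'-6)$ process $\wt{\eta}'$ on the $\gamma$-quantum cone that are cut off after the stopping time $\tau$, and \cite[Theorem~1.18]{dms2014mating} already states that those components whose boundary is entirely contained in one side of the path are conditionally independent quantum disks given their boundary lengths. Conditioning such a family further on the individual quantum areas leaves each component a quantum disk with the given boundary length and area and preserves the conditional independence, which is the full statement. In particular, no re-targeting at $w$ and no reduction to a disk exploration is needed; the uniform point $w$ merely selects which one-sided component is called $U_w$. (Your identification of $U_w$ with the bubble present at the separation time $\sigma$ is correct, since an $\SLE_{\kappa'}(\kappa'-6)$ toward $+\infty$ never re-enters a component it has disconnected from $+\infty$, but it plays no role in the direct argument.)
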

\begin{proof}
This follows because we know from \cite[Theorem~1.17]{dms2014mating} (see also \cite[Figure~1.18]{dms2014mating}) that the quantum surfaces parameterized by the components of $U \setminus \wh{\eta}'$ whose boundary is entirely contained in either the left or right side of $\wh{\eta}'$ are conditionally independent quantum disks given their boundary length.
\end{proof}

\begin{figure}[ht!]
\begin{center}
\includegraphics[scale=0.65,page=1]{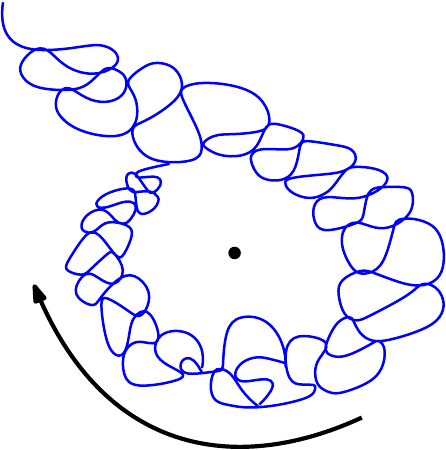} \hspace{0.00\textwidth}\includegraphics[scale=0.65,page=2]{figures/orientation_reversing}\hspace{0.00\textwidth}\includegraphics[scale=0.65,page=3]{figures/orientation_reversing}
\end{center}
\caption{\label{fig::orientation_changing} {\bf Left:} A whole-plane $\SLE_{\kappa'}(\kappa'-6)$ process from $\infty$ to $0$ stopped at a time that it has just closed a clockwise bubble around $0$.  The time-reversal of the associated space-filling $\SLE_{\kappa'}$ will enter this bubble and then fill it up before exiting, which is why it corresponds to a $\pi/2$-cone excursion of the corresponding boundary length process.  {\bf Middle:} More of the $\SLE_{\kappa'}(\kappa'-6)$ process is drawn.  The component containing $0$ is surrounded by one side of the $\SLE_{\kappa'}(\kappa'-6)$ so again corresponds to a $\pi/2$-cone excursion (which is nested inside the previous one). {\bf Right:} The $\SLE_{\kappa'}(\kappa'-6)$ drawn up until finishing the first bubble with a counterclockwise orientation after the time shown in the left panel.  This bubble corresponds to an orientation changing $\pi/2$-cone excursion.}	
\end{figure}

Suppose that $I = [s,t]$ is an interval of time in which $(\wt{L},\wt{R})$ has a $\pi/2$-cone excursion.  Then we say that the $\pi/2$-cone excursion has a left (resp.\ right) orientation if $\wt{L}_t = \wt{L}_s$ (resp.\ $\wt{R}_t = \wt{R}_s$).  This terminology is motivated because a left (resp.\ right) cone excursion exits $\R_+^2 + (\wt{L}_s,\wt{R}_s)$ in its left (resp.\ right) boundary; see \cite[Figure~1.13]{dms2014mating}.  We note that the left (resp.\ right) $\pi/2$-cone excursions of $(\wt{L},\wt{R})$ whose time-interval~$I$ contains~$0$ correspond to the clockwise (resp.\ counterclockwise) loops made by the $\SLE_{\kappa'}(\kappa'-6)$ process~$\wt{\eta}'$ around $+\infty$.   We say that such a $\pi/2$-cone excursion (on the interval~$I$) is orientation changing if it has the property that if $I' \supseteq I$ is any interval of time during which $(\wt{L},\wt{R})$ has a $\pi/2$-cone excursion, there exists an interval of time $I''$ during which it is having another $\pi/2$-cone excursion of the opposite orientation of that on $I$ with $I \subseteq I'' \subseteq I'$.  We note that orientation changing $\pi/2$-cone excursions whose time-interval contains $0$ cannot cluster. Indeed, this follows because $\wt{\eta}'$ is continuous and orientation changing $\pi/2$-cone excursions correspond to the times when $\wt{\eta}'$ makes a loop around $+\infty$ with the opposite orientation of the previous loop.  See Figure~\ref{fig::orientation_changing} for an illustration of these definitions in the context of an $\SLE_{\kappa'}(\kappa'-6)$ (but in this with the quantum cone parameterized by $\C$ with marked points at $0$ and $\infty$).  (However, an orientation changing $\pi/2$-cone excursion is typically preceded by a cluster of $\pi/2$-cone excursions with the opposite orientation.)

\begin{lemma}
\label{lem::conex_containing_0}
Let $\wt{\CI}$ be the collection of $\pi/2$-cone excursions of $(\wt{L},\wt{R})$ whose time interval contains $0$.  For $A_1,A_2 \in \wt{\CI}$, we say that $A_1$ comes before $A_2$ if the interval of time for $A_1$ contains that for $A_2$.  Let $\CI$ consist of those $\pi/2$-cone excursions in $\wt{\CI}$ which are orientation changing.  (As remarked above, the elements in $\wt{\CI}$ are in correspondence with the times in $\wt{\CT}$ and the elements in $\CI$ are in correspondence with the times in $\CT$.)  Let $A$ be the largest element in $\CI$ whose terminal displacement $d$ is at most $1$.  Given $d$, the conditional law of~$A$ is that of a $\pi/2$-cone excursion with terminal displacement~$d$ weighted by its length.  That is, the Radon-Nikodym derivative between the law of~$A$ given~$d$ and the law of a $\pi/2$-cone excursion with terminal displacement~$d$ is given by the length of $A$ times a normalization constant.
\end{lemma}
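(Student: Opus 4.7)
\proofof{Lemma \ref{lem::conex_containing_0}}
The plan is to combine a Palm-type calculus for $\pi/2$-cone excursions of the correlated 2D Brownian motion $(\wt L,\wt R)$ with the target-invariance of $\wt\eta'$ recorded in Lemma~\ref{lem::retarget_inf_to_0}.

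First, recall (compare the proof of \cite[Proposition~12.3]{dms2014mating}) that there is an infinite cone-excursion measure $\conelaw$ on $\pi/2$-cone excursions which generates the excursions of $(\wt L,\wt R)$ above its running 2D minimum as a Poisson point process of intensity $du \otimes \conelaw$.  Writing $\conelaw = \int_0^\infty \conelaw^a\,\rho(a)\,da$ for the disintegration over terminal displacement $a$, Brownian scaling shows that $\conelaw^a$ is the space-time rescaling of $\conelaw^1$, so the mean length under $\conelaw^a$ is finite (scaling as $a^2$) and the size-biased measure $\ell \cdot \conelaw^a$ is a well-defined probability law.

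Next, I would use the quantum-geometric dictionary from Section~\ref{subsec::setup_and_strategy}: orientation-changing $\pi/2$-cone excursions of $(\wt L,\wt R)$ whose time interval contains $0$ are in bijection with orientation-changing loops of $\wt\eta'$ around $+\infty$; the terminal displacement equals the quantum boundary length of the loop, and the length of the cone excursion equals the quantum area of the component containing $+\infty$ (since $\eta'$ is parameterized by quantum area).  Thus ``the largest element of $\CI$ with terminal displacement at most $1$'' corresponds to the first time $\tau \in \CT$ at which the component $U$ of $\cyl \setminus \wt\eta'([\cdot,\tau])$ containing $+\infty$ has quantum boundary length at most $1$, and the length of $A$ equals $\mu_h(U)$.

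The crucial step is to read off the length-biasing from Lemma~\ref{lem::retarget_inf_to_0}.  Applied iteratively, that lemma identifies the law of $(h,\wt\eta')$ with the one obtained by first sampling the quantum cone and $\eta'$ and then re-targeting $\wt\eta'$ at a point $w$ drawn uniformly from the quantum area measure on the current unexplored region.  In the re-targeted picture, given the nested candidate components cut off at orientation-changing loop times, the conditional probability that $w$ lies in the first candidate of quantum boundary length at most $1$ is proportional to its quantum area, which is the length of the corresponding cone excursion.  This is exactly the length-biasing asserted by the lemma, and Brownian scaling converts it into the desired statement for every conditioned value of the terminal displacement $a$.

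The principal obstacle is the rigorous implementation of the ``uniform target'' argument, since Lemma~\ref{lem::retarget_inf_to_0} is stated at a single re-targeting step while the unexplored region has infinite quantum mass.  I would handle this by truncating the quantum cone at a large negative horizontal level $r$ (so that the truncated region has finite quantum mass), applying Lemma~\ref{lem::retarget_inf_to_0} there to justify the uniform-target interpretation, and then passing to the limit $r \to -\infty$ using standard localization arguments.  A complementary Brownian-only route is to enumerate the nested chain $A^{(0)} \subset A^{(1)} \subset \cdots$ of cone excursions containing $0$ as a Markov chain, check inductively that the conditional shape of $A^{(k)}$ given its terminal displacement $a_k$ is $\conelaw^{a_k}$ size-biased by $\ell_k$, and note that the selection of the index $k^*$ with $A = A^{(k^*)}$ depends only on the sequence of terminal displacements and orientations, so it does not further bias the conditional shape of $A$.
\QED
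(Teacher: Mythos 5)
Your core idea --- that the excursion straddling a uniformly chosen point is length-biased relative to the generic excursion in the Poissonian collection --- is exactly the engine of the paper's proof, and your opening Palm/It\^o setup is the right foundation. But both of the concrete routes you propose have gaps. For the quantum-surface route: the \emph{conclusion} of Lemma~\ref{lem::retarget_inf_to_0} is an exact equality in law (the area-weighting that appears when one conditions on $\eta'(V)\in U$ is cancelled by the re-rooting at the uniform point $w$), so it cannot by itself produce any biasing; what you would need is the intermediate step of its proof, namely that conditioning on $\{\eta'(V)\in U\}$ for $V$ uniform in $[-R,R]$ weights the joint law by $\mu_h(U)$. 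Even granting that, the candidates you select among (the components containing $+\infty$ at successive orientation-changing times) are \emph{nested}, so ``$w$ lies in the first candidate of boundary length at most $1$'' is not a selection event among competing alternatives, and the comparison against the unbiased law $\conelaw^a$ is never actually made. The same nesting problem sinks your ``complementary Brownian-only route'': the chain $A^{(0)}\subset A^{(1)}\subset\cdots$ consists of excursions each containing the previous one, so they are emphatically not conditionally independent given their terminal displacements, the inductive claim has no base case, and the assertion that selecting $k^*$ from the displacement sequence ``does not further bias the shape'' would require a Markov property (conditional independence of the shape of $A^{(k)}$ from the data outside its time interval given its endpoint displacements) that you neither state nor prove.

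The paper's proof is the rigorous version of your idea and avoids both problems. It replaces the fixed time $0$ by a uniform time $V\in[0,L_0]$, which is legitimate by stationarity of the increments of $(\wt L,\wt R)$; it then localizes to the outermost excursion $E_0$ of terminal displacement at most $M_0$ containing $V$, inside which the relevant candidates $(E_j)$ --- the \emph{outermost} excursions of displacement at most $1$ in $E_0$ --- form a finite, disjoint, conditionally independent family given their displacements $(T_j)$; a short Bayes computation (equations~\eqref{eqn::et_form}--\eqref{eqn::prob_ratio}) then shows that $\p[\wt E = E_j \giv E_j\in\CA, T_j\in I]$ is proportional to the length, which is precisely the size-biasing, and the truncations $L_0,M_0\to\infty$ and $\delta,\zeta\to 0$ recover the statement for $A$. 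If you want to salvage your write-up, the fix is to work throughout with this disjoint outermost family rather than the nested chain, and to carry out the uniform-re-rooting step at the level of the Brownian motion (or, equivalently, of the quantum area measure on a truncated region of finite mass) with the limits taken explicitly.
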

\begin{proof}
We begin by noting that the following is true.  Fix $L_0 > 0$ large.  We will eventually take a limit as $L_0 \to \infty$.  Pick $V$ in $[0,L_0]$ uniformly at random.  Then
\[ (\wt{L}_{V+t} - \wt{L}_V,\wt{R}_{V+t} - \wt{R}_V) \stackrel{d}{=} (\wt{L}_t,\wt{R}_t).\]
Therefore the $\pi/2$-cone excursions of $(\wt{L},\wt{R})$ which contain $V$ have the same law as those which contain $0$.

Fix $M_0 > 0$ large.  We will eventually take a limit as $M_0 \to \infty$ after taking a limit as $L_0 \to \infty$.  We begin by making two observations:
\begin{itemize}
\item The probability of the event that the outermost orientation changing $\pi/2$-cone excursion which contains $V$ of terminal displacement at most $M_0$ is distinct from the outermost orientation changing $\pi/2$-cone excursion of terminal displacement at most $1$ tends to $1$ as $M_0 \to \infty$.
\item For $M_0$ fixed, the probability that the outermost orientation changing $\pi/2$-cone excursion $E_0$ of terminal displacement at most $M_0$ containing $V$ is contained in the time-interval $[0,L_0]$ tends to $1$ as $L_0 \to \infty$.
\end{itemize}
Consequently, if we let $\wt{V}$ be chosen uniformly in the time interval for $E_0$ and let $\wt{E}$ be the outermost orientation changing $\pi/2$-cone excursion of terminal displacement at most $1$ contained in $E_0$, then we have that the total variation distance between the laws of $\wt{E}$ and $A$ (as in the statement of the lemma) tends to $0$ as $L_0 \to \infty$ and then $M_0 \to \infty$.

Let $(E_j)$ be the collection of outermost orientation changing $\pi/2$-cone excursions of terminal displacement at most $1$ in $E_0$.  We let $\wt{T}$ (resp.\ $T_j$) be the terminal displacement of $\wt{E}$ (resp.\ $E_j$).  We assume that the $E_j$ are ordered so that $T_1 \geq T_2 \geq \cdots$.  Fix an event $\CA$ such that $\p[E_j \in \CA]$ is positive for all $j$.  Fix $d,\delta > 0$ and let $I = [d,d+\delta]$.  We have that,
\begin{align}
   &   \p[ \wt{E} \in \CA \giv \wt{T} \in I]
= \frac{\p[ \wt{E} \in \CA, \wt{T} \in I]}{\p[\wt{T} \in I]}
  = \sum_j \frac{\p[ \wt{E} = E_j, E_j \in \CA, T_j \in I]}{\p[\wt{T} \in I]} \notag\\
=&  \sum_j \p[ \wt{E} = E_j \giv E_j \in \CA,  T_j \in I] \p[ E_j \in \CA \giv T_j \in I] \frac{\p[T_j \in I]}{\p[\wt{T} \in I]} . \label{eqn::et_form}
\end{align}
Fix $\zeta > 0$ and assume that $\CA$ is an event of the form $\wt{\CA} \cap \{\ell \leq l \leq \ell + \zeta\}$ where $l$ is the length of the time-interval for the $\pi/2$-cone excursion.  Then we note for a constant $c_0 > 0$ that
\begin{equation}
\label{eqn::e_give_length}
\p[ \wt{E} = E_j \giv E_j \in \CA, T_j \in I] = c_0 \ell(1+O(\zeta)).
\end{equation}
Combining~\eqref{eqn::et_form} and~\eqref{eqn::e_give_length}, we have that
\begin{equation}
\label{eqn::prob_ratio} \p[ \wt{E} \in \CA \giv \wt{T} \in I] = \ell(1+O(\zeta))  \left(c_0 \sum_j  \p[ E_j \in \CA \giv  T_j \in I] \frac{\p[ T_j \in I]}{\p[\wt{T} \in I]} \right).
\end{equation}
It is not difficult to see that if we take a limit as $L_0 \to \infty$, $M_0 \to \infty$, and $\delta \to 0$, we have that the term in the parenthesis on the right hand side of~\eqref{eqn::prob_ratio} converges to a constant times the probability of $\CA$ under the law on $\pi/2$-cone excursions with terminal displacement equal to $d$.  Thus taking a further limit as $\zeta \to 0$ implies the result.
\end{proof}

\begin{proof}[Proof of Proposition~\ref{prop::unexplored_region_qc_inf_to_0}]
We begin by picking $w \in U$ uniformly from the quantum area measure conditionally independently of everything else.  Let $\wh{\eta}'$ be as in the statement of Lemma~\ref{lem::unexplored_region_resample} and let $\wh{\eta}_w'$ be an $\SLE_{\kappa'}(\kappa'-6)$ process in $U$ starting from the same point on $\partial U$ as $\wh{\eta}'$ and coupled to be the same as $\wh{\eta}'$ until the first time that $+\infty$ and $w$ are separated and to evolve conditionally independently afterwards.  Let $U_0$ be the component of $U \setminus \wh{\eta}_w'$ which contains $+\infty$.   Let $\CF$ be the $\sigma$-algebra which is generated by the quantum area and boundary length of $(U_0,h)$ and the quantum areas and boundary lengths of all of the components of $U \setminus \wh{\eta}_w'$.  Lemma~\ref{lem::unexplored_region_resample} implies that conditional on the event that $\partial U_0$ is entirely contained in either the left or right side of $\wh{\eta}_w'$ (i.e., $\wh{\eta}_w'$ separates $+\infty$ from $\partial U$) and given $\CF$, we have that the quantum surface $(U_0,h)$ has the law of a quantum disk with its given quantum boundary length and area.  Let $\wh{\tau}$ be the first time $t \in \CT$ strictly after time $\tau$.  As the event that the component $V_0$ of $U \setminus \wh{\eta}'([0,\wh{\tau}])$ containing $+\infty$ is equal to $U_0$ is measurable with respect to the $\sigma$-algebra that we have conditioned on, it follows that the same holds when we further condition on the event that $V_0 = U_0$.  Throughout the rest of the proof, we shall assume that we have conditioned on this event.

We are now going to show that:
\begin{enumerate}
\item The conditional law of the quantum area of the quantum surface $(U_0,h)$ given its quantum boundary length is conditionally independent of the quantum areas and boundary lengths of all of the other components of $U \setminus \wh{\eta}_w'$ and that
\item The conditional law of the quantum area of $(U_0,h)$ given its quantum boundary length is equal to that of a quantum disk weighted by its quantum area with the given quantum boundary length.
\end{enumerate}
Recall that the quantum surface $(U,h)$ corresponds to the first $\pi/2$-cone excursion of the time-reversal $(\wt{L},\wt{R})$ of $(L,R)$ which contains $0$ with terminal displacement at most~$1$ such that the previous cone excursion has the opposite orientation.  The quantum surface $(U_0,h)$ corresponds to the next $\pi/2$-cone excursion of $(\wt{L},\wt{R})$ with the opposite orientation as for $(U,h)$ and the other components of $U \setminus \wh{\eta}_w'$ correspond to other $\pi/2$-cone excursions of $(\wt{L},\wt{R})$ which occur in intervals of time which are disjoint from the one which corresponds to $(U_0,h)$.  The first claim is obvious from this representation of the joint law of the quantum areas and boundary lengths.  The second claim follows from this representation together with Lemma~\ref{lem::conex_containing_0}.

We have shown so far that the conditional law of $(U_0,h)$ given its quantum boundary length is given by that of a quantum disk weighted by its quantum area.  The proof works verbatim if we take $U$ instead to correspond to the component containing $+\infty$ at the first time $t \in \CT$ that its quantum boundary length falls below $R > 0$ where $R > 0$ is fixed.  By taking $R > 0$ very large, the result follows by successively exploring the components which contain $+\infty$ until first reaching the one with quantum boundary length at most $1$.
\end{proof}

\subsection{Comparison of pinched quantum cones}
\label{subsec::comparison_of_pinched_quantum_cones}

In Section~\ref{sec::sphere_from_cone} we exhibited one way of constructing a unit area quantum sphere by pinching a unit of quantum area off a $\gamma$-quantum cone.  In both this subsection as well as the next, we will be interested in another way of pinching a unit area quantum sphere off a $\gamma$-quantum cone by conditioning on the event that an $\SLE_{\kappa'}(\kappa'-6)$ process cuts out a quantum disk with small quantum boundary length and area close to $1$.  The purpose of the following proposition, which is the main result of this subsection, is to compare this type of conditioning with that used in Section~\ref{sec::sphere_from_cone}.  We will make use of the same notation as in Section~\ref{sec::sphere_from_cone}.

\begin{proposition}
\label{prop::space_filling_almost_sphere}
Fix $\gamma \in (\sqrt{2},2)$ and suppose that $\CC = (\cyl,h,+\infty,-\infty)$ is a $\gamma$-quantum cone.    Let $\wt{\eta}'$ be an $\SLE_{\kappa'}(\kappa'-6)$ process from $-\infty$ to $+\infty$ sampled independently of $h$.  For each $\epsilon, \delta > 0$ there exists $C_0 \geq 1$ such that for all $r \in \R_-$ and $C \geq C_0$ the following is true.  Let $\CT$, $\zeta_{r,C}$, $U_{r,C}$, and $\ell_{r,C}$ be as described in the beginning of the section and let $A_{r,C}$ be the quantum area of $U_{r,C}$.  Let
\begin{equation}
\label{eqn::i_r_c}
I_{r,C} = [\tfrac{1}{2},1] \cdot C^{-1} e^{\gamma r/2}
\end{equation}
and let 
\begin{equation}
\label{eqn::f_r_c_eps}
F_{r,\epsilon,C} = \{ \ell_{r,C} \in I_{r,C},\ 1 \leq A_{r,C} \leq 1+\epsilon \}.
\end{equation}
Then
\begin{equation}
\label{eqn::e_r_eps_lbd}
\pr{ E_{r,\epsilon} \giv F_{r,\epsilon,C} } \geq 1-\delta.
\end{equation}
Moreover, for each fixed choice of $C > 1$ and $\epsilon > 0$ there exists $p_0 > 0$ such that
\begin{equation}
\label{eqn::f_c_r_lbd}
\pr{F_{r,\epsilon,C} \giv E_{r,\epsilon}} \geq p_0 \quad\text{for all}\quad r \in \R_-.
\end{equation}
\end{proposition}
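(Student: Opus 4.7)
My plan is to compare the two bottleneck events geometrically in the cylinder. Both cut off a quantum area near $1$: $E_{r,\epsilon}$ via the vertical cross-section at $\tau_r$, while $F_{r,\epsilon,C}$ uses the $\SLE_{\kappa'}(\kappa'-6)$ loop at $\zeta_{r,C}$. Since $\ell_{r,C}$ has order $C^{-1}e^{\gamma r/2}$, far smaller than the typical cross-sectional quantum length $\approx e^{\gamma r/2}$ at height $\tau_r$, for large $C$ the $\SLE$ pinch lies strictly to the right of $\tau_r$, at a cylinder height where $X$ has dipped to $\approx r - \tfrac{2}{\gamma}\log C$. The heart of the argument will be to show the intermediate ``sliver'' $(\cyl_+ + \tau_r) \setminus U_{r,C}$ has quantum area vanishing as $C \to \infty$, uniformly in $r$.

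For~\eqref{eqn::e_r_eps_lbd}, I would condition on $F_{r,\epsilon,C}$ and apply Proposition~\ref{prop::unexplored_region_qc_inf_to_0} to identify $(U_{r,C},h)$, given $\ell_{r,C}$, as an area-weighted quantum disk of boundary length $\ell_{r,C}$, conditionally independent of the complementary surface given $\ell_{r,C}$. Bessel scaling for $X$ shows that $X$ traverses $[r,\,r-\tfrac{2}{\gamma}\log C]$ in a cylinder interval whose length and $\int e^{\gamma X(u)}\,du$-integral are both polynomial in $C^{-1}$; combined with independent Gaussian control of $\CH_2(\cyl)$ together with the conformal-distortion estimate of Lemma~\ref{lem::conformal_cylinder_hulls} to absorb the $\SLE$-induced boundary wiggle, the sliver's quantum area $S$ satisfies $\ex{S\giv F_{r,\epsilon,C}} \to 0$ uniformly in $r$ as $C\to\infty$. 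Since $\mu_h(\cyl_+ + \tau_r) = A_{r,C} + S$ and $A_{r,C}$ has a bounded conditional density on $[1,1+\epsilon]$ (inherited from the area law of a boundary-length-$\ell$ quantum disk), a Markov estimate for $S$ combined with a density comparison for the shifted upper endpoint $1+\epsilon - S$ will give $\pr{E_{r,\epsilon}\giv F_{r,\epsilon,C}}\geq 1-\delta$ for $C$ sufficiently large.

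For~\eqref{eqn::f_c_r_lbd}, I would condition on $E_{r,\epsilon}$ and invoke the second assertion of Proposition~\ref{prop::cone_to_sphere}: the conditional law of $(\cyl_+ + \tau_r,h)$, given additionally the restriction of $h$ to a fixed cylinder annulus near $\tau_r$, converges in probability to a unit-area quantum sphere as $r\to-\infty$. Since $\wt{\eta}'$ is independent of $h$, the task reduces to showing that on the (approximate) sphere the $\SLE_{\kappa'}(\kappa'-6)$ has uniformly positive probability of producing a first pinch of boundary length $\leq C^{-1}e^{\gamma r/2}$ with length actually lying in $I_{r,C}$ and pinched area in $[1,1+\epsilon]$. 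The length-in-$I_{r,C}$ constraint is a scaling statement about the jump distribution of the boundary-length process at $\CT$-times near the first passage below the threshold, and the area constraint follows from Proposition~\ref{prop::unexplored_region_qc_inf_to_0} together with continuity of the area distribution of an area-weighted quantum disk at fixed small boundary length. Uniformity in $r\in\R_-$ comes from a compactness argument combined with the sphere limit for $r\to-\infty$.

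The principal obstacle will be the uniform-in-$r$ sliver estimate. Because $\tau_r \to +\infty$ in the cylinder as $r\to-\infty$, the Bessel and $\CH_2$ estimates must be phrased entirely in terms of increments from level $r$: translation-invariance of the $\CH_2$-law handles the latter, and the strong Markov property of the Bessel process handles the former. Controlling possible excursions of the $\SLE$-trace into $\cyl_+ + \tau_r$ beyond the immediate neighborhood of the pinch will likely require a localization argument, probably via a target-invariance trick analogous to the one used in Lemma~\ref{lem::retarget_inf_to_0}, so that one can effectively treat the $\SLE$ behavior in the sliver as being driven by the small local geometry of the bottleneck rather than the global geometry of the cone.
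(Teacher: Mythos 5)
Your geometric picture (pinch to the right of $\tau_r$, conditional independence of inside and outside via Proposition~\ref{prop::unexplored_region_qc_inf_to_0}, distortion control via Lemma~\ref{lem::conformal_cylinder_hulls}) matches the paper's, and your treatment of~\eqref{eqn::f_c_r_lbd} is reasonable (the paper itself dismisses that bound as obvious). But there are two concrete problems with your plan for~\eqref{eqn::e_r_eps_lbd}. First, the quantitative claim driving your argument is wrong as stated: the projection $X$ must descend from level $r$ to level $\approx r-\tfrac{2}{\gamma}\log C$ before the boundary length can reach $C^{-1}e^{\gamma r/2}$, and since $X$ is a Brownian motion with constant negative drift this traversal occupies a cylinder interval of length of order $\log C$ (growing, not ``polynomial in $C^{-1}$''), over which $\int e^{\gamma X(u)}\,du$ is of order $e^{\gamma r}$ times an $O(1)$ exponential functional --- it does not decay in $C$ at all. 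So a direct moment bound on the sliver cannot by itself produce the $C\to\infty$ improvement the proposition asserts. The paper's mechanism is different: it introduces an auxiliary level $r+s$ and the measures $\conelaw$, $\conelaw_G$, $\conelaw_F$, proves unconditioned field-average and supremum estimates near $\tau_{r+s}$ (Lemmas~\ref{lem::behavior_of_average_at_loop_mu} and~\ref{lem::behavior_of_sup_at_loop_mu}), and transports them through the cut-and-glue map to conclude that under $\conelaw_F$ the projection of the \emph{modified} field first hits $r$ within a bounded window of the pinch (Lemma~\ref{lem::behavior_of_average_at_loop_mu_F}); the relevant smallness comes from sending $s\to-\infty$, a parameter absent from your scheme.

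Second, even granting a sliver estimate, you have no mechanism for transferring it to the conditional law given $F_{r,\epsilon,C}$. Conditional independence given $\ell_{r,C}$ handles the area constraint, but conditioning on $\{\ell_{r,C}\in I_{r,C}\}$ reweights the law of the exterior surface by a function of $\ell_{r,C}$, and your ``Markov estimate for $S$'' is only useful if that reweighting (equivalently, the Radon--Nikodym derivative $d\conelaw_G/d\conelaw$) is bounded and the event $\{\ell_{r,C}\in I_{r,C}\}$ has probability bounded below uniformly in $r$ and $C$. This is exactly the content of Lemmas~\ref{lem::rn_bound}--\ref{lem::disk_boundary_length_conditioned} in the paper, proved via a random-walk density argument for the successive log-boundary-lengths $Y_j^R$ (itself requiring a field-perturbation argument to show the increments have a.e.\ positive density); none of this appears in your proposal, and without it the conditional expectation $\ex{S\giv F_{r,\epsilon,C}}$ is not controlled by unconditioned estimates. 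These two gaps --- the $s$-parameter localization and the Radon--Nikodym/density machinery --- are the substance of the paper's proof, so the proposal as written would not go through.
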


Fix $C > 1$ and $r \in \R_-$.  We are going to prove Proposition~\ref{prop::space_filling_almost_sphere} by considering three different laws:
\begin{itemize}
\item The joint law $\conelaw$ of a $\gamma$-quantum cone $\CC = (\cyl,h,+\infty,-\infty)$ and an $\SLE_{\kappa'}(\kappa'-6)$ process $\wt{\eta}'$ in $\cyl$ from $-\infty$ to $+\infty$ sampled independently of $h$.
\item The law $\conelaw_F$ given by $\conelaw$ conditioned on $F_{r,\epsilon,C}$.
\item The law $\conelaw_G$ on pairs consisting of a quantum surface and a path whose joint law can be sampled from by:
	\begin{enumerate}
	\item Sampling a $\gamma$-quantum cone and an independent $\SLE_{\kappa'}(\kappa'-6)$ process conditioned on $F_{r,\epsilon,C}$ as in the definition of $\conelaw_F$.
	\item Resampling $\CD = (U_{r,C},h)$ (which on the event $F_{r,\epsilon,C}$ has boundary length $\ell_{r,C} \in I_{r,C}$ and quantum area $A_{r,C} \in [1,1+\epsilon]$) according to its $\conelaw$-conditional law given $\ell_{r,C}$ to yield $\wt{\CD}$.
	\end{enumerate} 
\end{itemize}
The laws $\conelaw$ and $\conelaw_F$ correspond to the laws considered in the statement of Proposition~\ref{prop::space_filling_almost_sphere}.  The law $\conelaw_G$ is an auxiliary law that will be used in the proof of Proposition~\ref{prop::space_filling_almost_sphere} (but does not appear in the statement).

The first step is to compare $\conelaw$ and $\conelaw_G$.

\begin{lemma}
\label{lem::rn_bound}
Let 
\begin{equation}
\label{eqn::z_r_e_c}
\CZ_{r,\epsilon,C} = \frac{d\conelaw_G}{d\conelaw}
\end{equation}
be the Radon-Nikodym derivative of $\conelaw_G$ with respect to $\conelaw$.  For each $\epsilon, \delta > 0$ there exists $K > 1$ such that
\[ \conelaw[ \CZ_{r,\epsilon,C} \geq K ] \geq 1-\delta \quad\text{for all}\quad r \in \R_- \quad\text{and}\quad C > 1.\] 
\end{lemma}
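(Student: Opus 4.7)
The plan is to derive an explicit formula for $\CZ_{r,\epsilon,C}$ and then to analyse it using scaling properties of the quantum cone and quantum disk. First I would observe that by (an appropriately adapted version of) Proposition~\ref{prop::unexplored_region_qc_inf_to_0}, under $\conelaw$ the interior surface $\CD=(U_{r,C},h)$, conditioned on the quantum boundary length $\ell_{r,C}$, is a quantum disk weighted by its area, and is conditionally independent of the exterior given $\ell_{r,C}$. Since the resampling step in the construction of $\conelaw_G$ redraws $\CD$ from this very conditional law, $\conelaw_G$ and $\conelaw$ agree on the conditional distribution of the interior given the pair (exterior, $\ell_{r,C}$). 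Thus the Radon--Nikodym derivative is a function of $\ell_{r,C}$ alone, and a direct computation gives
\[
\CZ_{r,\epsilon,C} \;=\; \frac{\mathbf{1}_{\ell_{r,C}\in I_{r,C}}\,P_{\ell_{r,C}}\!\left[A\in[1,1+\epsilon]\right]}{\conelaw[F_{r,\epsilon,C}]},
\]
where $P_{\ell}$ denotes the area-weighted quantum disk law with boundary length $\ell$.

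Next I would use two scaling facts. The first is that the quantum disk obeys $P_\ell[A\in[a,b]] = P_1[A\in[a/\ell^2,b/\ell^2]]$, so $\ell\mapsto P_\ell[A\in[1,1+\epsilon]]$ is continuous and varies by only a bounded multiplicative factor across the narrow interval $I_{r,C}=[\tfrac12,1]\cdot C^{-1}e^{\gamma r/2}$. The second is that, by the translation invariance of the $\gamma$-quantum cone under horizontal shifts of $\cyl$ (equivalently the rescaling of all quantum lengths), together with the identification of the boundary-length process of the component containing $+\infty$ in \cite[Theorem~1.18]{dms2014mating}, the distribution of the ratio $\ell_{r,C}/(C^{-1}e^{\gamma r/2})$ under $\conelaw$ does not depend on $r\in\R_-$ or on $C>1$. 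In particular $\conelaw[\ell_{r,C}\in I_{r,C}]$ equals a fixed positive constant $p_\star$ for every $r,C$.

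Combining these observations, the denominator $\conelaw[F_{r,\epsilon,C}]$ factors as
\[
\conelaw[F_{r,\epsilon,C}] \;=\; \E_\conelaw\!\left[\mathbf{1}_{\ell_{r,C}\in I_{r,C}}P_{\ell_{r,C}}[A\in[1,1+\epsilon]]\right],
\]
which is comparable, up to a bounded factor, to $p_\star\cdot P_{C^{-1}e^{\gamma r/2}}[A\in[1,1+\epsilon]]$. Consequently, on the event $\{\ell_{r,C}\in I_{r,C}\}$ the ratio in the expression for $\CZ_{r,\epsilon,C}$ is bounded below by a constant depending only on $\epsilon$ and not on $r$ or $C$. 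The desired estimate then follows upon choosing $K$ to be (a fixed fraction of) this uniform lower bound, after handling the bounded variation of the numerator over $I_{r,C}$.

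The main obstacle is the uniform-in-$(r,C)$ scaling claim for $\ell_{r,C}/(C^{-1}e^{\gamma r/2})$. For $\gamma=\sqrt{8/3}$ this is immediate, since the boundary-length process is a $3/2$-stable process and its overshoot at the first-passage time below a given level has a scale-invariant distribution. For general $\gamma\in(\sqrt{2},2)$ one needs to combine the translation invariance of the $\gamma$-quantum cone with the fact that, in the embedding of the cone into $\cyl$, a horizontal shift by $2\gamma^{-1}\log\lambda$ multiplies all quantum lengths by $\lambda$ and maps the stopping time $\zeta_{r,C}$ to $\zeta_{r',C'}$ for the appropriately rescaled parameters. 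Once this symmetry is in hand, the rest of the argument is a routine computation.
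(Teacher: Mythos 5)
Your reduction of $\CZ_{r,\epsilon,C}$ to an explicit function of $\ell_{r,C}$ is correct and is exactly the paper's first step (Lemma~\ref{lem::conditionally_independent}, via Proposition~\ref{prop::unexplored_region_qc_inf_to_0}), and your two scaling inputs correspond to the paper's Lemmas~\ref{lem::disk_boundary_length_rn} and~\ref{lem::disk_boundary_length_conditioned}. The problem is the final step: you prove $\CZ_{r,\epsilon,C}\geq c(\epsilon)$ \emph{only on the event} $\{\ell_{r,C}\in I_{r,C}\}$ and then conclude. But $\CZ_{r,\epsilon,C}=0$ off that event, and by your own observation $\conelaw[\ell_{r,C}\in I_{r,C}]=p_\star$ is a fixed constant strictly less than $1$ (the normalized length $Ce^{-\gamma r/2}\ell_{r,C}$ has an a.e.\ positive density on all of $[0,1]$, not just on $[\tfrac{1}{2},1]$). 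Hence $\conelaw[\CZ_{r,\epsilon,C}\geq K]\leq p_\star$ for every $K>0$, and no choice of $K$ gives the stated inequality once $\delta<1-p_\star$; the lower-bound reading of the statement is simply not available. The bound that is actually needed --- the one the paper's proof produces and the only one usable downstream in the proof of Lemma~\ref{lem::behavior_of_average_at_loop_mu_F}, where high-$\conelaw$-probability events are transferred to high-$\conelaw_G$-probability events --- is the \emph{upper} bound $\CZ_{r,\epsilon,C}\leq K(\epsilon)$. The good news is that your ingredients deliver precisely this if you run the last inequality the other way: $\conelaw[F_{r,\epsilon,C}]\geq p_\star\,\inf_{\ell\in I_{r,C}}P_\ell[A\in[1,1+\epsilon]]$, so the comparability of the numerator over $I_{r,C}$ gives $\CZ_{r,\epsilon,C}\leq c(\epsilon)/p_\star$ everywhere, and here the positivity of $p_\star$ is essential rather than decorative.

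Two supporting claims also need more justification than you give. First, the uniform (in $r,C$) comparability of $\ell\mapsto P_\ell[A\in[1,1+\epsilon]]$ over $I_{r,C}$ does not follow from continuity plus the interval being ``narrow'': by your own scaling identity this probability equals $P_1[A\in[\ell^{-2},(1+\epsilon)\ell^{-2}]]$, and as $\ell$ sweeps $[\tfrac{1}{2},1]\cdot C^{-1}e^{\gamma r/2}\to 0$ these are disjoint windows escaping to infinity. What is really needed is the regularly varying (power-law) tail of the area of the area-weighted unit-boundary-length quantum disk, equivalently of the duration of a $\pi/2$-cone excursion given its terminal displacement; this is what the paper is invoking when it recalls that $(U_{r,C},h)$ corresponds to a $\pi/2$-Brownian cone excursion in the proof of Lemma~\ref{lem::disk_boundary_length_conditioned}. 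Second, the positivity of $p_\star$ is asserted rather than proved; the paper devotes Lemmas~\ref{lem::random_walk_density} and~\ref{lem::loops_density} to showing that the law of $Ce^{-\gamma r/2}\ell_{r,C}$ has an almost everywhere positive density on $[0,1]$, and some such argument is required here too, since without it the constant $K$ could degenerate.
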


We will first need to collect Lemmas~\ref{lem::conditionally_independent}--\ref{lem::disk_boundary_length_conditioned} before completing the proof of Lemma~\ref{lem::rn_bound}.  Before we proceed with the details, we will give an overview of the different steps used to prove Lemma~\ref{lem::rn_bound}.  First, we will prove in Lemma~\ref{lem::conditionally_independent} that the Radon-Nikodym derivative $\CZ_{r,\epsilon,C}$ from Lemma~\ref{lem::rn_bound} is a function of the boundary length $\ell_{r,C}$, so that it suffices to compare the law of $\ell_{r,C}$ under $\conelaw_G$ and $\conelaw$.  This will involve controlling how likely it is that $\ell_{r,C} \in I_{r,C}$ under $\conelaw$.  This will be accomplished by observing in Lemma~\ref{lem::loops_density} that the $\log$ of the successive loop boundary lengths are i.i.d.\ and have a density with respect to Lebesgue measure, which reduces the problem to an overshoot analysis for random walks carried out in Lemma~\ref{lem::random_walk_density} and Lemmas~\ref{lem::disk_boundary_length_rn}, \ref{lem::disk_boundary_length_conditioned}.  The density of $\ell_{r,C}$ under $\conelaw_G$ is then given by weighting its density under $\conelaw$ by the probability that the quantum area of $U_{r,C}$ is in $[1,1+\epsilon]$ and noting that this probability is comparable for different values of $\ell_{r,C}$ in $I_{r,C}$.

\begin{lemma}
\label{lem::conditionally_independent}
The Radon-Nikodym derivative $\CZ_{r,\epsilon,C}$ in~\eqref{eqn::z_r_e_c} is equal to the Radon-Nikodym derivative between the laws of the boundary length $\ell_{r,C}$ under $\conelaw_G$ and $\conelaw$.  In particular, it depends only on $\ell_{r,C}$.
\end{lemma}
\begin{proof}
This follows because Proposition~\ref{prop::unexplored_region_qc_inf_to_0} implies that, under $\conelaw$, the quantum surface parameterized by $U_{r,C}$ is conditionally independent of the quantum surface parameterized by $\cyl \setminus U_{r,C}$ given $\ell_{r,C}$.  The same is therefore true under $\conelaw_F$ and $\conelaw_G$, which implies the result.
\end{proof}

We next collect the following elementary fact about random walks.

\begin{lemma}
\label{lem::random_walk_density}
Let $(X_j)$ be an i.i.d.\ sequence in $\R$ and assume that the law of $X_1$ has a density $f$ with respect to Lebesgue measure on $\R$ which is positive Lebesgue almost everywhere.  For each $j$, let $Y_j = X_1 + \cdots + X_j$.  Fix $A \subseteq \R$ with positive Lebesgue measure, let $\tau_A = \inf\{j \geq 1 : Y_j \in A\}$, and assume that $\p[ \tau_A < \infty] = 1$.  Then the law of $Y_{\tau_A}$ has a density with respect to Lebesgue measure on $A$ which is positive Lebesgue almost everywhere on $A$.
\end{lemma}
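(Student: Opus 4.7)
The plan is to decompose the law of $Y_{\tau_A}$ according to the value of the stopping time and compute an explicit density for each piece. For $j \geq 1$ and a measurable $B \subseteq A$, write
\[
\p[Y_{\tau_A} \in B,\, \tau_A = j] = \p[Y_1 \notin A,\ \ldots,\ Y_{j-1} \notin A,\ Y_j \in B].
\]
Since $Y_j = Y_{j-1} + X_j$ with $X_j$ independent of $(Y_1,\ldots,Y_{j-1})$ and having density $f$, conditioning on $(Y_1,\ldots,Y_{j-1})$ and applying Fubini gives
\[
\p[Y_{\tau_A} \in B,\, \tau_A = j] = \int_B \E\!\left[\mathbf{1}_{\{Y_1 \notin A,\ldots,Y_{j-1} \notin A\}}\, f(y - Y_{j-1})\right] dy,
\]
with the convention $Y_0 = 0$ (so the $j=1$ integrand is simply $f(y)$, and for $j = 1$ the indicator is the empty conjunction, equal to $1$). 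Thus each sub-probability measure $\p[Y_{\tau_A} \in \cdot,\, \tau_A = j]$ restricted to $A$ has a Lebesgue density $g_j(y)$.

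Summing over $j$, the total measure has density $g(y) := \sum_{j=1}^{\infty} g_j(y)$ on $A$; since by hypothesis $\p[\tau_A < \infty] = 1$, the series integrates to $1$ on $A$, so $g$ is a \emph{bona fide} probability density for $Y_{\tau_A}$ with respect to Lebesgue measure on $A$. For the positivity claim, observe that $g_1(y) = f(y)\mathbf{1}_A(y)$, which is positive at Lebesgue-a.e.\ $y \in A$ by the hypothesis on $f$; since the remaining $g_j$ are non-negative,
\[
g(y) \geq g_1(y) = f(y) > 0 \quad \text{for Lebesgue-a.e.\ } y \in A,
\]
which is exactly what the lemma asserts.

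There is no real obstacle here: the decomposition is immediate and the density is read off from the one-step transition. The only mildly subtle point is convergence/well-definedness of $g$, which is automatic from the finiteness of $\p[\tau_A < \infty] = 1$ and the monotone convergence theorem applied to the partial sums $\sum_{j=1}^N g_j$.
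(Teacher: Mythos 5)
Your proof is correct and follows essentially the same route as the paper's: decompose the law of $Y_{\tau_A}$ over the value of $\tau_A$ and read off a density from the one-step transition kernel $f(y-x)$. The only (harmless) difference is that you obtain positivity from the $j=1$ term $g_1(y)=f(y)\mathbf{1}_A(y)$ alone, whereas the paper integrates $f(y-x)$ against the law of $Y_{\tau_A-1}$; both yield the a.e.\ positivity on $A$.
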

\begin{proof}
Let $f_k$ be the conditional density of $Y_k$ given $\tau_A > k$.  An elementary calculation implies that the conditional density of $(Y_{\tau_A-1},Y_{\tau_A})$ given $\tau_A=k$ is equal to 
\begin{equation}
\label{eqn::y_tau_cond_density}
g_k(x,y) = Z_x^{-1} \one_{A}(y) f(y-x) f_{k-1}(x) \quad\text{where}\quad Z_x = \int_{\R} \one_A(y) f(y-x) dy.
\end{equation}
Therefore the conditional density of $Y_{\tau_A}$ given $\tau_A=k$ is equal to
\begin{equation}
\label{eqn::y_tau_cond_density2}
g_k(y) =  \int_{\R} Z_x^{-1} \one_{A}(y) f(y-x)  f_{k-1}(x)dx.
\end{equation}
Letting $p_k = \p[ \tau_A =k+1]$ we thus have that the density of $Y_{\tau_A}$ is given by
\begin{equation}
\label{eqn::y_tau_density}
g(y) = \int_{\R} Z_x^{-1} \one_{A}(y) f(y-x)  \left(\sum_k f_{k-1}(x)p_{k-1} \right) dx.
\end{equation}
The result therefore follows from our assumptions on $f$.
\end{proof}

In the next series of lemmas, we will make use of the following notation.  Fix $R \in \R$.  We let $(U_j^R)_{j \in \Z}$ be the sequence of components of $\cyl \setminus \wt{\eta}'((-\infty,t])$ which contain $+\infty$ for $t \in \CT$ where we take $U_0^R$ to be the first such component with quantum boundary length at most $R$.  For each $j$, we let $Y_j^R$ be the $\log$ of the quantum boundary length of $U_j^R$ and we let $X_j^R = Y_j^R - Y_{j-1}^R$.  In the next two lemmas, we are going to check that the criteria of Lemma~\ref{lem::random_walk_density} hold for the sequence $(Y_j^R)_{j \in \N}$ where we will take $A$ to be an interval of the form $(-\infty,x]$ for $x \in \R$.

\begin{lemma}
\label{lem::loops_density}
The $(X_j^R)_{j \in \N}$ are i.i.d.\ and $X_1^R$ has a density with respect to Lebesgue measure on $\R$ which is positive Lebesgue almost everywhere.
\end{lemma}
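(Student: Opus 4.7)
The plan is to derive both conclusions from the Markovian nested structure of the exploration (via Proposition~\ref{prop::unexplored_region_qc_inf_to_0}) together with the encoding of a quantum disk exploration in terms of a correlated two-dimensional Brownian excursion (Theorem~\ref{thm::disk_explore} and Remark~\ref{rem::disk_explore_condition_on_both_area_and_bl}).

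For the i.i.d.\ claim, I would argue as follows. By Proposition~\ref{prop::unexplored_region_qc_inf_to_0} applied iteratively to the nested components $U_j^R$, conditional on $Y_{j-1}^R$ the quantum surface $(U_{j-1}^R,h)$ has the law of a quantum disk with boundary length $e^{Y_{j-1}^R}$ weighted by its quantum area, and is conditionally independent of the exterior surface and of $(U_i^R,h)_{i<j-1}$; moreover the tip of $\wt{\eta}'$ at the corresponding stopping time is uniformly distributed from the quantum boundary measure. The continuation of $\wt{\eta}'$ inside $U_{j-1}^R$ is an $\SLE_{\kappa'}(\kappa'-6)$ started at this uniform tip and targeted at $+\infty$, sampled independently of $h$. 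Adding the constant $-2Y_{j-1}^R/\gamma$ to $h$ rescales $(U_{j-1}^R,h)$ to a unit boundary length quantum disk weighted by area (the deterministic factor $e^{2Y_{j-1}^R}$ that would appear from the area scaling is absorbed into the normalization, since we are conditioning on the boundary length). The increment $X_j^R$ is read off from the rescaled exploration, whose law depends neither on $Y_{j-1}^R$ nor on $j$; this gives both independence of the past and identical distribution across $j$.

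For the density claim, I would identify $e^{X_1^R}$ with the terminal displacement of the largest orientation-changing $\pi/2$-cone sub-excursion containing the origin that sits inside a correlated $\pi/2$-cone excursion of terminal displacement $1$, using Theorem~\ref{thm::disk_explore}, Remark~\ref{rem::disk_explore_condition_on_both_area_and_bl}, and Lemma~\ref{lem::conex_containing_0}. Existence of a density is then a consequence of the smoothness and strict positivity of the conditioned Brownian transition densities (Lemma~\ref{lem::transition_kernel_continuous}), combined with the fact that $X_1^R$ depends continuously on the Brownian trajectory. The main obstacle, and the reason some care is needed, is proving positivity of the density Lebesgue almost everywhere. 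My proposed route combines Brownian scaling, which maps $\pi/2$-cone excursions of terminal displacement $1$ to those of terminal displacement $\lambda$ and rescales the sub-excursion terminal displacement by $\lambda$, with an irreducibility argument: any target value $\ell \in (0,1)$ for $e^{X_1^R}$ can be realized on an open set of Brownian paths of positive probability by a local perturbation on a finite time interval, exploiting the everywhere-positive conditioned transition densities of Lemma~\ref{lem::transition_kernel_continuous}. The delicate point is arranging the perturbation so that the perturbed sub-excursion is genuinely the largest orientation-changing one, which requires global control on the surrounding excursion structure rather than purely local manipulations of the path.
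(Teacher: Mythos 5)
Your treatment of the i.i.d.\ claim is essentially the paper's: the paper disposes of it in one line by citing Proposition~\ref{prop::unexplored_region_qc_inf_to_0} (or, alternatively, the correspondence between the surfaces $(U_j^R,h)$ and a subset of the $\pi/2$-cone excursions of $(\wt{L},\wt{R})$ whose time interval contains $0$), and your iterated conditioning plus rescaling is a correct elaboration of that.

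The density claim is where there is a genuine gap. First, existence of a density does not follow from ``smoothness and strict positivity of the conditioned transition densities combined with continuity of $X_1^R$ in the Brownian trajectory'': a continuous functional of a process with smooth transition densities need not be absolutely continuous (a priori it could be constant on a set of paths of positive probability), and the terminal displacement of the largest orientation-changing nested cone excursion is a complicated path functional for which you exhibit no absolute-continuity mechanism. Second, the positivity step, which you correctly flag as delicate, is exactly the part that is missing: a ``local perturbation'' of the path is only an admissible change of measure if it is a Cameron--Martin shift, and any such shift alters the entire nested cone-excursion combinatorics, so one must show the perturbed sub-excursion remains the largest orientation-changing one --- you state this requirement but do not supply the argument. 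The paper sidesteps all of this by perturbing the \emph{field} rather than the Brownian path: it chooses a $(\cdot,\cdot)_\nabla$-orthonormal basis $(\phi_j)$ of $\CH_2(\cyl)$ of smooth functions with $\phi_1$ vanishing off $U_1^R$ and on $\partial U_j^R$ for $j \neq 1$ but strictly positive on $\partial U_1^R$, and writes $h = (h-\alpha_1\phi_1) + \alpha_1\phi_1$. Since $\wt{\eta}'$ is independent of $h$ and $\phi_1$ does not touch the other boundaries, $Y_0^R$ is determined by $h-\alpha_1\phi_1$, while
\[
X_1^R = \log\left(\int_{\partial U_1^R} e^{\gamma\alpha_1\phi_1(x)/2}\, d\nu_1(x)\right) - Y_0^R
\]
is a smooth, strictly increasing function of the conditionally Gaussian coefficient $\alpha_1$ whose range is all of $\R$; the change of variables formula then yields existence and almost-everywhere positivity of the density in one stroke. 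To salvage a purely Brownian-side proof you would need an analogous one-parameter absolutely continuous deformation that monotonically sweeps the target quantity through all of $(0,1)$ without disturbing the surrounding excursion structure, and it is not clear how to produce one.
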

\begin{proof}
That the $(X_j^R)_{j \in \N}$ are i.i.d.\ follows from Proposition~\ref{prop::unexplored_region_qc_inf_to_0}.  Alternatively, this follows since the corresponding quantum surfaces $(U_j^R,h)$ correspond to a certain subset of the $\pi/2$-cone excursions of $(\wt{L},\wt{R})$ whose time interval contains $0$.  Consequently, we just need to show that $X_1^R$ has an almost everywhere positive density with respect to Lebesgue measure on $\R$.  To see this, we pick a $(\cdot,\cdot)_\nabla$-orthonormal basis $(\phi_j)$ of $\CH_2(\cyl)$ consisting of $C_0^\infty(\cyl)$ functions such that $\phi_1|_{\cyl \setminus U_1^R} \equiv 0$ and $\phi_1|_{\partial U_j^R} \equiv 0$ for $j \neq 1$ and $\phi_1|_{\partial U_1^R} > 0$.  Since $\phi_1 \in \CH_2(\cyl)$, we can write $h = (h-\alpha_1 \phi_1) + \alpha_1 \phi_1$ so that $h - \alpha_1 \phi_1$ and $\alpha_1 \phi_1$ are conditionally independent given $\wt{\eta}'$ and the restriction of $h$ to $\cyl \setminus U_0^R$.  Let $\nu$ (resp.\ $\nu_1$) be the $\gamma$-LQG boundary length measure associated with $h$ (resp.\ $h-\alpha_1 \phi_1$).  Then we have that
\[ \frac{d\nu}{d\nu_1}(x) = e^{\gamma \alpha_1 \phi_1(x)/2}.\]
Consequently, we have that
\begin{equation}
\label{eqn::x_1_integral}
X_1^R = \log \left( \int_{\partial U_1^R} e^{\gamma \alpha_1 \phi_1(x) /2} d\nu_1(x) \right) - Y_0^R.
\end{equation}
Note that $X_1^R$ is smooth and strictly increasing viewed as a function of $\alpha_1$.  Moreover, $Y_0^R$ is determined by $h-\alpha_1 \phi_1$.  Applying the change of variables formula using the representation of $X_1^R$ as in~\eqref{eqn::x_1_integral} implies the result.
\end{proof}

Let $\ol{Y} = C e^{-\gamma r/2} \ell_{r,C}$.  Combining Lemma~\ref{lem::random_walk_density} and Lemma~\ref{lem::loops_density} we obtain the following.

\begin{lemma}
\label{lem::disk_boundary_length_rn}
The law of $\ol{Y}$ under $\conelaw$ has a density with respect to Lebesgue measure on $[0,1]$ which is almost everywhere positive on $[0,1]$ and which does not depend on $C$ or $r$.
\end{lemma}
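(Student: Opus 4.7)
The plan is to combine Lemma~\ref{lem::random_walk_density} with Lemma~\ref{lem::loops_density} to produce the density, and then to use the scale invariance of the $\gamma$-quantum cone to rule out dependence on $C$ and $r$.

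For the existence of the density and its positivity, first I would fix $R$ with $\log R > L := -\log C + \gamma r /2$ and work with the walk $(Y_j^R)_{j \in \N}$.  By the definition of $\zeta_{r,C}$ in the overview of Section~\ref{sec::bessel_brownian_constructions}, on the event $\{Y_0^R > L\}$ one has $\log \ell_{r,C} = Y_\tau^R$ where $\tau := \inf\{j \geq 1 : Y_j^R \leq L\}$; the fact that $\tau < \infty$ almost surely follows from the a.s.\ finiteness of $\zeta_{r,C}$ recorded at the start of Section~\ref{sec::bessel_brownian_constructions}.  Lemma~\ref{lem::loops_density} supplies that the increments are i.i.d.\ with an a.e.\ positive density on $\R$, so applying Lemma~\ref{lem::random_walk_density} to the shifted walk $(Y_j^R - Y_0^R)$ with target set $(-\infty, L - y]$ conditionally on $Y_0^R = y > L$ produces an a.e.\ positive conditional density for $Y_\tau^R$ on $(-\infty, L]$.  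Integrating against the distribution of $Y_0^R$ on $\{Y_0^R > L\}$ yields an a.e.\ positive density on $(-\infty, L]$ for $\log \ell_{r,C}$ on that event; the complementary event $\{Y_0^R \leq L\}$ can be made of arbitrarily small probability by choosing $R$ large, or handled by reindexing the walk by a larger $R' > R$ so that $\log \ell_{r,C} = Y_{\tau'}^{R'}$ with $\tau' \geq 1$.  A change of variables from $\log \ell_{r,C}$ to $\ol{Y} = \exp(\log \ell_{r,C} - L) \in (0,1]$ then produces the claimed a.e.\ positive density on $[0,1]$.

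For independence from $(C,r)$, I would invoke the scale invariance of the $\gamma$-quantum cone.  The Bessel-process construction in Section~\ref{subsubsec::quantum_cones}, together with the translation-invariance of $\SLE_{\kappa'}(\kappa'-6)$ and the fact that $\wt{\eta}'$ is sampled independently of $h$, shows that the joint law of $(h, \wt{\eta}')$ is preserved (up to recentring the horizontal embedding of $\cyl$) under the operation of adding a constant $a$ to $h$.  Under this operation all quantum boundary lengths rescale by $e^{\gamma a/2}$, so the threshold $C^{-1} e^{\gamma r/2}$ for the new cone corresponds to the threshold $C^{-1} e^{\gamma (r-a)/2}$ for the original cone.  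Substituting into the definition of $\ol{Y}$ gives $\ol{Y}_{r,C} \stackrel{d}{=} \ol{Y}_{r-a,C}$ for every $a \in \R$, so the law of $\ol{Y}_{r,C}$ does not depend on $r$.  Since $(C, r)$ enters the definition of $\ol{Y}_{r,C}$ only through the threshold $C^{-1} e^{\gamma r / 2}$ and the compensating prefactor $C e^{-\gamma r/2}$, jointly changing $(C, r) \to (C', r')$ with $r' = r + (2/\gamma) \log(C'/C)$ leaves $\ol{Y}_{r,C}$ pointwise invariant; combined with the previous step, this shows that the law is independent of $C$ as well.

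The only real bookkeeping issue I foresee is the treatment of the residual event $\{Y_0^R \leq L\}$ in the first step, but this is routine and can be disposed of by a limit in $R$ or by reindexing the walk; it does not affect the structure of the argument.
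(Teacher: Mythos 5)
Your proposal is correct and takes essentially the same route as the paper: the existence and a.e.\ positivity of the density come from combining Lemma~\ref{lem::random_walk_density} with Lemma~\ref{lem::loops_density}, with the residual event $\{Y_0^R \leq L\}$ disposed of by sending $R \to \infty$, and the independence of $(C,r)$ comes from the invariance of the $\gamma$-quantum cone law under adding a constant to the field. The paper's proof is simply a terser version of the same argument, and your write-up fills in the bookkeeping (the identification $\log \ell_{r,C} = Y_\tau^R$, the change of variables, and the explicit reparameterization showing the joint $(C,r)$-dependence collapses to the single threshold) correctly.
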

\begin{proof}
Since the law of a $\gamma$-quantum cone is invariant under the operation of adding a constant to the field, it follows that the law of $\ol{Y}$ is independent of $r$ and $C$.  The result then follows in the case that we condition on the event $P_j^R$ that the first $j$ such that $Y_j^R \leq C^{-1} e^{\gamma r/2}$ is at least $1$ by combining Lemma~\ref{lem::random_walk_density} and Lemma~\ref{lem::loops_density}.  The result follows more generally using that the probability of $P_j^R$ tends to $1$ as $R \to \infty$.
\end{proof}

We are now going to establish an analog of Lemma~\ref{lem::disk_boundary_length_rn} for $\conelaw_F$.

\begin{lemma}
\label{lem::disk_boundary_length_conditioned}
The law of $\ol{Y}$ under $\conelaw_G$ has a density with respect to Lebesgue measure on $[1/2,1]$ which is almost everywhere positive in $[1/2,1]$ and bounded from above by a constant times the density of $\ol{Y}$ under $\conelaw$ on $[1/2,1]$.
\end{lemma}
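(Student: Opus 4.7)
The plan is to apply Bayes' rule to express the $\conelaw_F$-density of $\ol{Y}$ as the $\conelaw$-density $\wt{g}$ (provided by Lemma~\ref{lem::disk_boundary_length_rn}) times a reweighting factor, and to control that factor using Proposition~\ref{prop::unexplored_region_qc_inf_to_0} together with the scaling symmetry of LQG.

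By Proposition~\ref{prop::unexplored_region_qc_inf_to_0}, the conditional $\conelaw$-law of $A_{r,C}$ given $\ell_{r,C} = \ell$ is that of the quantum area of a quantum disk of boundary length $\ell$ biased by its area. The LQG scaling identity (adding $c$ to $h$ multiplies boundary lengths by $e^{\gamma c/2}$ and areas by $e^{\gamma c}$) then implies that this conditional law equals that of $\ell^{2}B$, where $B$ has a fixed probability density $g_{1}$ on $(0,\infty)$ that is continuous and strictly positive. Writing $M = C^{2}e^{-\gamma r}$ and $\ell = y C^{-1}e^{\gamma r/2}$, a change of variables gives
\[
\phi^{r,C}_{\epsilon}(y) := \conelaw\!\left[A_{r,C} \in [1,1+\epsilon] \,\big|\, \ol{Y} = y\right]
= \int_{M/y^{2}}^{(1+\epsilon)M/y^{2}} g_{1}(u)\,du.
\]
An application of Bayes' rule then expresses the density of $\ol{Y}$ under $\conelaw_{F}$ at $y \in [1/2,1]$ as
\[
\frac{\wt{g}(y)\,\phi^{r,C}_{\epsilon}(y)}{\int_{1/2}^{1}\wt{g}(y')\,\phi^{r,C}_{\epsilon}(y')\,dy'}.
\]
Almost-everywhere positivity on $[1/2,1]$ is then immediate, since $\wt{g}>0$ a.e.\ by Lemma~\ref{lem::disk_boundary_length_rn} and $\phi^{r,C}_{\epsilon}>0$ everywhere because $g_{1}>0$.

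To get the upper bound, I would show that the ratio $\phi^{r,C}_{\epsilon}(y)/\phi^{r,C}_{\epsilon}(y')$ is bounded by a constant $K(\epsilon)$ uniformly over $y,y' \in [1/2,1]$, $r \in \R_{-}$, and $C > 1$. As $y$ varies over $[1/2,1]$ the endpoints of the interval of integration both lie in $[M,4(1+\epsilon)M]$ and the lengths $\epsilon M/y^{2}$ are comparable up to a factor of $4$; thus the bound reduces to a Harnack-type estimate on $g_{1}$. For bounded $M$ this follows from continuity and positivity of $g_{1}$ on compact subsets of $(0,\infty)$; for large $M$ it follows from the known power-law tail of the area distribution of a quantum disk, which makes the relevant integrals behave like $(M/y^{2})^{1-\alpha}$ up to an $\epsilon$-dependent prefactor, so that the ratio is controlled by $(y'/y)^{2(\alpha-1)} \leq 4^{\alpha-1}$. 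Granting this, $\phi^{r,C}_{\epsilon}(y) \leq K(\epsilon)\,\phi^{r,C}_{\epsilon}(y')$; integrating against $\wt{g}(y')\,dy'$ on $[1/2,1]$ and using $\int_{1/2}^{1}\wt{g} > 0$ (a consequence of Lemma~\ref{lem::disk_boundary_length_rn}) yields the desired pointwise upper bound on the $\conelaw_{F}$-density of $\ol{Y}$ by a constant depending only on $\epsilon$ times $\wt{g}$. The main obstacle is precisely the uniform-in-$M$ control on $g_{1}$, which rests on the tail asymptotics of the area distribution of the quantum disk.
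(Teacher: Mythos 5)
Your proposal is correct and follows essentially the same route as the paper: the paper's proof is exactly a Bayes'-rule factorization of $\conelaw_F[\ol{Y}\in J]$ against $\conelaw[\ol{Y}\in J]$, with the reweighting factor shown to be $\asymp 1$ (constants depending only on $\epsilon$) uniformly over $J\subseteq[1/2,1]$ and over $r,C$. The only difference is that the paper justifies the uniform comparability of $\conelaw[A_{r,C}\in[1,1+\epsilon]\giv \ol{Y}=y]$ across $y\in[1/2,1]$ by invoking the $\pi/2$-cone-excursion representation of $(U_{r,C},h)$ from Theorem~\ref{thm::disk_explore} (under which your $g_1$ is the density of the length of a length-weighted unit-terminal-displacement cone excursion, whose continuity, positivity, and power-law tail are the inputs you assume for the quantum disk area), so your Harnack-type estimate is the same step phrased through the area rather than the Brownian side of that equivalence.
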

\begin{proof}
Let $J \subseteq [1/2,1]$ be an open interval.  We have by Bayes' rule that 
\begin{equation}
\label{eqn::density_bayes}
\conelaw_G[\ol{Y} \in J] = \conelaw[ \ol{Y} \in J \giv F_{r,\epsilon,C} ] = \frac{\conelaw[ F_{r,\epsilon,C} \giv \ol{Y} \in J]}{\conelaw[ F_{r,\epsilon,C}]} \conelaw[ \ol{Y} \in J] := P_1 \conelaw[ \ol{Y} \in J].
\end{equation}
Lemma~\ref{lem::disk_boundary_length_rn} implies that $\conelaw[\ol{Y} \in [1/2,1] ] > 0$, which in turn implies that $P_1 \asymp 1$ as the conditional probability of $F_{r,\epsilon,C}$ given $\ol{Y}$ is comparable for each value of $\ol{Y} \in [1/2,1]$.  Indeed, this latter claim follows as the conditional probability of $F_{r,\epsilon,C}$ given $\ol{Y}$ is the probability of having a $\pi/2$-cone excursion of length in $[1,1+\epsilon]$ with terminal displacement $C^{-1} e^{\gamma r/2} \ol{Y}$.
\end{proof}

We can now complete the proof of Lemma~\ref{lem::rn_bound}.

\begin{proof}[Proof of Lemma~\ref{lem::rn_bound}]
By Lemma~\ref{lem::conditionally_independent}, we have that $\CZ_{r,\epsilon,C}$ is equal to the Radon-Nikodym derivative of the law of the quantum boundary length $\ell_{r,C}$ of $U_{r,C}$ under $\conelaw_G$ with respect to its law under $\conelaw$.  Thus the result follows by combining Lemma~\ref{lem::disk_boundary_length_rn} and Lemma~\ref{lem::disk_boundary_length_conditioned}.
\end{proof}

Let $u_{r,C}$ (resp.\ $v_{r,C}$) be the infimum (resp.\ supremum) of $\re(\partial U_{r,C})$.  In other words, $[u_{r,C},v_{r,C}] \times [0,2\pi]$ is the smallest annulus in $\cyl$ which contains $\partial U_{r,C}$.

\begin{figure}[ht!]
\begin{center}
\includegraphics[scale=0.85]{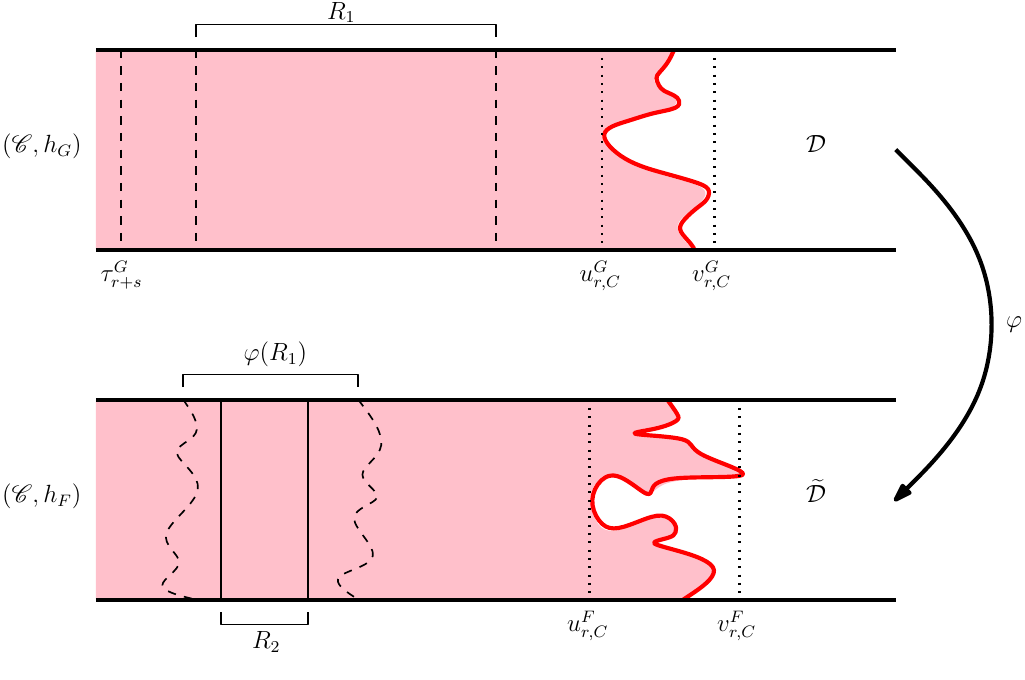}
\end{center}
\caption{\label{fig::bessel_brownian_reglue} Illustration of the setup of the proof of Lemma~\ref{lem::behavior_of_average_at_loop_mu_F}.  {\bf Top:} a surface sampled from the law $\conelaw_G$.  The red region is the part of the surface separated from $+\infty$ up until the first time that the $\SLE_{\kappa'}(\kappa'-6)$ process $\wt{\eta}'$ makes a loop with boundary length at most $C^{-1} e^{\gamma r/2}$.  {\bf Bottom:} the surface sampled from $\conelaw_F$ after resampling $\CD$ to yield $\wt{\CD}$. The superscripts $F$ and $G$ are used to indicate quantities associated with $h_F$ and $h_G$, respectively.}
\end{figure}

\newcommand{\av}{H}

\begin{lemma}
\label{lem::behavior_of_average_at_loop_mu}
Let $\av$ be the average of $h$ on $u_{r,C} + [0,2\pi i]$ in $\cyl$.  For any fixed $s \in \R_-$ we have that
\[ \conelaw[\av \geq r + s] \to 0 \quad\text{as}\quad C \to \infty \quad\text{uniformly}\quad\text{in}\quad r \in \R_-.\]
\end{lemma}
\begin{proof}
Suppose that $\wt{\av}$ has the law as described in the statement of the lemma for $r=0$ and $C=1$.  Then $\av = \wt{\av} + r - \tfrac{2}{\gamma} \log C$ has the law described in the statement of the lemma with the given value of $r$ and $C$.  The result then follows since $\conelaw[\wt{\av} < \infty] = 1$.
\end{proof}

We are now going to establish an analog of Lemma~\ref{lem::behavior_of_average_at_loop_mu} for $\conelaw_F$.

\begin{lemma}
\label{lem::behavior_of_average_at_loop_mu_F}
Let $\av$ be the average of $h$ on $u_{r,C} + [0,2\pi i]$.  For any fixed $s \in \R_-$ we have that
\[ \conelaw_F[ \av \geq r + s] \to 0 \quad\text{as}\quad C \to \infty \quad\text{uniformly}\quad\text{in}\quad r \in \R_-.\]
\end{lemma}
We will deduce Lemma~\ref{lem::behavior_of_average_at_loop_mu_F} from the corresponding result with $\conelaw_G$ in place of $\conelaw_F$, which in turn follows from Lemma~\ref{lem::rn_bound} and Lemma~\ref{lem::behavior_of_average_at_loop_mu}.  The idea of the proof will be to assume that we have samples from $\conelaw_F$ and $\conelaw_G$ coupled together on a common probability space so that one can transform from the former to the latter by cutting out the surface parameterized by $U_{r,C}$ and then gluing in one sampled from the $\conelaw$-conditional law given $\ell_{r,C}$.  This cutting and gluing operation involves applying a conformal transformation to the complement of $U_{r,C}$ in $\cyl$.  As we have just mentioned above, we have already proved that under $\conelaw_G$, the average considered in Lemma~\ref{lem::behavior_of_average_at_loop_mu_F} is very likely to be smaller than $r+s$ when $C$ is large.  The difficulty is that if we apply a conformal transformation to the circle $u_{r,C} + [0,2\pi i]$ in $\cyl$ we will not get another circle.  In order to deal with this challenge, we will first prove an intermediate result in which we have replaced the average over the circle $u_{r,C} + [0,2\pi i]$ in $\cyl$ with the worst-case average of the field when integrated against all smooth functions with compact support with support at most a bounded distance of $u_{r,C} + [0,2\pi i]$ (and with bounded derivative).  As we will see, averages of this type work well when applying conformal transformations because precomposing such a function with a conformal transformation leads to a function of the same type.  See Figure~\ref{fig::bessel_brownian_reglue} for an illustration of how we will implement this idea.  Before we give the proof of Lemma~\ref{lem::behavior_of_average_at_loop_mu_F}, we will need the following.

\begin{lemma}
\label{lem::behavior_of_sup_at_loop_mu}
Fix $a,b > 0$ and let $\Phi_{a,b}^1$ be the set of $C_0^\infty(\cyl)$ functions which are supported in the annulus $[0,a] \times [0,2\pi]$ with $\phi \geq 0$,  $\int \phi(x) dx = 1$, and $\| \phi' \|_\infty \leq b$.  Fix $r,s \in \R_-$ and let
\[  M(h) = \sup_{\phi \in \Phi_{a,b}^1} (h,\phi(\cdot+\tau_{r+s})).\]
We have that
\[ \conelaw[M(h) \geq r] \to 0 \quad\text{as}\quad s \to -\infty \quad\text{uniformly}\quad\text{in}\quad r \in \R_{-}.\]
\end{lemma}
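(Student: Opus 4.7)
The plan is to decompose $h = h_1 + h_2$ into its orthogonal projections onto $\CH_1(\cyl)$ and $\CH_2(\cyl)$, which are independent by the construction of the $\gamma$-quantum cone recalled in Section~\ref{subsubsec::quantum_cones}, and then control the $\phi$-supremum of each contribution separately.

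For the $h_1$ contribution I would use that $\phi \geq 0$ with $\int\phi = 1$: if $g(t) := \int_0^{2\pi}\phi(t,\theta)\,d\theta$, then $g$ is a probability density on $[0,a]$ and
\[
(h_1,\phi(\cdot-\tau_{r+s})) \;=\; \int_0^a X_{\tau_{r+s}+t}\,g(t)\,dt \;\leq\; \sup_{t\in[0,a]} X_{\tau_{r+s}+t},
\]
with the right-hand side no longer depending on $\phi$. Combining the Bessel description in Section~\ref{subsubsec::quantum_cones} with the change of variables recalled in Section~\ref{subsubsec::bessel}, after the prescribed reparameterization $X$ is a Brownian motion with drift $\gamma - Q < 0$, and in particular is strong Markov with a space-homogeneous transition kernel. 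Applying the strong Markov property at the stopping time $\tau_{r+s}$ (at which $X_{\tau_{r+s}} = r+s$), the random variable $\Delta := \sup_{t\in[0,a]}(X_{\tau_{r+s}+t} - (r+s))$ has the law of $\sup_{t\in[0,a]}(B_t + (\gamma - Q)t)$ for a standard Brownian motion $B$, which is tight and does not depend on $r$ or $s$.

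For the $h_2$ contribution I would use that the law of $h_2$ is translation invariant in $u$ and that $h_2$ is independent of $\tau_{r+s}$ (which is $h_1$-measurable). Conditional on $\tau_{r+s}$, the random variable $\sup_{\phi \in \Phi_{a,b}^1}(h_2,\phi(\cdot - \tau_{r+s}))$ therefore has the same law as $Y := \sup_{\phi \in \Phi_{a,b}^1}(h_2,\phi)$, whose distribution depends on neither $r$ nor $s$. Combining the two bounds gives $M(h) \leq (r+s) + \Delta + Y'$ with $Y' \stackrel{d}{=} Y$, so
\[
\conelaw[M(h) \geq r] \;\leq\; \conelaw[\Delta + Y' \geq -s],
\]
and the right-hand side tends to $0$ as $|s| \to \infty$ uniformly in $r \in \R_-$ (I read ``$s \to \infty$'' in the statement as $-s \to +\infty$, consistent with $s \in \R_-$).

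The main obstacle I foresee is the verification that $Y < \infty$ almost surely. The constraints $\phi \geq 0$, $\int\phi = 1$, $\|\phi'\|_\infty \leq b$ together with the fixed support $[0,a]\times[0,2\pi]$ force a uniform bound $\|\phi\|_\infty \leq K_{a,b}$, so by Arzel\`a--Ascoli $\Phi_{a,b}^1$ is relatively compact in $C^0(\cyl)$. The centered Gaussian process $\phi \mapsto (h_2,\phi)$ is then continuous on $\Phi_{a,b}^1$ with respect to a pseudometric dominated by the sup norm, and a Dudley-type entropy bound yields $\E[Y] < \infty$ together with Gaussian concentration. The remaining steps reduce to a routine application of the strong Markov property for $X$ and the independence/translation invariance of $h_2$.
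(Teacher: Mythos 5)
Your overall strategy is the same as the paper's: the paper reduces to $r=0$ by adding a constant to the field, recenters the embedding so that the projection of $h$ onto $\CH_1(\cyl)$ first hits $r+s$ at $u=0$, observes that to the right of this point $h$ is $(\gamma-Q)\re(z)$ plus a translation-invariant field, and concludes that $M(h)-(r+s)$ is a.s.\ finite with a law not depending on $r,s$ (citing \cite[just after Proposition~10.18]{dms2014mating} for the finiteness). Your explicit split into the radial part (handled via $\phi \geq 0$, $\int \phi = 1$ and the strong Markov property at $\tau_{r+s}$, giving the tight variable $\Delta$) and the lateral part (handled via translation invariance and independence from $\tau_{r+s}$) is a correct and more self-contained rendering of exactly this argument, and your reading of ``$s\to\infty$'' as $s\to-\infty$ is the intended one (compare the choice of $s_0\in\R_-$ with ``$s\leq s_0$'' in the proof of Lemma~\ref{lem::behavior_of_average_at_loop_mu_F}). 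One harmless imprecision: the radial process is a Brownian motion with drift $\gamma-Q$ only on $\{u\geq 0\}$ (to the left of $u=0$ it is the reversed Bessel description), but since $r+s\leq 0$ forces $\tau_{r+s}\geq 0$ and you only look forward from $\tau_{r+s}$, this does not affect your claim about $\Delta$.

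The one step that fails as literally stated is the justification that $Y=\sup_{\phi\in\Phi_{a,b}^1}(h_2,\phi)$ is finite. You propose Dudley's bound with the canonical metric ``dominated by the sup norm'' and covering numbers of $\Phi_{a,b}^1$ in $C^0$. But $\Phi_{a,b}^1$ is (essentially) a Lipschitz ball over a two-dimensional base, so its sup-norm metric entropy satisfies $\log N(\epsilon)\asymp \epsilon^{-2}$, and the entropy integral $\int_0 \sqrt{\log N(\epsilon)}\,d\epsilon \asymp \int_0 \epsilon^{-1}\,d\epsilon$ diverges; Dudley gives nothing with that metric comparison. The fix is easy and makes the compactness/chaining machinery unnecessary: the constraints defining $\Phi_{a,b}^1$ give a uniform bound $\|\phi\|_{H^1}\leq C(a,b)$ (uniform $L^\infty$ and gradient bounds on a fixed compact support), so $(h_2,\phi)\leq \|h_2\|_{H^{-1}(U)}\,\|\phi\|_{H^1}$ for a fixed neighborhood $U$ of $[0,a]\times[0,2\pi]$, and $\|h_2\|_{H^{-1}(U)}<\infty$ almost surely for the lateral projection of the whole-plane GFF. (Alternatively, one can run the chaining argument with the canonical $H^{-1}$-type metric, under which the entropy of $\Phi_{a,b}^1$ is much smaller.) With that repair your argument is complete and matches the paper's.
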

\begin{proof}
We will first assume that $r =0$ and we take our quantum cone to be embedded so that the horizontal translation is so that the projection of $h$ onto $\CH_1(\cyl)$ first hits $r+s=s$ at $u=0$.  Then we have that the restriction of $h$ to $\cyl_+$ has the same law as the sum of the function $(\gamma-Q)\re(z)$ and a whole-plane GFF on $\cyl$ restricted to $\cyl_+$ with the additive constant fixed so that its average on $[0,2\pi i]$ is equal to $s$.  Therefore it suffices to prove the result in the case that $h$ is the sum of $(\gamma-Q)\re(z)$ and a whole-plane GFF on $\cyl$ with this normalization.  The argument explained in the paragraph just after \cite[Proposition~9.19]{dms2014mating} implies that, in this setting, $M(h)-s$ is a.s.\ finite with law which does not depend on $s$.  This proves the result for $r = 0$.  The result then follows for other values of $r$ because one can switch from the $r=0$ setting to the setting of general $r \in \R_-$ by adding $r$ to the field.
\end{proof}

\begin{proof}[Proof of Lemma~\ref{lem::behavior_of_average_at_loop_mu_F}]
In the proof, we will be working with samples from both~$\conelaw_G$ and from~$\conelaw_F$, coupled onto a common probability space.  We will add an extra subscript $F$ or $G$ in order to clarify to which law each will be associated.

Suppose that $(\cyl,h_G,+\infty,-\infty)$ and $\eta_G'$ are sampled from the law~$\conelaw_G$.  We think of the embedding of the surface into $\cyl$ as taking place in two steps.  Namely, we first embed the surface as usual for quantum cones as described in Section~\ref{subsubsec::quantum_cones} so that the horizontal translation is such that the projection of $h_G$ onto $\CH_1(\cyl)$ first hits $0$ at $u=0$ and then we adjust the horizontal translation so that $u_{r,C,G} = 0$.

Suppose that $(\cyl,h_F,+\infty,-\infty)$ is a sampled from $\conelaw_F$.  We take the embedding into $\cyl$ to be the same as for $(\cyl,h_G)$ described above (but with $u_{r,C,F}$ in place of $u_{r,C,G}$).  We assume that $(\cyl,h_F,+\infty,-\infty)$ and $(\cyl,h_G,+\infty,-\infty)$ are coupled together as in the definition of $\conelaw_G$.  That is, we can transform from the former to the latter by cutting out the quantum surface  $(U_{r,C,F},h_F)$ and then gluing in a conditionally independent copy sampled from the $\conelaw$-conditional law with given boundary length $\ell_{r,C,F}$ (without any conditioning on its quantum area).  Let $\varphi$ be the conformal transformation which corresponds to this cutting and gluing operation.  That is, $\varphi \colon \cyl \setminus U_{r,C,G} \to \cyl \setminus U_{r,C,F}$ is a conformal transformation fixing $-\infty$ with $h_G  = h_F \circ \varphi + Q\log|\varphi'|$ in $\cyl \setminus U_{r,C,G}$.

It follows from Lemma~\ref{lem::rn_bound} and Lemma~\ref{lem::behavior_of_average_at_loop_mu} that the $\conelaw_G$-probability that the average of $h_G$ on $[0,2\pi i]$ is at most $r$ tends to $1$ as $C \to \infty$ uniformly in $r \in \R_-$.  Our goal is to use this to deduce that the same is true with $h_F$ in place of $h_G$.  The difficulty is that $\varphi([0,2\pi i])$ is not a circle in $\cyl$.  To deal with this, we will instead work with the field integrated against smooth test functions rather than on circles.  In order to do this, we will need to make sure we are working with test functions which are on a domain in which the distortion which arises by applying $\varphi$ is bounded and this is where we will make use of the distortion bound Lemma~\ref{lem::conformal_cylinder_hulls}.

Let $C_1, C_2$ be the constants from Lemma~\ref{lem::conformal_cylinder_hulls}.  By Lemma~\ref{lem::rn_bound}, the modulus of continuity for the projection of $h_G$ onto $\CH_1(\cyl)$ is the same as that for a standard Brownian motion as this is also true under $\conelaw$.  Fix $s \in \R_-$.  It thus follows that $\tau_{r+s,G} \leq - C_1 - 3C_2$ with $\conelaw_G$-probability tending to $1$ as $C \to \infty$ uniformly in $r \in \R_-$.  Let $\Phi_{a,b}^1$ and $M(h)$ be as in the statement of Lemma~\ref{lem::behavior_of_sup_at_loop_mu} with $a = 3 C_2$ and $b > 1$ very large but fixed.  Then Lemma~\ref{lem::rn_bound} and Lemma~\ref{lem::behavior_of_sup_at_loop_mu} together imply that for each $\delta > 0$ there exists $s_0 \in \R_-$ such that for each $s \leq s_0$ there exists $C_0 > 1$ such that $C \geq C_0$ implies
\begin{equation}
\label{eqn::cone_good}
\conelaw_G[ M(h) \geq r] \leq \delta \quad\text{uniformly in}\quad r \in \R_-.
\end{equation}

Let $R_1$ be the annulus in $\cyl$ which is given by $\tau_{r+s} + [0,3C_2] \times [0,2\pi]$.

Lemma~\ref{lem::conformal_cylinder_hulls} implies that $\varphi(R_1)$ contains a non-empty rectangle $R_2 = [p,q] \times [0,2\pi]$ in $\cyl$.  Let $\phi_{R_2} \in C_0^\infty(\cyl)$ with $\phi_{R_2} \geq 0$ be a function which is rotationally symmetric about $\pm \infty$ with $\int \phi_{R_2}(x) dx = 1$ and support contained in $R_2$ and let $\phi = |(\varphi^{-1})'|^2 \phi_{R_2} \circ \varphi^{-1}$.  Then $\phi \in C_0^\infty(\cyl)$ is supported in $R_1$, $\phi \geq 0$, and $\int \phi(x) dx = 1$.  Moreover, by taking $b > 1$ large enough relative to $C_1,C_2$, we have that $\| \phi' \|_\infty \leq b$.  Consequently, it follows from~\eqref{eqn::cone_good} that we can make the probability that $(h_G,\phi) \leq r$ as close to $1$ as we like by choosing $s \in \R_-$ small enough and $C > 1$ large enough, uniformly in $r \in \R_-$.  Since $Q\log|(\varphi^{-1})'|$ restricted to $R_2$ is bounded by a deterministic constant it thus follows that we can make the probability of the event that $(h_F,\phi_{R_2}) \leq r$ as close to $1$ as we like by choosing $s \in \R_-$ small enough and $C > 1$ large enough, uniformly in $r \in  \R_-$.  As $\phi_{R_2}$ is rotationally symmetric about $\pm \infty$ with support contained in $R_2$, it follows that on the event that $(h_F,\phi_{R_2}) \leq r$ we have that the projection of $h_F$ onto $\CH_1(\cyl)$ first hits $r$ to the left of the right side of $R_2$.
\end{proof}

\begin{proof}[Proof of Proposition~\ref{prop::space_filling_almost_sphere}]
By the definition of $u_{r,C}$, under $\conelaw_F$ the quantum area of $U_{r,C}$ is in $[1,1+\epsilon]$.  Recall from the statement of Lemma~\ref{lem::behavior_of_average_at_loop_mu_F} that $\av$ is the average of $h$ on the vertical line $u_{r,C} + [0,2\pi i]$.  Lemma~\ref{lem::behavior_of_average_at_loop_mu_F} thus implies that under $\conelaw_F$, we can make the probability that $\tau_r \leq u_{r,C}$ arbitrarily close to $1$ by making $C$ large.  Thus to finish proving that~\eqref{eqn::e_r_eps_lbd} holds, we need to explain why the quantum area of the part of $\cyl \setminus U_{r,C}$ which is to the right of $\tau_r$ tends to $0$ in probability under $\conelaw_F$ provided $r \in \R_-$ is small enough (uniformly in the choice of $C$).  This holds since the amount of quantum area in $\cyl$ which is to the right of $\tau_r$ tends to $0$ in probability as $r \to -\infty$ under $\conelaw$ hence also under $\conelaw_G$ by Lemma~\ref{lem::rn_bound}.  Therefore the claim for $\conelaw_F$ holds because of the natural coupling between $\conelaw_F$ and $\conelaw_G$.

We now turn to explain why~\eqref{eqn::f_c_r_lbd} holds.  Let $E_{r,\epsilon}$ and $E_{r,\beta}'$ be as in Lemma~\ref{lem::two_e_events}.  We have that
\begin{align*}
  \p[ F_{r,\epsilon,C} \giv E_{r,\epsilon}]
&= \frac{\p[ F_{r,\epsilon,C}, E_{r,\epsilon}]}{\p[ E_{r,\epsilon}]}
 \geq \frac{\p[ F_{r,\epsilon,C}, E_{r,\epsilon}, E_{r,\beta}']}{\p[ E_{r,\epsilon}]}
 = \frac{\p[ F_{r,\epsilon,C}, E_{r,\epsilon} \giv E_{r,\beta}']}{\p[ E_{r,\epsilon}] / \p[ E_{r,\beta}']}	
\end{align*}
Lemma~\ref{lem::two_e_events} implies that the denominator is bounded above by a constant $c_0 > 0$ which depends only on $\epsilon$ and $\beta$.  Therefore it suffices to give a lower bound for $\p[ F_{r,\epsilon,C}, E_{r,\epsilon} \giv E_{r,\beta}']$, but this is easy to see from the definitions.  This proves~\eqref{eqn::f_c_r_lbd}.
\end{proof}

\subsection{Extra conditioning does not affect the limiting law}
\label{subsec::asymptotic_mixing}

Proposition~\ref{prop::space_filling_almost_sphere} implies that we can think of conditioning on $F_{r,\epsilon,C}$ as taking place in two steps: we first condition the surface on $E_{r,\epsilon}$ and then condition the result on the uniformly positive probability event $F_{r,\epsilon,C}$.  The purpose of this section is to argue that this second conditioning step does not have a significant effect on the resulting law.  This, in turn, will complete the proof of Theorem~\ref{thm::sphere_equivalent_constructions}.

\begin{lemma}
\label{lem::f_delta_far_left}
Fix $r \in \R$.  Suppose that $(\cyl,h,+\infty,-\infty)$ is a $\gamma$-quantum cone.  Let $\tau_r$ be as in Proposition~\ref{prop::cone_to_sphere} and $v_{r,C}$ as defined just before the statement of Lemma~\ref{lem::behavior_of_average_at_loop_mu}.  Then we have that
\[ \pr{ v_{r,C} \geq \tau_r + S \giv E_{r,\epsilon}, F_{r,\epsilon,C} } \to 0 \quad\text{as}\quad r \to -\infty \quad\text{and then}\quad S \to \infty \]
at a rate which only depends on $\epsilon,C$.
\end{lemma}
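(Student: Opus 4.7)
The plan is to reduce via Proposition~\ref{prop::space_filling_almost_sphere} and then use the sphere convergence from Proposition~\ref{prop::cone_to_sphere} to pass to a limiting statement on the unit area quantum sphere, where I expect a.s.\ finiteness of the analog of $v_{r,C}-\tau_r$ to suffice.

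Since $\pr{F_{r,\epsilon,C}\giv E_{r,\epsilon}}\geq p_0>0$ uniformly in $r\in\R_-$ for fixed $\epsilon,C$ by Proposition~\ref{prop::space_filling_almost_sphere}, we have
\[
\pr{v_{r,C}\geq\tau_r+S\giv E_{r,\epsilon},F_{r,\epsilon,C}} \leq p_0^{-1}\,\pr{v_{r,C}\geq\tau_r+S,\,F_{r,\epsilon,C}\giv E_{r,\epsilon}},
\]
so it suffices to control the right-hand side. Because $\wt\eta'$ is independent of $h$ and $E_{r,\epsilon}$ is $h$-measurable, the conditional law of $(h,\wt\eta')$ given $E_{r,\epsilon}$ factors as a product. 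Applying Proposition~\ref{prop::cone_to_sphere}---in particular its second, conditional part, which anchors the embedding via the field on the annulus $[\tau_r,\tau_r+S]\times[0,2\pi]$---I would argue that as $r\to-\infty$ and then $\epsilon\to 0$, the joint law of the pair $(v_{r,C}-\tau_r,\one_{F_{r,\epsilon,C}})$ converges to that of analogous quantities $(v_C^{\mathrm{sph}},\one_{F_C^{\mathrm{sph}}})$ defined on a unit area quantum sphere decorated by an independent whole-plane $\SLE_{\kappa'}(\kappa'-6)$ from $-\infty$ to $+\infty$.

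It then remains to show that $v_C^{\mathrm{sph}}$ is a.s.\ finite on $F_C^{\mathrm{sph}}$. On the unit area sphere, the projection of $h$ onto $\CH_1(\cyl)$ is a time-changed Bessel excursion: starting from the bottleneck $\tau^{\mathrm{sph}}$ at level $r$, it rises to an a.s.\ finite maximum and then returns to $-\infty$ as $u\to+\infty$, in particular hitting any level $r-(2/\gamma)\log(2\pi C)$ at some a.s.\ finite position $\tau^{\mathrm{exit}}$. Because the loop $\partial U_C^{\mathrm{sph}}$ winds once around the cylinder (so has Euclidean length at least $2\pi$) while having quantum length at most $C^{-1}e^{\gamma r/2}$, the field $h$ must be at most $r-(2/\gamma)\log(2\pi C)$ somewhere on the loop, forcing its horizontal range to sit near $\tau^{\mathrm{exit}}$, which is the only point past $\tau^{\mathrm{sph}}$ in the physical half-cylinder where the projection drops below that level. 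Combined with a KPZ-type observation that a loop of small quantum length has small Euclidean diameter at the local scale of the field, this gives $v_C^{\mathrm{sph}}-\tau^{\mathrm{sph}}\leq (\tau^{\mathrm{exit}}-\tau^{\mathrm{sph}}) + O(1)$ a.s., hence finite, and the tail bound $\pr{v_C^{\mathrm{sph}}-\tau^{\mathrm{sph}}>S,\,F_C^{\mathrm{sph}}}\to 0$ as $S\to\infty$ follows.

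The main obstacle is promoting the convergence of quantum surfaces from Proposition~\ref{prop::cone_to_sphere} to convergence of the Euclidean-embedded quantity $v_{r,C}-\tau_r$, since surface-class convergence alone is not sufficient. The anchoring of the embedding by the annulus conditioning in the second part of Proposition~\ref{prop::cone_to_sphere} is the natural tool for this upgrade, but the details require some care, as does the KPZ-type Euclidean diameter bound used in the sphere argument.
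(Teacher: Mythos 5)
Your opening reduction is correct and is exactly how the paper deploys Proposition~\ref{prop::space_filling_almost_sphere}: since $\pr{F_{r,\epsilon,C}\giv E_{r,\epsilon}}\geq p_0>0$, it suffices to bound $\pr{v_{r,C}\geq\tau_r+S,\,F_{r,\epsilon,C}\giv E_{r,\epsilon}}$, and it is essential that $F_{r,\epsilon,C}$ stay inside the probability. The gap is that your finiteness argument on the sphere then never uses $F$, and in fact places the loop in exactly the configuration that $F$ forbids. You argue that the only place past $\tau^{\mathrm{sph}}$ where the field is low enough to support a loop of quantum length at most $C^{-1}e^{\gamma r/2}$ is near $\tau^{\mathrm{exit}}$, the descent of the radially symmetric part back through level $r-(2/\gamma)\log(2\pi C)$ near the $+\infty$ end. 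But if $\partial U_{r,C}$ sits there, then $U_{r,C}$ is a tiny neighborhood of $+\infty$ with negligible quantum area, contradicting $\mu_h(U_{r,C})\in[1,1+\epsilon]$ on $F_{r,\epsilon,C}$. On $E_{r,\epsilon}\cap F_{r,\epsilon,C}$ the loop must instead sit in the bottleneck just to the right of $\tau_r$, where the conditioned field (Lemma~\ref{lem::cond_x_large}: a Bessel excursion conditioned to reach a high level) is still near level $r$ before rising; you have overlooked this second low region, and the whole content of the lemma is that the loop is pinned there within an $O(1)$ window. Moreover, even granting your localization, the bound $v_C^{\mathrm{sph}}-\tau^{\mathrm{sph}}\leq(\tau^{\mathrm{exit}}-\tau^{\mathrm{sph}})+O(1)$ is vacuous for the lemma: the width of the interval on which the excursion exceeds level $r$ diverges as $r\to-\infty$, so this quantity is not tight uniformly in $r$ and cannot yield a rate depending only on $\epsilon,C$.

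There are two further problems. First, the limiting object $v_C^{\mathrm{sph}}$ is not well defined independently of $r$, because the defining threshold $C^{-1}e^{\gamma r/2}$ scales with $r$; what the lemma requires is tightness of $v_{r,C}-\tau_r$ uniformly over $r$, which almost sure finiteness of a single limiting random variable does not supply. Second, the inputs ``quantum length of the loop is at least its Euclidean length times $\inf e^{\gamma h/2}$'' and the ``KPZ-type'' small-diameter claim are not established in the paper and are not routine: the quantum length of an $\SLE_{\kappa'}$-type curve is not an arclength integral of $e^{\gamma h/2}$, and pointwise values of $h$ on the curve are not defined. The paper's own (one-line) proof runs differently: after the $p_0^{-1}$ reduction via Proposition~\ref{prop::space_filling_almost_sphere}, it uses the Bessel-excursion description of the field to the right of $\tau_r$ given $E_{r,\epsilon}$ from the proof of Proposition~\ref{prop::cone_to_sphere} (in particular Lemma~\ref{lem::cond_x_large}) together with the fact that on $F_{r,\epsilon,C}$ the component $U_{r,C}$ must carry at least a unit of the at most $1+\epsilon$ units of area lying to the right of $\tau_r$, to confine $\partial U_{r,C}$ to an $O(1)$ neighborhood of $\tau_r$ uniformly in $r$.
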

\begin{proof}
Let $\av$ be the average of $h$ on $u_{r,C} + [0,2\pi i]$.  Then we have that
\begin{align*}
   \p[ \av \geq r + M \giv E_{r,\epsilon}, F_{r,\epsilon,C} ]
&= \frac{\p[\av \geq r + M, E_{r,\epsilon} \giv F_{r,\epsilon,C} ]}{\p[ E_{r,\epsilon} \giv F_{r,\epsilon,C}]}.
\end{align*}
Using~\eqref{eqn::e_r_eps_lbd} of Proposition~\ref{prop::space_filling_almost_sphere}, we see that the probability is at most a constant times $\p[ \av \geq r + M \giv F_{r,\epsilon,C}]$.  By Lemma~\ref{lem::behavior_of_average_at_loop_mu_F}, we know that the probability of this event decays to $0$ as $M \to \infty$ uniformly in $r \in \R_-$.  Therefore it suffices to show that for each fixed $M$ we have that 
\[ \pr{ v_{r,C} \geq \tau_r + S, \av \leq r + M \giv E_{r,\epsilon}, F_{r,\epsilon,C} } \to 0 \quad\text{as}\quad r \to -\infty \quad\text{and then}\quad S \to \infty.\]

Using~\eqref{eqn::f_c_r_lbd} of Proposition~\ref{prop::space_filling_almost_sphere} in the inequality, we see that there exists a constant $c_0 > 0$ depending only on $C$ and $\epsilon$ such that
\begin{align*}
   \p[ v_{r,C} \geq \tau_r + S, \av \leq r + M \giv E_{r,\epsilon}, F_{r,\epsilon,C} ]
&= \frac{\p[ v_{r,C} \geq \tau_r + S, \av \leq r + M, F_{r,\epsilon,C} \giv E_{r,\epsilon}]}{\p[ F_{r,\epsilon,C} \giv E_{r,\epsilon}]}\\
&\leq c_0 \p[ v_{r,C} \geq \tau_r + S, \av \leq r + M, F_{r,\epsilon,C} \giv E_{r,\epsilon}].
\end{align*}
Let $E_{r,\beta}'$ be as in Lemma~\ref{lem::cond_x_large}.   We also have that
\begin{align*}
   &\p[ v_{r,C} \geq \tau_r +S,\av \leq r + M, F_{r,\epsilon,C} \giv E_{r,\epsilon}]\\
 \leq& \p[ v_{r,C} \geq \tau_r + S,\av \leq r + M, F_{r,\epsilon,C}, E_{r,\beta}' \giv E_{r,\epsilon}] + \p[(E_{r,\beta}')^c \giv E_{r,\epsilon}].
\end{align*}
By Lemma~\ref{lem::two_e_events}, the second term above can be made arbitrarily small by choosing $\beta > 0$ to be sufficiently large.  By Bayes' rule, the first term on the right hand side above is bounded from above by
\begin{align*}
  \frac{\p[ v_{r,C} \geq \tau_r + S,\av \leq r+ M, F_{r,\epsilon,C}, E_{r,\epsilon} \giv E_{r,\beta}']}{\p[ E_{r,\epsilon} \giv E_{r,\beta}']}.
\end{align*}
By Lemma~\ref{lem::two_e_events}, the ratio is bounded from above by a constant times the numerator.  Therefore it suffices to show that $\p[ v_{r,C} \geq \tau_r + S, \av \leq r + M, F_{r,\epsilon,C} \giv E_{r,\beta}'] \to 0$ as $r \to -\infty$ then $S \to \infty$.  We will deduce this from Lemma~\ref{lem::cond_x_large}.

Let $\sigma_\beta$ be the first $u \geq \tau_r$ that the projection $X$ of $h$ onto $\CH_1(\cyl)$ hits $\gamma^{-1} \log \beta^{-1}$.  We consider two possibilities: either $u_{r,C} \leq \sigma_\beta$ or $u_{r,C} > \sigma_\beta$.  We will first argue that the second possibility is very unlikely, so that we can exclude it.  We note that the event that $\av \leq r + M$, $u_{r,C} \geq \sigma_\beta$, and $F_{r,\epsilon,C}$ all hold is contained in the event that the quantum area in the part of $\cyl$ which is to the right of where $X$ first hits $r+M$ after $\sigma_\beta$ is at least $1$.  It is easy to see that the conditional probability of this given $E_{r,\beta}'$ tends to $0$ as $r \to -\infty$ as claimed.

We are thus left to bound
\[ \p[ v_{r,C} \geq \tau_r + S, \av \leq r + M, u_{r,C} \leq \sigma_\beta \giv E_{r,\beta}'].\]
Note that on $\av \leq r+M$ and $u_{r,C} \leq \sigma_\beta$ (and given $E_{r,\beta}'$), we have that $v_{r,C}$ is (non-strictly) to the left of the first loop of $\wt{\eta}'$ around $+\infty$ which is completed after $X$ attains its infimum on $[\tau_r,\sigma_\beta]$.  Since the conditional law of $X$ given $E_{r,\beta}'$ in $[\tau_r,\sigma_\beta]$ is that of a Brownian motion with positive drift starting from $r$ and run until the first time it hits $\gamma^{-1} \log \beta^{-1}$, it follows that the length of time it takes after time $\tau_r$ for the infimum to be attained has an exponential tail.  Thus the result easily follows.

\end{proof}

\begin{lemma}
\label{lem::f_delta_looks_like_sphere}
Suppose that $r \in \R$, $\epsilon > 0$, and $C > 1$.  Suppose that $(\cyl,h,+\infty,-\infty)$ is sampled from $\conelaw_F$.  Then we have that the quantum surfaces $(\cyl_+ + v_{r,C},h)$ converge weakly to the unit area quantum sphere (in the same sense as in Proposition~\ref{prop::cone_to_sphere}) when we take a limit as $r \to -\infty$, $C \to \infty$, and then $\epsilon \to 0$.
\end{lemma}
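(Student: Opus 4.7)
The plan is to reduce the statement to the second assertion of Proposition~\ref{prop::cone_to_sphere} by first replacing the conditioning on $F_{r,\epsilon,C}$ with a conditioning on $E_{r,\epsilon}$ via Proposition~\ref{prop::space_filling_almost_sphere}, and then showing that the shift from $v_{r,C}$ to $\tau_r$ is a negligible perturbation.

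First, by~\eqref{eqn::e_r_eps_lbd} and~\eqref{eqn::f_c_r_lbd} in Proposition~\ref{prop::space_filling_almost_sphere}, the law $\conelaw_F$ is, as $C \to \infty$, arbitrarily close in total variation to the law $\conelaw[\,\cdot\giv E_{r,\epsilon},F_{r,\epsilon,C}]$ obtained by conditioning $\conelaw[\,\cdot\giv E_{r,\epsilon}]$ further on the event $F_{r,\epsilon,C}$, which has conditional probability uniformly bounded below by some $p_0(\epsilon,C) > 0$. Hence it suffices to prove the convergence of $(\cyl_+ + v_{r,C},h)$ to the unit area quantum sphere under $\conelaw[\,\cdot\giv E_{r,\epsilon},F_{r,\epsilon,C}]$ in the order $r \to -\infty$, $C \to \infty$, $\epsilon \to 0$.

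Next, fix $S > 0$ large and let $U_r = [\tau_r,\tau_r+S] \times [0,2\pi]$. Lemma~\ref{lem::f_delta_far_left} gives $v_{r,C} \in [\tau_r,\tau_r+S]$ on an event of conditional probability tending to $1$ as $r \to -\infty$ and then $S \to \infty$, so $(\cyl_+ + v_{r,C},h)$ differs from $(\cyl_+ + \tau_r,h)$ only inside $U_r$, where both the quantum area and boundary length contributions go to $0$ as $r \to -\infty$ (since the projection of $h$ onto $\CH_1(\cyl)$ is of order $r$ throughout $U_r$, and the $\CH_2(\cyl)$ part is translation invariant). The second assertion of Proposition~\ref{prop::cone_to_sphere} states that under $\conelaw[\,\cdot\giv E_{r,\epsilon}]$ the conditional law of $(\cyl_+ + \tau_r,h)$ given $h|_{U_r}$ converges in probability to the unit area quantum sphere. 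To transfer this to $\conelaw[\,\cdot\giv E_{r,\epsilon},F_{r,\epsilon,C}]$, I would argue that $F_{r,\epsilon,C}$ is, modulo a manageable error, measurable with respect to $\sigma(h|_{U_r},\eta')$: the length $\ell_{r,C}$ depends on $h$ along the loop $\partial U_{r,C} \subseteq U_r$ and on $\eta'$, while the area $A_{r,C}$ differs from $\mu_h(\cyl_+ + \tau_r)$ (which is constrained by $E_{r,\epsilon}$) only by integrals over $U_r$ computable from $h|_{U_r}$ and $\eta'$. Since $\eta'$ is independent of $h$ under $\conelaw$, this further conditioning preserves the conditional law of $(\cyl_+ + \tau_r,h)$ given $h|_{U_r}$, yielding the desired convergence.

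The hardest part will be making rigorous the claim that $F_{r,\epsilon,C}$ reduces to a $\sigma(h|_{U_r},\eta')$-measurable event with negligible error, since $U_{r,C}$ could in principle extend to the left of $\tau_r$ or past $\tau_r + S$; controlling both extensions requires complementing Lemma~\ref{lem::f_delta_far_left} with a symmetric bound on $u_{r,C}$, and carefully expressing $A_{r,C}$ as $\mu_h(\cyl_+ + \tau_r)$ plus signed integrals supported in $U_r$.
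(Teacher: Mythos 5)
Your route is the paper's: its proof of this lemma is a one\nobreakdash-line combination of the final assertion of Proposition~\ref{prop::cone_to_sphere} with Lemma~\ref{lem::f_delta_far_left}, with Proposition~\ref{prop::space_filling_almost_sphere} implicitly supplying the passage from $\conelaw_F$ to $\conelaw[\,\cdot\giv E_{r,\epsilon},F_{r,\epsilon,C}]$ exactly as in your first step, so the ingredients and the order of limits are right. The one step that does not go through as written is the claim that $F_{r,\epsilon,C}$ is, modulo a manageable error, $\sigma(h|_{U_r},\eta')$-measurable, and hence that the extra conditioning ``preserves the conditional law of $(\cyl_++\tau_r,h)$ given $h|_{U_r}$.'' Even on the good event $\tau_r\leq u_{r,C}\leq v_{r,C}\leq \tau_r+S$, the constraint $A_{r,C}\in[1,1+\epsilon]$ reads $\mu_h(\cyl_++\tau_r)+c\in[1,1+\epsilon]$ with $c$ a small correction determined by $h|_{U_r}$ and $\eta'$, and $\mu_h(\cyl_++\tau_r)$ is precisely the non-$U_r$-measurable quantity that $E_{r,\epsilon}$ pins down only to $[1,1+\epsilon]$. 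So conditioning on $F_{r,\epsilon,C}$ genuinely re-conditions the field to the right of $U_r$: it restricts the total area to the subinterval $[1,1+\epsilon]\cap([1,1+\epsilon]-c)$, and independence of $\eta'$ from $h$ does not make this go away.

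The repair is to absorb this extra conditioning rather than argue it away: the proof of Proposition~\ref{prop::cone_to_sphere} (via Lemma~\ref{lem::cond_x_large}, i.e.\ conditioning the Bessel excursion) is insensitive to replacing $[1,1+\epsilon]$ by any subinterval whose endpoints differ from $1$ and $1+\epsilon$ by quantities tending to $0$ as $r\to-\infty$, since all such conditionings collapse to ``area equals $1$'' in the $\epsilon\to 0$ limit; as $c\to 0$ in probability for fixed $S$, this closes the gap. Also, the ``symmetric bound on $u_{r,C}$'' you defer is already in the paper: Lemma~\ref{lem::behavior_of_average_at_loop_mu_F} with $s=0$ says the circle average of $h$ at $u_{r,C}$ is below $r$ with high $\conelaw_F$-probability, and since that average is the projection of $h$ onto $\CH_1(\cyl)$ evaluated at $u_{r,C}$ while that projection exceeds $r$ on $(-\infty,\tau_r)$, it forces $u_{r,C}\geq\tau_r$. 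With those two substitutions your argument is the paper's.
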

\begin{proof}
Proposition~\ref{prop::space_filling_almost_sphere} implies that the conditional law of $(\cyl_+ + v_{r,C},h)$ given $F_{r,\epsilon,C}$ is close in total variation to its conditional law given both $F_{r,\epsilon,C}$ and $E_{r,\epsilon}$ as the conditional probability of the latter given the former can be made arbitrarily close to $1$.  Lemma~\ref{lem::f_delta_far_left} implies that, conditional on these two events, we can fix a value of $S$ large so that with high probability we have that $v_{r,C} \leq \tau_r + S$, given both $F_{r,\epsilon,C}$ and $E_{r,\epsilon}$.  Whether this occurs is determined by the values of the field and the path in $(-\infty,\tau_r + S] \times [0,2\pi]$.  Therefore the result follows by applying the final assertion of Proposition~\ref{prop::cone_to_sphere}.

\end{proof}

\begin{proof}[Proof of Theorem~\ref{thm::sphere_equivalent_constructions}, $\gamma \in (\sqrt{2},2)$]

Suppose that $(\cyl,h,+\infty,-\infty)$ and $\eta'$ are sampled from $\conelaw_F$.  Then Lemma~\ref{lem::f_delta_looks_like_sphere} implies that the laws of the surfaces $(\cyl_+ + v_{r,C},h)$ converge weakly to the law of the unit area quantum sphere when we take a limit as $r \to -\infty$, then $C \to \infty$, and then $\epsilon \to 0$.  Moreover, by Proposition~\ref{prop::cone_to_sphere_brownian_limit} we have that the joint law of $(L,R)$ restricted to the interval $J_{r,C}$ of time in which $\eta'$ is filling the component $U_{r,C}$ converges weakly with respect to the topology of uniform convergence as $r \to -\infty$, then $C \to \infty$, and then $\epsilon \to 0$ to that of a correlated Brownian loop of unit length.

We next claim that $\eta'$ converges to a space-filling $\SLE_{\kappa'}$ from $-\infty$ to $-\infty$ which is independent of the limiting surface.  We know that for each fixed $r$, $C$, and $\epsilon$ that the conditional law of $\eta'$ in $J_{r,C}$ given $h$, $U_{r,C}$, and $\eta'(\zeta_{r,C})$ is that of a space-filling $\SLE_{\kappa'}$ in $U_{r,C}$ from $\eta'(\zeta_{r,C})$ to $\eta'(\zeta_{r,C})$.  Moreover, Lemma~\ref{lem::f_delta_far_left} implies that $v_{r,C} \to -\infty$ in probability as $r \to -\infty$ and then $C \to \infty$.  In particular, this implies that the law of the GFF used to generate $\eta'$ restricted to $U_{r,C}$ converges to a whole-plane GFF on $\cyl$ in the sense that for each fixed $x \in \R$ its restriction to the half-infinite cylinder $[x,\infty) \times [0,2\pi]$ converges in the total variation to the law of the corresponding restriction of a whole-plane GFF (see \cite[Proposition~2.10]{MS_IMAG4}).  This implies the claim.

Suppose that $(\cyl,h,-\infty,+\infty)$ has the law of a unit area quantum sphere and that~$\eta'$ is an independent space-filling $\SLE_{\kappa'}$ process from $-\infty$ to $-\infty$ reparameterized by quantum area.  Let $(L,R)$ be the quantum boundary length of the left/right side of~$\eta'$.  We have shown so far that $(L,R)$ evolves as a correlated Brownian loop.  To finish proving the result, we need to show that $(L,R)$ a.s.\ determines $(\cyl,h,-\infty,+\infty)$ and $\eta'$ up to a conformal transformation.  We let $\wt{\eta}'$ be the $\SLE_{\kappa'}(\kappa'-6)$ process which arises by reparameterizing $\eta'$ by capacity as seen from $+\infty$, let $\wt{\CT}$ be the set corresponding to~$\wt{\eta}'$ as defined in the beginning of the section, and let $\tau_\epsilon$ be the first $t \in \wt{\CT}$ that the component $U_\epsilon$ of $\cyl \setminus \wt{\eta}'([0,t])$ which contains $+\infty$ has quantum boundary length at least $\epsilon$.  Then it follows from the first part of the proof that the conditional law of the quantum surface $(U_\epsilon,h)$ given its quantum boundary length, quantum area, and $\wt{\eta}'(\tau_\epsilon)$ is the same as in the setting of an exploration of a $\gamma$-quantum cone by an independent $\SLE_{\kappa'}(\kappa'-6)$ process.  Also, as in the setting of a $\gamma$-quantum cone, we have that $(U_\epsilon,h)$ corresponds to a $\pi/2$-cone excursion $A_\epsilon$ of the time-reversal of $(L,R)$.  By the limiting construction, we know that the joint law of $(U_\epsilon,h)$ and $A_\epsilon$ is the same as in the setting of a $\gamma$-quantum cone, conditional on quantum boundary length and area.  It therefore follows that both $(U_\epsilon,h)$ and $\eta'$ restricted to the interval of time in which it is filling $U_\epsilon$ are a.s.\ determined by $A_\epsilon$ since we know this to be true in the case of a $\gamma$-quantum cone.  The claim follows since Lemma~\ref{lem::conformal_cylinder_hulls} implies that the effect of resampling $(\cyl \setminus U_\epsilon,h)$ given $(U_\epsilon,h)$ tends to $0$ as $\epsilon \to 0$.
\end{proof}

We are now going to explain how a variant of the argument given above handles the case that $\gamma \in (0,\sqrt{2}]$.

\begin{figure}[ht!]
\begin{center}
\includegraphics[scale=0.85]{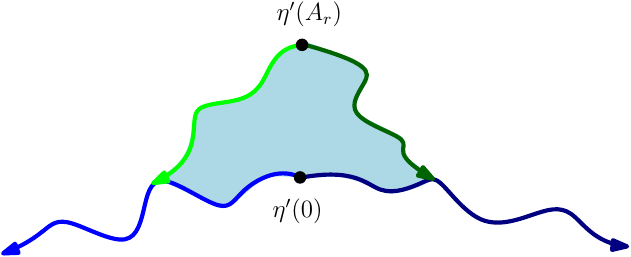}	
\end{center}
\caption{\label{fig::kappa_bigger_than_8_illustration} Illustration of the setup for the proof of Theorem~\ref{thm::sphere_equivalent_constructions} for $\gamma \in (0,\sqrt{2}]$ and $\kappa' \geq 8$.  Shown is a space-filling $\SLE_{\kappa'}$ process $\eta'$ on an $\gamma$-quantum cone parameterized by $\C$ normalized so that $\eta'(0) = 0$.  The blue (resp.\ dark blue) path represents the left (resp.\ right) side of $\eta'$ stopped at time $0$ and the green (resp.\ dark green) show the left (resp.\ right) side of $\eta'$ stopped at the time $A_r$ up until merging with the blue (resp.\ dark blue) path.  The light blue region is $\eta'([0,A_r])$.  The left/right boundary length process $(L,R)$ is normalized so that $L_0 = R_0 = 0$ and $L_{A_r}$ (resp.\ $R_{A_r}$) is equal to the length of the green (resp.\ dark green) path minus the length of blue (resp.\ dark blue) path up until they merge and $L_t,R_t$ are defined similarly for general $t$ values.  In particular, for $t \in [0,A_r]$, we have that $L_t$ (resp.\ $R_t$) is at least $-1$ times the length of the blue (resp.\ dark blue) path up until merging with the green (resp.\ dark green) path.}
\end{figure}

\begin{proof}[Proof of Theorem~\ref{thm::sphere_equivalent_constructions}, $\gamma \in (0,\sqrt{2}\text{$]$}$]
We suppose that $(\cyl,h,+\infty,-\infty)$ has the law of a $\gamma$-quantum cone and that $\eta'$ is a space-filling $\SLE_{\kappa'}$ on $\cyl$ from $-\infty$ to $-\infty$ sampled independently of $h$ and then reparameterized by quantum area.  We normalize time so that $\eta'(0) = +\infty$.  Let $A_r$ be the first time $t$ (if it exists) that the quantum boundary length of the boundary of $U_t = \eta'([0,t])$ is equal to $e^{\gamma r/2}$.  Let $F_{r,\epsilon}$ be the event that $A_r$ (which is equal to the quantum area of $U_{A_r}$) is in $[1,1+\epsilon]$.  Let $(L,R)$ denote the change in the quantum boundary length of the left/right side of $\eta'$ relative to time $0$ so that $L_0 = R_0 = 0$.  Conditional on $A_r$ and $(L_{A_r},R_{A_r})$, and the length $\ol{L}$ (resp.\ $\ol{R}$) of the left (resp.\ right) side of $\eta'((-\infty,0])$ up until it merges with the left (resp.\ right) side of $\eta'([A_r,\infty))$ (see Figure~\ref{fig::kappa_bigger_than_8_illustration} for an illustration when the quantum cone is parameterized by $\C$) we have that $(L,R)$ in $[0,A_r]$ evolves as a (correlated) two-dimensional Brownian motion starting from $(0,0)$, conditioned to be in $(\ol{L},\ol{R}) + \R_+^2$, and conditioned to take the value $(L_{A_r},R_{A_r})$ at time~$A_r$.

This process is characterized by a certain Markovian resampling property.  Namely, if we condition on part of its initial and terminal segments, then the conditional law of the remaining part is that of a (correlated) two-dimensional Brownian motion with given starting and ending points conditioned to stay in $(\ol{L},\ol{R}) + \R_+^2$.  Conditional on $F_{r,\epsilon}$, we have that $(\ol{L},\ol{R})$ converges to $0$ as $r \to -\infty$ and we have that $A_r \to 1$ as $r \to -\infty$ and $\epsilon \to 0$.  Arguments analogous to those in Section~\ref{sec::brownian_excursions} imply that the law of $(L,R)$ in $[0,A_r]$ conditional on $F_{r,\epsilon}$ converges weakly with respect to the topology of uniform convergence when we take a limit as $r \to -\infty$ and then $\epsilon \to 0$ to that of a correlated Brownian loop of length $1$.  Indeed, this follows because any subsequential limit will satisfy the Markovian resampling property that conditional on an initial and terminal segment, the conditional law of the remaining part is that of a (correlated) two-dimensional Brownian motion with the given starting and ending points conditioned to stay in $\R_+^2$.  Moreover, this resampling property uniquely characterizes the limiting law.

The same argument given in the case that $\gamma \in (\sqrt{2},2)$ implies that the conditional law of the quantum surface $(U_{A_r},h)$ given $F_{r,\epsilon}$ converges as $r \to -\infty$ and $\epsilon \to 0$ to that of the unit area quantum sphere and $\eta'$ in $[0,A_r]$ converges to an independent space-filling $\SLE_{\kappa'}$ process from $-\infty$ to $-\infty$.

Suppose that $(\cyl,h,+\infty,-\infty)$ has the law of a unit area quantum sphere, $\eta'$ is an independent $\SLE_{\kappa'}$ process from $-\infty$ to $-\infty$ reparameterized by quantum area, and $(L,R)$ is the quantum length of the left/right side of $\eta'$.  To finish the proof, it is left to show that $(L,R)$ a.s.\ determines $(\cyl,h,+\infty,-\infty)$ and $\eta'$.  Fix $\delta > 0$.  Then the conditional law of the quantum surface parameterized by $\cyl \setminus (\eta'([0,\delta]) \cup \eta'([1-\delta,1]))$ given $(L_\delta,R_\delta)$ and $(L_{1-\delta},R_{1-\delta})$ is the same as the corresponding conditional law in the case of a $\gamma$-quantum cone.  In the latter setting, we know that $(L,R)$ restricted to $[\delta,1-\delta]$ a.s.\ determines both the surface and $\eta'|_{[\delta,1-\delta]}$ (viewed as a path in the surface)  Thus arguing as in the end of the proof in the case that $\gamma \in (\sqrt{2},2)$, the result follows because the distortion estimates from Section~\ref{subsec::maps} imply that the effect of resampling the part of the surface which is parameterized by $\eta'([0,\delta])$ and $\eta'([1-\delta,1])$ a.s.\ tends to $0$ as $\delta \to 0$.
\end{proof}

\section{L\'evy excursion construction for $\gamma=\sqrt{8/3}$}
\label{sec::levy_construction}

In this section, we are going to complete the proof of Theorem~\ref{thm::pure_sphere_equivalent_constructions}.  We will then show that the conditional law of the tip of the $\SLE_6$ grown up to a given time conditional on the region it has cut off from its target point (as a path-decorated quantum surface) is uniformly distributed on the hull boundary according to the quantum length measure and identify the law of the surface component containing the target point of the $\SLE_6$.  We will end by completing the proof of Theorem~\ref{thm::pure_sphere_radon_nikodym_derivative}.

One notion which will be important for this section is the so-called \emph{quantum natural time}, constructed in \cite{dms2014mating}.  It is a quantum analog of the natural parameterization first defined in \cite{ls2011natural}.  We now review its definition and basic construction.  Its existence is implied by \cite[Theorem~1.15]{dms2014mating}, which considers a quantum wedge of weight $2-\tfrac{\gamma^2}{2}$.  This is the quantum surface that one obtains by considering a $\gamma$-quantum cone $(\C,h,0,\infty)$ decorated by an independent space-filling $\SLE_{\kappa'}$ curve $\eta'$ from $\infty$ to $\infty$ normalized so that $\eta'(0) = 0$ and then taking the quantum surface parameterized by $\eta'([0,\infty))$.  If $\kappa' \in (4,8)$, then this surface is not homeomorphic to $\h$ and is described by a Poissonian string of beads (and this is the case were interested in) while for $\kappa' \geq 8$ it is homeomorphic to~$\h$.  If we assume we are in the former setting, then we can draw on top of the surface a concatenation of independent $\SLE_{\kappa'}(\kappa'/2-4;\kappa'/2-4)$ processes, one for each bead.  This divides the surface parameterized by $\eta'([0,\infty))$ into a collection of quantum disks and it is shown in \cite[Theorem~1.15]{dms2014mating} that the path decorated surfaces can be represented as a gluing of a pair of independent forested lines.  A forested line is a forest of stable looptrees which is defined out of a $\kappa'/4$-stable L\'evy process.  In particular, there is a natural time parameterization associated with a forested line which corresponds to the time parameterization of the underlying L\'evy process.  Recall that for a stable L\'evy process, one can recover its time parameterization as a measurable function of its jumps.  Indeed, one can recover the time elapsed by counting the number of jumps between $\epsilon$ and $2\epsilon$, normalizing by an appropriate power of $\epsilon$, and then sending $\epsilon \to 0$ (we will remind the reader of this in the proof of Theorem~\ref{thm::pure_sphere_radon_nikodym_derivative} in additional detail).  In the case of $\SLE_{\kappa'}$ on top of an LQG surface, the jumps correspond to quantum disks which the $\SLE$ separates from its target point.

We begin by collecting the following which implies that we can make sense of $\lsphere$ conditioned on having quantum area in a given interval $[a,b]$ with $0 < a < b \leq +\infty$.

\begin{lemma}
\label{lem::finite_mass}
Let $A$ be the total quantum area of the quantum disks associated with a sample produced from $\lsphere$.  For each $a > 0$ we have that $\lsphere[A > a] < \infty$.
\end{lemma}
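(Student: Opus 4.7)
The strategy is to combine the self-similarity of $\lexcursion$ with the quadratic scaling of the quantum disk area in boundary length, reducing the claim to a moment estimate for the normalized length-$1$ $\tfrac{3}{2}$-stable excursion.

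\textbf{Step 1 (Scaling reduction).} Following the construction of $\lexcursion$ recalled in Section~\ref{subsubsec::stable_levy} (with $\alpha = \tfrac{3}{2}$ and positivity parameter $\rho = \tfrac{1}{3}$), a sample from $\lexcursion$ is obtained by first drawing a lifetime $t$ with density $c_\alpha t^{-5/3}\, dt$ and then rescaling a normalized length-$1$ excursion to length $t$. The $\tfrac{3}{2}$-stable scaling multiplies each jump by $t^{2/3}$, and by the $\gamma$-LQG scaling, the area of a quantum disk of boundary length $\ell$ has the same law as $\ell^2 \tilde Y$, where $\tilde Y$ is the area of a unit boundary length $\sqrt{8/3}$-quantum disk. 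Consequently, conditional on the excursion having length $t$, the total area $A$ has the same law as $t^{4/3} A^*$, where $A^*$ denotes the total area associated with a normalized length-$1$ excursion. Integrating $\mathbf{1}\{A > a\}$ against $\lexcursion$ and evaluating the inner integral yields
\begin{equation*}
\lsphere[A > a] \;=\; c_\alpha \int_0^\infty t^{-5/3}\, \pr{t^{4/3} A^* > a}\, dt \;=\; \tfrac{3 c_\alpha}{2}\, a^{-1/2}\, \ex{(A^*)^{1/2}}.
\end{equation*}
The claim therefore reduces to showing $\ex{(A^*)^{1/2}} < \infty$.

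\textbf{Step 2 (Reduction to a squared-jump sum).} Writing $A^* = \sum_j \ell_j^2 \tilde Y_j$ for i.i.d.\ copies $\tilde Y_j$ of $\tilde Y$, conditionally independent of the jumps $(\ell_j)$ of the normalized excursion, Jensen's inequality gives
\begin{equation*}
\ex{(A^*)^{1/2}} \;\leq\; \ex{A^*}^{1/2} \;=\; \Bigl(\ex{\tilde Y}\cdot \ex{\textstyle\sum_j \ell_j^2}\Bigr)^{1/2}.
\end{equation*}
The factor $\ex{\tilde Y}$ is finite since the area of a unit boundary length $\sqrt{8/3}$-quantum disk has polynomially decaying density, a fact which can be extracted from the area law of the quantum disk derived in \cite{dms2014mating}. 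It remains to show that the expected sum of squared jumps of a normalized $\tfrac{3}{2}$-stable excursion is finite.

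\textbf{Step 3 (Squared-jump bound).} For the unconditioned $\tfrac{3}{2}$-stable process on $[0,1]$, one has $\ex{\sum_s (\Delta X_s)^2} = \int u^2 \nu(du) = \infty$, because the L\'evy measure $\nu(du) = c u^{-5/2}\, du$ fails to integrate $u^2$ at infinity. However, under the normalized excursion law all jumps are bounded by the excursion's maximum $M$, and one splits $\sum_j \ell_j^2 = \sum_{\ell_j \leq 1} \ell_j^2 + \sum_{\ell_j > 1} \ell_j^2$. The small-jump part has finite expectation by the small-jump integral $\int_0^1 u^2 \nu(du) = 2c < \infty$, transferred from the unconditioned to the excursion setting via It\^o excursion theory. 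The large-jump part consists of finitely many terms, each bounded by $M^2$, and is thereby controlled by a moment of $M$, which is finite for a normalized $\tfrac{3}{2}$-stable excursion (a standard fact obtainable, e.g., via the Vervaat representation relating the excursion to the bridge).

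\textbf{Main obstacle.} The delicate part is Step~3: one cannot simply appeal to the L\'evy-measure computation for the unconditioned process, and the proof must genuinely exploit the excursion conditioning to tame the large-jump contributions. Making the sketch rigorous requires either It\^o excursion-theoretic estimates for the $\tfrac{3}{2}$-stable process or a Vervaat-type transformation together with polynomial-tail bounds for the maximum of the normalized excursion.
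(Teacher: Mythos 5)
Your route is genuinely different from the paper's, and in outline it is sound: the scaling reduction in Step~1 is correct (the change of variables does give $\lsphere[A>a]=\tfrac{3c_\alpha}{2}a^{-1/2}\,\ex{(A^*)^{1/2}}$, which as a bonus recovers the $A^{-3/2}$ density of Theorem~\ref{thm::pure_sphere_radon_nikodym_derivative}), and Jensen plus the conditional independence of the disks correctly reduces the lemma to $\ex{\tilde Y}<\infty$ and $\ex{\sum_j \ell_j^2}<\infty$. Passing through the first moment is in fact essentially forced on you: the sum of the jumps themselves of a $3/2$-stable excursion is a.s.\ infinite (since $\int_0^1 u\cdot u^{-5/2}\,du=\infty$), so a subadditivity argument at the level of the $1/2$-moment would fail, and only the sum of \emph{squared} jumps can be finite. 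The paper, by contrast, proves the lemma with a short soft argument that avoids every moment estimate: arguing by contradiction, if $\lexcursion$ assigned infinite mass to $\{A>a\}$, then in the Poisson point process of excursions of $X-I$ (with $X$ a spectrally positive $3/2$-stable process started from $1$ and killed upon hitting $0$, so that exactly one unit of local time is consumed) there would a.s.\ be infinitely many excursions whose attached disks carry total area exceeding $a$, forcing the total area $A_1$ to be infinite; but $A_1<\infty$ a.s., since by \cite[Corollary~12.2]{dms2014mating} and the time-reversal theorem for spectrally one-sided stable processes it agrees in law with the area cut out by a whole-plane $\SLE_6$ drawn on a $\sqrt{8/3}$-quantum cone up to an a.s.\ finite time.

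The gaps in your write-up are exactly the two facts you defer, and they are real. (a) $\ex{\tilde Y}<\infty$ is true (the area of a unit boundary length $\sqrt{8/3}$-quantum disk has density decaying like $y^{-5/2}$, hence a finite mean), but ``polynomially decaying density'' is not by itself sufficient --- you need the exponent, and extracting the conditional area law given boundary length from \cite{dms2014mating} is itself a nontrivial step. (b) For $\ex{\sum_j\ell_j^2}<\infty$ under the normalized excursion law, the small-jump/large-jump split is the right idea, but the ``transfer via It\^o excursion theory'' for the small jumps and the bound on the large-jump part (which needs joint control of the number $N_{>1}$ of large jumps and the maximum $M$, e.g.\ $\ex{N_{>1}M^2}<\infty$, resting on the superexponential tail of the maximum of a normalized spectrally positive excursion) each require genuine arguments that you have not supplied. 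You identify this honestly as the main obstacle, but as written the proof is incomplete precisely there; the lesson from the paper's argument is that for the qualitative statement of the lemma none of this quantitative work is needed, while your approach, once completed, would yield the stronger explicit tail $\lsphere[A>a]\asymp a^{-1/2}$ directly.
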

We will identify $\lsphere[A > a]$ explicitly in Section~\ref{subsec::comparison_of_infinite_sphere_measures} below.
\begin{proof}[Proof of Lemma~\ref{lem::finite_mass}]
Suppose for contradiction that there exists $a > 0$ such that $\lsphere[A > a] = \infty$.  Let $X_t$ be a $3/2$-stable L\'evy process with only upward jumps and let $I_t$ be its running infimum.  Assume that $X_0 = 1$ and let $\tau = \inf\{t \geq 0 : X_t = 0\}$.  Given $X|_{[0,\tau]}$, we sample a family $\CA_1$ of conditionally independent quantum disks indexed by the jumps of $X|_{[0,\tau]}$ where the boundary length of the disk associated with a given jump has boundary length equal to the size of the jump.  Let $A_1$ be the total quantum area of the quantum disks in $\CA_1$.  Note that the jumps of $(X-I)|_{[0,\tau]}$ are equal to those of $X|_{[0,\tau]}$.  By the Poissonian representation of the excursions that $X-I$ makes from $0$, it follows from the assumption that $\lsphere[A > a] = \infty$ that $\p[A_1 = \infty] = 1$.  It therefore suffices to show that $\p[A_1 < \infty] = 1$.  One can either see this using a direct computation or, alternatively, noting by \cite[Corollary~10.2]{dms2014mating} and \cite[Chapter~VII, Theorem~18]{bertoin96levy} that $A_1$ is equal in law to the amount of quantum area cut out by a whole-plane $\SLE_6$ from a $\sqrt{8/3}$-quantum cone stopped at the last time that the quantum length of its outer boundary hits $1$.
\end{proof}

\subsection{Proof of Theorem~\ref{thm::pure_sphere_equivalent_constructions}}
\label{subsec::pure_sphere_proof}

We know from \cite[Corollary~10.2]{dms2014mating} that the quantum boundary length of the complementary component containing $-\infty$ of a whole-plane $\SLE_6$ process $\ol{\eta}'$ from $+\infty$ to $-\infty$ drawn on top of a $\sqrt{8/3}$-quantum cone $(\cyl,h,+\infty,-\infty)$ evolves as a totally asymmetric $3/2$-stable L\'evy process with negative jumps and conditioned to be non-negative (see \cite[Chapter~VII.3]{bertoin96levy} for more on this process).  Moreover, the components viewed as quantum surfaces separated by $\ol{\eta}'$ from $-\infty$ are conditionally independent quantum disks with quantum boundary length equal to the size of the corresponding jump made by the boundary length process and the ordered sequence of quantum disks together with whether they were surrounded on the left or right side of $\ol{\eta}'$ and the marked point corresponding to the last point on the disk boundaries visited by $\ol{\eta}'$ a.s.\ determine both $\ol{\eta}'$ and the quantum cone.  This implies that we can sample from the law of a $\sqrt{8/3}$-quantum cone decorated with an independent whole-plane $\SLE_6$ using the following steps:
\begin{itemize}
\item Sample a totally asymmetric $3/2$-stable L\'evy process $\ol{X}$ with only downward jumps conditioned to be non-negative.
\item Given $\ol{X}$, sample a collection of conditionally independent quantum disks $\CD$ with boundary lengths equal to the size of the jumps made by $\ol{X}$ and orient each such disk either clockwise or counterclockwise by the result of an i.i.d.\ fair coin flip.
\item For each quantum disk in $\CD$, sample a uniformly chosen point in its boundary from its quantum boundary measure.
\end{itemize}

\begin{figure}[ht!]
\begin{center}
\includegraphics[scale=0.85]{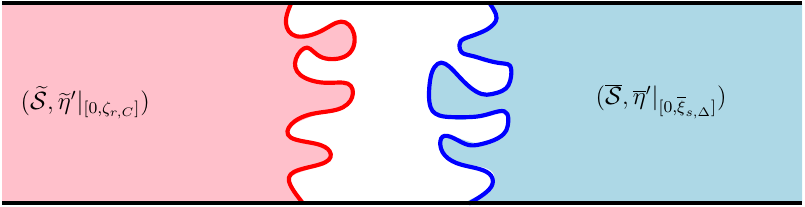}
\end{center}
\caption{\label{fig::bessel_levy_equivalence} Illustration of the setup of the last four steps of the proof of Theorem~\ref{thm::pure_sphere_equivalent_constructions}. Shown in light red is the path-decorated surface $(\wt{\CS},\wt{\eta}'|_{[0,\zeta_{r,C}]})$ which consists of the part of $\CS$ and $\wt{\eta}'$ up until the time $\zeta_{r,C}$ defined in Section~\ref{subsec::comparison_of_pinched_quantum_cones}.  Shown in light blue is the path decorated surface $(\ol{\CS},\ol{\eta}'|_{[0,\ol{\xi}_{s,\Delta}]})$ which consists of the part of $\CS$ and $\ol{\eta}'$ up until time~$\ol{\xi}_{s,\Delta}$, the first time $t$ that the quantum boundary length of the hull of $\ol{\eta}'|_{[0,t]}$ as seen from $-\infty$ falls below $e^{\gamma s/2}$ after first hitting $\Delta$.  On the event that $\wt{\CS}$ and $\ol{\CS}$ are disjoint, the three surfaces are conditionally independent ($\ol{\CS}$, $\wt{\CS}$, and the part of $\CS$ not in $\ol{\CS},\wt{\CS}$).}
\end{figure}

Fix $\delta, \Delta > 0$ and $s \in \R_-$.  We let $\ol{\xi}_{s,\Delta}$ be the first time $t$ that $\ol{X}$ falls below $e^{\gamma s/2}$ after exceeding $\Delta$ and let $\ol{Q}_{s,\Delta,\delta}$ be the event that $\ol{\xi}_{s,\Delta} < \infty$ and the area separated from $-\infty$ by $\ol{\eta}'$ is in $[1-\delta,1]$.  See Figure~\ref{fig::bessel_levy_equivalence} for an illustration of the setup.  We want to show that the surface $\ol{\CS}$ separated from $-\infty$ by $\ol{\eta}'|_{[0,\ol{\xi}_{s,\Delta}]}$ conditional on $\ol{Q}_{s,\Delta,\delta}$ converges to that of the unit area quantum sphere decorated with an independent $\SLE_6$ from $-\infty$ to $+\infty$ when we take a limit as $s \to -\infty$, $\Delta \to 0$, and then $\delta \to 0$.  This is a consequence of the following sequence of observations.  We let $F_{r,\epsilon,C}$ and $\wt{\eta}'$ be as in Section~\ref{subsec::comparison_of_pinched_quantum_cones}, where we take $\wt{\eta}'$ and $\ol{\eta}'$ to be time-reversal of each other.
\begin{enumerate}
\item The conditional probability of $\ol{Q}_{s,\Delta,\delta}$ given $F_{r,\epsilon,C}$ tends to $1$ when we take a limit first as $r \to -\infty$, then $\epsilon \to 0$, then $s \to -\infty$, then $\Delta \to 0$, and then $\delta \to 0$.
\item The conditional probability that $\ol{\eta}'|_{[0,\ol{\xi}_{s,\Delta}]}$ is disjoint from $\wt{\eta}'|_{[0,\zeta_{r,C}]}$ given $F_{r,\epsilon,C}$ and $\ol{Q}_{s,\Delta,\delta}$ tends to $1$ when we take the same sequence of limits as in the previous item.
\item Let $\wt{\CS}$ be the surface separated from $+\infty$ by $\wt{\eta}'|_{[0,\zeta_{r,C}]}$.  Then the path decorated surfaces $(\wt{\CS},\wt{\eta}'|_{[0,\zeta_{r,C}]})$ and $(\ol{\CS},\ol{\eta}'|_{[0,\ol{\xi}_{s,\Delta}]})$ are conditionally independent given their boundary lengths on $F_{r,\epsilon,C}$, $\ol{Q}_{s,\Delta,\delta}$, and the event that $\wt{\CS}$ and $\ol{\CS}$ are disjoint.  In particular, the conditional law of $(\ol{\CS},\ol{\eta}'|_{[0,\ol{\xi}_{s,\Delta}]})$ given its boundary length and conditional on $\ol{Q}_{s,\Delta,\delta}$ does not change when we further condition on $F_{r,\epsilon,C}$ and that the two surfaces are disjoint.
\item By the proof of Theorem~\ref{thm::sphere_equivalent_constructions} given in Section~\ref{sec::bessel_brownian_constructions} we have in the setting of the previous item that the surface parameterized by the complement of $\wt{\CS}$ converges to a unit area quantum sphere when we take a limit as $r \to -\infty$ and $\epsilon \to 0$.  As remarked above, the conditional probability that $\ol{\CS}$ is contained inside of the complement of $\wt{\CS}$ tends to $1$ when we take the further limits as $s \to -\infty$, $\Delta \to 0$, and then $\delta \to 0$.  In particular, when we take this sequence of limits, the limit of $\ol{\eta}'|_{[0,\ol{\xi}_{s,\Delta}]}$ yields a path on top of a unit area quantum sphere.  Recall that we took $\wt{\eta}'$ and $\ol{\eta}'$ to be time-reversals of each other.  From the construction, $\wt{\eta}'$ in the limit yields an independent whole-plane $\SLE_6$ on top of the quantum sphere.  Therefore $\ol{\eta}'$ also yields an independent whole-plane $\SLE_6$ on top of the quantum sphere by the reversibility of whole-plane $\SLE_6$.  By Proposition~\ref{prop::levy_conditioned_infinite} stated and proved below, the joint law of its boundary length process and the quantum disks it separates from $\pm \infty$ converges to the time-reversal of the type of $3/2$-stable L\'evy excursion described in the statement of Theorem~\ref{thm::pure_sphere_equivalent_constructions}.  \qed
\end{enumerate}

\begin{proposition}
\label{prop::levy_conditioned_infinite}
Let $\ol{X}$, $\ol{Q}_{s,\Delta,\delta}$, and $\ol{\xi}_{s,\Delta}$ be as earlier in this section.  For each $t \geq 0$, let $\ol{\CA}_t$ be the collection of marked, oriented quantum disks cut out by $\ol{\eta}'|_{[0,t]}$.  The joint law of the time-reversal of $\ol{X}|_{[0,\ol{\xi}_{s,\Delta}]}$ and $\ol{\CA}_{\ol{\xi}_{s,\Delta}}$ cut out by $\ol{\eta}'|_{[0,\ol{\xi}_{s,\Delta}]}$ conditional on $\ol{Q}_{s,\Delta,\delta}$ converges as $s \to -\infty$, $\Delta \to 0$, and then $\delta \to 0$ weakly to $\lsphere$ conditioned on the total quantum area of the quantum disks being equal to $1$ (as described just before the statement of Theorem~\ref{thm::pure_sphere_equivalent_constructions}).
\end{proposition}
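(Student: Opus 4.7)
The strategy is to use the time-reversal duality for spectrally one-sided $3/2$-stable Lévy processes recorded at the end of Section~\ref{subsubsec::stable_levy} (from \cite[Chapter~VII, Theorem~18]{bertoin96levy}) to convert the downward-jump description of the cone boundary length $\ol{X}$ into an upward-jump description matching the definition of $\lsphere$, and then to take the limits in $s$, $\Delta$, $\delta$ in that order.

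First, by \cite[Corollary~12.2]{dms2014mating}, conditionally on $\ol{X}$ the quantum disks in $\ol{\CA}_{\ol{\xi}_{s,\Delta}}$ are independent marked, oriented quantum disks whose boundary lengths match the magnitudes of the downward jumps of $\ol{X}|_{[0,\ol{\xi}_{s,\Delta}]}$. Setting $\ol{Y}_t = \ol{X}_{\ol{\xi}_{s,\Delta}-t}$ and applying the duality cited above with $y = \ol{X}_{\ol{\xi}_{s,\Delta}} \leq e^{\gamma s/2}$ identifies $\ol{Y}$ with a $3/2$-stable Lévy process with only upward jumps, started from $y$ and run up to its first hitting time of $0$. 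The conditional-independence structure of the disks passes transparently through time-reversal, since the disks depend on $\ol{X}$ only through its multiset of jump magnitudes together with the independent orientations and uniform marked points. Hence $(\ol{Y}, \ol{\CA}_{\ol{\xi}_{s,\Delta}})$ is a spectrally positive $3/2$-stable path from $y$ to $0$ together with conditionally independent, oriented, marked quantum disks attached to its (now upward) jumps.

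Second, I will match this description to $\lsphere$ via its Poissonian definition using the excursion measure $\lexcursion$ of $X-I$ for a spectrally positive $3/2$-stable $X$. For $s,\Delta$ fixed, the subset of $\lexcursion$-excursions that exceed height $\Delta$ and then first return below $e^{\gamma s/2}$ has finite $\lexcursion$-mass, and any such excursion truncated at this first return has—up to an initial ``stub'' from $0$ up to $y$, which has vanishing duration and attached quantum area as $s\to-\infty$ by the scaling property of $3/2$-stable processes and the corresponding scaling of quantum disk area with boundary length—exactly the law of the time-reversed $(\ol{Y}, \ol{\CA}_{\ol{\xi}_{s,\Delta}})$ from Step 1. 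Conditioning $\lsphere$ further on total quantum area in $[1-\delta,1]$, which by Lemma~\ref{lem::finite_mass} has finite mass, then reproduces the defining conditioning of $\ol{Q}_{s,\Delta,\delta}$.

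Taking $s \to -\infty$, $\Delta \to 0$, $\delta \to 0$ removes the three auxiliary constraints: as $s \to -\infty$ the stub disappears and the reversed process becomes a genuine excursion from $0$; as $\Delta \to 0$ the height condition becomes automatic for an $\lsphere$-typical excursion; and as $\delta \to 0$ one recovers the regular conditional law of $\lsphere$ given total area $=1$. The main obstacle is the careful justification of these limits. Specifically, I must (i) show that the area contribution of the ``stub'' vanishes uniformly as $s\to-\infty$, which follows from scaling invariance of $\lexcursion$ together with the disk-area estimates behind Lemma~\ref{lem::finite_mass}; (ii) upgrade weak convergence of the boundary-length process to joint weak convergence with the marked-disk configuration, which reduces to continuity of the ``attach independent disks at jumps'' map given the conditional independence in Step 1; and (iii) verify that the law of total area under $\lsphere$ admits a continuous positive density with respect to Lebesgue measure, so that the conditioning on $[1-\delta,1]$ converges as $\delta \to 0$ to a bona fide regular conditional probability given area $=1$—this density existence will itself follow from the scaling invariance of $\lexcursion$ under $(X_t)\mapsto(u^{-1/\alpha}X_{ut})$.
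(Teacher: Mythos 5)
Your proposal is correct and follows essentially the same route as the paper: both rest on the time-reversal duality of \cite[Chapter~VII, Theorem~18]{bertoin96levy} to identify the reversed boundary-length process (with its attached conditionally independent marked, oriented disks from \cite[Corollary~12.2]{dms2014mating}) with an $\lexcursion$-excursion, and then take the limits in the order $s \to -\infty$, $\Delta \to 0$, $\delta \to 0$ to successively remove the auxiliary conditioning. The only difference is that you flag and sketch several technical verifications (the vanishing ``stub,'' continuity of the disk-attachment map, and the density of the total-area law needed for the $\delta \to 0$ limit) that the paper's proof leaves implicit.
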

\begin{proof}
\cite[Chapter~VII, Theorem~18]{bertoin96levy} implies that the conditional law of the time-reversal of $\ol{X}|_{[0,\ol{\xi}_{s,\Delta}]}$ and $\ol{\CA}_{\ol{\xi}_{s,\Delta}}$ given $\ol{\xi}_{s,\Delta} < \infty$ converges to the law given by $\lsphere$ conditioned on the maximum of the L\'evy excursion being at least $\Delta$ when we take a limit as $s \to -\infty$.  Therefore the conditional law of the time-reversal of $\ol{X}|_{[0,\ol{\xi}_{s,\Delta}]}$ and $\ol{\CA}_{\ol{\xi}_{s,\Delta}}$ given $\ol{Q}_{s,\Delta,\delta}$ converges as $s \to -\infty$ to the law given by $\lsphere$ conditioned on the maximum of the L\'evy excursion being at least $\Delta$ and the quantum area of the quantum disks being between $1-\delta$ and $1$.  Taking a further limit as $\Delta \to 0$ yields $\lsphere$ conditioned on the quantum area being in $[1-\delta,1]$, so the result follows by sending $\delta \to 0$.
\end{proof}

\subsection{Comparison of Bessel and L\'evy measures}
\label{subsec::comparison_of_infinite_sphere_measures}

We are now going to complete the proof of Theorem~\ref{thm::pure_sphere_radon_nikodym_derivative}.  Before we do so, we first collect the following result which gives the distribution of the quantum area associated with the Bessel construction of the unit area quantum sphere.

\begin{proposition}
\label{prop::bessel_area_decay}
There exists a constant $c_0 > 0$ such that the density of the distribution of the quantum area $A$ of a $\sqrt{8/3}$-quantum sphere sampled from $\bsphere$ with respect to Lebesgue measure on $\R_+$ is given by $c_0 A^{-3/2}$.
\end{proposition}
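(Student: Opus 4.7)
The plan is to exploit a scaling symmetry of $\bsphere$ inherited from the Bessel excursion measure. For the map $\CS_\lambda$ sending $Z(\cdot)$ to $\lambda Z(\cdot/\lambda^2)$ (which preserves the dimension of a Bessel process), a direct computation based on the construction of $\nu_\delta^{\bes}$ recalled in Section~\ref{subsubsec::bessel} gives $(\CS_\lambda)_* \nu_\delta^{\bes} = \lambda^{2-\delta}\nu_\delta^{\bes}$. Indeed, Brownian scaling sends a $\bes^{4-\delta}$-excursion of length $t$ to one of length $\lambda^2 t$, and a change of variables in the length density $c_\delta t^{\delta/2-2}\,dt$ produces exactly the prefactor $\lambda^{2-\delta}$. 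For $\gamma = \sqrt{8/3}$ we have $\delta = 4 - 8/\gamma^2 = 1$, so the prefactor is simply $\lambda$.

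The next step will be to translate this into a scaling of $\bsphere$. Writing $\phi(s) = \tfrac{4}{\gamma^2}\int_0^s Z_t^{-2}\,dt$ for the accumulated quadratic variation of $\tfrac{2}{\gamma}\log Z$, the $\CH_1$-projection of $h$ is $X(u) = \tfrac{2}{\gamma}\log Z(\phi^{-1}(u))$. Replacing $Z$ by $\CS_\lambda Z$ yields $\tilde\phi(s) = \phi(s/\lambda^2)$ and hence $\tilde\phi^{-1}(u) = \lambda^2\phi^{-1}(u)$; the two $\lambda^2$ factors cancel in the composition, leaving $\tilde X(u) = X(u) + \tfrac{2}{\gamma}\log\lambda$. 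Thus the Bessel scaling $\CS_\lambda$ corresponds to adding the constant $c = \tfrac{2}{\gamma}\log\lambda$ to the $\CH_1$-projection of $h$, while leaving the independent $\CH_2$-projection unchanged. Writing $T_c$ for the map $h \mapsto h + c$ on fields, it will follow that $(T_c)_*\bsphere = \lambda^{2-\delta}\bsphere$.

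The conclusion is then a standard scaling argument. Since $T_c$ multiplies the quantum area by $e^{\gamma c} = \lambda^2$, the push-forward $\mu := \bsphere \circ A^{-1}$ on $(0,\infty)$ satisfies $\mu(\lambda^{-2}B) = \lambda^{2-\delta}\mu(B)$ for every Borel $B$. For $\delta = 1$ this is $\mu(sB) = s^{-1/2}\mu(B)$, which under $A = e^x$ becomes the translation rule $\nu(E+t) = e^{-t/2}\nu(E)$ for $\nu := \log_* \mu$ on $\R$. Therefore the measure $e^{x/2}\,d\nu(x)$ is translation-invariant, so by the uniqueness of Haar measure it equals a constant multiple of Lebesgue measure; pulling back through $A = e^x$ yields $d\mu = c_0 A^{-3/2}\,dA$ as claimed. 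The local finiteness of $\mu$ on $(0,\infty)$, needed to apply the Haar uniqueness step, is implicit in the fact that $\bsphere$ admits a disintegration into finite conditional laws along level sets of $A$, as justified in the footnote following the definition of $\bsphere$ using the argument of \cite[Section~5]{dms2014mating}.

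The main obstacle will be the bookkeeping in the middle step: verifying that $\CS_\lambda$ interacts cleanly with the quadratic-variation reparameterization so as to produce only a constant shift of $X$, with no accompanying horizontal translation on the cylinder. This is essentially a chain-rule calculation, but it deserves some care because $\nu_1^{\bes}$ is an infinite measure and the total quadratic variation of $\tfrac{2}{\gamma}\log Z$ over a typical excursion is almost surely infinite, so the reparameterized process $X$ lives on all of $\R$ rather than on a compact interval; one should confirm that the canonical horizontal translation chosen to embed $\CC$ in $\cyl$ commutes with $\CS_\lambda$ in the appropriate sense.
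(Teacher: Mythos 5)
Your argument is correct, but it is not the route the paper takes: the paper's proof consists of noting that $\delta = 4 - 8/\gamma^2 = 1$ for $\gamma=\sqrt{8/3}$ and then citing \cite[Proposition~4.24, part~(ii)]{dms2014mating}, whereas you rederive that fact from scratch. Your chain --- Brownian scaling gives $(\CS_\lambda)_* \nu_\delta^\bes = \lambda^{2-\delta} \nu_\delta^\bes$, the quadratic-variation clock converts this into the statement that adding the constant $\tfrac{2}{\gamma}\log\lambda$ to $h$ scales $\bsphere$ by $\lambda^{2-\delta}$ while multiplying $A$ by $\lambda^2$, and a Haar-measure argument then forces the density to be proportional to $A^{\delta/2-2} = A^{-3/2}$ --- is essentially the content of the cited result, and it is the same scaling philosophy the paper itself deploys for the L\'evy measure $\lsphere$ in the proof of Theorem~\ref{thm::pure_sphere_radon_nikodym_derivative}. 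What your version buys is self-containedness (and it makes visible that the area exponent $\delta/2-2$ matches the lifetime exponent in $\nu_\delta^\bes$ for every $\delta$); what the citation buys is brevity. The one point you flag as needing confirmation --- whether the horizontal translation used to embed the surface in $\cyl$ commutes with $\CS_\lambda$ --- is actually a non-issue: horizontal translations are conformal automorphisms of $\cyl$ fixing $\pm\infty$, so the quantum surface, and in particular its area $A$, does not depend on that choice, and no commutation is required. The only substantive hypothesis in your Haar step is local finiteness of the law of $A$ on $(0,\infty)$, i.e.\ $\bsphere[A \geq a] < \infty$ for each $a>0$, which you correctly source from \cite[Section~5]{dms2014mating}.
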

\begin{proof}
We first recall that the Bessel dimension used in the construction is given by $\delta = 4-8/\gamma^2 = 1$.  Consequently, the result follows from \cite[Proposition~4.18]{dms2014mating}.
\end{proof}

\begin{proof}[Proof of Theorem~\ref{thm::pure_sphere_radon_nikodym_derivative}]
We assume throughout that $\gamma=\sqrt{8/3}$.  We are going to derive the result using scaling.  Namely, we consider the transformation given by multiplying the quantum area of the surface by a constant.  This, in turn, corresponds to adding a constant to the field used to parameterize the surface.  If we add the constant $C$ to this field, then the total quantum area is multiplied by $e^{\gamma C}$.

We are now going to determine how the quantum natural time of the $\SLE_6$ path is scaled under this operation.  First, we suppose that $X$ is a $3/2$-stable process with only upward jumps.  Fix $T > 0$.  We recall that there exists a constant $c_0 > 0$ such that the following is true.  Let $\Lambda$ be a \ppp\ on $[0,T] \times \R_+$ with intensity measure $c_0 dt \otimes u^{-5/2} du$ where $dt$ (resp.\ $du$) denotes Lebesgue measure on $[0,T]$ (resp.\ $\R_+$).  Then $\Lambda$ is equal in law to the set of jump time/size pairs for $X$ in $[0,T]$, i.e., the set of pairs $(t,X_{t^-} - X_t)$ with $t \in [0,T]$ and $X_{t^{-}} - X_t \neq 0$.  (Recall that $X$ has downward jumps so that $X_{t^-} - X_t \geq 0$.)  Using this fact, we can a.s.\ determine the amount of time that such a $3/2$-stable L\'evy process has been run if we only observe its jumps in that time interval (and not the length of the interval).  Indeed, for $0 < x_1 < x_2$, we let $N(x_1,x_2)$ be the number of jumps made by $X$ in that interval with size contained in $[x_1,x_2]$.  Then the almost sure limit as $j \to \infty$ of
\begin{equation}
\label{eqn::levy_time}
\frac{N(e^{-j-1},e^{-j})}{ \tfrac{2}{3} c_0 e^{3/2 j} (e^{3/2}-1)}
\end{equation}
is equal to the length of the interval.

Using the same principle, we can a.s.\ determine the length of a $3/2$-stable L\'evy excursion if we only observe its jumps.  Suppose that we have a sample $(\CS,x,y)$ produced from $\lsphere$.  Let $T$ be the length of the L\'evy excursion used to generate the surface.  Now suppose that we add $C$ to the field used to parameterize the surface and let $T_C$ be the length of the resulting L\'evy excursion.  Then the quantum boundary lengths of each of the components cut out by the $\SLE_6$ path are scaled by the factor $e^{\gamma C/2}$.  Therefore the number of quantum disks with boundary length between $e^{-j-1}$ and $e^{-j}$ after adding $C$ to the field is given by $N(e^{-j-1-\gamma C/2},e^{-j-\gamma C/2})$.  If we divide this quantity by $ \tfrac{2}{3} c_0 e^{3/2 j}(e^{3/2}-1)$ as in~\eqref{eqn::levy_time} and then send $j \to \infty$, the almost sure limit that we obtain is $T_C = e^{3\gamma C/4} T$.

Letting $\lsphere( \giv  t)$ be the probability measure under which we have conditioned the length of the L\'evy excursion to be equal to $t$, we therefore have that
\begin{equation}
\label{eqn::lsphere_scaling}
\lsphere( A \in [a,a+\epsilon] \giv t) = \lsphere(A \in [1,1+\epsilon/a] \giv a^{-3/4} t).
\end{equation}
Thus,
\begin{align*}
     \lsphere(A \in [a,a+\epsilon])
&= c \int_0^\infty \lsphere( A \in [a,a+\epsilon] \giv t) t^{-5/3} dt \quad\text{(Section~\ref{subsubsec::stable_levy}; $c=c_{3/2}$)}\\
&= c\int_0^\infty \lsphere(A \in [1,1+\epsilon/a] \giv a^{-3/4} t) t^{-5/3} dt \quad\text{(by~\eqref{eqn::lsphere_scaling})}\\
&= c a^{-1/2} \int_0^\infty \lsphere( A \in [1,1+\epsilon/a] \giv t) t^{-5/3} dt\\
&= a^{-1/2} \lsphere(A \in [1,\epsilon/a])
\end{align*}
Dividing both sides by $\epsilon$ and sending $\epsilon \to 0$ implies that the density of $A$ with respect to Lebesgue measure on $\R_+$ is given by a constant times $A^{-3/2}$.  (The constant is explicitly given by the density of $A$ with respect to Lebesgue measure evaluated at $1$.)  Combining this with Proposition~\ref{prop::bessel_area_decay} proves~\eqref{eqn::cm_m_rn}.
\end{proof}

\subsection{Tip is uniformly random and law of the unexplored region}
\label{subsec::unexplored_law}

We are now going to show that if one performs an $\SLE_6$ exploration on a $\gamma=\sqrt{8/3}$ unit area quantum sphere $(\CS,x,y)$ from $x$ to $y$ where $x,y$ are sampled independently from the quantum measure on $\CS$ then:
\begin{itemize}
\item The location of the tip of the path is uniformly distributed according to the quantum boundary measure on its hull relative to its target point $y$ and
\item The conditional law of the complement of the hull given its boundary length is that of a quantum disk weighted by its quantum area.
\end{itemize}
These results will be important in \cite{qlebm} in which we construct a version of $\QLE(8/3,0)$ on a quantum sphere.  We will then deduce from this the analogous results in the setting of a $\sqrt{8/3}$-quantum cone.

Both of these statements hold if we explore the $\SLE_6$ up to any stopping time $\tau$ for the filtration $(\CF_t)$ which is defined as follows.  For each $t$, we let $\CF_t$ be the $\sigma$-algebra which is generated by the collection of quantum disks that $\wt{\eta}'|_{[0,t]}$ has separated from $y$, each marked by the last point on their boundary visited by $\wt{\eta}'$ and oriented by the direction in which $\wt{\eta}'$ has traced their boundary.  Here, we assume that $\wt{\eta}'$ has the quantum natural time parameterization.  (Note that the quantum boundary length of the component of $\CS \setminus \wt{\eta}'([0,t])$ is $\CF_t$-measurable for each deterministic $t$ since the boundary length process is absolutely continuous to a $3/2$-stable L\'evy process and $\CF_t$ determines the jumps made up to time $t$.  The same holds if $t$ is replaced by a stopping time.)

\begin{proposition}
\label{prop::unexplored_region}
Fix $t > 0$ and suppose that $(\CS,x,y)$ is distributed according to $\lsphere$ conditioned on the event that the amount of quantum natural time for $\wt{\eta}'$ to go from $x$ to $y$ is at least $t$.  Let $\tau$ be a stopping time for $(\CF_t)$ as defined just above with $\p[\tau \geq t] = 1$.  Then the conditional law of the component $U_y$ of $\CS \setminus \wt{\eta}'([0,\tau])$ containing $y$, viewed as a quantum surface, given its boundary length is that of a quantum disk with the given boundary length weighted by its quantum area.  Moreover, $\wt{\eta}'(\tau)$ is uniformly distributed from the quantum boundary measure on $\partial U_y$.
\end{proposition}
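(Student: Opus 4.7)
The plan is to transfer the analogous result for a $\sqrt{8/3}$-quantum cone (Proposition~\ref{prop::unexplored_region_qc_inf_to_0}, which applies since $\sqrt{8/3} \in (\sqrt{2},2)$ and $\SLE_{\kappa'}(\kappa'-6) = \SLE_6$ when $\kappa' = 6$) to the sphere via the limiting coupling developed in Section~\ref{subsec::pure_sphere_proof}.

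First I would reduce to a one-parameter family of special stopping times. For each $a > 0$, let $\sigma_a$ be the first time $s \geq t$ at which $s$ belongs to the sphere analog of $\CT$ from Section~\ref{subsec::setup_and_strategy} and the quantum boundary length of the component containing $y$ drops below $a$. By Theorem~\ref{thm::pure_sphere_equivalent_constructions}, the filtration of bubbles separated from $y$ by $\eta'$ is generated by a Poisson point process---the jumps of the time-reversed boundary-length L\'evy excursion together with the conditionally independent quantum disks indexed by these jumps. Consequently, any stopping time $\tau$ as in the hypothesis can be written as a monotone limit of stopping times of the form $\sigma_a$ minimized against a stopping time depending on finitely many observed bubbles. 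Using the strong Markov structure of this Poissonian filtration and weak continuity of the quantum disk law in its boundary length, it suffices to verify the two claims for $\tau = \sigma_a$.

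For $\tau = \sigma_a$ the proof of Theorem~\ref{thm::pure_sphere_equivalent_constructions} in Section~\ref{subsec::pure_sphere_proof} provides a coupling between the path-decorated sphere and a $\sqrt{8/3}$-quantum cone decorated by an independent whole-plane $\SLE_6$: the former arises as the weak limit, as $s \to -\infty$, $\Delta \to 0$, and $\delta \to 0$, of the path-decorated surface $(\ol{\CS}, \ol{\eta}'|_{[0,\ol{\xi}_{s,\Delta}]})$ separated from $-\infty$ on the cone and conditioned on $\ol{Q}_{s,\Delta,\delta}$. Under this coupling, and with probability tending to one, $\sigma_a$ on the sphere corresponds to the first time $s' \in \CT$ strictly after $\ol{\xi}_{s,\Delta}$ at which the boundary length of the component of the cone containing $+\infty$ drops below $a$. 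Proposition~\ref{prop::unexplored_region_qc_inf_to_0} identifies the conditional law of this component given its boundary length as that of a quantum disk weighted by its area, and identifies $\ol{\eta}'(s')$ as uniformly distributed on its boundary under quantum length measure. Passing to the weak limit delivers both assertions on the sphere.

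The main obstacle is verifying that the conditioning events $\ol{Q}_{s,\Delta,\delta}$ used in the cone-to-sphere limit do not bias either conditional law. This follows because $\ol{Q}_{s,\Delta,\delta}$ is measurable with respect to the boundary-length process and the bubbles separated from $-\infty$ before time $\ol{\xi}_{s,\Delta}$ (in particular before $\sigma_a$), whereas the two conditional laws in question depend only on the post-$\sigma_a$ portion of the L\'evy-excursion-plus-quantum-disks description from Theorem~\ref{thm::pure_sphere_equivalent_constructions}. These two portions of data are conditionally independent given the boundary length at time $\sigma_a$, so the effect of the conditioning vanishes in the limit and the cone identities transfer cleanly to the sphere.
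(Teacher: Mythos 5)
There is a genuine gap in your transfer step, and it stems from a misreading of the coupling in Section~\ref{subsec::pure_sphere_proof}. In that construction the sphere arises as the hull $\ol{\CS}$ of $\ol{\eta}'$ run up to the pinch time $\ol{\xi}_{s,\Delta}$, so every sphere time --- in particular $\sigma_a \geq t$ --- corresponds to a cone time \emph{before} $\ol{\xi}_{s,\Delta}$, not ``strictly after'' it as you write. Consequently $\ol{Q}_{s,\Delta,\delta}$ is not measurable with respect to the pre-$\sigma_a$ data: it requires $\ol{\xi}_{s,\Delta} < \infty$ and pins the quantum area of the hull at time $\ol{\xi}_{s,\Delta}$ to $[1-\delta,1]$, and both constraints depend on how the exploration behaves after $\sigma_a$, i.e.\ on the unexplored region $U_y$ itself. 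The conditional-independence argument in your last paragraph therefore collapses. Nor is this a removable technicality: the limit you invoke (area pinned to $[1-\delta,1]$, $\delta \to 0$) produces the \emph{unit area} quantum sphere, and for that object the conclusion of the proposition is false, since fixing the total area makes the area of $U_y$ a deterministic function of the explored region, which is incompatible with $U_y$ being an area-weighted quantum disk given only its boundary length. The proposition is stated for $\lsphere$ conditioned merely on the duration exceeding $t$ precisely so that the total area remains free. (Your reduction to the times $\sigma_a$ is also shakier than stated --- a general stopping time of the bubble filtration need not be a monotone limit of such times --- but that is secondary.)

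The paper instead runs the argument of Proposition~\ref{prop::unexplored_region_qc_inf_to_0} directly on $\lsphere$ rather than transferring from the cone through a pinching limit. The inputs all survive the change of setting: given the surface, the marked point $y$ is distributed according to the quantum area measure, so the re-targeting computation of Lemma~\ref{lem::retarget_inf_to_0} applies and shows that rerooting at a point of $U_y$ chosen from the area measure weights the law of $U_y$ by its quantum area; the L\'evy-excursion description of Theorem~\ref{thm::pure_sphere_equivalent_constructions} supplies the conditional independence of the bubbles and their quantum-disk law given boundary lengths (the analog of Lemma~\ref{lem::unexplored_region_resample}); and the excursion-weighted-by-length computation of Lemma~\ref{lem::conex_containing_0} yields the area weighting and the uniformity of the tip. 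If you wish to retain a transfer-style proof, you must condition the cone exploration only on events measurable with respect to the data up to $\sigma_a$, which is a substantially different argument from the one you wrote.
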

\begin{proof}
This follows from an argument which is very similar to that used to prove Proposition~\ref{prop::unexplored_region_qc_inf_to_0}, though the present case is actually simplified because we are working with a finite volume surface rather than an infinite volume surface.  In particular, we can resample the target point $y$ by picking another independent point from the quantum measure.  We are going to prove the result first for deterministic times $s > t$, then deduce that the result holds for stopping times which take on dyadic rational values, and then finally by continuity deduce the result for general stopping times.

Fix $s > t$ deterministic.  Let $w$ be a point on $\CS$ picked from the quantum area measure independently of everything else.  Using an argument which is analogous to that in the proof of Proposition~\ref{prop::unexplored_region_qc_inf_to_0}, it is easy to see that if $w$ lands in one of the bubbles that $\wt{\eta}'$ separates from $y$, then the conditional law of that bubble given its boundary length is that of a quantum disk weighted by its quantum area with the given boundary length.  By the symmetry of $w$ and $y$, it thus follows that if we run $\wt{\eta}'$ until the first time $\tau$ that it separates $y$ from $w$, then the conditional law of the component of $\CS \setminus \wt{\eta}'([0,\tau])$ containing $y$ given $\CF_\tau$ is that of a quantum disk weighted by its quantum area with the given boundary length.  This holds even if we condition further on which component contains $w$.  The claim thus follows for the deterministic time $s$ by conditioning on the event that $w$ is contained in a component which is separated by $\wt{\eta}'$ from $y$ in the time interval $[s,s+\epsilon]$ for $\epsilon > 0$ fixed and then taking a limit as $\epsilon \to 0$.

Now suppose that $\tau$ is a stopping time for $(\CF_t)$ with $\p[\tau \geq t] = 1$.  Fix $k \in \N$ and let $\tau_k$ be the first multiple of $2^{-k}$ which occurs after time $\tau$.  Then the conditional law is as above at the time $\tau_k$.  Finally, we note that the law of a quantum disk weighted by its quantum area with given boundary length is a continuous function of the boundary length since we can change the boundary length by adding a constant to the field.  Therefore the form of the conditional law at the time $\tau$ follows by taking a limit as $k \to \infty$.

\end{proof}

We are now going to use Proposition~\ref{prop::unexplored_region} to deduce the conditional law of the region with infinite quantum area when we explore a $\sqrt{8/3}$-quantum cone with an independent whole-plane $\SLE_6$ parameterized by quantum natural time.  We will not give an explicit description of this law, but rather describe it in terms of certain resampling properties.

\begin{proposition}
\label{prop::unexplored_quantum_cone}
Suppose that $(\cyl,h,+\infty,-\infty)$ is a $\sqrt{8/3}$-quantum cone and that $\wt{\eta}'$ is a whole-plane $\SLE_6$ process from $+\infty$ to $-\infty$ sampled independently of $h$ and then reparameterized by quantum natural time.  Let $\tau$ be an a.s.\ finite stopping time for the filtration generated by the quantum surfaces separated by $\wt{\eta}'$ from $-\infty$.  Let $\varphi$ be the unique conformal transformation from the component of $\cyl \setminus \wt{\eta}'([0,\tau])$ containing $-\infty$ to $\cyl_-$ with $\varphi(-\infty)  = -\infty$ and $\varphi'(-\infty) > 0$, let $\wt{h} = h \circ \varphi^{-1} + Q \log|(\varphi^{-1})'|$, and let $b$ be the quantum boundary length of $\partial \cyl_-$ assigned by $\nu_{\wt{h}}$.  Then the law of $\wt{h}$ possesses the following properties:
\begin{enumerate}
\item\label{it:cone_resamp_finite} For each $r > 0$, the conditional law of $\wt{h}$ in the annulus $[-r,0] \times [0,2\pi] \subseteq \cyl_-$ given its values on $\cyl_- - r$ is that of a GFF with free boundary conditions on $\partial \cyl_-$ and Dirichlet boundary conditions on $\partial \cyl_- - r$ given by those of $\wt{h}$, conditioned so that the quantum boundary length of $\partial \cyl_-$ is equal to $b$.
\item\label{it:cone_resamp_infinite} The conditional law of $\wt{h}$ given its values on $\partial \cyl_-$ is equal to that of the sum of the function $z \mapsto (\gamma-Q)\Re(z)$ and a GFF on $\cyl_-$ with boundary conditions on $\partial \cyl_-$ which agree with $\wt{h}$.
\end{enumerate}
\end{proposition}
\begin{proof}
\noindent{\it Step 1: Local behavior of a quantum sphere at its marked points is described by quantum cone.} Suppose that $(\CS,x,y) = (\cyl,h,-\infty,+\infty)$ is a doubly-marked quantum sphere sampled from $\lsphere = c_{\mathrm {LB}} \bsphere$ conditioned so that its mass is at least $1$.  We are first going to explain why the local behavior of the surface near $+\infty$ (hence also near $-\infty$ by symmetry) is described by a $\gamma$-quantum cone.  Since the amount of mass that a quantum sphere contains has a density with respect to Lebesgue measure, it follows that $(\CS,x,y)$ a.s.\ has mass which is strictly larger than $1$.  Suppose that we take the horizontal translation for the embedding into $\cyl$ so that $\cyl_-$ has quantum area equal to~$1$.  Then the conditional law of $h$ in $\cyl_+$ given its values in $\cyl_-$ is that of a GFF with the given boundary values on $\partial \cyl_+$ plus the function $(\gamma-Q) \re(z)$.  By \cite[Proposition~4.13]{dms2014mating}, the following is true.  Let $\tau_C$ be the smallest $u \geq 0$ so that the average of $h+C$ on $u + [0,2\pi i]$ is equal to $0$.  Then $h(\cdot + \tau_C)+C$ converges as $C \to \infty$ to a $\gamma$-quantum cone.

\noindent{\it Step 2: Resampling property for quantum sphere conditioned on both quantum natural time and area.} Suppose that we fix $\epsilon > 0$ and we now let $(\CS,x,y) = (\cyl,h,-\infty,+\infty)$ be a doubly-marked quantum sphere sampled from $\lsphere = c_{\mathrm {LB}} \bsphere$.  Let $\wt{\eta}'$ be an independent whole-plane $\SLE_6$ from $+\infty$ to $-\infty$ parameterized by quantum natural time.  We assume that $(\CS,x,y)$ is conditioned so that the amount of quantum natural time for $\wt{\eta}'$ to go from $+\infty$ to $-\infty$ is at least $\epsilon$  (without conditioning on the quantum area for the moment).  By Proposition~\ref{prop::unexplored_region}, we know that the quantum surface parameterized by the component of $\cyl \setminus \eta'([0,\epsilon])$ containing $-\infty$ is a quantum disk weighted by its quantum area.  Let $\varphi$ be the unique conformal transformation from this component to $\cyl_-$ which fixes and has positive derivative at $-\infty$.  By \cite[Proposition~A.9]{dms2014mating}, we know that the conditional law of $\wt{h} = h \circ \varphi^{-1} + Q\log|(\varphi^{-1})'|$ in $[-r,0] \times [0,2\pi]$ given its values in $(-\infty,-r] \times [0,2\pi]$ is that of a GFF with the given Dirichlet boundary conditions on $-r + [0,2\pi i]$ and free boundary conditions on $[0,2\pi i]$ conditioned to have quantum boundary length equal to that of the component of $\cyl \setminus \eta'([0,\epsilon])$ containing $-\infty$.

Now suppose that we condition further on the quantum area of $(\CS,x,y)$ being at least~$1$.  On the event that the quantum area assigned to $(-\infty,-r] \times [0,2\pi]$ by $\wt{h}$ is at least $1$ we have the same form for the conditional law for the field in $[-r,0] \times [0,2\pi]$ as above, even with this extra conditioning.  We also note that the probability of this event tends to $1$ as $\epsilon \to 0$.

\noindent{\it Step 3: Completion of proof of property~\ref{it:cone_resamp_finite}.}  We now want to combine the observations made in Step 1 and Step 2 in order to complete the proof of property~\ref{it:cone_resamp_finite} in the case of the $\gamma$-quantum cone.  We suppose that $(\CS,x,y) = (\cyl,h,-\infty,+\infty)$ is a doubly marked quantum sphere with quantum area conditioned to be at least $1$ and let $\wt{\eta}'$ be an independent whole-plane $\SLE_6$ from $+\infty$ to $-\infty$ which is subsequently reparameterized by quantum natural time.  Recall from the proof of Proposition~\ref{prop::bessel_area_decay} that replacing $h$ with $h+C$ has the effect of multiplying the quantum natural time by $e^{3 \gamma C/4}$.  So, if we run $\wt{\eta}'$ for $\epsilon e^{-3\gamma C/4}$ units of quantum natural time measured using $h$ then the amount of quantum natural time elapsed measured using $h+C$ is $\epsilon$.  Suppose that we condition further on the quantum natural time required by $\wt{\eta}'$ to go from $+\infty$ to $-\infty$ to be at least $\epsilon e^{-3\gamma C/4}$.  We note that the probability of this event tends to $1$ as $C \to \infty$.  Therefore in the limit as $C \to \infty$, the first observation implies that (after horizontally translating) we obtain a $\gamma$-quantum cone decorated by an independent whole-plane $\SLE_6$.  Combining this with the second observation implies that property~\ref{it:cone_resamp_finite} holds for a quantum cone.

\noindent{\it Step 4: Completion of the proof of property~\ref{it:cone_resamp_infinite}.}  Suppose that $(\cyl,h,+\infty,-\infty)$ is a $\gamma$-quantum cone and $\wt{\eta}'$ is an independent whole-plane $\SLE_6$ from $+\infty$ to $-\infty$, reparameterized according to quantum natural time.  Let $\wt{h}$ be the field which describes the unexplored region after mapping back to $\cyl_-$ as in the statement of the proposition.  We first claim that the average of $\wt{h}$ on $\partial \cyl_- - r$ multiplied by $r^{-1}$ converges to $\gamma-Q$ in probability as $r \to \infty$.  We are going to deduce this from the corresponding property of a $\gamma$-quantum cone parameterized by $\cyl$.  Since we will be applying a conformal mapping, it will be more convenient to consider the field integrated against a smooth test function and then make use of the argument used to prove Lemma~\ref{lem::behavior_of_average_at_loop_mu_F}.  In order to accomplish this, we will consider two annuli on our quantum cone which differ by a horizontal translation of $r$ along $\cyl$ and then use that they are transformed into approximate annuli by the conformal map using Lemma~\ref{lem::conformal_cylinder_hulls}.

To make the idea sketched above more precise, we begin by letting $C_1,C_2$ be the constants from Lemma~\ref{lem::conformal_cylinder_hulls}.  Let $v_0 = \inf\{ \re(z) : z \in \wt{\eta}'([0,\epsilon])\}$, $R_1 = -C_1 + [v_0,v_0 - 3 C_2] \times [0,2\pi]$, and $R_2 = -r + [v_0,v_0-3C_2] \times [0,2\pi]$.  It follows from the definition of a $\gamma$-quantum cone that the difference in the average of the field on $R_2$ and $R_1$ multiplied by $r^{-1}$ converges in probability to $\gamma-Q$ as $r \to \infty$.  Lemma~\ref{lem::conformal_cylinder_hulls} implies that $\varphi(R_i)$ for $i=1,2$ contains a non-empty rectangle~$\wt{R}_i$ in~$\cyl_-$ where the distance of~$\wt{R}_1$ from~$\partial \cyl_-$ is bounded and the distance between $\wt{R}_2$ and $\wt{R}_1$ is given by $r$ up to a constant error which does not grow with $r$.  It follows from the argument used to prove Lemma~\ref{lem::behavior_of_average_at_loop_mu_F} that the difference in the average of $\wt{h}$ on $\wt{R}_2$ and $\wt{R}_1$ multiplied by $r^{-1}$ converges in probability to $\gamma-Q$ as $r \to \infty$.

Fix $u > 0$ and consider $z \in \cyl_-$ with $\re(z) = -u$.  Let $\phi_0$ (resp.\ $\phi_r)$ be the function which is harmonic in $\cyl_-$ (resp.\ $\cyl_+ - r$) with boundary values given by those of $\wt{h}$ on $\partial \cyl_-$ (resp.\ $\partial \cyl_+ - r$).  Finally, let $\phi$ be the function which is harmonic in the annulus $[-r,0] \times [0,2\pi]\subseteq \cyl_-$ with boundary values given by those of $h$ on $\partial \cyl_-$ and $\partial \cyl_- - r$.  Then we know that
\[ \phi(z) = \left(1-\frac{u}{r}\right) \phi_1(z) + \frac{u}{r} \phi_2(z) + o(1) \quad\text{as}\quad r \to \infty.\]
We note that $\phi_1(z)$ converges as $r \to \infty$ to the harmonic extension of the boundary values of $h$ from $\partial \cyl_-$ to $\cyl_-$ and the previous claim implies that
\[ \frac{u}{r} \phi_2(z) \to (Q-\gamma)u \quad\text{as}\quad r \to \infty.\]  This completes the proof.
\end{proof}

\section{Exploring a quantum sphere with $\SLE_{\kappa'}(\kappa'-6)$}
\label{sec::extensions}

We will now establish some generalizations of our earlier results to the case of $\gamma \in (\sqrt{2},2)$.

\begin{theorem}
\label{thm::sphere_sle_kp_kp_minus_6}
Let $(\cyl,h,-\infty,+\infty)$ be a unit area quantum sphere with $\gamma \in (\sqrt{2},2)$.  Let $\wt{\eta}'$ be a whole-plane $\SLE_{\kappa'}(\kappa'-6)$ process in $\cyl$ from $-\infty$ to $+\infty$.  Let $\CU_1$ be the collection of components of $\cyl \setminus \wt{\eta}'$ which are cut off by $\wt{\eta}'$ from $+\infty$ by $\wt{\eta}'$ (viewed as a path in the universal cover of $\cyl \setminus \{+\infty\}$) and let $\CU_2$ be the remaining components of $\cyl \setminus \wt{\eta}'$ which are cut off by $\wt{\eta}'$ from $+\infty$.  (The elements of $\CU_1$ are marked by the first, equivalently last, boundary point visited by $\wt{\eta}'$ and the elements of $\CU_2$ are doubly marked by the first and last boundary point visited by $\wt{\eta}'$.)  Conditional on their quantum boundary lengths and areas, the elements of $\CU_1,\CU_2$ are conditionally independent.  The elements of the former are quantum disks and the elements of the latter are surfaces sampled from the infinite measure on quantum surfaces used to construct a weight $2-\gamma^2/2$ quantum wedge with the given conditioning.

Moreover, $(\cyl,h,-\infty,+\infty)$ and $\wt{\eta}'$ are a.s.\ determined up to horizontal translation and a global rotation of $\cyl$ about $\pm \infty$ by the ordered sequence of oriented, marked components cut out by $\wt{\eta}'$ viewed as quantum surfaces.
\end{theorem}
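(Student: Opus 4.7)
The plan is to derive Theorem~\ref{thm::sphere_sle_kp_kp_minus_6} as the limit of the corresponding structural result on a $\gamma$-quantum cone, in direct parallel with the strategy developed in Sections~\ref{sec::sphere_from_cone}--\ref{sec::levy_construction}. I would begin with a $\gamma$-quantum cone $\CC = (\cyl,h,+\infty,-\infty)$ decorated by an independent whole-plane $\SLE_{\kappa'}(\kappa'-6)$ process $\wt{\eta}'$ from $-\infty$ to $+\infty$, parameterized by quantum natural time. By \cite[Theorem~1.18]{dms2014mating}, the components cut out by $\wt{\eta}'$ already possess the desired conditional structure: the proper-disk components (those separated from $+\infty$ in the universal cover) are conditionally independent quantum disks given their boundary lengths, while the remaining pocket-like components arising from the forested-line decomposition are governed by the weight $2-\gamma^2/2$ quantum wedge measure with the appropriate boundary-length and area constraints.

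Following Section~\ref{subsec::comparison_of_pinched_quantum_cones}, I would then introduce the event $F_{r,\epsilon,C}$: the first time $\wt{\eta}'$ makes a loop around $+\infty$ of quantum boundary length at most $C^{-1}e^{\gamma r/2}$, the enclosed quantum area lies in $[1,1+\epsilon]$. Generalizations of Proposition~\ref{prop::space_filling_almost_sphere} and Lemma~\ref{lem::f_delta_looks_like_sphere} carry over essentially verbatim, since they rely only on properties of the $\gamma$-quantum cone and of the GFF rather than on the nature of the decorating path. Conditioning on $F_{r,\epsilon,C}$ and sending $r\to-\infty$, $C\to\infty$, then $\epsilon\to 0$ therefore produces a unit area quantum sphere, and the target-invariance argument of Lemma~\ref{lem::retarget_inf_to_0} ensures the finite-mass endpoint is uniformly distributed according to the quantum measure. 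Because the conditioning on $F_{r,\epsilon,C}$ only affects the joint law of boundary lengths and areas across components and not the conditional laws of the components given those quantities, the full component structure from the cone setting passes to the sphere limit with precisely the constraints asserted in the theorem.

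The main obstacle is verifying that the forested-line identification of the $\CU_2$ components transfers cleanly. In the cone, the unforested trunk of each forested line is a stable process run for all time, while in the sphere limit it becomes a finite excursion; one must check that the small-bubble mass accumulating near the conditioning pinch point does not distort the measure on the other components, via a finite-mass estimate for the weight $2-\gamma^2/2$ wedge measure analogous to Lemma~\ref{lem::finite_mass}. Finally, the almost sure determinism statement follows from its cone-setting analog (also part of \cite[Theorem~1.18]{dms2014mating}) combined with the conformal distortion bound of Lemma~\ref{lem::conformal_cylinder_hulls}: the effect of resampling the pinch region on the global embedding vanishes as the limits are taken, so the reconstruction of $(\cyl,h)$ and $\eta'$ from the ordered sequence of marked, oriented components passes from the cone to the sphere.
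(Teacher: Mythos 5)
Your proposal is correct and follows essentially the same route as the paper: the paper's own proof of Theorem~\ref{thm::sphere_sle_kp_kp_minus_6} is simply the observation that the argument for Theorem~\ref{thm::pure_sphere_equivalent_constructions} applies verbatim, namely pinching a $\gamma$-quantum cone decorated by an independent $\SLE_{\kappa'}(\kappa'-6)$ (whose component structure is given by \cite[Theorem~1.18]{dms2014mating}) via the bottleneck events of Section~\ref{subsec::comparison_of_pinched_quantum_cones} and passing the conditional component laws and the determinism statement through the limit. Your additional attention to the forested-line ($\CU_2$) components and to the conformal distortion control near the pinch point matches exactly the points the paper delegates to Sections~\ref{subsec::comparison_of_pinched_quantum_cones}--\ref{sec::levy_construction}.
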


We note that in Theorem~\ref{thm::sphere_sle_kp_kp_minus_6} we do not describe the evolution of the boundary length of the component containing $+\infty$.  However, the proof of Theorem~\ref{thm::sphere_sle_kp_kp_minus_6} follows from the same argument used to prove Theorem~\ref{thm::pure_sphere_equivalent_constructions}.

We now state the analog of Proposition~\ref{prop::unexplored_region} in the general case of $\gamma \in (\sqrt{2},2)$.

\begin{proposition}
\label{prop::unexplored_general_sphere}
Suppose that $(\cyl,h,-\infty,+\infty)$ is a unit area quantum sphere with $\gamma \in (\sqrt{2},2)$ and that~$\wt{\eta}'$ is a whole-plane $\SLE_{\kappa'}(\kappa'-6)$ process in~$\cyl$ from $-\infty$ to $+\infty$ which is sampled independently of~$h$ and then reparameterized by quantum natural time.  Let~$\tau$ be a stopping time for the filtration generated by the bubbles cut out by $\wt{\eta}'|_{[0,t]}$ such that a.s.\ the boundary of the component $U$ of $\cyl \setminus \wt{\eta}'([0,\tau])$ containing $+\infty$ is contained in one side of $\wt{\eta}'$.  Conditional on its quantum boundary length and area, the quantum surface $(U,h)$ has the law of a quantum disk.  Moreover, $\wt{\eta}'(\tau)$ is uniformly distributed from the quantum boundary measure on $\partial U$.
\end{proposition}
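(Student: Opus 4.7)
The plan is to adapt the argument used to prove Proposition~\ref{prop::unexplored_region_qc_inf_to_0} for the $\gamma$-quantum cone, with Theorem~\ref{thm::sphere_sle_kp_kp_minus_6} now playing the role that \cite[Theorem~1.18]{dms2014mating} played there. The key structural input is a retargeting identity on the sphere, analogous to Lemma~\ref{lem::retarget_inf_to_0}. By \cite[Proposition~5.15]{dms2014mating}, the two marked points of the unit area quantum sphere $(\cyl,h,-\infty,+\infty)$ are conditionally independent and uniformly distributed from $\mu_h$ given the surface. Hence, if $w$ is sampled uniformly from $\mu_h$ independently of everything else and $\eta'_w$ is a whole-plane $\SLE_{\kappa'}(\kappa'-6)$ process from $-\infty$ to $w$ coupled to agree with $\eta'$ until $w$ and $+\infty$ are first separated (and evolving conditionally independently afterwards), then $(\cyl,h,-\infty,w;\eta'_w)$ has the same law as $(\cyl,h,-\infty,+\infty;\eta')$.

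First I would treat the case that $\tau$ is the time at which $\eta'$ finishes cutting out the next region $V$ whose boundary lies entirely on one side of $\eta'$ after some earlier stopping time $\tau_0$ with $\partial U$ already on one side. Theorem~\ref{thm::sphere_sle_kp_kp_minus_6} directly gives that, conditional on their boundary lengths and areas, the one-sided components cut out by $\eta'$ are independent quantum disks; in particular, $V$ is a quantum disk with its given boundary length and area. On the event that the uniformly chosen $w$ lies in the component $U$ of $\cyl\setminus\eta'([0,\tau_0])$ containing $+\infty$, the retargeting identity identifies the conditional law of $U$ (weighted by $\mu_h(U)$, since $w$ was chosen from $\mu_h$) with that of $V$. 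After further conditioning on both the boundary length and the quantum area of $U$, the area weighting becomes a constant factor and one concludes that $(U,h)$ is a quantum disk with the given parameters.

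To extend from these ``loop-closing'' stopping times to an arbitrary $\tau$ satisfying the measurability hypothesis, I would invoke the strong Markov property for the filtration generated by the bubbles. Theorem~\ref{thm::sphere_sle_kp_kp_minus_6} yields that, conditional on its boundary length and area, $(U,h)$ is independent of the collection of previously explored bubbles, so the description of the law of $(U,h)$ is unaffected by further conditioning on the bubble history up to $\tau$. The statement that $\eta'(\tau)$ is uniform on $\partial U$ from the quantum boundary measure follows from the same retargeting identity combined with the cone version of this assertion in \cite[Theorem~1.18]{dms2014mating}: the first boundary point of $V$ visited by the retargeted exploration is uniform from the quantum boundary measure of $\partial V$, which under the retargeting identity translates to the stated uniformity on $\partial U$.

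The main obstacle will be making the retargeting identity rigorous via a Bayes'-rule argument patterned on the proof of Lemma~\ref{lem::retarget_inf_to_0}, and in particular ensuring that the one-sided measurability condition imposed on $\tau$ is preserved under the coupling of $\eta'$ and $\eta'_w$. A secondary subtlety is the need to condition on events of measure zero (the exact values of the boundary length and quantum area of $U$), which as usual requires approximation by positive-probability events and a continuity argument before passing to the limit.
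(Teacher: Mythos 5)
Your proposal follows exactly the route the paper takes: its proof of Proposition~\ref{prop::unexplored_general_sphere} is simply ``the same argument used to prove Proposition~\ref{prop::unexplored_region_qc_inf_to_0},'' and your outline is a faithful transcription of that argument to the sphere --- the retargeting identity via the uniformity of the marked points (\cite[Proposition~5.15]{dms2014mating} in place of \cite[Lemma~9.3]{dms2014mating}), the one-sided-component resampling now supplied by Theorem~\ref{thm::sphere_sle_kp_kp_minus_6}, and the Bayes'-rule comparison, with the area weighting correctly absorbed by the extra conditioning on quantum area. The proposal is correct and takes essentially the same approach as the paper.
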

\begin{proof}
This follows from the same argument used to prove Proposition~\ref{prop::unexplored_region_qc_inf_to_0}.
\end{proof}

By combining Theorem~\ref{thm::sphere_sle_kp_kp_minus_6} and Proposition~\ref{prop::unexplored_general_sphere}, we can determine the law of the components which are cut off when we perform an exploration by a radial $\SLE_{\kappa'}(\kappa'-6)$ on a disk.

\begin{theorem}
\label{thm::disk_explore2}
Suppose that $(\cyl_+,h,+\infty)$ is a quantum disk (where $+\infty$ is uniform from the area measure given the quantum surface $(\cyl_+,h)$), $x \in \partial \cyl_+$ is chosen uniformly from the quantum boundary measure on $\partial \cyl_+$, and $\wt{\eta}'$ is a radial $\SLE_{\kappa'}(\kappa'-6)$ starting from $x$ and targeted at $+\infty$ which is sampled conditionally independently of $h$ given $x$.   Let $\CU_1$ be the collection of components of $\cyl_+ \setminus \wt{\eta}'$ which are cut off by $\wt{\eta}'$ from $+\infty$ by $\wt{\eta}'$ viewed as a path in the universal cover of $\cyl_+ \setminus \{+\infty\}$ and let $\CU_2$ be the remaining components of $\cyl_+ \setminus \wt{\eta}'$ which are cut off by $\wt{\eta}'$ from $+\infty$.  (The elements of $\CU_1$ are marked by the first, equivalently last, boundary point visited by $\wt{\eta}'$ and the elements of $\CU_2$ are doubly marked by the first and last boundary point visited by $\wt{\eta}'$.)  Conditional on their quantum boundary lengths and areas, the elements of $\CU_1,\CU_2$ are conditionally independent.  The elements of the former are quantum disks and the elements of the latter are surfaces sampled from the infinite measure on quantum surfaces used to construct a weight $2-\gamma^2/2$ quantum wedge with the given conditioning.

Moreover, $(\cyl_+,h,+\infty)$ and $\wt{\eta}'$ are a.s.\ determined up to a global rotation of $\cyl_+$ about $+ \infty$ by the ordered sequence of oriented, marked components cut out by $\wt{\eta}'$ viewed as quantum surfaces.
\end{theorem}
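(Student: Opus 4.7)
The plan is to deduce Theorem~\ref{thm::disk_explore2} by realizing the radial exploration of a quantum disk as the tail end of a whole-plane $\SLE_{\kappa'}(\kappa'-6)$ exploration of a unit area quantum sphere. Concretely, let $(\cyl,h)$ be a unit area quantum sphere and let $\wh{\eta}'$ be an independent whole-plane $\SLE_{\kappa'}(\kappa'-6)$ from $-\infty$ to $+\infty$, parameterized by quantum natural time. Pick a stopping time $\tau$ for the filtration generated by the cut-off bubbles such that, almost surely, the unexplored component $U$ containing $+\infty$ has boundary contained in only one side of $\wh{\eta}'$ (for instance, the first such time after the outer boundary length first drops below a fixed positive threshold). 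By Proposition~\ref{prop::unexplored_general_sphere}, conditional on the boundary length and area of $\partial U$ the quantum surface $(U,h)$ is a quantum disk, and $\wh{\eta}'(\tau)$ is uniformly distributed on $\partial U$ with respect to the quantum boundary length measure.

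The next step is to invoke the target invariance of $\SLE_{\kappa'}(\kappa'-6)$ developed in \cite{MS_IMAG,MS_IMAG4} to argue that the continuation $\wh{\eta}'|_{[\tau,\infty)}$, viewed inside $U$ after a conformal change of coordinates sending $U$ to $\cyl_+$ with $+\infty \mapsto +\infty$, has the law of a radial $\SLE_{\kappa'}(\kappa'-6)$ from the image of $\wh{\eta}'(\tau)$ targeted at $+\infty$, sampled conditionally independently of the field on $U$ given the marked tip. Combined with the previous paragraph, this produces exactly the configuration appearing in the statement of Theorem~\ref{thm::disk_explore2}: a quantum disk with a uniformly random marked boundary point decorated by an independent radial $\SLE_{\kappa'}(\kappa'-6)$ emerging from it. Under this coupling the sets $\CU_1$ and $\CU_2$ for the radial exploration pull back to the components cut off by $\wh{\eta}'$ after time $\tau$, classified according to whether $\wh{\eta}'$ winds around $+\infty$ in the universal cover of $\cyl \setminus \{+\infty\}$ between their first and last visits.

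Granting this identification, the assertions about the joint law of the cut-off surfaces are immediate from Theorem~\ref{thm::sphere_sle_kp_kp_minus_6} applied to $\wh{\eta}'$: that theorem yields conditional independence of all cut-off bubbles given their boundary lengths and areas, so the sub-collection of post-$\tau$ bubbles retains this conditional independence, with $\CU_1$-type components being quantum disks and $\CU_2$-type ones being surfaces sampled from the weight $2-\gamma^2/2$ wedge measure. The a.s.\ determination of the path-decorated surface up to horizontal translation and rotation reduces to the corresponding post-$\tau$ determination statement in Theorem~\ref{thm::sphere_sle_kp_kp_minus_6}, since the observable data in Theorem~\ref{thm::disk_explore2} is precisely the post-$\tau$ data there. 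The main obstacle will be the target-invariance step: one must verify that after conditioning on $(U,h)$ as a quantum disk with marked tip $\wh{\eta}'(\tau)$, the continuation $\wh{\eta}'|_{[\tau,\infty)}$ is a radial $\SLE_{\kappa'}(\kappa'-6)$ which is independent of the field on $U$. This should follow from the retargeting-style argument in the proof of Lemma~\ref{lem::retarget_inf_to_0} together with the $\SLE_{\kappa'}(\kappa'-6)$ target invariance of \cite{MS_IMAG,MS_IMAG4}, but some care is required because the starting point of the continuation is itself random (uniform on $\partial U$ in the quantum length measure) and correlated with the path $\wh{\eta}'|_{[\tau,\infty)}$ only through its location.
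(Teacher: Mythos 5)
Your proposal is correct and follows essentially the same route as the paper, which derives Theorem~\ref{thm::disk_explore2} precisely by combining Proposition~\ref{prop::unexplored_general_sphere} (to realize the quantum disk with a uniformly random marked boundary point as the unexplored region of a whole-plane $\SLE_{\kappa'}(\kappa'-6)$ on a sphere) with Theorem~\ref{thm::sphere_sle_kp_kp_minus_6} (to read off the law of the cut-off components and the determination statement). Your explicit identification of the target-invariance/domain-Markov step for the continuation of the whole-plane process as the point requiring care is a useful elaboration of what the paper leaves implicit.
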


\bibliographystyle{hmralphaabbrv}
\addcontentsline{toc}{section}{References}
\bibliography{sle_kappa_rho}

\begin{thebibliography}{DRSV14b}

\bibitem[AHS17]{twoperspectives}
J.~Aru, Y.~Huang, and X.~Sun.
\newblock Two perspectives of the 2{D} unit area quantum sphere and their
  equivalence.
\newblock {\em Comm. Math. Phys.}, 356(1):261--283, 2017.
\newblock \arxiv{1512.06190}. \MR{3694028}

\bibitem[Ber96]{bertoin96levy}
J.~Bertoin.
\newblock {\em L\'evy processes}, volume 121 of {\em Cambridge Tracts in
  Mathematics}.
\newblock Cambridge University Press, Cambridge, 1996. \MR{1406564 (98e:60117)}

\bibitem[BJRV13]{bjrv2013super_critical}
J.~Barral, X.~Jin, R.~Rhodes, and V.~Vargas.
\newblock Gaussian multiplicative chaos and {KPZ} duality.
\newblock {\em Comm. Math. Phys.}, 323(2):451--485, 2013. \MR{3096527}

\bibitem[BM18]{bm_compact2}
J.~Bettinelli and G.~{Miermont}.
\newblock {Compact Brownian surfaces II. The general case}.
\newblock 2018+.
\newblock In preparation.

\bibitem[DKRV16]{lqg_sphere}
F.~David, A.~Kupiainen, R.~Rhodes, and V.~Vargas.
\newblock Liouville quantum gravity on the {R}iemann sphere.
\newblock {\em Comm. Math. Phys.}, 342(3):869--907, 2016.
\newblock \arxiv{1410.7318}. \MR{3465434}

\bibitem[DMS14]{dms2014mating}
B.~{Duplantier}, J.~{Miller}, and S.~{Sheffield}.
\newblock {Liouville quantum gravity as a mating of trees}.
\newblock {\em ArXiv e-prints}, September 2014, \arxiv{1409.7055}.

\bibitem[DRSV14a]{DRSV1}
B.~Duplantier, R.~Rhodes, S.~Sheffield, and V.~Vargas.
\newblock Critical {G}aussian multiplicative chaos: convergence of the
  derivative martingale.
\newblock {\em Ann. Probab.}, 42(5):1769--1808, 2014.
\newblock \arxiv{1206.1671}. \MR{3262492}

\bibitem[DRSV14b]{MR3215583}
B.~Duplantier, R.~Rhodes, S.~Sheffield, and V.~Vargas.
\newblock Renormalization of {C}ritical {G}aussian {M}ultiplicative {C}haos and
  {KPZ} {R}elation.
\newblock {\em Comm. Math. Phys.}, 330(1):283--330, 2014.
\newblock \arxiv{1212.0529}. \MR{3215583}

\bibitem[DRV16]{drvtorus}
F.~David, R.~Rhodes, and V.~Vargas.
\newblock Liouville quantum gravity on complex tori.
\newblock {\em J. Math. Phys.}, 57(2):022302, 25, 2016.
\newblock \arxiv{1504.00625}. \MR{3450564}

\bibitem[DS09]{ds2009qg_prl}
B.~Duplantier and S.~Sheffield.
\newblock Duality and the {K}nizhnik-{P}olyakov-{Z}amolodchikov relation in
  {L}iouville quantum gravity.
\newblock {\em Phys. Rev. Lett.}, 102(15):150603, 4, 2009.
\newblock \arxiv{0901.0277}. \MR{2501276 (2010d:83106)}

\bibitem[DS11]{ds08}
B.~Duplantier and S.~Sheffield.
\newblock Liouville quantum gravity and {KPZ}.
\newblock {\em Invent. Math.}, 185(2):333--393, 2011.
\newblock \arxiv{0808.1560}. \MR{2819163 (2012f:81251)}

\bibitem[Geo11]{georgii2011gibbs}
H.-O. Georgii.
\newblock {\em Gibbs measures and phase transitions}, volume~9 of {\em de
  Gruyter Studies in Mathematics}.
\newblock Walter de Gruyter \& Co., Berlin, second edition, 2011. \MR{2807681
  (2012d:82015)}

\bibitem[GHMS17]{ghms2015correlation}
E.~Gwynne, N.~Holden, J.~Miller, and X.~Sun.
\newblock Brownian motion correlation in the peanosphere for {$\kappa>8$}.
\newblock {\em Ann. Inst. Henri Poincar\'e Probab. Stat.}, 53(4):1866--1889,
  2017.
\newblock \arxiv{1510.04687}. \MR{3729638}

\bibitem[GHS16]{ghs2016bipolar}
E.~{Gwynne}, N.~{Holden}, and X.~{Sun}.
\newblock {Joint scaling limit of a bipolar-oriented triangulation and its dual
  in the peanosphere sense}.
\newblock {\em ArXiv e-prints}, March 2016, \arxiv{1603.01194}.

\bibitem[GM16]{strongertopology}
E.~Gwynne and J.~Miller.
\newblock Convergence of the topology of critical {F}ortuin-{K}asteleyn planar
  maps to that of {CLE}$_\kappa$ on a {L}iouville quantum surface.
\newblock 2016.
\newblock In preparation.

\bibitem[GMS17]{gms2017tutte}
E.~{Gwynne}, J.~{Miller}, and S.~{Sheffield}.
\newblock {The Tutte embedding of the mated-CRT map converges to Liouville
  quantum gravity}.
\newblock {\em ArXiv e-prints}, May 2017, \arxiv{1705.11161}.

\bibitem[GMS18]{gms2018tutte}
E.~{Gwynne}, J.~{Miller}, and S.~{Sheffield}.
\newblock {The Tutte embedding of the Poisson-Voronoi tessellation of the
  Brownian disk converges to $\sqrt{8/3}$-Liouville quantum gravity}.
\newblock {\em ArXiv e-prints}, September 2018, \arxiv{1809.02091}.

\bibitem[GRV16]{guillarmourhodesvargas}
C.~{Guillarmou}, R.~{Rhodes}, and V.~{Vargas}.
\newblock {Liouville Quantum Gravity on compact surfaces}.
\newblock {\em ArXiv e-prints}, July 2016, \arxiv{1607.08467}.

\bibitem[GS15]{finitevolumelimit}
E.~{Gwynne} and X.~{Sun}.
\newblock {Scaling limits for the critical Fortuin-Kastelyn model on a random
  planar map III: finite volume case}.
\newblock {\em ArXiv e-prints}, October 2015, \arxiv{1510.06346}.

\bibitem[GS17]{finitevolumeestimates}
E.~Gwynne and X.~Sun.
\newblock Scaling limits for the critical {F}ortuin-{K}asteleyn model on a
  random planar map {II}: local estimates and empty reduced word exponent.
\newblock {\em Electron. J. Probab.}, 22:Paper No. 45, 56, 2017.
\newblock \arxiv{1505.03375}. \MR{3661659}

\bibitem[HRV15]{hrvdisk}
Y.~{Huang}, R.~{Rhodes}, and V.~{Vargas}.
\newblock {Liouville Quantum Gravity on the unit disk}.
\newblock {\em ArXiv e-prints}, February 2015, \arxiv{1502.04343}.

\bibitem[KMSW15]{kmsw2015bipolar}
R.~{Kenyon}, J.~{Miller}, S.~{Sheffield}, and D.~B. {Wilson}.
\newblock {Bipolar orientations on planar maps and SLE$_{12}$}.
\newblock {\em ArXiv e-prints}, November 2015, \arxiv{1511.04068}.
\newblock To appear in Ann. Probab.

\bibitem[KMSW17]{kmsw2016bipolar-lattice}
R.~Kenyon, J.~Miller, S.~Sheffield, and D.~B. Wilson.
\newblock {Six-vertex model and Schramm-Loewner evolution}.
\newblock {\em Phys. Rev. E}, 95:052146, May 2017.
\newblock \arxiv{1605.06471}.

\bibitem[Law05]{LAW05}
G.~F. Lawler.
\newblock {\em Conformally invariant processes in the plane}, volume 114 of
  {\em Mathematical Surveys and Monographs}.
\newblock American Mathematical Society, Providence, RI, 2005. \MR{2129588
  (2006i:60003)}

\bibitem[LG13]{legalluniqueanduniversal}
J.-F. Le~Gall.
\newblock Uniqueness and universality of the {B}rownian map.
\newblock {\em Ann. Probab.}, 41(4):2880--2960, 2013.
\newblock \arxiv{1105.4842}. \MR{3112934}

\bibitem[LS11]{ls2011natural}
G.~F. Lawler and S.~Sheffield.
\newblock A natural parametrization for the {S}chramm-{L}oewner evolution.
\newblock {\em Ann. Probab.}, 39(5):1896--1937, 2011.
\newblock \arxiv{0906.3804}. \MR{2884877}

\bibitem[LSW03]{LSW_RESTRICTION}
G.~Lawler, O.~Schramm, and W.~Werner.
\newblock Conformal restriction: the chordal case.
\newblock {\em J. Amer. Math. Soc.}, 16(4):917--955 (electronic), 2003.
\newblock \arxiv{math/0209343}. \MR{1992830 (2004g:60130)}

\bibitem[Mie13]{miermontlimit}
G.~Miermont.
\newblock The {B}rownian map is the scaling limit of uniform random plane
  quadrangulations.
\newblock {\em Acta Math.}, 210(2):319--401, 2013. \MR{3070569}

\bibitem[MS15a]{map_making}
J.~{Miller} and S.~{Sheffield}.
\newblock {An axiomatic characterization of the Brownian map}.
\newblock {\em ArXiv e-prints}, June 2015, \arxiv{1506.03806}.

\bibitem[MS15b]{qlebm}
J.~{Miller} and S.~{Sheffield}.
\newblock {Liouville quantum gravity and the Brownian map I: The QLE(8/3,0)
  metric}.
\newblock {\em ArXiv e-prints}, July 2015, \arxiv{1507.00719}.

\bibitem[MS16a]{qle_continuity}
J.~{Miller} and S.~{Sheffield}.
\newblock {Liouville quantum gravity and the Brownian map II: geodesics and
  continuity of the embedding}.
\newblock {\em ArXiv e-prints}, May 2016, \arxiv{1605.03563}.

\bibitem[MS16b]{qle_determined}
J.~{Miller} and S.~{Sheffield}.
\newblock {Liouville quantum gravity and the Brownian map III: the conformal
  structure is determined}.
\newblock {\em ArXiv e-prints}, August 2016, \arxiv{1608.05391}.

\bibitem[MS16c]{MS_IMAG}
J.~Miller and S.~Sheffield.
\newblock Imaginary geometry {I}: interacting {SLE}s.
\newblock {\em Probab. Theory Related Fields}, 164(3-4):553--705, 2016.
\newblock \arxiv{1201.1496}. \MR{3477777}

\bibitem[MS16d]{MS_IMAG2}
J.~Miller and S.~Sheffield.
\newblock {Imaginary geometry II: reversibility of SLE$_\kappa(\rho_1;\rho_2)$
  for $\kappa \in (0,4)$}.
\newblock {\em Ann. Probab.}, 44(3):1647--1722, 2016.
\newblock \arxiv{1201.1497}. \MR{3502592}

\bibitem[MS16e]{MS_IMAG3}
J.~Miller and S.~Sheffield.
\newblock {Imaginary geometry III: reversibility of SLE$_\kappa$ for $\kappa
  \in (4,8)$}.
\newblock {\em Ann. of Math. (2)}, 184(2):455--486, 2016.
\newblock \arxiv{1201.1498}. \MR{3548530}

\bibitem[MS16f]{ms2013qle}
J.~Miller and S.~Sheffield.
\newblock Quantum {L}oewner evolution.
\newblock {\em Duke Math. J.}, 165(17):3241--3378, 2016.
\newblock \arxiv{1312.5745}. \MR{3572845}

\bibitem[MS17]{MS_IMAG4}
J.~Miller and S.~Sheffield.
\newblock Imaginary geometry {IV}: interior rays, whole-plane reversibility,
  and space-filling trees.
\newblock {\em Probab. Theory Related Fields}, 169(3-4):729--869, 2017.
\newblock \arxiv{1302.4738}. \MR{3719057}

\bibitem[RY99]{RY04}
D.~Revuz and M.~Yor.
\newblock {\em Continuous martingales and {B}rownian motion}, volume 293 of
  {\em Grundlehren der Mathematischen Wissenschaften [Fundamental Principles of
  Mathematical Sciences]}.
\newblock Springer-Verlag, Berlin, third edition, 1999. \MR{2000h:60050}

\bibitem[Sch00]{S0}
O.~Schramm.
\newblock Scaling limits of loop-erased random walks and uniform spanning
  trees.
\newblock {\em Israel J. Math.}, 118:221--288, 2000.
\newblock \arxiv{math/9904022}. \MR{1776084 (2001m:60227)}

\bibitem[She07]{SHE06}
S.~Sheffield.
\newblock Gaussian free fields for mathematicians.
\newblock {\em Probab. Theory Related Fields}, 139(3-4):521--541, 2007.
\newblock \arxiv{math/0312099}. \MR{2322706 (2008d:60120)}

\bibitem[She16a]{SHE_WELD}
S.~Sheffield.
\newblock Conformal weldings of random surfaces: {SLE} and the quantum gravity
  zipper.
\newblock {\em Ann. Probab.}, 44(5):3474--3545, 2016.
\newblock \arxiv{1012.4797}. \MR{3551203}

\bibitem[She16b]{2011arXiv1108.2241S}
S.~Sheffield.
\newblock Quantum gravity and inventory accumulation.
\newblock {\em Ann. Probab.}, 44(6):3804--3848, 2016.
\newblock \arxiv{1108.2241}. \MR{3572324}

\bibitem[Shi85]{shimura1985conebm}
M.~Shimura.
\newblock Excursions in a cone for two-dimensional {B}rownian motion.
\newblock {\em J. Math. Kyoto Univ.}, 25(3):433--443, 1985. \MR{807490
  (87a:60095)}

\bibitem[SW05]{SCHRAMM_WILSON}
O.~Schramm and D.~B. Wilson.
\newblock S{LE} coordinate changes.
\newblock {\em New York J. Math.}, 11:659--669 (electronic), 2005.
\newblock \arxiv{math/0505368}. \MR{2188260 (2007e:82019)}

\bibitem[Wer04]{W03}
W.~Werner.
\newblock Random planar curves and {S}chramm-{L}oewner evolutions.
\newblock In {\em Lectures on probability theory and statistics}, volume 1840
  of {\em Lecture Notes in Math.}, pages 107--195. Springer, Berlin, 2004.
\newblock \arxiv{math/0303354}. \MR{2079672 (2005m:60020)}

\end{thebibliography}

\bigskip

\filbreak
\begingroup
\small
\parindent=0pt

\bigskip
\vtop{
\hsize=5.3in
Department of Mathematics\\
Massachusetts Institute of Technology\\
Cambridge, MA, USA } \endgroup \filbreak

\end{document}